\documentclass[11pt,a4paper,reqno]{amsart}
\pdfoutput=1

\usepackage{times}
\usepackage[margin=1.0in,tmargin=0.9in,bmargin=0.80in]{geometry}
\usepackage[dvipsnames]{xcolor}
\usepackage[english]{babel}

\definecolor{bred}{rgb}{0.8,0,0}

\usepackage{xltabular}
\usepackage{subfigure}
\usepackage{amsmath,amssymb,graphicx,epsfig}
\usepackage{mathtools}
\usepackage{enumerate,amsthm,epstopdf}
\usepackage[flushleft]{threeparttable}
\usepackage{multirow}
\usepackage{longtable}
\usepackage[utf8]{inputenc}
\usepackage[T1]{fontenc}
\usepackage{bbm}
\usepackage{xargs}
\usepackage{latexsym}
\usepackage{wasysym}
\usepackage{float}
\usepackage{hyperref}
\usepackage{url}
\usepackage{booktabs}
\usepackage{amsfonts}
\usepackage{nicefrac}
\usepackage{microtype}
\usepackage{cleveref}
\usepackage[textsize=footnotesize]{todonotes}
\hypersetup{colorlinks,
	linkcolor={blue},
	citecolor={bred},
	urlcolor={blue}
	}
\usepackage[numbers]{natbib}
\usepackage{indentfirst}

\newcommand{\e}{\mathbb{E}}
\newcommand{\II}{\mathbbm{1}}
\newcommand{\p}{\mathbb{P}}
\newcommand{\dd}{\mathrm{d}}
\newcommand{\la}{\langle}
\newcommand{\ra}{\rangle}
\newcommand{\bt}{\lfloor t \rfloor}
\newcommand{\bs}{\lfloor s \rfloor}
\newcommand{\br}{\lfloor r \rfloor}
\newcommand{\ut}{\lceil t \rceil}
\newcommand{\us}{\lceil s \rceil}

\theoremstyle{theorem}
\newtheorem{theorem}{\textit{\textbf}{Theorem}}
\newtheorem{proposition}{\textbf{Proposition}}
\newtheorem{lemma}{\textbf{Lemma}}
\newtheorem{corollary}{\textbf{Corollary}}
\newtheorem{remark}{\textbf{Remark}}
\newtheorem{definition}{\textbf{Definition}}
\newtheorem{assumption}{\textbf{Assumption}}

\allowdisplaybreaks

\title{Tamed Stochastic Gradient Hamiltonian Monte Carlo}
\author[Z. Wang]{Zhuoran Wang}
\author[Y. Zhang]{Ying Zhang}

\address{Financial Technology Thrust, Society Hub, The Hong Kong University of Science and Technology (Guangzhou), Guangzhou, China}
\email{zwang104@connect.hkust-gz.edu.cn}

\address{Financial Technology Thrust, Society Hub, The Hong Kong University of Science and Technology (Guangzhou), Guangzhou, China}
\email{yingzhang@hkust-gz.edu.cn}

\thanks{Financial support by the Guangzhou-HKUST(GZ) Joint Funding Program (No. 2025A03J3322) is gratefully acknowledged.}
\keywords{Stochastic optimization, sampling problem, Langevin-dynamics based algorithm, taming technique, Hamiltonian Monte Carlo, superlinear gradient, discontinuous gradient.}

\begin{document}
	
\maketitle

\begin{abstract}
	In this paper, we propose a novel tamed stochastic gradient Hamiltonian Monte Carlo (tSGHMC) algorithm for sampling and stochastic optimization problems with superlinearly growing stochastic gradients. Under a certain continuity in average condition and a strong convexity condition, we establish a non-asymptotic error bound in Wasserstein-2 distance for tSGHMC with the rate of convergence equal to $1/4$. Then, we derive an upper estimate for the associated expected excess risk, which provides a theoretical guarantee for the performance of tSGHMC. To illustrate the effectiveness of the proposed algorithm, we apply tSGHMC to practical examples, including a newsvendor problem and a Conditional Value-at-Risk  minimization problem, using synthetic and real-world datasets. Numerical results support our theoretical findings. Furthermore, we compare tSGHMC with its first-order counterpart, namely, the tamed unadjusted stochastic Langevin algorithm. Simulation results demonstrate that tSGHMC achieves lower root mean square error and expected excess risk across a range of tasks.
\end{abstract}

\section{Introduction}

We consider the stochastic optimization problem:
\begin{equation}\label{OPT}
	\text{minimize}\quad
	\mathbb{R}^d\owns\theta\mapsto
	u(\theta)\coloneq \e[U(\theta,X)],
\end{equation}
where  $U\colon\mathbb{R}^d\times\mathbb{R}^k\to\mathbb{R}$ is a measurable function, and $X$ is an $\mathbb{R}^k$-valued random variable with known probability law $\mathcal{P}$. Our objective is to construct an estimator $\hat{\theta}$ that minimizes the expected excess risk
\[\e[u(\hat{\theta})]-\inf_{\theta\in\mathbb{R}^d}u(\theta).\]
A common sampling-based approach for solving \eqref{OPT} is to consider the associated Gibbs measure $\pi_\beta(\theta)\propto\exp(-\beta u(\theta))$ with $\beta>0$. As $\beta$ increases, $\pi_\beta$ concentrates around the minimizers of $u$ \cite{hwang1980laplace}. Consequently, samples drawn from $\pi_\beta$ for sufficiently large $\beta$ can be used to construct approximate solutions to \eqref{OPT} \cite{dalalyan2017further,dalalyan2017theoretical}.

A standard approach to sampling from $\pi_\beta$ is to use Langevin Monte Carlo (LMC)  \cite{ermak1975computer,parisi1981correlation} or stochastic gradient Langevin dynamics (SGLD) \cite{welling2011bayesian}, as both algorithms can be viewed as Euler discretizations of the following overdamped Langevin stochastic differential equation (SDE):
\begin{equation}\label{eq:overdamped}
	\bar{L}_0=\theta_0,\quad
	\dd \bar{L}_t
	=-h(\bar{L}_t)\,\dd t+\sqrt{2\beta^{-1}}\,\dd W_t,
	\quad t\geq 0,
\end{equation}
where $\theta_0$ is an $\mathbb{R}^d$-valued random variable, $h\coloneq\nabla u$, and $(W_t)_{t\geq 0}$ is a $d$-dimensional Brownian motion, which, under suitable conditions, admits $\pi_\beta$ as its unique invariant distribution \cite{pavliotis2014stochastic}. The convergence properties of LMC and SGLD have been extensively studied in the literature \cite{barkhagen2021stochastic,chau2021stochastic,cheng2018convergence,cheng2018sharp,chewi2025analysis,dalalyan2017further,dalalyan2017theoretical,durmus2017nonasymptotic,durmus2019high,erdogdu2021convergence,mou2022improved,neufeld2024robust,raginsky2017non,sabanis2020fully,vempala2019rapid,welling2011bayesian,xu2018global,zhang2023nonasymptotic}, providing theoretical guarantees for their performance.

An alternative way of solving the sampling task is to use the kinetic Langevin Monte Carlo (KLMC) algorithm \cite{cheng2018underdamped}, or its variant, the stochastic gradient Hamiltonian Monte Carlo (SGHMC) algorithm \cite{chen2014stochastic}, both of which are developed based on the underdamped (or kinetic) Langevin SDE:
\begin{align}\label{eq.underdamped}
	\begin{split}
		\widetilde{V}_0&=\nu_0,\quad\dd \widetilde{V}_t=-[\gamma \widetilde{V}_t+h(\widetilde{Z}_t)]\,\dd t+\sqrt{2\gamma\beta^{-1}}\,\dd W_t,\\
		\widetilde{Z}_0&=\theta_0,\quad\dd \widetilde{Z}_t=\widetilde{V}_t\,\dd t,\quad t\geq 0,
	\end{split}
\end{align}
where $(\theta_0,\nu_0)$ is an $\mathbb{R}^{2d}$-valued random variable, $(\widetilde{Z}_t,\widetilde{V}_t)_{t\geq 0}$ are the position and velocity processes, respectively, and $\gamma>0$ is the friction coefficient. Under mild conditions, the underdamped Langevin SDE \eqref{eq.underdamped} admits an invariant measure given by
\begin{equation}
	\Pi_\beta(\theta,\nu)\propto\exp\left(-\beta\left(u(\theta)+\frac{|\nu|^2}{2}\right)\right),
\end{equation}
whose marginal distribution in the position variable $\theta$ is exactly $\pi_\beta$ \cite{pavliotis2014stochastic}. While non-asymptotic convergence results have been established for KLMC and SGHMC under various assumptions \cite{akyildiz2024nonasymptotic,altschuler2026shifted,chau2022stochastic,chen2014stochastic,cheng2018underdamped,dalalyan2020sampling,gao2022global,liang2024non,liu2020improved,yu2024langevin,zhang2026dimension}, they all require $h$ to be globally Lipschitz continuous. Such an assumption excludes key applications in modern machine learning, particularly those involving highly nonlinear models or neural networks. In addition, for superlinearly growing gradients, standard Euler discretizations of \eqref{eq.underdamped} are unstable, as their absolute moments may diverge to infinity in finite time \cite{hutzenthaler2011strong}. To address the instability issue, \cite{dai2025explicit,johnston2024kinetic,lytras2025contractive} introduced tamed variants of KLMC by applying the taming technique \cite{beyn2016stochastic,hutzenthaler2012tame,sotirios2013tame,wang2013tamed}, and derived convergence guarantees under the polynomial Lipschitz condition of $h$. Nevertheless, these algorithms utilize the exact gradient $h$ at each iteration, leading to substantial computational costs in large-scale applications. This thus motivates the development of a tamed KLMC (tKLMC) algorithm that incorporates the associated stochastic gradient.

\subsection{Contributions}

In this work, we propose a novel tamed SGHMC (tSGHMC) algorithm for sampling and stochastic optimization with superlinear and discontinuous stochastic gradients. The proposed algorithm is given precisely by
\begin{align}\label{eq.tSGHMC-intro}
	\begin{split}
		\nu_0^\lambda=\nu_0,\quad
		\nu_{n+1}^\lambda&=\nu_n^\lambda-\lambda[\gamma \nu_n^\lambda+H_\gamma(\theta_n^\lambda,X_{n+1})]
		+\sqrt{2\lambda\gamma\beta^{-1}}\,\xi_{n+1},\\
		\theta_0^\lambda=\theta_0,\quad
		\theta_{n+1}^\lambda&=\theta_n^\lambda+\lambda \nu_n^\lambda,\quad n\in\mathbb{N}_0,
	\end{split}
\end{align}
where $\lambda>0$ is the step size, $(X_n)_{n\in\mathbb{N}}$ are $\mathbb{R}^k$-valued independent and identically distributed (i.i.d.) random variables with probability law $\mathcal{P}$, $(\xi_n)_{n\in\mathbb{N}}$ are $\mathbb{R}^d$-valued i.i.d.\ standard Gaussian random variables, and $H_\gamma:\mathbb{R}^d\times\mathbb{R}^k\to\mathbb{R}^d$ is the tamed stochastic gradient given by
\begin{equation}\label{eq.tamed-gradient}
	H_\gamma(\theta,x)\coloneq m\theta+
	\frac{H(\theta,x)-m\theta}
	{\sqrt{1+\gamma^{-1}|\theta|^{4r}}},
\end{equation}
for all $\theta\in\mathbb{R}^d$, $ x\in\mathbb{R}^k$, with $m,r>0$ and $H$ denoting the stochastic gradient which satisfies $h(\theta)=\e[H(\theta,X_0)]$ for all $\theta\in\mathbb{R}^d$. We note that tSGHMC \eqref{eq.tSGHMC-intro}-\eqref{eq.tamed-gradient} incorporates a carefully designed taming coefficient $H_\gamma$, preventing the numerical instability caused by the superlinear $H$, while retaining the low computational cost of stochastic gradient updates. We refer to Remark~\ref{remark.tamed} for more details regarding the construction of $H_\gamma$. To the best of the authors' knowledge, this is the first such variant of SGHMC in the literature.

For tSGHMC, we provide a non-asymptotic convergence bound in Wasserstein-2 distance, which is derived from a novel result on the moment estimates of the proposed algorithm. Such a convergence result further yields an upper estimate for the expected excess risk, ensuring the applicability of tSGHMC on optimization problems in the form of \eqref{OPT}. Crucially, these results are obtained under relaxed conditions including a polynomial continuity in average condition (Assumption~\ref{assumption2.local Lip+growth}) and a data-dependent strong convexity condition (Assumption~\ref{assumption3.strong convexity}), consequently, tSGHMC can be used to solve a wide range of applications with superlinearly growing and discontinuous stochastic gradients, as well as those involving online data where a uniform bound on the data stream is unavailable.

Another key contribution of this work is a theoretical improvement over the pioneer study \cite{johnston2024kinetic}. Under a suitable choice of parameters, we can show that the convergence result for tSGHMC in Wasserstein-2 distance has a rate of convergence equal to $1/4$, improving upon the order $1/5$ rate of convergence established for tKLMC in  \cite{johnston2024kinetic}. This enhancement is achieved by exploiting key properties of auxiliary processes introduced in Section~\ref{subsection.auxiliary}. As a result, by choosing $\gamma=\mathcal{O}(\lambda^{-1/4})$, we derive a $\lambda\gamma=\mathcal{O}(\lambda^{3/4})$ error term in Proposition~\ref{proposition: W_2 - interpolation and auxiliary}  instead of $\sqrt{\lambda\gamma}=\mathcal{O}(\lambda^{2/5})$ (with $\gamma=\mathcal{O}(\lambda^{-1/5})$) in \cite[Proposition~8.10]{johnston2024kinetic}, leading to the improvement of $\sqrt{\lambda}\gamma=\mathcal{O}(\lambda^{1/4})$ in Theorem~\ref{Main Theorem} compared to $\sqrt{\lambda}\gamma^{3/2}=\mathcal{O}(\lambda^{1/5})$ in \cite[Theorem 7.1]{johnston2024kinetic}. We refer to Remark~\ref{remark.rate1} for further details regarding the interplay between $\lambda$ and $\gamma$.

Finally, we investigate the empirical performance of tSGHMC in various examples relevant in practice, including, e.g., posterior sampling, newsvendor problem, Conditional Value-at-Risk (CVaR) minimization, and nonlinear regression. Numerical results on synthetic and real-world datasets demonstrate that tSGHMC can effectively address the aforementioned sampling and optimization problems. To illustrate the superior performance of tSGHMC, we compare it with the tamed unadjusted stochastic Langevin algorithm (TUSLA) \cite{lim2024non,lovas2023taming}, which can be viewed as the first-order counterpart of tSGHMC. For most experiments, tSGHMC converges faster and achieves higher accuracy than TUSLA in terms of root mean square error (RMSE) and expected excess risk, while exhibiting comparable performance in the remaining cases.

\subsection*{Notations}

At the end of this section, we introduce some notation. The set of nonnegative integers is denoted by $\mathbb{N}_0\coloneq \{0,1,2,3,\cdots\}$, while $\mathbb{N}\coloneq\mathbb{N}_0\backslash\{0\}$.
For a positive real number $a$, denote its integer part by $\lfloor a\rfloor$ and define $\lceil a\rceil = \lfloor a\rfloor + 1$.
The closed ball centered at $a$ with radius $r$ is denoted by $\bar{B}(a,r)$.
The Euclidean inner product is denoted by $\la\cdot,\cdot\ra$, where $|\cdot|$ denotes the corresponding Euclidean norm (the dimension of the space may vary depending on the context).
For matrices, $\mathbb{R}^{d\times d}$ denotes the set of $d\times d$ real matrices. $\operatorname{Hess}(u)(\theta)$ denotes the Hessian matrix of the function $u(\theta)$ with $\theta\in\mathbb{R}^d$, $I_d$ denotes the identity matrix in $\mathbb{R}^{d\times d}$, and $\operatorname{diag}(a_1,\dots,a_d)$ denotes the diagonal matrix whose diagonal entries are $a_1,\dots,a_d$. 
For two symmetric matrices $A,B\in\mathbb{R}^{d\times d}$, we write $A\preceq B$, or equivalently $B\succeq A$, if $B-A$ is positive
semidefinite.
Let $(\Omega,\mathcal{F},\p)$ denote a probability space. The expectation of a random variable $Z$ is denoted by $\e[Z]$. For $1 \leq p < \infty$, $L^p$ denotes the space of real-valued random variables that are $p$-integrable. The indicator function of a set $A$ is denoted by $\II_A$. For a random variable $Z$ taking values in $\mathbb{R}^d$, its distribution on $\mathcal{B}(\mathbb{R}^d)$, the Borel $\sigma$-field of $\mathbb{R}^d$, is denoted by $\mathcal{L}(Z)$. For any integer $q\geq 1$, let $\mathcal{P}(\mathbb{R}^q)$ denote the set of probability measures on $\mathcal{B}(\mathbb{R}^q)$. For $\mu,\nu\in\mathcal{P}(\mathbb{R}^d)$, let $\mathcal{C}(\mu,\nu)$ denote the set of probability measures $\zeta$ defined on $\mathcal{B}(\mathbb{R}^{2d})$ with marginals $\mu$ and $\nu$. For two Borel probability measures $\mu$ and $\nu$ defined on $\mathbb{R}^d$ with finite $q$-th moments, the Wasserstein distance of order $q\geq 1$ is defined by
\[W_q(\mu,\nu)  \coloneq  \left(\inf_{\zeta\in\mathcal{C}(\mu,\nu)}\int_{\mathbb{R}^d}\int_{\mathbb{R}^d}|\theta-\theta'|^q\,\zeta(\dd\theta\dd\theta')\right)^{1/q}.\]

\section{Assumptions and Main Results}
\label{section.Assumptions and Main Results}

\subsection{Setting}

Fix $d,k\in\mathbb{N}$. Let $U\colon\mathbb{R}^d\times\mathbb{R}^k\to\mathbb{R}$ be a measurable function, and assume that $\e[|U(\theta,X)|]<\infty$ holds for every $\theta\in\mathbb{R}^d$, where $X$ is an $\mathbb{R}^k$-valued random variable with probability law $\mathcal{L}(X)$.
Let $u\colon\mathbb{R}^d\to\mathbb{R}_{\geq 0}$ be a nonnegative twice continuously differentiable function defined by $u(\theta)\coloneq\e[U(\theta,X)]$, and let $h\coloneq \nabla u$ denote its gradient. Let $\theta^*$ denote a minimizer of $u$, i.e., $h(\theta^*)=0$.
Fix $\beta>0$, for all $A\in\mathcal{B}(\mathbb{R}^d)$, define the probability measures
\begin{align}
	\pi_\beta(A)&\coloneq \frac{\int_A \exp(-\beta u(\theta))\,\dd\theta}{\int_{\mathbb{R}^d}\exp(-\beta u(\theta))\,\dd\theta},\label{eq.pi_beta}\\
	\Pi_\beta(A)&\coloneq \frac{\int_A\exp\left(-\beta\left(u(\theta)+|\nu|^2/2\right)\right)\,\dd\theta\dd\nu}{\int_{\mathbb{R}^d}\exp\left(-\beta\left(u(\theta)+|\nu|^2/2\right)\right)\,\dd\theta\dd\nu},\label{eq.Pi_beta}
\end{align}
where $\int_{\mathbb{R}^d}\exp(-\beta u(\theta))\,\dd\theta<\infty$. Let $(\mathcal{G}_n)_{n\in\mathbb{N}_0}$ be a filtration describing the evolution of available information, and set $\mathcal{G}_\infty\coloneq\sigma(\bigcup_{n\in\mathbb{N}_0}\mathcal{G}_n)$. Consider a $(\mathcal{G}_n)$-adapted sequence $(X_n)_{n\in\mathbb{N}_0}$ of i.i.d.\ $\mathbb{R}^k$-valued random variables with distribution $\mathcal{L}(X)$. Let $(\xi_n)_{n\in\mathbb{N}_0}$ denote a sequence of independent standard Gaussian vectors in $\mathbb{R}^d$.

The \textbf{tamed stochastic gradient Hamiltonian Monte Carlo} (tSGHMC) algorithm $(\theta_n^\lambda,\nu_n^\lambda)_{n\in\mathbb{N}_0}$ is defined by
\begin{align}\label{tSGHMC}
	\begin{split}
		\nu_0^\lambda=\nu_0,\quad
		\nu_{n+1}^\lambda&=\nu_n^\lambda-\lambda[\gamma \nu_n^\lambda+H_\gamma(\theta_n^\lambda,X_{n+1})]
		+\sqrt{2\lambda\gamma\beta^{-1}}\,\xi_{n+1},\\
		\theta_0^\lambda=\theta_0,\quad
		\theta_{n+1}^\lambda&=\theta_n^\lambda+\lambda \nu_n^\lambda,\quad n\in\mathbb{N}_0,
	\end{split}
\end{align}
where $\theta_0$, $\nu_0$ are $\mathbb{R}^d$-valued random variables, $\lambda>0$ denotes the step size, $\gamma>0$ is the friction parameter, $\beta>0$ is the inverse temperature, and where for any $\theta\in\mathbb{R}^d$ and $x\in\mathbb{R}^k$,
\begin{equation}\label{eq.tamedSG}
	H_\gamma(\theta,x)=m\theta+\frac{H(\theta,x)-m\theta}{\sqrt{1+\gamma^{-1}|\theta|^{4r}}}\end{equation}
with $m,r>0$, and $H\colon\mathbb{R}^d\times\mathbb{R}^k\to\mathbb{R}^d$ satisfying $h(\theta)=\e[H(\theta,X_0)]$ for all $\theta\in\mathbb{R}^d$. Throughout, we assume that the initial conditions $\theta_0,\nu_0$, the sigma-field $\mathcal{G}_\infty$, and the Gaussian sequence $(\xi_n)_{n\in\mathbb{N}_0}$ are mutually independent.

\begin{remark}\label{remark.tamed}
	We note that tSGHMC \eqref{tSGHMC}-\eqref{eq.tamedSG} can be obtained by applying the taming technique \cite{beyn2016stochastic,hutzenthaler2012tame,sotirios2013tame,wang2013tamed} to the SGHMC algorithm introduced in \cite{chen2014stochastic}. Inspired by \cite{johnston2024kinetic,lytras2025taming,neufeld2025non}, we choose the tamed coefficient $H_\gamma$ in the current form \eqref{eq.tamedSG}, as it satisfies a dissipativity condition, a linear growth condition (in $\theta$), and possesses a certain convergence property as presented in Lemmas~\ref{Lemma.dissipativity of Expectation of SG}, \ref{lemma.2nd bound of taming factor}, and~\ref{lemma.mse - H_lambda and H}, respectively. These properties are key in deriving moment estimates of tSGHMC and establishing its convergence result in Wasserstein-2 distance.
\end{remark}

\subsection{Assumptions}\label{Section. Assumptions}

Let $\vartheta\in[1,\infty)$, $2r\in[\vartheta,\infty)\cap\mathbb{N}$, and $\rho\in[1,\infty)$ be fixed. The assumptions are listed below.

\begin{assumption}\label{assumption1.initial condition}
	The initial condition $(\theta_0,\nu_0)$ satisfies $\e[|\theta_0|^{12r}]+\e[|\nu_0|^{12r}]<\infty$. The process $(X_n)_{n\in\mathbb{N}_0}$ has a finite $12\rho r$-th moment, i.e., $\e[|X_0|^{12\rho r}]<\infty$. Furthermore, we have that $h(\theta)=\e[H(\theta,X_0)]$, for all $\theta\in\mathbb{R}^d$.
\end{assumption}

In the following assumption, we impose a (joint) locally Lipschitz condition and a suitable growth condition on the function $F$. In addition, we assume that $G$ satisfies a ``continuity in average'' condition, which is weaker than locally Lipschitz continuity.

\begin{assumption}\label{assumption2.local Lip+growth}
	The function $H\colon \mathbb{R}^d\times\mathbb{R}^k\to\mathbb{R}^d$ takes the form of
	\begin{equation}\label{eq.expression of H}
		H(\theta,x)=F(\theta,x)+G(\theta,x),
	\end{equation}
	where
	\begin{enumerate}
		\item $F\colon \mathbb{R}^d\times\mathbb{R}^k\to\mathbb{R}^d$ satisfies that there exists a constant $L_F>0$ such that, for all $\theta,\theta'\in\mathbb{R}^d$, $x,x'\in\mathbb{R}^k$,
		\[|F(\theta,x)-F(\theta',x')|\leq L_F(1+|x|+|x'|)^{\rho-1}(1+|\theta|+|\theta'|)^{2r-1}(|\theta-\theta'|+|x-x'|).\]
		Furthermore, there exists a constant $K_F>0$ such that for all $\theta\in\mathbb{R}^d$, $x\in\mathbb{R}^k$,
		\[|F(\theta,x)|\leq K_F(1+|x|)^\rho(1+|\theta|^{2r}).\]
		\item There exists a constant $L_G>0$ such that, for all $\theta,\theta'\in\mathbb{R}^d$,
		\[\e[|G(\theta,X_0)-G(\theta',X_0)|]\leq L_G(1+|\theta|+|\theta'|)^{\vartheta-1}|\theta-\theta'|.\]
		In addition, there exists a constant $K_G>1$ such that for all $\theta\in\mathbb{R}^d$ and $x\in\mathbb{R}^k$,
		\[|G(\theta,x)|\leq K_G(1+|x|)^\rho(1+|\theta|)^\vartheta.\]
	\end{enumerate}
\end{assumption}

\begin{remark}
	We consider stochastic gradients $H$ defined in \eqref{eq.expression of H}, where $F$ is locally Lipschitz continuous and $G$ may be discontinuous. This structure is common in applications with nonsmooth stochastic gradients, such as quantile estimation, CVaR minimization, regularized optimization with ReLU neural networks, see, e.g., \cite{bardou2008computation,daouia19extreme,lim2024non,lim2025langevin}, and the newsvendor problem as presented in Section~\ref{subsection.newsvendor}.
\end{remark}

\begin{remark}\label{remark.local Lip. of h}
	By Assumptions~\ref{assumption1.initial condition} and \ref{assumption2.local Lip+growth}, we obtain that $\e[F(\theta,X_0)]$ and $\e[G(\theta,X_0)]$ are well-defined. Moreover, for all $\theta\in\mathbb{R}^d$ and $x\in\mathbb{R}^k$, 
	\begin{equation}\label{eq.upper bound of |H|}
		|H(\theta,x)|\leq K_H(1+|x|)^\rho(1+|\theta|^{2r}),
	\end{equation}
	where $K_H\coloneq 2^{2r-1}K_G+K_F$. Besides, we note that $h$ is locally Lipschitz continuous, i.e., there exists a constant $L_h>0$ such that for all $\theta,\theta'\in\mathbb{R}^d$,
	\[|h(\theta)-h(\theta')|\leq L_h(1+|\theta|+|\theta'|)^{2r-1}|\theta-\theta'|,\]
	where $L_h\coloneq L_G+L_F\e[(1+2|X_0|)^{\rho-1}]$.
\end{remark}

We consider the following variant of the strong convexity condition, which introduces the dependence on the data stream.

\begin{assumption}\label{assumption3.strong convexity}
	There exists a measurable symmetric matrix-valued function $A\colon \mathbb{R}^k\to\mathbb{R}^{d\times d}$ such that for all $(\theta,x)\in\mathbb{R}^d\times\mathbb{R}^k$,
	\begin{equation}\label{eq.positive semi definited A}
		\la\theta,A(x)\theta\ra\geq 0,
	\end{equation}
	and, for all $\theta,\theta'\in\mathbb{R}^d$ and $x\in\mathbb{R}^k$,
	\begin{equation}\label{eq.local strong convexity}
		\la\theta-\theta',H(\theta,x)-H(\theta',x)\ra\geq\la\theta-\theta',A(x)(\theta-\theta')\ra.
	\end{equation}
	Moreover, the smallest eigenvalue of the matrix $\e[A(X_0)]$ is strictly positive and is denoted by $m$.
\end{assumption}

\begin{remark}
	The local convexity condition \eqref{eq.positive semi definited A}-\eqref{eq.local strong convexity} in Assumption~\ref{assumption3.strong convexity} is originally proposed in \cite{barkhagen2021stochastic}. Such a condition relaxes the strong convexity condition, i.e., for all $\theta,\theta'\in\mathbb{R}^d$ and $x\in\mathbb{R}^k$,
	\[\la\theta-\theta',H(\theta,x)-H(\theta',x)\ra\geq \bar{a}|\theta-\theta'|^2\]
	for some $\bar a> 0$, by allowing non-uniform dependence on the data stream, which accommodates, e.g., applications with online data.
\end{remark}

The following result shows that Assumption~\ref{assumption3.strong convexity} implies a strong convexity and a dissipativity condition of $u$.

\begin{remark}\label{remark.dissipativity of h}
	By Assumptions~\ref{assumption1.initial condition} and \ref{assumption3.strong convexity}, $u$ is $m$-strongly convex, i.e., for all $\theta,\theta'\in\mathbb{R}^d$,
	\[\la \theta-\theta',h(\theta)-h(\theta')\ra\geq m|\theta-\theta'|^2.\]
	Hence, we obtain, for all $\theta\in\mathbb{R}^d$, that
	\[u(0)\geq u(\theta)-\la h(\theta),\theta\ra+\frac{m}{2}|\theta|^2,\]
	which indicates that
	\[\la h(\theta),\theta\ra\geq\frac{m}{2}|\theta|^2-u(0).\]
	Furthermore, we have that
	\[\frac{m}{2}|\theta^*|^2-u(0)\leq 0,\]
	implying $\theta^*\in\bar{B}(0,R_0)$ with $R_0\coloneq\sqrt{2u(0)/m}$.
\end{remark}

The following remark shows that $H$ defined in \eqref{eq.expression of H} satisfies a dissipativity condition.

\begin{remark}\label{remark.dissipativitiy of SG}
	Let Assumptions~\ref{assumption1.initial condition}-\ref{assumption3.strong convexity} hold. Then, for all $\theta\in\mathbb{R}^d$ and $x\in\mathbb{R}^k$,
	\[\la\theta,H(\theta,x)\ra\geq \la\theta,\widetilde{A}(x)\theta\ra-\widetilde{b}(x),\]
	where $\widetilde{A}(x)=A(x)-m I_d$ and $\widetilde{b}(x)=K_H^2(1+|x|)^{2\rho}/(4m)$.
\end{remark}

\begin{proof}
	Postponed to Appendix \hyperref[Proof.dissipative of SG]{A}.
\end{proof}

\subsection{Main Results}

Define
\begin{equation}\label{eq.friction restriction}
	\gamma_{\min}\coloneq\max\left\{\sqrt{K+\frac{m}{2}},\sqrt{\frac{2(K+2|h(0)|)}{3}},6\sqrt{6}K,1,14m,\sqrt{m},32C_m\right\},
\end{equation}
where $K$ is defined in Remark~\ref{remark.MY}, and $C_m$ is explicitly given in Table~\ref{tab.constants}. Besides, for all $q\in\mathbb{N}$, we define
\begin{equation}\label{eq.step-size restriction}
	\lambda_{q,\max,\gamma}\coloneq\min\left\{\frac{1}{4\gamma},\frac{3m}{16q(q-1)2^{q-2}\gamma}\right\},\quad\lambda_{\max,\gamma}\coloneq\lambda_{12r,\max,\gamma}.
\end{equation}

The following theorem establishes a non-asymptotic bound in Wasserstein-2 distance between the distribution of $\theta_n^\lambda$ defined in \eqref{tSGHMC} and the target distribution $\pi_\beta$.

\begin{theorem}\label{Main Theorem}
	Let Assumptions~\ref{assumption1.initial condition}-\ref{assumption3.strong convexity} hold. Then, for all $\epsilon>0$ and $\beta>0$, there exists $\dot{C}>0$ such that, for all $\gamma\geq\gamma_{\min}$, $\lambda\leq\lambda_{\max,\gamma}$, and $n\in\mathbb{N}_0$,
	\begin{equation}\label{eq.main estimates}
		W_2(\mathcal{L}(\theta_n^\lambda),\pi_\beta)\leq \dot{C}\left(\sqrt{\lambda}\gamma+\lambda\gamma^{5/2}+\gamma^{-1}+\exp\left(-\frac{\lambda m}{2\gamma}n\right)W_2(\mathcal{L}(\theta_0,\nu_0),\Pi_\beta)+\epsilon\right),
	\end{equation}
	where $\dot{C}=\mathcal{O}((d/\beta)^{3r+1/2})$ is independent of
	$n,\lambda,\gamma$ and $\epsilon$. Its explicit expression is provided in Table~\ref{tab.constants}.
\end{theorem}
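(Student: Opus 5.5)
The plan follows the now-standard decomposition for kinetic Langevin schemes with taming. It chains several processes via the triangle inequality for $W_2$, all lifted to $\mathbb{R}^{2d}$. We use the fact that $W_2$ of the $\theta$-marginals is bounded by $W_2$ of the joint laws, so $W_2(\mathcal{L}(\theta_n^\lambda),\pi_\beta)\le W_2(\mathcal{L}(\theta_n^\lambda,\nu_n^\lambda),\Pi_\beta)$.

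\textbf{Step 1: the chain of processes.} Introduce the continuous-time interpolation of \eqref{tSGHMC} on the time grid $\lambda n$. Rescale time by $\lambda$ so that one step corresponds to unit time, and couple it synchronously with the Brownian increments that generate $\xi_n$. On each block of length $T\approx \gamma/(\lambda m)$ (in rescaled time) introduce an auxiliary process solving the tamed underdamped dynamics with drift $\gamma\nu+h_\gamma(\theta)$, where $h_\gamma:=\e[H_\gamma(\cdot,X_0)]$. This auxiliary process is started at the interpolation's position at the block's start. We also introduce the underdamped SDE \eqref{eq.underdamped} with the untamed drift $h$, started from $(\theta_0,\nu_0)$ in stationarity-coupled form. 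The bound then splits into three pieces: (i) interpolation versus auxiliary process; (ii) auxiliary (tamed drift $h_\gamma$) versus the untamed Langevin process; (iii) the untamed Langevin process versus $\Pi_\beta$.

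\textbf{Step 2: the three pieces.} For (iii), the strong convexity in Remark~\ref{remark.dissipativity of h} and a standard twisted-norm synchronous coupling for kinetic Langevin with $\gamma\ge\gamma_{\min}$ give contraction at rate $m/\gamma$ in continuous time. This yields the term $\exp(-\lambda m n/(2\gamma))W_2(\mathcal{L}(\theta_0,\nu_0),\Pi_\beta)$. For (ii), the same contraction applies, combined with Lemma~\ref{lemma.mse - H_lambda and H}, which controls $\e|h_\gamma(\theta)-h(\theta)|^2\lesssim \gamma^{-2}(1+|\theta|^{8r})$. Integrated against moments of the Langevin process, this gives an $O(\gamma^{-1})$ contribution. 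The $\epsilon$ term is there to absorb the effect of working with smooth approximations. I would replace the possibly discontinuous $G$ by a mollified version (or use a Moreau--Yosida-type regularization as in Remark~\ref{remark.MY}, which supplies $K$). The strong convexity and Lipschitz constants are then usable in the coupling, at a cost of an arbitrarily small error $\epsilon$ in $W_2$.

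\textbf{Step 3: moments and the discretization error.} The central ingredient for (i) is a uniform-in-$n$ bound on $\e|\theta_n^\lambda|^{12r}+\e|\nu_n^\lambda|^{12r}$, with dependence $(d/\beta)^{6r}$. This follows from the dissipativity of $H_\gamma$ (Lemma~\ref{Lemma.dissipativity of Expectation of SG}) and its linear growth in $\theta$ at the level $\gamma^{1/2}$ (Lemma~\ref{lemma.2nd bound of taming factor}). The argument uses a Lyapunov function $|\nu|^2+\gamma\la\theta,\nu\ra+c\gamma^2|\theta|^2$ raised to power $6r$, under $\lambda\le\lambda_{\max,\gamma}$. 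Given these moments, piece (i) is handled by comparing one-step increments. There are three error sources: the drift frozen at grid points, which costs about $\lambda\gamma\cdot$(moments), using $|\theta_{t}-\theta_{\lfloor t\rfloor}|\le\lambda|\nu|$; the stochastic-gradient noise $H_\gamma-h_\gamma$, which is a martingale difference and enters in square-root form; and the continuity in average of $G$, which gives only $L^1$ control. Combining these with the contraction over a block and summing geometrically yields the $\sqrt{\lambda}\gamma+\lambda\gamma^{5/2}$ terms.

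\textbf{Main obstacle.} The delicate step is (i). The discontinuous $G$ prevents an $L^2$ Lipschitz estimate, so one must interpolate the $L^1$ continuity-in-average bound against the high moments via Cauchy--Schwarz. This produces square roots that must be matched with the right powers of $\gamma$. Getting $\lambda\gamma$ rather than $\sqrt{\lambda\gamma}$ in Proposition~\ref{proposition: W_2 - interpolation and auxiliary} requires exploiting the martingale structure of the noise in the auxiliary block. I would do this first, tracking all constants via the moment bounds to obtain $\dot C=\mathcal{O}((d/\beta)^{3r+1/2})$.
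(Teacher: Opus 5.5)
There is a genuine gap in your contraction steps. The synchronous-coupling contraction you invoke for (iii) needs an \emph{upper} Hessian bound, not only strong convexity, and so does its implicit use in (ii) and in the geometric summation over blocks in (i). In the twisted-coordinate computation of Theorem~\ref{theorem:contraction result of dalalyan}, the term $\frac{2}{\gamma}\la z_t,S_tz_t\ra-2\gamma|z_t|^2$ is controlled only because $S_t\preceq KI_d$ with $\gamma^2\geq K+m/2$. Here $h$ is only polynomially Lipschitz (e.g.\ $F$ may contain $\kappa|\theta|^2\theta$), so $\operatorname{Hess}(u)$ is unbounded and no choice of $\gamma$ makes the diffusion with drift $h$ contract in this way. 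Your mean tamed drift $h_\gamma$ does not help: the $|\theta|$-dependent factor $(1+\gamma^{-1}|\theta|^{4r})^{-1/2}$ need not preserve the monotonicity of $h-m\theta$, so $h_\gamma$ is not known to be strongly monotone.

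Mollifying $G$ targets the wrong difficulty. $h=\nabla u$ is already continuous ($u\in C^2$); the obstruction is the superlinear growth inherited from $F$, which mollification leaves intact. The Moreau--Yosida option you mention in passing is the right tool, but it has to be built into the chain:
\begin{itemize}
\item In the paper, both the block-wise auxiliary processes $\zeta^{\lambda,n}$ and the reference diffusion $r^\lambda$ are driven by $h_{MY,\epsilon}$. This drift is $K$-Lipschitz with $K\le\epsilon^{-1}$ and $(m/2)$-strongly convex, so Theorem~\ref{theorem:contraction result of dalalyan} applies for $\gamma\geq\gamma_{\min}$. This is why $\gamma_{\min}$ depends on $K$, hence on $\epsilon$.
\item The taming error $H_\gamma-H$ (Lemma~\ref{lemma.mse - H_lambda and H}) and the regularization error $h-h_{MY,\epsilon}$ (Lemma~\ref{lemma.error of MY regularization}) are treated as perturbations inside the one-block estimate. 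They produce the $C_3\gamma^{-3}$ and $C_4\gamma^{-1}\epsilon^2$ terms of $S_{\lambda,\gamma}$.
\item The chain is closed by $W_2(\pi_\beta^\epsilon,\pi_\beta)\le\bar c\epsilon$ (Lemma~\ref{lemma.contraction of pi_beta}), proved with an overdamped coupling that needs only monotonicity.
\end{itemize}
That is where the $\epsilon$ in the statement comes from, and no comparison between a tamed and an untamed diffusion is needed.

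Two smaller points.
\begin{itemize}
\item The paper's one-block bound (Proposition~\ref{proposition: W_2 - interpolation and auxiliary}) is a finite-horizon estimate. It closes because $T=\lfloor 1/\lambda\rfloor$, i.e.\ $\lambda(t-nT)\le 1$, so $\frac{\lambda}{2}\int_{nT}^t\e[|Z_s^\lambda-\zeta_s^{\lambda,n}|^2]\,\dd s$ can be absorbed. The factor $\gamma/m$ that turns $\lambda\gamma+\sqrt{S_{\lambda,\gamma}/\gamma}$ into $\lambda\gamma^2+\sqrt{\gamma S_{\lambda,\gamma}}$ comes from summing these unit blocks against a contraction factor $e^{-m/(4\gamma)}$ per block (Proposition~\ref{theorem:W_2 - scaled and auxiliary}). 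With your blocks of length $\gamma/(\lambda m)$ you would face Gr\"onwall factors of order $e^{c\gamma/m}$, unless you also exploit contraction inside each block.
\item The discontinuity of $G$ is not the real obstacle. You never need to compare $H$ at two different points, nor interpolate the $L^1$ bound against moments. After expanding $V_t^\lambda-V_t^{\lambda,n}$ around $\bt$, it suffices that $H(Z^\lambda_{\bt},X_{\ut})-h(Z^\lambda_{\bt})$ has zero conditional mean given $\mathcal{H}^\lambda_{\bt}$. All Lipschitz comparisons then go through $h_{MY,\epsilon}$, and continuity in average enters only via the local Lipschitz constant $L_h$ of $h$.
\end{itemize}
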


\begin{remark}\label{remark.rate1}
	We note that to make the right-hand side of \eqref{eq.main estimates} fall below a given precision level, we cannot choose freely the value of $\epsilon$, instead, $\epsilon$ and $\gamma$ should be selected in a coupled way. As a concrete example, we set $\epsilon=\lambda^{1/4}$ and consider the case where $\gamma=\mathcal{O}(\epsilon^{-1})$. Then
	\[W_2(\mathcal{L}(\theta_n^\lambda),\pi_\beta)\leq 5\dot{C}\left(\lambda^{1/4}+\exp\left(-\frac{\lambda^{5/4}m}{2}n\right)W_2(\mathcal{L}(\theta_0,\nu_0),\Pi_\beta)\right),\]
	which can be made arbitrarily small by choosing sequentially $\lambda$ and $n$. Moreover, we note that with the above choice of parameters, tSGHMC converges in Wasserstein-2 distance with the rate equal to $1/4$, which improves that achieved for tKLMC in \cite{johnston2024kinetic}. Furthermore, for any $\varepsilon>0$, if we choose
	\begin{align*}
		\lambda&\leq\min\left\{(\varepsilon/(10\dot C))^4,\lambda_{\max,\gamma}\right\},\\
		n&\geq \frac{2}{m}\max\left\{(10\dot{C}/\varepsilon)^5,\lambda_{\max,\gamma}^{-5/4}\right\}\log\left(\frac{10\dot{C}W_2(\mathcal{L}(\theta_0,\nu_0),\Pi_\beta)}{\varepsilon}\right),
	\end{align*}
	then we obtain $W_2(\mathcal{L}(\theta_n^\lambda),\pi_\beta)\leq\varepsilon$.
\end{remark}

Let $(\theta_n^\lambda,\nu_n^\lambda)$ be generated by tSGHMC \eqref{tSGHMC}-\eqref{eq.tamedSG}, and set $\hat{\theta}=\theta_n^\lambda$. Then, by using Theorem~\ref{Main Theorem}, the following result shows that tSGHMC can be used to solve the optimization problem:
\[\text{minimize}\quad\mathbb{R}^d\owns\theta\mapsto u(\theta)\coloneq\e[U(\theta,X)],\]
by providing an upper bound for the expected excess risk.

\begin{theorem}\label{Theorem.Optimization}
	Let Assumptions~\ref{assumption1.initial condition}-\ref{assumption3.strong convexity} hold. Then, for all $\epsilon>0$ and $\beta>0$, there exist $C',C''>0$ such that, for all $\gamma\geq\gamma_{\min}$, $\lambda\leq\lambda_{\max,\gamma}$, and $n\in\mathbb{N}_0$, 
	\[\e [u(\theta_n^\lambda)]-u(\theta^*)\leq C'\left(\sqrt{\lambda}\gamma+\lambda\gamma^{5/2}+\gamma^{-1}+\exp\left(-\frac{\lambda m}{2\gamma}n\right)W_2(\mathcal{L}(\theta_0,\nu_0),\Pi_\beta)+\epsilon\right)+C''\frac{2d}{m\beta},\]
	where $C'=\mathcal{O}((d/\beta)^{4r+1/2})$ and $C''=\mathcal{O}((d/\beta)^{r+1/2})$ are independent of $n,\lambda$, $\gamma$ and $\epsilon$. Their explicit expressions are provided in Table~\ref{tab.constants}.
\end{theorem}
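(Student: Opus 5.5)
The plan is the standard splitting of the excess risk into a sampling error and an intrinsic Gibbs-measure error. Let $Z_\infty\sim\pi_\beta$. Write
\[
\e[u(\theta_n^\lambda)]-u(\theta^*)=\bigl(\e[u(\theta_n^\lambda)]-\e[u(Z_\infty)]\bigr)+\bigl(\e[u(Z_\infty)]-u(\theta^*)\bigr).
\]
We bound the first term by a local Lipschitz estimate of $u$ combined with Theorem~\ref{Main Theorem}. We bound the second term via the strong convexity and local Lipschitz property of $h$ (Remarks~\ref{remark.local Lip. of h} and \ref{remark.dissipativity of h}) together with moment bounds for $\pi_\beta$; this produces the $C''\,2d/(m\beta)$ term.

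\textbf{First term.} Take an optimal coupling $(\theta_n^\lambda,Z_\infty)$ for $W_2(\mathcal{L}(\theta_n^\lambda),\pi_\beta)$. By the mean value theorem,
\[
u(\theta)-u(\theta')=\int_0^1\la h(t\theta+(1-t)\theta'),\theta-\theta'\ra\,\dd t .
\]
Using $|h(\theta)|\le L_h(1+|\theta|)^{2r-1}|\theta|+|h(0)|$ from Remark~\ref{remark.local Lip. of h}, we get
\[
|u(\theta)-u(\theta')|\le c\,(1+|\theta|+|\theta'|)^{2r}|\theta-\theta'| .
\]
Cauchy--Schwarz then gives
\[
|\e[u(\theta_n^\lambda)]-\e[u(Z_\infty)]|\le c\bigl(\e[(1+|\theta_n^\lambda|+|Z_\infty|)^{4r}]\bigr)^{1/2}W_2(\mathcal{L}(\theta_n^\lambda),\pi_\beta).
\]
The moment factor is controlled uniformly in $n$ by the moment estimates of tSGHMC, namely the $4r$-th moments. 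These are available since the moment bounds go up to order $12r$ under $\lambda\le\lambda_{\max,\gamma}$ and $\gamma\ge\gamma_{\min}$. The $\pi_\beta$ moments are bounded by the dissipativity in Remark~\ref{remark.dissipativity of h}; for instance, $\int|\theta|^{2p}\,\dd\pi_\beta=\mathcal{O}((d/\beta)^{p})$ via a Lyapunov/integration-by-parts argument. The moment factor thus scales like $(d/\beta)^{r}$. Multiplying by $\dot C=\mathcal{O}((d/\beta)^{3r+1/2})$ from Theorem~\ref{Main Theorem} gives $C'=\mathcal{O}((d/\beta)^{4r+1/2})$ and the bracketed rate in the statement.

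\textbf{Second term.} Since $h(\theta^*)=0$, the same integral representation gives
\[
u(\theta)-u(\theta^*)\le L_h(1+|\theta|+|\theta^*|)^{2r-1}|\theta-\theta^*|^2 .
\]
Applying Cauchy--Schwarz under $\pi_\beta$, we need $\e_{\pi_\beta}|\theta-\theta^*|^4$ and $\e_{\pi_\beta}(1+|\theta|+R_0)^{4r-2}$, using $|\theta^*|\le R_0$.

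The key quantity is the concentration of $\pi_\beta$ around $\theta^*$. By $m$-strong convexity, integration by parts gives
\[
\e_{\pi_\beta}\la\theta-\theta^*,h(\theta)\ra=d/\beta ,
\]
hence $\e_{\pi_\beta}|\theta-\theta^*|^2\le d/(m\beta)$. Iterating with the test function $|\theta-\theta^*|^{2p}$ yields $\e_{\pi_\beta}|\theta-\theta^*|^{2p}\le c_p\bigl(d/(m\beta)\bigr)^{p}$, which is the source of the $2d/(m\beta)$ factor. Collecting terms gives $C''\,2d/(m\beta)$, where the remaining polynomial moment factor $\mathcal{O}((d/\beta)^{r-1/2})$ is multiplied by $\sqrt{d/\beta}$-type terms, so $C''=\mathcal{O}((d/\beta)^{r+1/2})$. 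One could alternatively use the sharper Gibbs variational bound $\e_{\pi_\beta}u-u(\theta^*)\lesssim d/\beta$, but the polynomial-growth route matches the stated constants.

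\textbf{Main obstacle.} The delicate step is the uniform-in-$n$ moment bound for $\theta_n^\lambda$ of order $4r$ used in the first term, with explicit $d/\beta$ dependence. I would take it directly from the paper's moment-estimate lemma for tSGHMC, which holds under the same restrictions on $\gamma$ and $\lambda$. The remaining work is the bookkeeping of constants to match the stated orders.
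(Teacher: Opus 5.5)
Your treatment of the sampling term is the paper's argument in Lemma~\ref{Lemma.sampling error}: optimal coupling, the polynomial Lipschitz bound on $u$ from Remark~\ref{remark.local Lip. of h}, Cauchy--Schwarz with $\bar C_{4r}$ and $C_{\pi_\beta,4r}$, and then Theorem~\ref{Main Theorem}.

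For the Gibbs term your route is genuinely different, and correct.

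\textbf{The paper's route.} Lemma~\ref{Lemma.concentration property} splits $\pi_\beta$ over $\bar B(0,R_0+1)$ and its complement. On the ball it uses a local Lipschitz constant of $h$. On the complement it uses Cauchy--Schwarz and Chebyshev. It then imports $\e[|\theta_\infty-\theta^*|^2]\le 2d/(m\beta)$ from Lemma~\ref{lemma.second part of opt.}.

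\textbf{Your route.} You use the global bound $u(\theta)-u(\theta^*)\le\la h(\theta)-h(\theta^*),\theta-\theta^*\ra\le L_h(1+|\theta|+|\theta^*|)^{2r-1}|\theta-\theta^*|^2$, followed by Cauchy--Schwarz. The fourth moment comes from the self-contained recursion $m\,\e|\theta_\infty-\theta^*|^{2p}\le\beta^{-1}(d+2p-2)\,\e|\theta_\infty-\theta^*|^{2p-2}$, obtained by integration by parts. The boundary terms vanish because strong convexity gives Gaussian tails.

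\textbf{What each buys.} The paper's route needs only a second moment and a cited lemma. However, as written its tail term is $(\e[u(\theta_\infty)^2])^{1/2}(\e[|\theta_\infty-\theta^*|^2])^{1/2}$. When $\e[|\theta_\infty-\theta^*|^2]<1$, which is the large-$\beta$ regime of interest, this is of order $\sqrt{d/(m\beta)}$, not $d/(m\beta)$. So the claimed bound $C''\,\e[|\theta_\infty-\theta^*|^2]$ does not follow without an extra step, such as Chebyshev with the fourth moment. That fourth-moment bound is exactly your estimate.

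Your route gives the linear factor directly:
\begin{itemize}
\item $\sqrt{\e|\theta_\infty-\theta^*|^4}\le\sqrt{d(d+2)}/(m\beta)\le 2d/(m\beta)$;
\item the constant is $C''=L_h(\e_{\pi_\beta}[(1+|\theta|+R_0)^{4r-2}])^{1/2}$, of order $(d/\beta)^{r-1/2}$, which fits within the stated order.
\end{itemize}
Your remark about extra $\sqrt{d/\beta}$-type factors is therefore unnecessary.
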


\begin{remark}\label{remark.precision-expected}
	By the same argument as in Remark~\ref{remark.rate1}, setting $\epsilon=\lambda^{1/4}$ and choosing $\gamma=\mathcal{O}(\epsilon^{-1})$, we obtain
	\[\e[u(\theta_n^\lambda)]-u(\theta^*)\leq 5C'\left(\lambda^{1/4}+\exp\left(-\frac{\lambda^{5/4}m}{2}n\right)W_2(\mathcal{L}(\theta_0,\nu_0),\Pi_\beta)\right)+C''\frac{2d}{m\beta}.\]
	Consequently, for any $\epsilon>0$, if we choose
	\[\beta\geq\max\left\{1,\frac{6\bar C''d}{m\varepsilon}\right\},\quad\lambda\leq\min\left\{(\varepsilon/(15C'))^4,\lambda_{\max,\gamma}\right\},\]
	where $\bar{C}''$ is independent of $\beta,n,\lambda,\gamma,\epsilon$ with its explicit expression given in Table~\ref{tab.constants}, and
	\[n\geq\frac{2}{m}\max\left\{(15C'/\varepsilon)^5,\lambda_{\max,\gamma}^{-5/4}\right\}\log\left(\frac{15C' W_2(\mathcal{L}(\theta_0,\nu_0),\Pi_\beta)}{\varepsilon}\right),\]
	then we have $\e[u(\theta_n^\lambda)]-u(\theta^*)\leq\varepsilon$.
\end{remark}

The proofs of the main results can be found in Section~\ref{Proof of the Main Theorems}.

\subsection{Related Works}

We classify existing results according to the smoothness assumptions imposed on the gradient. We begin with kinetic Langevin discretizations under global Lipschitz conditions and then consider the regime of superlinearly growing gradients.

To establish convergence results, many existing works on KLMC impose certain global Lipschitz conditions on $h$. Under a strong convexity condition of $u$, \cite[Theorem~1]{cheng2018underdamped}, \cite[Theorem~2]{dalalyan2020sampling} and \cite[Theorem~6]{zhang2023improved} showed that KLMC achieves an $\varepsilon$-precision level in Wasserstein-2 distance within $\widetilde{\mathcal O}(\sqrt d/\varepsilon)$ iterations. Under the conditions that $\pi_\beta$ satisfies a log-Sobolev inequality (LSI) and $\nabla^2 u$ is globally Lipschitz, \cite[Theorem~1]{ma2021there} obtained an $\varepsilon$-precision level guarantee in Wasserstein-2 distance with the same iteration complexity.

Non-asymptotic guarantees for SGHMC have also been developed under global Lipschitz conditions on $H$  \cite{akyildiz2024nonasymptotic,chau2022stochastic,gao2022global,liang2024non}. When $U$ satisfies additionally a dissipativity condition, \cite[Lemma~EC.3]{gao2022global} derived a Wasserstein-2 bound of order $\mathcal{O}((\delta^{1/4}+\lambda^{1/4})(n\lambda)^{3/2}\sqrt{\log(n\lambda)}+(n\lambda)\sqrt{\lambda})$, with $\delta\in[0,1)$ being the gradient noise level, yet this bound deteriorates with the number of iterations $n$. By controlling $\delta$, SGHMC achieves an $\varepsilon$-precision level after $\widetilde{\mathcal{O}}(\varepsilon^{-4})$ iterations. In the same setting, \cite[Theorem~2.8]{chau2022stochastic} improved the convergence result in \cite{gao2022global} by proving time-uniform Wasserstein-$p$ error bounds of order $\mathcal{O}(\delta^{1/(2p)}+\lambda^{1/(2p)})$ with $p=1,2$. This yields an $\varepsilon$-precision guarantee in Wasserstein-$p$ distances by choosing $\lambda+\delta=\mathcal{O}(\varepsilon^{2p})$, whose corresponding iteration complexity is $\widetilde{\mathcal{O}}(\varepsilon^{-2p})$. Such a result is further improved in \cite{akyildiz2024nonasymptotic} under local smoothness assumptions and a local dissipativity condition. 
\cite[Theorem~2.1]{akyildiz2024nonasymptotic} established a convergence result in Wasserstein-2 distance with a rate of $\mathcal{O}(\lambda^{1/4})$, which yields the iteration complexity of $\widetilde{\mathcal{O}}(\varepsilon^{-4})$. More recently, \cite{liang2024non} extended the theory to the case of discontinuous stochastic gradients under a continuity in average condition. While \cite[Theorem~3.1]{liang2024non} recovered the same Wasserstein-2 convergence result as in \cite{akyildiz2024nonasymptotic}, \cite[Theorem~3.2]{liang2024non} further proved a sharper Wasserstein-1 bound of order $\mathcal{O}(\lambda^{1/2})$, implying an $\varepsilon$-precision guarantee within $\widetilde{\mathcal{O}}(\varepsilon^{-2})$ iterations.

To move beyond global Lipschitz continuity, several results in the literature have introduced tamed or structure-preserving kinetic Langevin schemes for superlinearly growing gradients. In particular, \cite{johnston2024kinetic,lytras2025contractive} assume that $u$ is strongly convex and that $h$ is polynomially Lipschitz. Under these conditions, \cite{johnston2024kinetic} proposed the tKLMC algorithm, which is obtained by applying the taming technique to KLMC. \cite[Theorem~7.1]{johnston2024kinetic} showed that it requires $\widetilde{\mathcal{O}}(\varepsilon^{-5})$ iterations for tKLMC to reach an $\varepsilon$-precision level in Wasserstein-2 distance. Furthermore, \cite{lytras2025contractive} introduced two monotonicity preserving taming schemes of kinetic Langevin SDE, namely the tamed stochastic exponential (tSE) scheme and the tamed OBABO (tOBABO) scheme. \cite[Theorem~3 and 6]{lytras2025contractive} derived $\varepsilon$-precision level guarantees in Wasserstein-2 distance of $\widetilde{\mathcal{O}}(\varepsilon^{-5/3})$ and $\widetilde{\mathcal{O}}(\varepsilon^{-5})$ for tSE and tOBABO, respectively. Moreover, when $u$ is non-convex, \cite{dai2025explicit} proposed an explicit splitting scalar auxiliary variable (SSAV) scheme, which preserves a modified energy and achieves first-order strong convergence over finite time intervals. By further assuming a dissipativity condition and exponential ergodicity, \cite[Theorem~5.6]{dai2025explicit} established a long time weak error bound for SSAV on the sampling tasks, leading to the complexity $\widetilde{\mathcal{O}}(\varepsilon^{-1})$ at $\varepsilon$-precision level for expectations of suitable test functions.

In this work, we propose the tSGHMC algorithm \eqref{tSGHMC}-\eqref{eq.tamedSG} which utilizes the stochastic gradient, hence, it is more computationally efficient than the existing algorithms that use the exact gradient. Moreover, we establish non-asymptotic convergence results in the Wasserstein-2 distance and for the expected excess risk, providing theoretical guarantees for tSGHMC to solve sampling and optimization problems. We note that the rate of convergence in Wasserstein-2 distance for tSGHMC is of order $1/4$, which improves the rate of order $1/5$ for tKLMC in \cite{johnston2024kinetic}. As a consequence, by setting $\epsilon=\lambda^{1/4}$ and by taking $\gamma=\mathcal{O}(\epsilon^{-1})$, Theorem~\ref{Main Theorem} yields an iteration complexity of $\widetilde{\mathcal{O}}(\varepsilon^{-5})$, see Remark~\ref{remark.rate1}. These results are obtained under relaxed conditions, i.e., Assumptions~\ref{assumption2.local Lip+growth} and \ref{assumption3.strong convexity}, which can accommodate a broad range of real-world applications as presented in Section~\ref{section.Applications}.

{\small\begin{table*}[t]
	\centering
	\caption{Comparison of iteration complexities between Theorem~\ref{Main Theorem} and existing results for achieving an $\varepsilon$-precision level in Wasserstein-2 distance.*}
	\label{tab:comparison}
	\renewcommand{\arraystretch}{1.35}
	\setlength{\tabcolsep}{4pt}
	\begin{threeparttable}
		\begin{tabular}{cccccc}
			\toprule
			Work & Algorithm & Complexity & Smoothness & Curvature & Others \\
			\midrule
			\cite{cheng2018underdamped,dalalyan2020sampling,zhang2023improved}
			& \multirow{2}{*}{KLMC} & $\widetilde{\mathcal{O}}(\sqrt{d}/\varepsilon)$
			& G-L $h$ & S-C & - \\
			\cite{ma2021there}
			&  & $\widetilde{\mathcal{O}}(\sqrt{d}/\varepsilon)$
			& G-L $h$, $\nabla^2 u$ & - & LSI \\
			\hline
			\cite{chau2022stochastic,gao2022global}
			& \multirow{3}{*}{SGHMC} & $\widetilde{\mathcal{O}}(\varepsilon^{-4})$
			& G-L $H$ & - & Dissip. \\
			\cite{akyildiz2024nonasymptotic}
			&  & $\widetilde{\mathcal{O}}(\varepsilon^{-4})$
			& D-G-L $H$ & - & D-Dissip. \\
			\cite{liang2024non}
			&  & $\widetilde{\mathcal{O}}(\varepsilon^{-4})$
			& A-C $H$ & - & D-Dissip. \\
			\hline
			\cite{johnston2024kinetic}
			& tKLMC & $\widetilde{\mathcal{O}}(\varepsilon^{-6})$
			& P-L $h$ & S-C & - \\
			\hline
			\multirow{2}{*}{\cite{lytras2025contractive}}
			& tSE & $\widetilde{\mathcal{O}}(\varepsilon^{-5/3})$
			& P-L $h$ & S-C & - \\
			& tOBABO & $\widetilde{\mathcal{O}}(\varepsilon^{-5})$
			& P-L $h$ & S-C & - \\
			\hline
			\cite{dai2025explicit}$^\star$
			& SSAV & $\widetilde{\mathcal{O}}(\varepsilon^{-1})$
			& P-L $h$ & - & Dissip., EE \\
			\hline
			Theorem~\ref{Main Theorem}
			& tSGHMC & $\widetilde{\mathcal{O}}(\varepsilon^{-5})$
			& P-A-C $H$ & D-S-C & - \\
			\bottomrule
		\end{tabular}
		\begin{tablenotes}[flushleft]
			\footnotesize
			\item * G-L: global Lipschitzness; D-G-L: data-dependent global
			Lipschitzness; A-C: continuity in average; P-L: polynomial
			Lipschitzness; P-A-C: polynomial continuity in average; S-C: strong
			convexity; D-S-C: data-dependent strong convexity; Dissip.: dissipativity;
			D-Dissip.: data-dependent dissipativity; EE: exponential ergodicity.
			\item $\star$ The iteration complexity of SSAV is established for the weak error.
		\end{tablenotes}
	\end{threeparttable}
\end{table*}}

\section{Applications}
\label{section.Applications}

In this section, we apply the tSGHMC algorithm \eqref{tSGHMC}-\eqref{eq.tamedSG} to vrious sampling and optimization examples, including a posterior sampling problem in Section~\ref{subsection.sampling}, an artificial optimization example in Section~\ref{subsection.artificial}, a newsvendor problem in Section~\ref{subsection.newsvendor}, and two real-world applications in Section~\ref{subsection.real-world}. We show numerically that tSGHMC can solve the aforementioned problems, which supports our theoretical findings. Furthermore, we compare the performance of tSGHMC with that of the TUSLA algorithm \cite{lim2024non,lovas2023taming}. Numerical results illustrate that tSGHMC outperforms TUSLA in terms of training accuracy in most cases, while, in other cases, tSGHMC achieves comparable performance to that of TUSLA. The Python code for the experiments can be found at \url{https://github.com/ZhuoranWangCn/tSGHMC}.

\subsection{Posterior Sampling for Penalized Logistic Regression}
\label{subsection.sampling}

Let $d\in \mathbb{N}$ be fixed. We consider the posterior sampling problem associated with a penalized logistic regression model discussed in \cite{yu2024langevin}. More precisely, we consider the problem of sampling from $\pi_\beta(\theta)\propto\exp(-\beta u(\theta))$ with 
\begin{align}\label{eq.sampling-obj}
	u(\theta)\coloneq
	\frac{1}{\mathsf{N}}\sum_{i=1}^\mathsf{N} U(\theta,z_i)=\frac{1}{\mathsf{N}}\sum_{i=1}^\mathsf{N}\log\left(1+\exp(-y_i\la x_i,\theta\ra)\right)+\frac{\eta}{2}|\theta|^2,
\end{align}
where $\theta\in\mathbb{R}^d$, $\eta>0$, $\mathsf{N}$ is the sample size, and $\{z_i=(x_i,y_i)\}_{i=1}^\mathsf{N}$ denotes the observed dataset consisting of feature vectors $x_i\in\mathbb{R}^d$ and binary labels $y_i\in\{-1,1\}$.

We aim to solve the problem using tSGHMC \eqref{tSGHMC}-\eqref{eq.tamedSG}. To this end, define $H\colon\mathbb{R}^d\times(\mathbb{R}^d\times\{-1,1\})^\mathsf{K}\to\mathbb{R}^d$ by $H\coloneq F+G$, where, for all $(\theta,\bar z)\coloneq(\theta,\bar x_1,\bar y_1,\dots,\bar x_\mathsf{K},\bar y_\mathsf{K})\in\mathbb{R}^d\times(\mathbb{R}^d\times\{-1,1\})^\mathsf{K}$, we set 
\begin{equation}\label{eq:Sampling-SG}
	F(\theta,\bar z)\coloneq\eta\theta,\quad G(\theta,\bar z)\coloneq-\frac{1}{\mathsf{K}}\sum_{i=1}^\mathsf{K}\frac{\bar y_i \bar x_i}{1+\exp(\bar y_i\la \bar x_i,\theta\ra)}.
\end{equation}
Therefore, the stochastic gradient $H$ used in tSGHMC is given by $H(\theta,U_{{\mathbf{z}}})= F(\theta,U_{{\mathbf{z}}})+G(\theta,U_{{\mathbf{z}}})$, where
\begin{equation}\label{eq:Sampling-SGt}
	F(\theta,U_{{\mathbf{z}}})= \eta\theta,\quad G(\theta,U_{{\mathbf{z}}})= -\frac{1}{\mathsf{K}}\sum_{l=1}^\mathsf{K}\frac{y_{I_l}x_{I_l}}{1+\exp(y_{I_l}\la x_{I_l},\theta\ra)}.
\end{equation}
Here, $U_{\mathbf{z}}\coloneq\{(x_{I_1},y_{I_1}),\dots,(x_{I_\mathsf{K}},y_{I_\mathsf{K}})\}$ denotes the mini-batch, where the indices $I_1,\dots,I_\mathsf{K}$ are sampled independently and uniformly from $\{1,\ldots,\mathsf{N}\}$, with $\mathsf{K}\ll \mathsf{N}$.

\begin{proposition}\label{prop.Sampling}
	The function $H$ defined in \eqref{eq:Sampling-SG}-\eqref{eq:Sampling-SGt} satisfies Assumptions~\ref{assumption2.local Lip+growth} and \ref{assumption3.strong convexity}. Moreover, for all $\theta\in\mathbb{R}^d$, $\e[H(\theta,U_{\mathbf{z}})]=h(\theta)$.
\end{proposition}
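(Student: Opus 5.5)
We verify each part of Assumptions~\ref{assumption2.local Lip+growth} and \ref{assumption3.strong convexity} directly for $F(\theta,\bar z)=\eta\theta$ and the logistic term $G$. Throughout, we fix parameters compatible with the standing choices. We take $\rho=1$ and $\vartheta=1$, and any $r$ with $2r\in\mathbb{N}$, $2r\ge1$. The random input is $X_0=U_{\mathbf z}$, viewed as a vector in $\mathbb{R}^{k}$ with $k=\mathsf K(d+1)$. It takes finitely many values (the dataset is fixed), so all of its moments are finite. Set $M\coloneq\max_i|x_i|$.

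\emph{Assumption~\ref{assumption2.local Lip+growth}(1).} We have $|F(\theta,x)-F(\theta',x')|=\eta|\theta-\theta'|$. Since $(1+|x|+|x'|)^{0}=1$ and $(1+|\theta|+|\theta'|)^{2r-1}\ge1$, this is at most $\eta(1+|\theta|+|\theta'|)^{2r-1}(|\theta-\theta'|+|x-x'|)$, so $L_F=\eta$ works. For growth, $|F|=\eta|\theta|\le\eta(1+|\theta|^{2r})$, because $|\theta|\le 1+|\theta|^{2r}$ when $2r\ge1$. Hence $K_F=\eta$.

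\emph{Assumption~\ref{assumption2.local Lip+growth}(2).} Growth follows from $|(1+e^s)^{-1}|\le1$. This gives
\[|G(\theta,\bar z)|\le\frac1{\mathsf K}\sum_{i=1}^{\mathsf K}|\bar x_i|\le|\bar z|\le(1+|\bar z|)(1+|\theta|),\]
so $K_G=2$ (any constant $>1$) works. For continuity in average, we use the stronger pointwise Lipschitz property. The map $s\mapsto(1+e^s)^{-1}$ has derivative bounded by $1/4$. Since $|\bar y_i|=1$,
\[|G(\theta,\bar z)-G(\theta',\bar z)|\le\frac1{4\mathsf K}\sum_i|\bar x_i|^2|\theta-\theta'|\le\frac{M^2}{4}|\theta-\theta'|\]
for $\bar z$ in the support. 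Thus $L_G=M^2/4$, with exponent $\vartheta-1=0$.

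\emph{Assumption~\ref{assumption3.strong convexity}.} We take $A(\bar z)\coloneq\eta I_d$. The map $\theta\mapsto\log(1+\exp(-\bar y_i\langle\bar x_i,\theta\rangle))$ is convex, and $G(\cdot,\bar z)$ is the gradient of the average of these maps. By monotonicity of gradients of convex functions, $\langle\theta-\theta',G(\theta,\bar z)-G(\theta',\bar z)\rangle\ge0$. Adding the $F$ part gives
\[\langle\theta-\theta',H(\theta,\bar z)-H(\theta',\bar z)\rangle\ge\eta|\theta-\theta'|^2.\]
Clearly $\langle\theta,A\theta\rangle\ge0$ and $\lambda_{\min}(\e[A])=\eta>0$, so $m=\eta$. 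If one prefers not to invoke convexity, one can instead note that the Jacobian of $G$ is $\frac1{\mathsf K}\sum_i\sigma(1-\sigma)\bar x_i\bar x_i^\top\succeq0$.

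\emph{Unbiasedness.} The indices $I_l$ are i.i.d.\ uniform on $\{1,\dots,\mathsf N\}$. By linearity of expectation,
\[\e[G(\theta,U_{\mathbf z})]=-\frac1{\mathsf N}\sum_{i=1}^{\mathsf N}\frac{y_ix_i}{1+\exp(y_i\langle x_i,\theta\rangle)},\]
which is exactly the gradient of the logistic part of \eqref{eq.sampling-obj}. Adding $\eta\theta$ gives $\e[H(\theta,U_{\mathbf z})]=h(\theta)$.

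The main point needing care is matching the exponents $\rho,\vartheta,r$ to the required forms, in particular the constraints $2r\ge\vartheta$ and $2r\in\mathbb N$. The remaining steps are elementary.
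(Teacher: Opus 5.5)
Your proof is correct and follows the paper's argument essentially step by step: the same exponents $\rho=\vartheta=1$, the same constants $L_F=K_F=\eta$ and $L_G=M^2/4$, and the same choices $A(\bar z)=\eta I_d$, $m=\eta$; your appeal to monotonicity of the gradient of the convex logistic loss is the paper's Jacobian-plus-segment-integration argument in compressed form. One small point in your favour is that taking $K_G=2$ respects the strict requirement $K_G>1$ in Assumption~\ref{assumption2.local Lip+growth}(2), whereas the paper states $K_G=1$.
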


\begin{proof}
	Postponed to Appendix \hyperref[proof.Sampling]{A.2}.
\end{proof}

\begin{figure}
	\centering
	\includegraphics[width=1.0\linewidth]{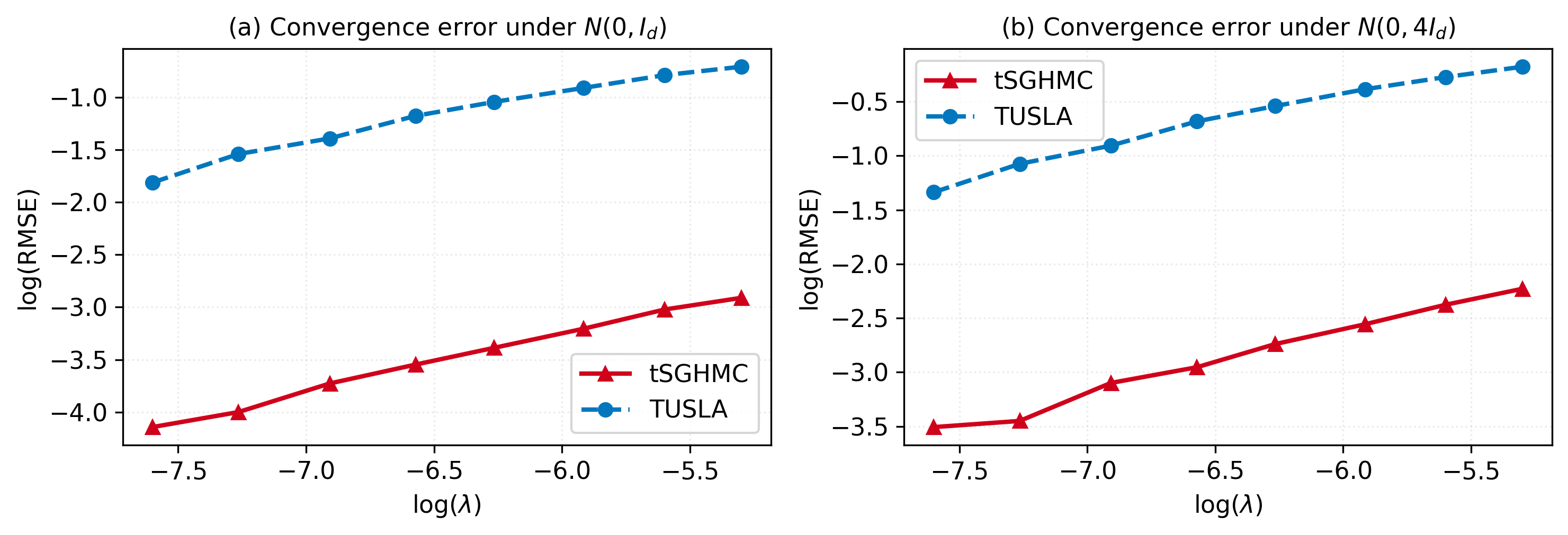}
	\caption{RMSE obtained using tSGHMC and TUSLA for penalized logistic regression with feature vectors generated from $\mathcal{N}(0,I_d)$ in panel (a) and $\mathcal{N}(0,4I_d)$ in panel (b).}
	\label{fig.example1sampling}
\end{figure}

In the numerical experiment, we set $d=3$, $\mathsf{N}=100$, $\eta=10^{-3}$, $T=n\lambda=5$, $\mathsf{K}=1$, $\beta=1$, $r=0.5$, $\gamma=1$ and $m=10^{-3}$. We generate the true parameter vector $\theta^{\text{true}}\in\mathbb{R}^d$ from a standard Gaussian distribution and keep it fixed throughout each experiment. To obtain the dataset $\{(x_i,y_i)\}_{i=1}^\mathsf{N}$, we first draw the feature vectors $\{x_i\}_{i=1}^\mathsf{N}$ from a Gaussian distribution $\mathcal{N}(0,\Sigma)$. Then, for each $x_i$, we sample $B_i\sim\mathrm{Bernoulli}(p_i)$ with parameter
\begin{align*}
	p_i\coloneq\p(B_i=1\mid x_i)=\frac{1}{1+\exp(-\la x_i,\theta^{\text{true}}\ra)},
\end{align*}
and obtain $y_i=2B_i-1$. We consider two experimental settings by taking $\Sigma=I_d$ and $\Sigma=4I_d$, respectively, and generate the corresponding labels. We run tSGHMC 100 times, and for each independent run, tSGHMC is initialized with $\theta_0\sim\mathcal{N}(0,I_d)$ and $\nu_0=0$. By Proposition~\ref{prop.Sampling} and the setup above, Assumptions~\ref{assumption1.initial condition}-\ref{assumption3.strong convexity} hold. Therefore, Theorem~\ref{Main Theorem} can be used to provide a theoretical guarantee for the performance of tSGHMC. 

Furthermore, to evaluate the numerical performance of tSGHMC, we compute the root mean square error (RMSE) defined by
\begin{equation}\label{eq.RMSE-self}
	\operatorname{RMSE}\coloneq \left(\frac{1}{R}\sum_{i=1}^{R}\left|\theta_{\lfloor T/\lambda\rfloor}^{(i)}-\theta_{\lfloor T/\lambda_{\text{ref}}\rfloor}^{(i)}\right|^2\right)^{1/2},
\end{equation}
where $\theta_{\lfloor T/\lambda\rfloor}^{(i)}$ and $\theta_{\lfloor T/\lambda_{\text{ref}}\rfloor}^{(i)}$ denote the samples generated using tSGHMC with step sizes $\lambda$ and $\lambda_{\text{ref}}$, respectively. We note that, by setting $\lambda_{\text{ref}}$ to be a small value, $\theta_{\lfloor T/\lambda_{\text{ref}}\rfloor}^{(i)}$ can be viewed as the exact solution of the corresponding SDE in \eqref{eq.KLD}. We set $R=100$, $\lambda_{\text{ref}}=10^{-4}$, and $\lambda\in\{5\times 10^{-4}, 7\times 10^{-4}, 10^{-3}, 1.4\times 10^{-3}, 1.9\times 10^{-3}, 2.7\times 10^{-3}, 3.7\times 10^{-3}, 5\times 10^{-3}\}$. Figure~\ref{fig.example1sampling} depicts the log-log plot of RMSE against the step sizes $\lambda$ for tSGHMC when $x_i$'s are realizations of $\mathcal{N}(0,I_d)$ and $\mathcal{N}(0,4I_d)$. These results demonstrate the effectiveness of tSGHMC for the sampling from $\pi_\beta$. Moreover, we compare the performance of tSGHMC with that of TUSLA. Numerical results show that tSGHMC consistently achieves a smaller RMSE than TUSLA across all step sizes, indicating its superior accuracy.

\subsection{Artificial Example}
\label{subsection.artificial}

Let $d\in\mathbb{N}$ be fixed. Consider the following optimization problem:
\begin{equation}\label{eq.Artificial-OPT}
	\text{minimize}\quad\mathbb{R}^d \owns\theta\mapsto u(\theta)  \coloneq  \e[U(\theta,X)],
\end{equation}
where $X\sim \mathcal{N}(0,I_d)$ and the loss function $U\colon \mathbb{R}^d\times\mathbb{R}^d\to\mathbb{R}$ is defined by
\begin{equation}\label{eq.Artificial-loss}
	U(\theta,x)  \coloneq  \frac{1}{2}\la\theta,\widetilde{A}\theta\ra+\frac{\eta}{4}\la\theta,\widetilde{B}\theta\ra^2+\la x,\theta\ra.
\end{equation}
Here, $\eta>0$, $\widetilde{A}=\operatorname{diag}(\mu_1,\dots,\mu_d)$ with $\mu_1=1$ and $\mu_2=\dots=\mu_d=\bar{\varepsilon}$, $0<\bar{\varepsilon}<1$, and $\widetilde{B}=vv^\top$ is a matrix generated from a vector $v\in\mathbb{R}^d$. The minimizer of $u$ is $\theta^*=0$.

We solve the aforementioned problem \eqref{eq.Artificial-OPT}-\eqref{eq.Artificial-loss} using tSGHMC \eqref{tSGHMC}-\eqref{eq.tamedSG}.
For all $(\theta,x)\in\mathbb{R}^d\times\mathbb{R}^d$, define the stochastic gradient $H\colon\mathbb{R}^d\times\mathbb{R}^d\to\mathbb{R}^d$ by $H(\theta,x)\coloneq F(\theta,x)+G(\theta,x)$, where
\begin{equation}\label{eq.Artificial-SG}
	F(\theta,x)  \coloneq  \widetilde{A}\theta+x,\quad G(\theta,x)  \coloneq  \eta\la\theta,\widetilde{B}\theta\ra \widetilde{B}\theta.
\end{equation}
Thus, at iteration $n$, the stochastic gradient used in tSGHMC is $H(\theta,X_{n+1})=F(\theta,X_{n+1})+G(\theta,X_{n+1})$ for each $\theta\in\mathbb{R}^d$ with
\begin{equation}\label{eq.Artificial-SGt}
	F(\theta,X_{n+1})=\widetilde{A}\theta+X_{n+1},\quad G(\theta,X_{n+1})=\eta\la\theta,\widetilde{B}\theta\ra \widetilde{B}\theta,
\end{equation}
where $(X_n)_{n\in\mathbb{N}_0}$ is a sequence of i.i.d.\ standard Gaussian vectors.

\begin{proposition}\label{prop.Artificial}
	The stochastic gradient $H$ defined in \eqref{eq.Artificial-SG}-\eqref{eq.Artificial-SGt} satisfies Assumptions~\ref{assumption2.local Lip+growth} and \ref{assumption3.strong convexity}. Moreover, for all $\theta\in\mathbb{R}^d$, $\e[H(\theta,X_0)]=h(\theta)$.
\end{proposition}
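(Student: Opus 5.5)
The plan is to verify the three requirements in turn: unbiasedness, the growth and continuity bounds of Assumption~\ref{assumption2.local Lip+growth}, and the monotonicity condition of Assumption~\ref{assumption3.strong convexity}. Throughout we use $\rho=1$, $\vartheta=1$ and $2r=3$, since $G$ is cubic in $\theta$. The constraint $2r\in[\vartheta,\infty)\cap\mathbb{N}$ then holds.

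\textbf{Unbiasedness.} Since $X_0\sim\mathcal{N}(0,I_d)$ has all moments, $U(\theta,X)$ is integrable and $u(\theta)=\frac12\la\theta,\widetilde A\theta\ra+\frac{\eta}{4}\la\theta,\widetilde B\theta\ra^2$. Differentiating gives $h(\theta)=\widetilde A\theta+\eta\la\theta,\widetilde B\theta\ra\widetilde B\theta$, using the symmetry of $\widetilde B=vv^\top$. Because $\e[X_0]=0$, we get $\e[H(\theta,X_0)]=h(\theta)$.

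\textbf{Assumption~\ref{assumption2.local Lip+growth}.} The roles of $F$ and $G$ in the assumption do not have to match their labels in \eqref{eq.Artificial-SG}. The cubic term is the piece that is only polynomially Lipschitz, so it must be handled in the slot that permits growth of order $(1+|\theta|+|\theta'|)^{2r-1}$. I would therefore regroup, taking the assumption's $F$ to be $\widetilde A\theta+x+\eta\la\theta,\widetilde B\theta\ra\widetilde B\theta$ and the assumption's $G$ to be $0$. The trivial $G\equiv 0$ satisfies part~(2) with any $L_G>0$ and any $K_G>1$.

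For the assumption's $F$, the linear part gives $|\widetilde A(\theta-\theta')+(x-x')|\le \max(1,|\widetilde A|)(|\theta-\theta'|+|x-x'|)$. For the cubic part, write $g(\theta)=\la v,\theta\ra^2\la v,\theta\ra v$. Then
\[
g(\theta)-g(\theta')=|v|^4\,|\la \hat v,\theta\ra^3-\la \hat v,\theta'\ra^3|\,\hat v,
\]
where $\hat v=v/|v|$. Applying $|a^3-b^3|\le (a^2+ab+b^2)|a-b|\le \tfrac32(|a|+|b|)^2|a-b|$ gives
\[
|g(\theta)-g(\theta')|\le \tfrac32\eta|v|^4(|\theta|+|\theta'|)^2|\theta-\theta'|\le \tfrac32\eta|v|^4(1+|\theta|+|\theta'|)^2|\theta-\theta'|.
\]
For growth, $|\widetilde A\theta+x|+\eta|v|^4|\theta|^3\le C(1+|x|)(1+|\theta|^3)$, using $|\theta|\le 1+|\theta|^3$. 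This yields explicit constants $L_F$ and $K_F$.

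If one prefers to keep the paper's labels, the same cubic estimate gives $\e|G(\theta,X_0)-G(\theta',X_0)|\le L_G(1+|\theta|+|\theta'|)^2|\theta-\theta'|$. That choice would force $\vartheta=3$ and $r\ge 3/2$. It also works, provided the growth bound $|G|\le K_G(1+|x|)(1+|\theta|)^3$ is used. Either choice is acceptable; I would adopt the regrouping above because it is cleaner.

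\textbf{Assumption~\ref{assumption3.strong convexity}.} Set $A(x)\coloneq\widetilde A$, which is constant and positive definite, so \eqref{eq.positive semi definited A} holds and the smallest eigenvalue of $\e[A(X_0)]$ is $m=\bar\varepsilon>0$. The $x$ terms cancel in $H(\theta,x)-H(\theta',x)$, so \eqref{eq.local strong convexity} reduces to showing
\[
\la\theta-\theta',g(\theta)-g(\theta')\ra\ge 0 .
\]
With $a=\la v,\theta\ra$ and $b=\la v,\theta'\ra$, this quantity equals $\eta|v|^2(a-b)(a^3-b^3)$, because $g(\theta)=\eta a^3 v$. Since $t\mapsto t^3$ is monotone, $(a-b)(a^3-b^3)\ge 0$. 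Equivalently, $\frac{\eta}{4}\la v,\theta\ra^4$ is convex, so its gradient is monotone.

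The main obstacle is only the bookkeeping in matching the decomposition and exponents $(\rho,\vartheta,r)$ to Assumption~\ref{assumption2.local Lip+growth}. Once the cubic term is placed correctly, every estimate above is elementary.
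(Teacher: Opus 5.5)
Your proof is correct. Your treatment of unbiasedness and of Assumption~\ref{assumption3.strong convexity} is essentially the paper's: the paper takes $A(x)=\bar\varepsilon I_d$ rather than $\widetilde A$, but both choices give $m=\bar\varepsilon$, and both arguments get monotonicity of the cubic term from $a^3-b^3=(a-b)(a^2+ab+b^2)$ with $a^2+ab+b^2\ge 0$.

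The genuine difference is how you split $H$ into $F$ and $G$ for Assumption~\ref{assumption2.local Lip+growth}.
\begin{itemize}
\item The paper keeps the labels of \eqref{eq.Artificial-SG}. It takes $F=\widetilde A\theta+x$ with $\rho=1$, puts the cubic term in $G$, and verifies the continuity-in-average bound with $\vartheta=3$, which forces $r=3/2$.
\item Your main route moves the cubic term into the locally Lipschitz slot and takes $G\equiv 0$, with $\vartheta=1$ and $2r=3$.
\end{itemize}
Your route shows that the cubic term is pointwise, not merely in-average, polynomially Lipschitz. So this example never actually uses the discontinuous-$G$ part of the assumption, and part~(2) becomes trivial. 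The paper's route has the advantage that the proposition holds verbatim for the named $F$ and $G$ in \eqref{eq.Artificial-SG}, which is arguably the literal claim. Your secondary remark covers that reading, and both routes end with the same $r=3/2$. For the same reason, your sentence that the cubic term \emph{must} go into the $F$ slot is overstated, as you yourself acknowledge in the next paragraph.

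A few small slips do not affect the conclusions:
\begin{itemize}
\item $g(\theta)-g(\theta')$ equals $|v|^4\bigl(\la\hat v,\theta\ra^3-\la\hat v,\theta'\ra^3\bigr)\hat v$, without the absolute value.
\item Since $\la\theta-\theta',v\ra=a-b$, the inner product $\la\theta-\theta',g(\theta)-g(\theta')\ra$ equals $\eta(a-b)(a^3-b^3)$, with no factor $|v|^2$.
\item You define $g$ without $\eta$ but include $\eta$ in later formulas.
\end{itemize}
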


\begin{proof}
	Postponed to Appendix \hyperref[proof.Artificial]{A.2}.
\end{proof}

\begin{figure}
	\centering
	\includegraphics[width=1.0\linewidth]{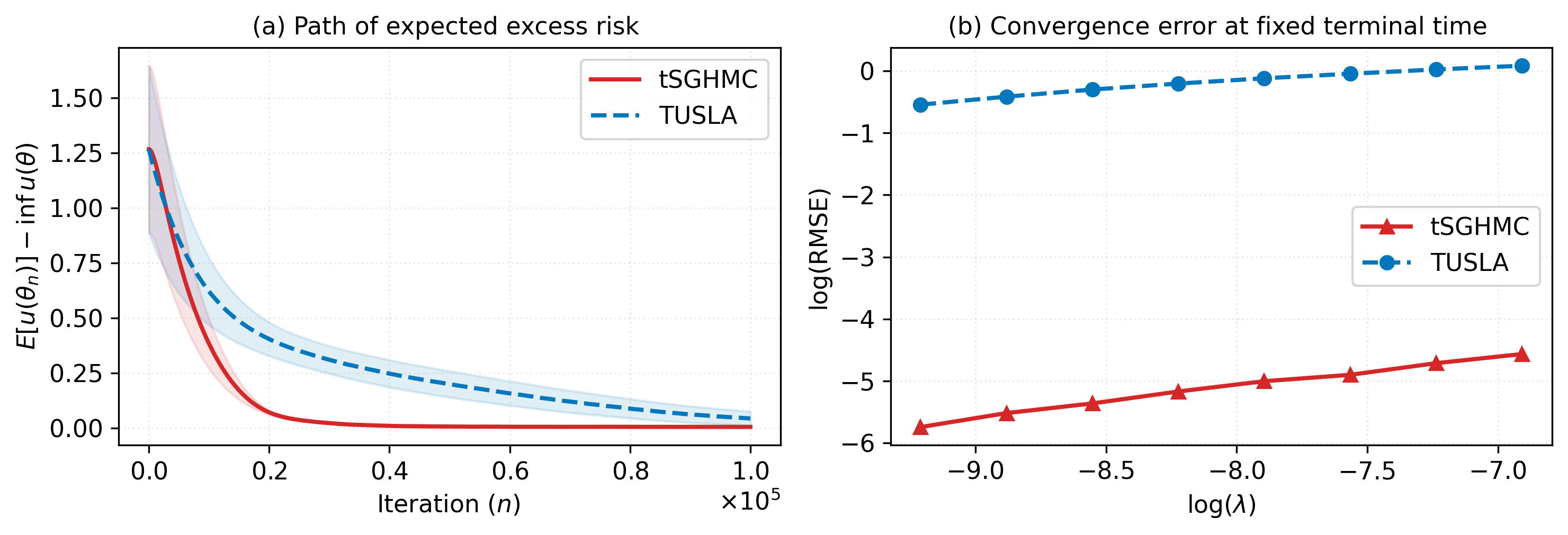}
	\caption{Convergence comparison between tSGHMC and TUSLA for the artificial example \eqref{eq.Artificial-OPT}-\eqref{eq.Artificial-loss}. (a) Path of expected excess risk versus the number of iterations $n$. (b) Log-log plot of the RMSE at $T=10$ for different step sizes $\lambda$.}
	\label{fig.example1artificial}
\end{figure}

In the numerical simulation, we set $d=20$, $\bar{\varepsilon}=0.05$ and $\eta=0.1$. We sample a standard Gaussian vector $\bar{v}$ once at the beginning of the experiment, normalize it to $\bar{v}/|\bar{v}|$, and then keep it fixed throughout the simulation. Let $\widetilde{B}=vv^\top$ with $v=\bar{v}/|\bar{v}|$. We set $\beta=10^{10}$, $r=1.5$, $\gamma=1$ and $m=0.05$, then execute 30 independent runs of tSGHMC, initializing each with $\theta_0\sim\mathcal{N}(0,I_d)$ and $\nu_0=0$. By Proposition~\ref{prop.Artificial} and the preceding setup,
Assumptions~\ref{assumption1.initial condition}-\ref{assumption3.strong convexity}
are satisfied. Hence, Theorems~\ref{Main Theorem} and~\ref{Theorem.Optimization}
apply and provide theoretical guarantees for the performance of tSGHMC.

In the first experiment, we set $\lambda=10^{-3}$ and compute the expected excess risk associated with problem \eqref{eq.Artificial-OPT}-\eqref{eq.Artificial-loss}. Figure~\ref{fig.example1artificial}(a) illustrates the path of expected excess risk over $10^5$ iterations obtained using tSGHMC, with the shaded region representing the sample mean $\pm 1.96$ standard errors at each iteration. We observe that the path converges to zero, which shows that tSGHMC can be used to solve the optimization problem. Moreover, we conduct the same experiment using TUSLA and compare the performance of both algorithms. Numerical results show that tSGHMC converges much faster than TUSLA, in other words, it requires fewer iterations for tSGHMC to fall below a prescribed precision level compared to TUSLA.

In the second experiment, we compute the RMSE defined in \eqref{eq.RMSE-self} using both tSGHMC and TUSLA with the parameters $R=30$, $T=10$, $\lambda_{\text{ref}}=10^{-5}$, and $\lambda\in\{10^{-4}, 1.39\times 10^{-4}, 1.93\times 10^{-4}, 2.68\times 10^{-4}, 3.73\times 10^{-4}, 5.18\times 10^{-4}, 7.20\times 10^{-4}, 10^{-3}\}$. The simulation results are presented in Figure ~\ref{fig.example1artificial}(b), which show the superior performance of tSGHMC in terms of RMSE.

\subsection{Newsvendor Problem}
\label{subsection.newsvendor}

In this section, we investigate a newsvendor problem with nonlinear ordering cost function \cite{arrow1951optimal,halman2012approximating,lau1988newsboy,lau2007designing,petruzzi1999pricing}. Specifically, we consider the following regularized optimization problem:
\begin{equation}\label{eq.Newsvendor-OPT}
	\text{minimize}\quad\mathbb{R}^d\owns\theta\mapsto u(\theta)  \coloneq  \e[U(\theta,X)]+\frac{\kappa}{4}|\theta|^4,
\end{equation}
where $\kappa>0$, $\theta\in\mathbb{R}^d$ denotes the inventory decision vector for $d$ products, and $X\sim\mathcal{N}(\mu,\Sigma_X)$ is an $\mathbb{R}^d$-valued random demand vector. The loss function $U\colon \mathbb{R}^d\times\mathbb{R}^d\to\mathbb{R}$ is defined by
\begin{equation}\label{eq.Newsvendor-Loss}
	U(\theta,x)\coloneq\frac{1}{2}\la\theta, \bar{A}\theta\ra+\sum_{i=1}^d \Big(h_i\max\{\theta_i-x_i,0\}+ s_i\max\{x_i-\theta_i,0\}\Big),
\end{equation}
where $\bar{A}\in\mathbb{R}^{d\times d}$ is a symmetric positive definite matrix, and $h_i,s_i>0$ for all $i=1,\dots,d$. The first term on the right-hand side of \eqref{eq.Newsvendor-Loss} represents the ordering cost, while the second and third terms correspond to the holding and shortage costs of the $i$-th product, respectively.

We solve the newsvendor problem \eqref{eq.Newsvendor-OPT}-\eqref{eq.Newsvendor-Loss} using tSGHMC \eqref{tSGHMC}-\eqref{eq.tamedSG}. For all $(\theta,x)\in\mathbb{R}^d\times\mathbb{R}^d$, define $H\colon\mathbb{R}^d\times\mathbb{R}^d\to\mathbb{R}^d$ by $H(\theta,x)\coloneq F(\theta,x)+G(\theta,x)$ with
\begin{equation}\label{eq.Newsvendor-SG}
	F(\theta,x)\coloneq \bar{A}\theta+\kappa|\theta|^2\theta,\quad G(\theta,x)\coloneq\sum_{i=1}^d\left(h_i\II_{\{\theta_i>x_i\}}-s_i\II_{\{x_i>\theta_i\}}\right)e_i,
\end{equation}
where $e_i$ is the unit vector with the $i$-th coordinate equal to $1$. Thus, at iteration $n$, tSGHMC uses the stochastic gradient $H(\theta,X_{n+1})=F(\theta,X_{n+1})+G(\theta,X_{n+1})$ with
\begin{equation}\label{eq.Newsvendor-SGt}
	F(\theta,X_{n+1})=\bar{A}\theta+\kappa|\theta|^2\theta,\quad G(\theta,X_{n+1})=\sum_{i=1}^d\left(h_i\II_{\{\theta_i>X_{n+1}^i\}}-s_i\II_{\{X_{n+1}^i>\theta_i\}}\right)e_i,
\end{equation}
where $(X_n)_{n\in\mathbb{N}_0}$ is a sequence of i.i.d.\ Gaussian vectors with law $\mathcal{N}(\mu,\Sigma_X)$, and $X_{n+1}^i$ denotes the $i$-th coordinate of $X_{n+1}$.

\begin{proposition}\label{prop.Newsvendor}
	The stochastic gradient $H$ defined in \eqref{eq.Newsvendor-SG}-\eqref{eq.Newsvendor-SGt} satisfies Assumptions~\ref{assumption2.local Lip+growth} and \ref{assumption3.strong convexity}. Moreover, for all $\theta\in\mathbb{R}^d$, $\e[H(\theta,X_0)]=h(\theta)$.
\end{proposition}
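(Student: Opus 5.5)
We verify the three requirements of Proposition~\ref{prop.Newsvendor} in turn: unbiasedness of $H$, the two parts of Assumption~\ref{assumption2.local Lip+growth}, and Assumption~\ref{assumption3.strong convexity}. Throughout we take $\rho=1$, $r=1$ (so $2r=2$), and $\vartheta=1$.

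\textbf{Unbiasedness.} Since $X$ is Gaussian, $\p(X_i=\theta_i)=0$. For each fixed $x$, the map $\theta\mapsto U(\theta,x)$ is differentiable at every $\theta$ with $\theta_i\neq x_i$ for all $i$, and there its gradient equals $\bar A\theta+G(\theta,x)$. The map $\theta\mapsto U(\theta,x)$ is also Lipschitz on bounded sets, with a constant that is integrable in $x$, and it is differentiable almost surely at each fixed $\theta$. Dominated convergence therefore lets us differentiate under the expectation. This gives $\nabla\e[U(\theta,X)]=\bar A\theta+\e[G(\theta,X)]$, where
\[
\e[G(\theta,X)]_i=h_i\p(X_i<\theta_i)-s_i\p(X_i>\theta_i).
\]
Adding $\nabla(\kappa|\theta|^4/4)=\kappa|\theta|^2\theta$ yields $h(\theta)=\e[H(\theta,X_0)]$. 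In passing, $\theta\mapsto\e[G(\theta,X)]$ is $C^1$ with derivative $\operatorname{diag}\big((h_i+s_i)\phi_i(\theta_i)\big)$, where $\phi_i$ is the Gaussian marginal density of $X_i$. Hence $u\in C^2$ and $u\ge 0$, as the setting requires.

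\textbf{Assumption~\ref{assumption2.local Lip+growth}.} For $F$, we write
\[
\big||\theta|^2\theta-|\theta'|^2\theta'\big|\le |\theta|^2|\theta-\theta'|+\big||\theta|^2-|\theta'|^2\big||\theta'|\le 3(1+|\theta|+|\theta'|)^2|\theta-\theta'|.
\]
Since $F$ does not depend on $x$, this gives the local Lipschitz bound with $\rho=1$ and $L_F=|\bar A|+3\kappa$. The same estimate gives $|F(\theta,x)|\le (|\bar A|+\kappa)(1+|\theta|^2)$. For $G$, we have the trivial bound $|G|\le \sum_i(h_i+s_i)$, which is at most $K_G(1+|x|)(1+|\theta|)$ with $K_G>1$. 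For continuity in average, coordinatewise we get
\[
\e\big|G_i(\theta,X)-G_i(\theta',X)\big|\le (h_i+s_i)\,\p\big(X_i\text{ lies between }\theta_i\text{ and }\theta'_i\big)\le (h_i+s_i)\,\|\phi_i\|_\infty|\theta_i-\theta'_i|.
\]
Summing over $i$ gives $L_G=\max_i(h_i+s_i)\|\phi_i\|_\infty\sqrt d$. The only degenerate case to watch is $\Sigma_X$ singular on the diagonal; we assume $\Sigma_X$ is positive definite, so every $\phi_i$ is a bounded density.

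\textbf{Assumption~\ref{assumption3.strong convexity}.} This step is the main obstacle, because $G(\cdot,x)$ is discontinuous, so there is no pointwise Jacobian to work with. The plan is to use monotonicity instead. For fixed $x$, each coordinate map $t\mapsto h_i\II_{\{t>x_i\}}-s_i\II_{\{x_i>t\}}$ is nondecreasing in $t$ (it jumps from $-s_i$ to $0$ to $h_i$). Hence
\[
\la\theta-\theta',G(\theta,x)-G(\theta',x)\ra=\sum_i(\theta_i-\theta'_i)\big(G_i(\theta,x)-G_i(\theta',x)\big)\ge0.
\]
The map $\theta\mapsto|\theta|^2\theta$ is the gradient of the convex function $|\theta|^4/4$, so it is monotone as well. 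Therefore
\[
\la\theta-\theta',H(\theta,x)-H(\theta',x)\ra\ge\la\theta-\theta',\bar A(\theta-\theta')\ra .
\]
Taking $A(x)\equiv\bar A$, which is symmetric and positive definite, both \eqref{eq.positive semi definited A} and \eqref{eq.local strong convexity} hold, and $m=\lambda_{\min}(\bar A)>0$. Finally, the moment requirement in Assumption~\ref{assumption1.initial condition} on $X_0$ holds because Gaussian vectors have moments of all orders.
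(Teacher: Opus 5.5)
Your treatment of unbiasedness, of the conditions on $G$, and of Assumption~\ref{assumption3.strong convexity} is correct and follows the paper closely. The paper differentiates the explicit integral formula for $u$ against the marginal densities rather than using dominated convergence. It also uses the same coordinatewise monotonicity of the step functions, the monotonicity of $\theta\mapsto|\theta|^2\theta$, and $A(x)=\bar A$.

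The genuine error is the parameter choice $r=1$. Since $F(\theta,x)=\bar A\theta+\kappa|\theta|^2\theta$ is cubic in $\theta$, Assumption~\ref{assumption2.local Lip+growth}(1) fails with $2r=2$ in two ways:
\begin{enumerate}
\item The growth bound you state, $|F(\theta,x)|\le(\|\bar A\|_{\mathrm{op}}+\kappa)(1+|\theta|^2)$, is false. Indeed $|F(\theta,x)|\ge\kappa|\theta|^3-\|\bar A\|_{\mathrm{op}}|\theta|$, and since $F$ does not depend on $x$, the factor $(1+|x|)^\rho$ cannot help.
\item With $r=1$ the Lipschitz condition requires the weight $(1+|\theta|+|\theta'|)^{2r-1}=1+|\theta|+|\theta'|$. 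Your own (correct) estimate yields $(1+|\theta|+|\theta'|)^2$, and this cannot be improved: the Jacobian $|\theta|^2I_d+2\theta\theta^\top$ of $|\theta|^2\theta$ has all singular values at least $|\theta|^2$.
\end{enumerate}

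The repair is to take $r=3/2$, as the paper does. Then $2r=3\in\mathbb{N}$ with $3\ge\vartheta=1$, and your Lipschitz estimate has exactly the required exponent $2r-1=2$. The growth condition follows from $|F(\theta,x)|\le\|\bar A\|_{\mathrm{op}}|\theta|+\kappa|\theta|^3\le(\|\bar A\|_{\mathrm{op}}+\kappa)(1+|\theta|^3)$. This is not merely cosmetic: $r$ enters the taming factor $\sqrt{1+\gamma^{-1}|\theta|^{4r}}$ of the algorithm, and the newsvendor experiment is run with $r=1.5$.
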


\begin{proof}
	Postponed to Appendix \hyperref[proof.Newsvendor]{A.2}.
\end{proof}

\begin{figure}
	\centering
	\includegraphics[width=1.0\linewidth]{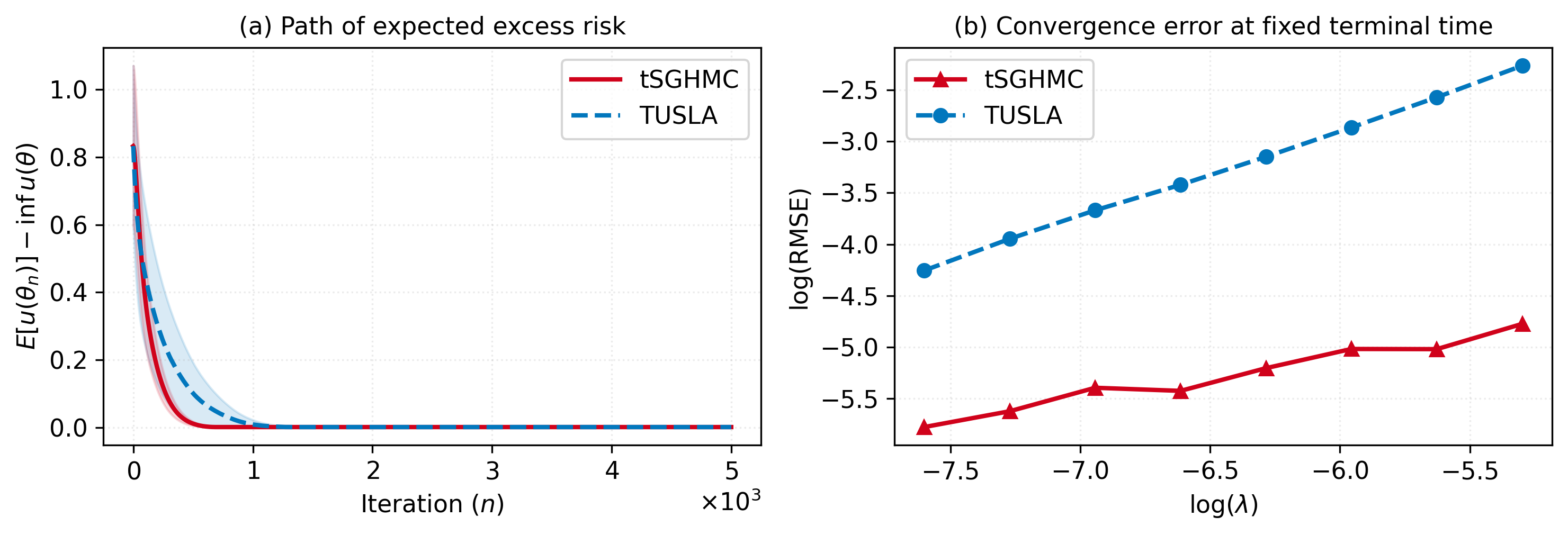}
	\caption{Numerical performance of tSGHMC and TUSLA for the newsvendor problem \eqref{eq.Newsvendor-OPT}-\eqref{eq.Newsvendor-Loss}. (a) Trajectory of expected excess risk versus the number of iterations $n$. (b) Log-log plot of the RMSE at $T=20$ for different step sizes $\lambda$.}
	\label{fig.example2newsvendor}
\end{figure}

To conduct numerical experiments, we set $d=5$, $\kappa=0.02$, $h_i=0.05$, and $s_i=0.1$ for all $i=1,\dots, d$. Moreover, we define $\bar{A}=\eta I_d+\bar\Sigma$, where $\eta=10^{-3}$ and  $\bar\Sigma=\operatorname{diag}(1,1/d^2,\dots,1/d^2)$. We also set $\mu=(-1,-0.5,0,0.5,1)$ and $\Sigma_X=\bar\Sigma$. We run tSGHMC 30 times with $\beta=10^{10}$, $r=1.5$, $\gamma=1$ and $m=0.041$. For each independent run, tSGHMC is initialized from $\theta_0\sim\mathcal{N}(0,I_d)$ and $\nu_0=0$. Given this setup and Proposition~\ref{prop.Newsvendor}, Assumptions~\ref{assumption1.initial condition}-\ref{assumption3.strong convexity} hold. Thus, Theorems~\ref{Main Theorem} and \ref{Theorem.Optimization} theoretically guarantee that tSGHMC solves the newsvendor problem defined by \eqref{eq.Newsvendor-OPT}-\eqref{eq.Newsvendor-Loss}.

Figure~\ref{fig.example2newsvendor}(a) depicts the trajectory of the expected excess risk over 5,000 iterations obtained using tSGHMC with $\lambda=0.05$, where the shaded band represents the sample mean $\pm 1.96$ standard errors at each iteration. We note that the reference solution $\theta^*$ to this problem is computed numerically using the Adam algorithm \cite{kingma2014adam}. The trajectory of the expected excess risk converges to zero, indicating that tSGHMC is effective for solving this optimization problem. We further compare the performance of tSGHMC with that of TUSLA. Numerical results show that tSGHMC converges faster than TUSLA while achieving comparable accuracy.

Next, we compute the RMSE defined in \eqref{eq.RMSE-self} using tSGHMC and TUSLA, where we set $R=30$, $T=20$, $\lambda_{\text{ref}}=10^{-4}$, and $\lambda\in\{5\times 10^{-4},6.95\times 10^{-4},9.65\times 10^{-4},1.34\times 10^{-3}, 1.86\times 10^{-3},2.59\times 10^{-3},3.60\times 10^{-4},5\times 10^{-3}\}$. As shown in Figure~\ref{fig.example2newsvendor}(b), tSGHMC outperforms TUSLA and achieves a higher accuracy measured using RMSE.

\subsection{Real-World Applications}
\label{subsection.real-world}

In this section, we further evaluate the performance of tSGHMC on CVaR minimization and nonlinear regression problems constructed using real-world datasets.

\subsubsection{CVaR Minimization}
\label{subsection.VaR}

In the insurance industry, Value-at-Risk (VaR) and Conditional Value-at-Risk (CVaR) are standard metrics for quantifying extreme tail risks, particularly those arising from large but infrequent claims. These risk measures are crucial for pricing high-excess coverage, allocating risk capital, and designing reinsurance contracts to ensure that extreme payouts remain at an acceptably low frequency. The empirical analysis uses the Danish reinsurance claim dataset \cite{mcneil1997estimating}, available from the CASdatasets package \cite{dutang2026}.\footnote{For details on the Danish reinsurance claim dataset, see \url{https://dutangc.github.io/CASdatasets/reference/danish.html}. General installation instructions for CASdatasets are available at \url{https://dutangc.github.io/CASdatasets/index.html}.} Comprising 2,167 observations, the data points are treated as i.i.d.\ in accordance with the assumptions of tSGHMC in \eqref{tSGHMC}-\eqref{eq.tamedSG}. More precisely, the dataset contains large fire insurance claims with three coverage-specific loss components: building damage, damage to furniture and personal property (i.e., contents), and loss of profits, representing distinct sources of insured losses. In this example, we consider CVaR minimization problems based on these claims. We first investigate a univariate CVaR computation problem, where we use the aggregate loss over these three components, denoted as ``Total'', as the input data. Then, we extend the setting to a risk allocation problem, in which the three coverage-specific loss components are treated as distinct risk exposures and their allocation weights are optimized to account for their different contributions to tail risk.

\paragraph{\textbf{Univariate CVaR Computation}.} Consider the following regularized optimization problem \cite{bardou2008computation,sabanis2020fully}:
\begin{equation}\label{eq.VaR-OPT}
	\text{minimize}\quad\mathbb{R}\owns\theta\mapsto u(\theta)  \coloneq \e[U(\theta,X)]+\frac{\eta}{2}|\theta|^2,
\end{equation}
where $\eta>0$ is the regularization parameter, $X$ is the input random variable, and the loss function $U\colon \mathbb{R}\times\mathbb{R}\to\mathbb{R}$ is given by
\begin{equation}\label{eq.VaR-loss}
	U(\theta,x)\coloneq \theta+\frac{1}{1-\bar{q}}\max\{x-\theta,0\},
\end{equation}
with $0<\bar{q}<1$ denoting the confidence level. By \cite[Proposition 2.1]{bardou2008computation}, $\operatorname{VaR}_{q}(X)=\arg\min_\theta u(\theta)$ and $\operatorname{CVaR}_{q}(X)=\min_\theta u(\theta)$.

We solve the problem \eqref{eq.VaR-OPT}-\eqref{eq.VaR-loss} using tSGHMC, and we define the stochastic gradient $H\colon \mathbb{R}\times\mathbb{R}\to\mathbb{R}$ by $H(\theta,x)\coloneq F(\theta,x)+G(\theta,x)$, where
\begin{equation}\label{eq.stochastic gradient of VaR}
	F(\theta,x)  \coloneq  1+\eta\theta,\quad G(\theta,x)  \coloneq -\frac{1}{1-\bar{q}}\II_{\{x>\theta\}}.
\end{equation}

\paragraph{\textbf{Risk Allocation Problem}.} According to \cite{sabanis2020fully}, the optimization problem can be formulated as follows:
\begin{equation}\label{eq.Risk-OPT}
	\text{minimize}\quad\mathbb{R}^{d+1}\owns\bar\theta\mapsto \bar{u}(\bar{\theta})\coloneq\e[\bar{U}(\bar\theta,X)]+\frac{\eta}{2(r+1)}|\bar{\theta}|^{2(r+1)},
\end{equation}
where $\eta>0$, $r\in\mathbb{N}$, $X$ is an $\mathbb{R}^d$-valued input random variable, and $\bar{U}\colon \mathbb{R}^{d+1}\times\mathbb{R}^d\to\mathbb{R}$ is defined as
\begin{equation}\label{eq.Risk-loss}
	\bar{U}(\bar\theta,x)\coloneq\theta+\frac{1}{1-\bar{q}}\max\left\{\sum_{i=1}^d g_i(\omega)x_i-\theta,0\right\},
\end{equation}
with $\bar{\theta}\coloneq(\theta,\omega)=(\theta,\omega_1,\dots,\omega_d)$, $0<\bar{q}<1$, and $g_i(\omega)\coloneq e^{w_i}/(\sum_{k=1}^d e^{w_k})$ for $i=1,\dots,d$.

To solve the problem \eqref{eq.Risk-OPT}-\eqref{eq.Risk-loss} using tSGHMC, we denote by $\bar H\colon \mathbb{R}^{d+1}\times\mathbb{R}^d\to\mathbb{R}^{d+1}$ the stochastic gradient defined by $\bar H(\bar\theta,x)\coloneq \bar F(\bar\theta,x)+\bar G(\bar\theta,x)$ with
\begin{align*}
	\bar F(\bar{\theta},x)&\coloneq(\bar F_\theta(\bar{\theta},x),\bar F_{\omega_1}(\bar{\theta},x),\dots,\bar F_{\omega_d}(\bar{\theta},x)),\\
	\bar G(\bar{\theta},x)&\coloneq(\bar G_\theta(\bar{\theta},x),\bar G_{\omega_1}(\bar{\theta},x),\dots,\bar G_{\omega_d}(\bar{\theta},x)),
\end{align*}
where $\bar F_\theta$, $\bar F_{\omega_j}$, $\bar G_\theta$ and $\bar G_{\omega_j}$ for all $j=1,\dots,d$ are given by
\begin{align*}
	\bar F_\theta(\bar{\theta},x)&\coloneq\eta|\bar{\theta}|^{2r}\theta, \quad\ \ \bar G_\theta(\bar{\theta},x)\coloneq 1-\frac{1}{1-\bar{q}}\II_{\{\sum_{i=1}^d g_i(\omega)x_i> \theta\}},\\
	\bar F_{\omega_j}(\bar{\theta},x)&\coloneq \eta |\bar{\theta}|^{2r}\omega_j,\quad \bar G_{\omega_j}(\bar{\theta},x)\coloneq\frac{1}{1-\bar{q}}\bar{g}_{\omega_j}(\omega,x)\II_{\{\sum_{i=1}^d g_i(\omega)x_i> \theta\}}.
\end{align*}
Here, for all $j=1,\dots,d$, $\bar{g}_{\omega_j}(\omega,x)\coloneq\sum_{i=1}^d\frac{\partial g_i(\omega)}{\partial \omega_j}x_i$ with  $\frac{\partial g_j(\omega)}{\partial \omega_j}=e^{\omega_j}(\sum_{k\neq j}e^{\omega_k})/(\sum_{k=1}^d e^{\omega_k})^2$ and $\frac{\partial g_i(\omega)}{\partial \omega_j}=-e^{\omega_i}e^{\omega_j}/(\sum_{k=1}^d e^{\omega_k})^2$ for $i\neq j$.

Next, we specify the stochastic gradient $H^\#(\theta,X_{n+1})$ for the $n$-th tSGHMC update:
\[H^\#(\theta,X_{n+1})\coloneq
\begin{cases}
	H(\theta,X_{n+1}), & \text{for the univariate CVaR computation},\\
	\bar H(\theta,X_{n+1}), & \text{for the risk allocation problem}.
\end{cases}\]
Here, $X_{n+1}\coloneq x_{I_{n+1}}$, where the index $I_{n+1}$ is drawn independently and uniformly from $\{1,\dots,2167\}$ for each $n\in\mathbb{N}_0$. In the univariate case, $\{x_i\}_{i=1}^{2167}$ are the observed values of the ``Total'' aggregate loss. In the risk allocation case, $\{x_i\}_{i=1}^{2167}$ are three-dimensional vectors, each containing the building, contents, and profits components.

\paragraph{\textbf{Numerical Experiments}}

We set $\bar q=0.95$ and $\eta=10^{-3}$, then conduct the experiments on the Danish reinsurance claim dataset using tSGHMC to solve the aforementioned optimization problems. The reference solution $\theta^*$ is obtained numerically using Adam \cite{kingma2014adam}.

\begin{figure}
	\centering
	\includegraphics[width=1.0\linewidth]{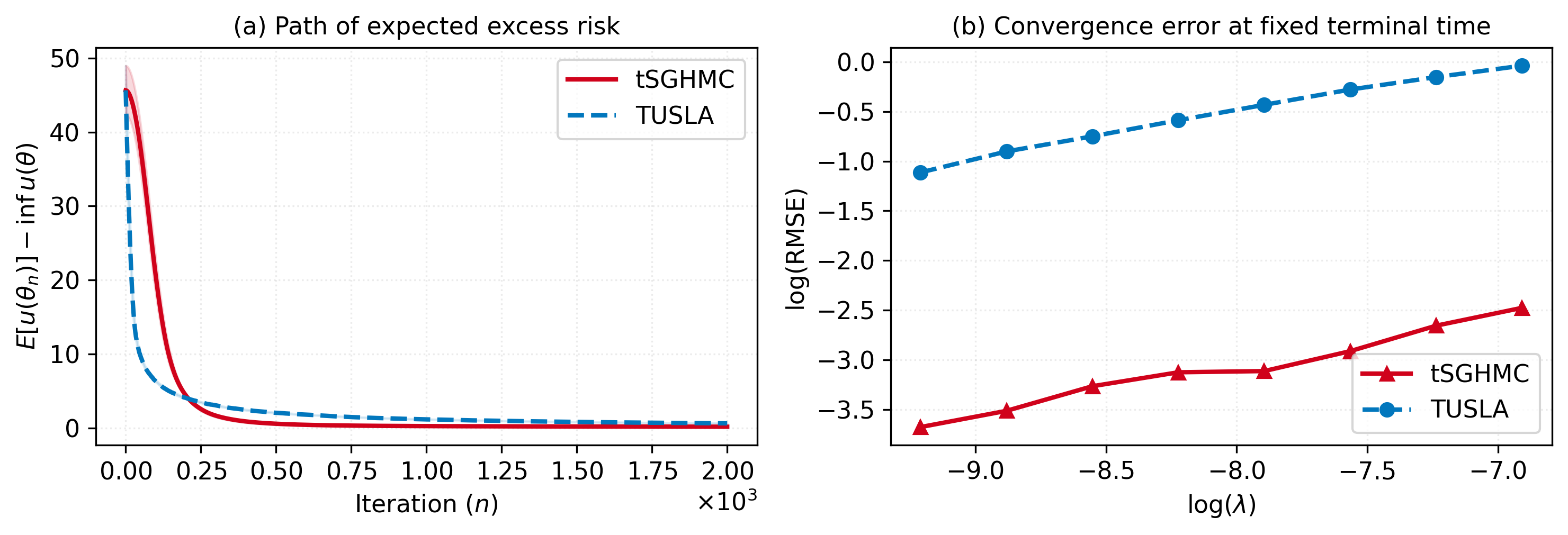}
	\caption{Comparison of tSGHMC and TUSLA on the univariate CVaR computation problem \eqref{eq.VaR-OPT}-\eqref{eq.VaR-loss}. (a) Path of expected excess risk versus the number of iterations $n$. (b) Log-log plot of the RMSE at $T=10$ across various step sizes $\lambda$.}
	\label{fig.example3var1}
\end{figure}

\begin{figure}
	\centering
	\includegraphics[width=1.0\linewidth]{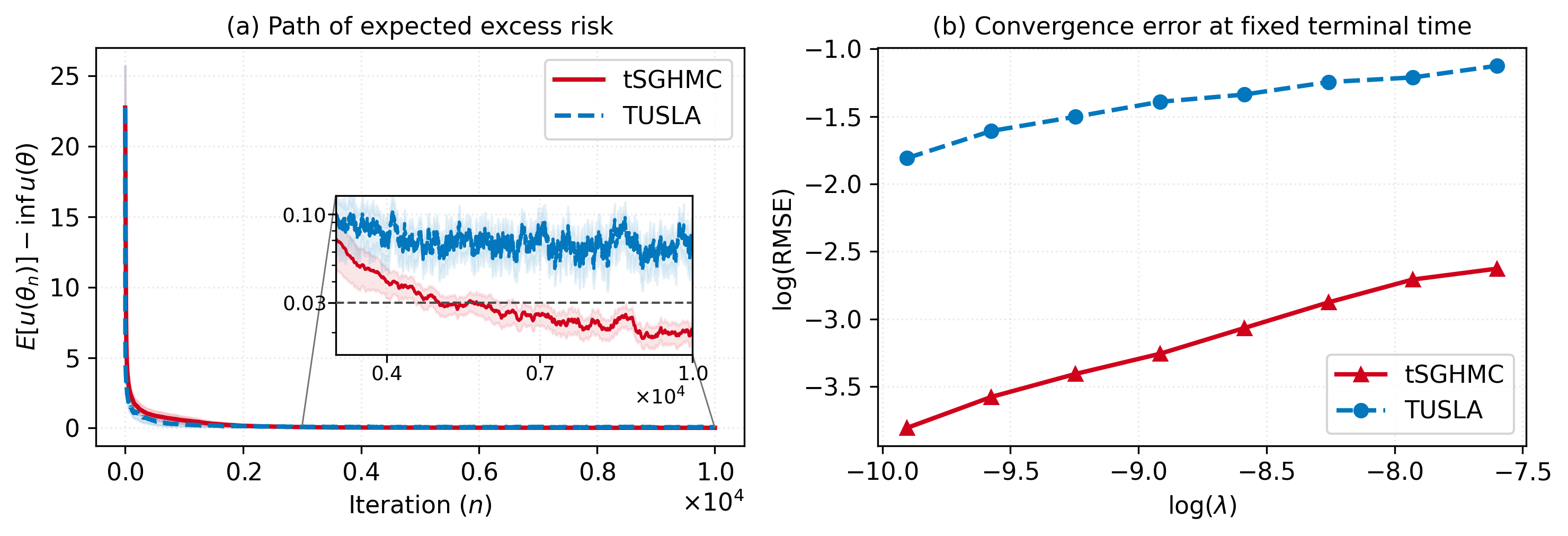}
	\caption{Numerical comparison of tSGHMC and TUSLA on the risk allocation problem \eqref{eq.Risk-OPT}-\eqref{eq.Risk-loss}. (a) Expected excess risk as a function of the iteration number $n$. (b) Log-log plot of the RMSE at $T=10$ across various step sizes $\lambda$.}
	\label{fig.example3var3}
\end{figure}

\begin{enumerate}
	\item For the univariate CVaR computation, we set $\beta=10^{10}$, $r=0.5$, $\gamma=1$, and $m=10^{-3}$. We then run 100 independent trials, initializing each with $\theta_0\sim\mathcal{N}(0,I_d)$ and $\nu_0=0$.\\
	Figure~\ref{fig.example3var1}(a) shows the expected excess risk trajectory over
	2,000 iterations with $\lambda=5\times 10^{-3}$, where the shaded region
	represents the sample mean $\pm 1.96$ standard errors at each iteration. The
	trajectory generated by tSGHMC decreases toward zero, demonstrating its effectiveness for
	the optimization problem. Compared with TUSLA, tSGHMC attains a smaller expected
	excess risk, indicating higher accuracy. In addition,
	Figure~\ref{fig.example3var1}(b) presents the log-log plot of the RMSE \eqref{eq.RMSE-self} against different step sizes $\lambda\in\{10^{-4},1.39\times 10^{-4},1.93\times 10^{-4},2.68\times 10^{-4},3.73\times 10^{-4}, 5.18\times 10^{-4}, 7.20\times 10^{-4},10^{-3}\}$, using tSGHMC and TUSLA with $R=100$, $T=10$, and $\lambda_{\text{ref}}=10^{-5}$. The numerical results show that tSGHMC yields a smaller RMSE than TUSLA.
	\item For the risk allocation problem, we set the input dimension $d=3$. We run tSGHMC 100 times with $\beta=10^{10}$, $r=1$, $\gamma=3$, and $m=10^{-3}$. For each independent run, the initial conditions are $\theta_0\sim\mathcal{N}(0,I_{d+1})$ and $\nu_0=0$.\\
	Figure~\ref{fig.example3var3}(a) reports the expected excess risk over $10^4$ iterations for tSGHMC with $\lambda=0.05$, with shaded bands representing the sample mean $\pm 1.96$ standard errors at each iteration. The risk converges to zero, showing that tSGHMC
	can be used to solve the risk allocation problem. In comparison with TUSLA, tSGHMC exhibits higher accuracy: it reaches the precision level of 0.03, whereas TUSLA does not. Moreover, Figure~\ref{fig.example3var3}(b) shows the RMSE in \eqref{eq.RMSE-self} on a log-log scale for step sizes $\lambda\in\{5\times 10^{-5},6.95\times 10^{-5},9.65\times 10^{-5},1.34\times 10^{-4},1.86\times 10^{-4},2.59\times 10^{-4},3.60\times 10^{-4},5\times 10^{-4}\}$, with $R=100$, $T=10$, and $\lambda_{\text{ref}}=10^{-5}$. Across all step sizes, tSGHMC consistently achieves smaller RMSE values than TUSLA.
\end{enumerate}

\subsubsection{Nonlinear Regression}

In this section, we consider the nonlinear regression problem \cite{liang2024non,lim2024non} defined as follows:
\begin{equation}\label{eq.Reg-OPT}
	\text{minimize}\quad \mathbb{R}^d \ni \theta \mapsto 
	u(\theta) \coloneq 
	\e[\ell(Y,\mathfrak{N}(\theta,Z))]
	+\frac{\eta}{2(r+1)}|\theta|^{2(r+1)},
\end{equation}
where $\ell\colon \mathbb{R}^{m_2}\times\mathbb{R}^{m_2}\to\mathbb{R}$ denotes the loss function with $m_2\in\mathbb{N}$, $\theta\in\mathbb{R}^d$ is the parameter of the problem, $Z$ is the $\mathbb{R}^{m_1}$-valued input random variable with $m_1\in\mathbb{N}$, $Y$ is the $\mathbb{R}^{m_2}$-valued target random variable, and $\mathfrak{N}\colon \mathbb{R}^d\times\mathbb{R}^{m_1}\to\mathbb{R}^{m_2}$ denotes the neural network model. In this experiment, we use the squared loss $\ell(u,v)=|u-v|^2$ for $u,v\in\mathbb{R}^{m_2}$ and consider a one-hidden-layer feedforward neural network $\mathfrak{N}\colon \mathbb{R}^d\times\mathbb{R}^{m_1}\to\mathbb{R}^{m_2}$ defined by
\begin{equation}
	\mathfrak{N}(\theta,z)\coloneq W_2\sigma(W_1 z+b_1)+b_2,
\end{equation}
where $\theta\coloneq([W_1],[W_2],b_1,b_2)\in\mathbb{R}^d$, with $d=d_1(m_1+m_2+1)+m_2$, $W_1\in\mathbb{R}^{d_1\times m_1}$, $b_1\in\mathbb{R}^{d_1}$, $W_2\in\mathbb{R}^{m_2\times d_1}$, $b_2\in\mathbb{R}^{m_2}$, $\sigma$ denotes the ReLU activation function, defined componentwise by $\sigma(u)\coloneq\max\{0,u\}$.

\begin{figure}
	\centering
	\includegraphics[width=1.0\linewidth]{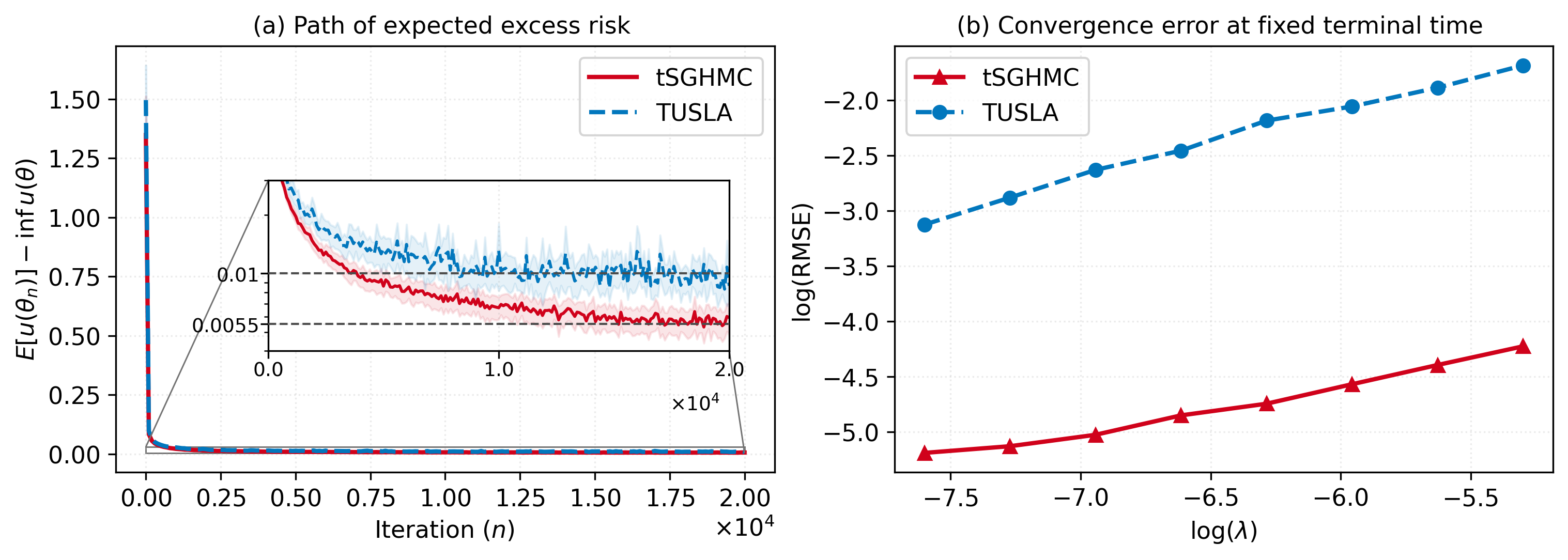}
	\caption{Convergence comparison between tSGHMC and TUSLA for the nonlinear regression application \eqref{eq.Reg-OPT}. (a) Path of expected excess risk versus the number of iterations $n$. (b) Log-log plot of the RMSE at $T=10$ across different step sizes $\lambda$.}
	\label{fig.example4nn}
\end{figure}

We test the performance of tSGHMC on the Concrete Compressive Strength dataset \cite{yeh1998modeling} from the UCI Machine Learning Repository,\footnote{Further details on the concrete compressive strength dataset are available at \url{https://archive.ics.uci.edu/dataset/165/concrete+compressive+strength}; additional information about the UCI Machine Learning Repository can be found at \url{https://archive.ics.uci.edu}.} which contains 1,030 samples. Each sample has 8 input features and 1 target variable, so that $m_1=8$ and $m_2=1$. Both the input variables and the target variable are standardized before training. We set $d_1=20$ (which yields $d=201$), $\eta=10^{-3}$, $\beta=10^{10}$, $r=0.5$, $\gamma=0.5$ and $m=10^{-10}$. We use mini-batch stochastic gradient with a mini-batch size of 256. We run tSGHMC 30 times, for each independent run, initialize $\theta_0=([W_1^0],[W_2^0],b_1^0,b_2^0)$ using Xavier initialization \cite{glorot2010understanding}, i.e., $[W_1^0]_{i,j}\sim\mathcal{N}(0,2/(m_1+d_1))$, $[W_2^0]_{i,j}\sim\mathcal{N}(0,2/(d_1+m_2))$, with $b_1^0=0$, $b_2^0=0$, and $\nu_0=0$.

For the first experiment, we compute the expected excess risk over $2\times 10^4$ iterations with $\lambda=0.5$. The reference solution $\theta^*$ is computed numerically using the Adam algorithm \cite{kingma2014adam}.  As shown in Figure~\ref{fig.example4nn}(a), the tSGHMC trajectory converges to zero, with the shaded area denoting the sample mean $\pm 1.96$ standard errors, validating the effectiveness of the algorithm. Compared with TUSLA, tSGHMC converges faster and attains a smaller excess risk. Specifically, it falls below the precision level of 0.01 using fewer iterations than TUSLA and reaches a stricter threshold of $5.5\times 10^{-3}$ which cannot be achieved by TUSLA.

Finally, we compare the RMSE defined in \eqref{eq.RMSE-self} for both algorithms. We set $R = 30$, $T = 10$, $\lambda_{\text{ref}} = 10^{-4}$, and $\lambda\in\{5\times 10^{-4}, 6.95\times 10^{-4}, 9.65\times 10^{-4}, 1.34\times 10^{-3}, 1.86\times 10^{-3}, 2.59\times 10^{-3}, 3.60\times 10^{-3}, 5\times 10^{-3}\}$. As illustrated in Figure~\ref{fig.example4nn}(b), tSGHMC consistently yields smaller RMSE values than TUSLA across the entire range of step sizes.

\section{Proof of the Main Results}
\label{section.Main Results}

In this section, we provide proofs for the main results. We begin with a proof overview in Section~\ref{subsection:Proof Overview}. Then, in Section~\ref{subsection.MY}, we introduce the Moreau-Yosida regularization as an appropriate approximation of $u$ possessing certain regularity properties and review several of its properties. Next, in Section~\ref{subsection.auxiliary}, we define several auxiliary processes that are essential to the convergence analysis. We derive preliminary estimates in Section~\ref{subsection.estimates}, including moment bounds for both the algorithm and the associated auxiliary processes, which provide the basis for the convergence analysis. The section concludes in Section~\ref{Proof of the Main Theorems} with the proofs of the main theorems.

\subsection{Proof Overview}
\label{subsection:Proof Overview}

Our goal is to establish an error bound for $W_2(\mathcal{L}(\theta_n^\lambda),\pi_\beta)$. Standard strategy for the convergence analysis involves the following decomposition:
\[W_2(\mathcal{L}(\theta_n^\lambda),\pi_\beta)\leq W_2(\mathcal{L}(\theta_n^\lambda),\mathcal{L}(\widetilde{Z}_t^\lambda))+W_2(\mathcal{L}(\widetilde{Z}_t^\lambda),\pi_\beta),\]
where $(\widetilde{Z}_t^\lambda)_{t\geq 0}=(\widetilde{Z}_{\lambda t})_{t\geq 0}$ is the time-changed process of the underdamped Langevin SDE \eqref{eq.KLD}. The second term on the right-hand side of the above inequality relates to certain contraction property of the underdamped Langevin SDE. However, existing contraction results typically require the potential gradient $h$ to be globally Lipschitz, see, e.g., \cite{dalalyan2020sampling,eberle2019couplings}, which cannot be used in our setting where $h$ is polynomially Lipschitz continuous, see Remark~\ref{remark.local Lip. of h}. To address this issue, we adopt the approach developed in \cite{johnston2024kinetic} and introduce the Moreau-Yosida regularization $u_{MY,\epsilon}$ of $u$, which provides a smooth approximation of the original potential. Crucially, $u_{MY,\epsilon}$ possesses a globally Lipschitz gradient and preserves the strong convexity property of $u$. Moreover, the associated Gibbs measure $\pi_\beta^\epsilon$, defined in Definition~\ref{def.pi}, remains close to the original target measure $\pi_\beta$.

Within this framework, we establish the main result by utilizing the decomposition:
\begin{equation}\label{eq.main decomposition}
	W_2(\mathcal{L}(\theta_n^\lambda),\pi_\beta)
	\leq
	W_2(\mathcal{L}(\theta_n^\lambda),\pi_\beta^\epsilon)
	+
	W_2(\pi_\beta^\epsilon,\pi_\beta).
\end{equation}
The first term on the right-hand side of \eqref{eq.main decomposition} captures the convergence of tSGHMC towards $\pi_\beta^\epsilon$. Bounding this term requires introducing several auxiliary processes, which we detail in Section~\ref{subsection.auxiliary}. The second term on the right-hand side of \eqref{eq.main decomposition} quantifies the approximation error induced by the Moreau-Yosida regularization and is analyzed in Section~\ref{subsection.MY}.

\subsection{Introduction to Moreau-Yosida Regularization}
\label{subsection.MY}

In this section, we introduce the Moreau-Yosida regularization, see, e.g., \cite{beck2017first,durmus2018efficient,johnston2024kinetic,planiden2019proximal,rockafellar1998variational} as a key tool for approximating a strongly convex objective function $u$ whose gradient is not globally Lipschitz.

We first present several classical results for Moreau-Yosida regularization. Given a proper, closed, and convex function $g\colon \mathbb{R}^d\to(-\infty,\infty]$ and a smoothing parameter $\epsilon>0$, the $\epsilon$-Moreau-Yosida regularization $g^\epsilon\colon \mathbb{R}^d\to(-\infty,\infty]$ of $g$ is given by
\[g^\epsilon(\theta)\coloneq \inf_{\phi\in\mathbb{R}^d}\left\{g(\phi)+\frac{1}{2\epsilon}|\theta-\phi|^2\right\}.\]

We recall the following properties of $g^\epsilon$. For detailed derivations of results in Lemma \ref{lemma.MY}(1)-(6), one may refer to \cite[Example~1.46]{rockafellar1998variational}, \cite[Theorem~6.55 and 6.60]{beck2017first}, \cite[Equation (13) and Lemma~5.2]{johnston2024kinetic}, and \cite[Theorem~3.12]{planiden2019proximal}, respectively.

\begin{lemma}\label{lemma.MY}
	Let $g\colon \mathbb{R}^d\to(-\infty,\infty]$ be a proper, closed, and convex function. Then:
	\begin{enumerate}
		\item $\arg\min g^\epsilon=\arg\min g$ and $\min g^\epsilon=\min g$.
		\item $g^\epsilon\colon \mathbb{R}^d\to\mathbb{R}$ is real-valued and convex.
		\item $g^\epsilon\colon \mathbb{R}^d\to\mathbb{R}$ is continuously differentiable with an $\epsilon^{-1}$-Lipschitz gradient, i.e., for all $\theta,\theta'\in\mathbb{R}^d$,
		\[|\nabla g^\epsilon(\theta)-\nabla g^\epsilon(\theta')|\leq\frac{1}{\epsilon}|\theta-\theta'|.\]
		Furthermore, its gradient is given by
		\[\nabla g^\epsilon(\theta)=\frac{1}{\epsilon}\left(\theta-\operatorname{prox}_g^\epsilon(\theta)\right),\]
		where the proximal operator is defined as
		\[\operatorname{prox}_g^\epsilon(\theta)\coloneq\arg\min_{\phi\in\mathbb{R}^d}\left\{g(\phi)+\frac{1}{2\epsilon}|\theta-\phi|^2\right\}.\]
		\item If $g$ is continuously differentiable, we have, for all $\theta\in\mathbb{R}^d$, that
		\[\nabla g^\epsilon(\theta)=\nabla g(\operatorname{prox}_g^\epsilon(\theta)).\]
		\item If $g$ is twice continuously differentiable, we have that $g^\epsilon$ is also twice continuously differentiable.
		\item If $g$ is $m$-strongly convex and $\epsilon<m^{-1}$, we have that $g^\epsilon$ is an $(m/2)$-strongly convex function.
	\end{enumerate}
\end{lemma}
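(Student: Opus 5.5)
The six items of Lemma~\ref{lemma.MY} are classical, so the plan is to check each one directly from the definition of $g^\epsilon$ and the proximal map. The first step, which everything else rests on, is to show that the proximal map is well defined. For fixed $\theta$, the function
\[\phi\mapsto g(\phi)+\frac{1}{2\epsilon}|\theta-\phi|^2\]
is proper, closed and $\epsilon^{-1}$-strongly convex, because $g$ is proper, closed and convex. Such a function is coercive and lower semicontinuous, so it has exactly one minimizer. Hence $\operatorname{prox}_g^\epsilon(\theta)$ exists and $g^\epsilon(\theta)<\infty$.

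\textbf{Item (2).} Since $g$ is proper, closed and convex, it has an affine minorant, so $g^\epsilon$ is finite everywhere. Convexity follows because $g^\epsilon$ is the infimal convolution of the two convex functions $g$ and $\frac{1}{2\epsilon}|\cdot|^2$. Equivalently, it is the partial minimization over $\phi$ of a function that is jointly convex in $(\theta,\phi)$.

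\textbf{Item (1).} Taking $\phi=\theta$ in the definition gives $g^\epsilon\le g$. Conversely, $g^\epsilon(\theta)\ge \inf g$ for every $\theta$, so $\min g^\epsilon=\min g$. If $\theta$ minimizes $g$, then $g^\epsilon(\theta)=g(\theta)$ attains this common minimum. If instead $g^\epsilon(\theta)=\min g$, then
\[g(\operatorname{prox}_g^\epsilon\theta)+\frac{1}{2\epsilon}\bigl|\theta-\operatorname{prox}_g^\epsilon\theta\bigr|^2\le \min g,\]
which forces $\operatorname{prox}_g^\epsilon\theta=\theta$ and $\theta\in\arg\min g$.

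\textbf{Item (3).} This is the main step. Write $p=\operatorname{prox}_g^\epsilon$. The first-order optimality condition is $(\theta-p(\theta))/\epsilon\in\partial g(p(\theta))$. Monotonicity of $\partial g$ then gives firm nonexpansiveness:
\[|p(\theta)-p(\theta')|^2\le\la p(\theta)-p(\theta'),\theta-\theta'\ra .\]
For differentiability, compare $g^\epsilon(\theta')$ with the value obtained by plugging in the suboptimal point $\phi=p(\theta)$, and then swap the roles of $\theta$ and $\theta'$. This sandwiches
\[g^\epsilon(\theta')-g^\epsilon(\theta)-\frac{1}{\epsilon}\la\theta-p(\theta),\theta'-\theta\ra\]
between $0$ and $\epsilon^{-1}|\theta-\theta'|^2$, which yields the gradient formula. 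The $\epsilon^{-1}$-Lipschitz bound follows because $I-p$ is also firmly nonexpansive.

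\textbf{Item (4).} When $g$ is $C^1$, the subdifferential is $\partial g(p)=\{\nabla g(p)\}$. The optimality condition from item (3) then reads $\nabla g^\epsilon(\theta)=\nabla g(p(\theta))$.

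\textbf{Item (5).} Apply the implicit function theorem to
\[\Phi(\theta,\phi)=\phi-\theta+\epsilon\nabla g(\phi)=0 .\]
Its $\phi$-Jacobian is $I_d+\epsilon\operatorname{Hess}(g)$, which is invertible because $\operatorname{Hess}(g)\succeq0$ by convexity. So $p$ is $C^1$, and $\nabla g^\epsilon=\epsilon^{-1}(I-p)$ is $C^1$.

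\textbf{Item (6).} Assume additionally $g\in C^2$, as in our application to $u$; otherwise one argues through strong monotonicity of the inverse operator $(\partial g)^{-1}+\epsilon I$. Differentiating the identity from item (5) gives
\[\operatorname{Hess}(g^\epsilon)=\operatorname{Hess}(g)\,(I_d+\epsilon\operatorname{Hess}(g))^{-1}.\]
In an eigenbasis this matrix has eigenvalues $a/(1+\epsilon a)$, where $a\ge m$ ranges over the eigenvalues of $\operatorname{Hess}(g)$. The map $a\mapsto a/(1+\epsilon a)$ is increasing, so each eigenvalue is at least $m/(1+\epsilon m)$. Since $\epsilon m<1$, this is larger than $m/2$, which gives $(m/2)$-strong convexity.

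I expect item (3) to be the only step needing care. Its two-sided estimate on the linearization error is the crux, and the remaining items follow from it by short arguments.
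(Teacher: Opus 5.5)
Your arguments are correct. The difference from the paper is that the paper proves nothing here. It simply defers items (1)--(6) to \cite[Example~1.46]{rockafellar1998variational}, \cite[Theorems~6.55 and 6.60]{beck2017first}, \cite[Equation~(13) and Lemma~5.2]{johnston2024kinetic} and \cite[Theorem~3.12]{planiden2019proximal}.

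Your self-contained derivation is essentially the textbook proof behind those references: well-posedness of the prox, the optimality condition $\epsilon^{-1}(\theta-\operatorname{prox}_g^\epsilon(\theta))\in\partial g(\operatorname{prox}_g^\epsilon(\theta))$, firm nonexpansiveness, and the implicit function theorem. It gives slightly more than the statement:
\begin{itemize}
\item the formula $\operatorname{Hess}(g^\epsilon)(\theta)=H(I_d+\epsilon H)^{-1}$ with $H=\operatorname{Hess}(g)(\operatorname{prox}_g^\epsilon(\theta))$;
\item the modulus $m/(1+\epsilon m)$ in (6), which shows that $\epsilon<m^{-1}$ is only used to compare with $m/2$.
\end{itemize}
What the citation of Planiden--Wang buys is item (6) without any smoothness assumption.

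You prove (6) only for $g\in C^2$. That suffices for the paper, which applies it only to $u\in C^2$. However, your fallback for general $g$ names the wrong property. Strong monotonicity of $(\partial g)^{-1}+\epsilon I$ is automatic and only reproduces the $\epsilon^{-1}$-Lipschitz bound of item (3). What is needed is an upper bound, namely the $m^{-1}$-Lipschitz continuity of $(\partial g)^{-1}=\nabla g^*$.

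A direct argument works as follows. Let $p,p'$ be the prox points of $\theta,\theta'$, and set $a=p-p'$ and $b=\nabla g^\epsilon(\theta)-\nabla g^\epsilon(\theta')$. Since $\nabla g^\epsilon(\theta)\in\partial g(p)$ and $\nabla g^\epsilon(\theta')\in\partial g(p')$, strong convexity of $g$ gives $\la a,b\ra\ge m|a|^2$, and hence $|b|\ge m|a|$. Moreover $\theta-\theta'=a+\epsilon b$. So for $\epsilon m<1$,
\[
\la b,\theta-\theta'\ra-\frac{m}{2}|\theta-\theta'|^2=(1-\epsilon m)\la a,b\ra+\epsilon\Bigl(1-\frac{\epsilon m}{2}\Bigr)|b|^2-\frac{m}{2}|a|^2\ge\frac{m}{2}(1-\epsilon^2m^2)|a|^2\ge 0 .
\]
This is $(m/2)$-strong monotonicity of $\nabla g^\epsilon$, hence $(m/2)$-strong convexity of $g^\epsilon$, for any proper closed $m$-strongly convex $g$.

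Two small precision points remain.
\begin{itemize}
\item In (3), the swap argument bounds the linearization error by $\frac{1}{2\epsilon}|\theta-\theta'|^2$ in absolute value, not from below by $0$. The lower bound $0$ (indeed $\frac{1}{2\epsilon}|(\theta-p)-(\theta'-p')|^2$) comes instead from the subgradient inequality for $g$ at $p$. Either version suffices for differentiability.
\item In (5), you should add that $\phi\mapsto\phi+\epsilon\nabla g(\phi)$ is injective, since it is strongly monotone. This ensures the local branch produced by the implicit function theorem coincides with $\operatorname{prox}_g^\epsilon$.
\end{itemize}
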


We now define the Moreau-Yosida regularization of $u$, denoted by $u_{MY,\epsilon}\colon \mathbb{R}^d\to[0,\infty]$, as
\[u_{MY,\epsilon}(\theta)\coloneq \inf_{\phi\in\mathbb{R}^d}\left\{u(\phi)+\frac{1}{2\epsilon}|\theta-\phi|^2\right\}.\]
Let $h_{MY,\epsilon}\coloneq\nabla u_{MY,\epsilon}$ denote its gradient. By Lemma~\ref{lemma.MY}, we can show that $u_{MY,\epsilon}$ is gradient Lipschitz, $(m/2)$-strongly convex, and satisfies a dissipativity condition.

\begin{remark}\label{remark.MY}
	Let $\epsilon>0$. By Lemma~\ref{lemma.MY} and Remark~\ref{remark.dissipativity of h}, $u_{MY,\epsilon}$ is $(m/2)$-strongly convex provided that $\epsilon<m^{-1}$. Moreover, $h_{MY,\epsilon}$ is $K$-Lipschitz continuous, where $K\equiv K(\epsilon)\leq \epsilon^{-1}$. Hence, we have, for all $\theta\in\mathbb{R}^d$, that
	\[\frac{m}{2}I_d\preceq \operatorname{Hess}(u_{MY,\epsilon})(\theta)\preceq KI_d.\]
	In addition, Lemma~\ref{lemma.MY} and \cite[Definition 1.22]{rockafellar1998variational} imply that for all $\theta\in\mathbb{R}^d$,
	\begin{equation}\label{eq.uMY le u}
		0\leq u(\theta^*)=u_{MY,\epsilon}(\theta^*)\leq u_{MY,\epsilon}(\theta)\leq u(\theta).
	\end{equation}
	Finally, combining the strong convexity of $u_{MY,\epsilon}$ with \eqref{eq.uMY le u} yields
	\[\la h_{MY,\epsilon}(\theta),\theta\ra\geq\frac{m}{4}|\theta|^2-u_{MY,\epsilon}(0)\geq\frac{m}{4}|\theta|^2-u(0).\]
\end{remark}

In the following result, we show that $h_{MY,\epsilon}$ can be upper bounded by $h$.

\begin{lemma}\label{lemma.hMY leq h}
	Let Assumptions~\ref{assumption1.initial condition}-\ref{assumption3.strong convexity} hold, and let $\epsilon>0$. Then, for all $\theta\in\mathbb{R}^d$,
	\[|h_{MY,\epsilon}(\theta)|\leq|h(\theta)|.\]
\end{lemma}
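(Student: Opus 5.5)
The plan is to work with the proximal point. Fix $\theta\in\mathbb{R}^d$ and set $p\coloneq\operatorname{prox}_u^\epsilon(\theta)$. This is well defined because $u$ is twice continuously differentiable and $m$-strongly convex by Remark~\ref{remark.dissipativity of h}; in particular $u$ is proper, closed and convex, so Lemma~\ref{lemma.MY} applies. By Lemma~\ref{lemma.MY}(3) and (4),
\[
h_{MY,\epsilon}(\theta)=\frac{1}{\epsilon}(\theta-p)=h(p).
\]
The first-order optimality condition for the minimisation defining $p$ gives $h(p)+\epsilon^{-1}(p-\theta)=0$, which is consistent with this identity. It therefore suffices to show $|h(p)|\le|h(\theta)|$.

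The key step is monotonicity of $h$, coming from convexity of $u$ (strong convexity is not even needed). By Remark~\ref{remark.dissipativity of h},
\[
\la \theta-p,\,h(\theta)-h(p)\ra\ge m|\theta-p|^2\ge0.
\]
Substituting $\theta-p=\epsilon h(p)$ yields $\epsilon\la h(p),h(\theta)-h(p)\ra\ge0$. Hence
\[
|h(p)|^2\le\la h(p),h(\theta)\ra\le|h(p)|\,|h(\theta)|
\]
by Cauchy--Schwarz. If $h(p)=0$ the claim is trivial; otherwise we divide by $|h(p)|$ and obtain $|h_{MY,\epsilon}(\theta)|=|h(p)|\le|h(\theta)|$.

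There is no real obstacle here. The only point requiring care is the justification of $\nabla u_{MY,\epsilon}=h\circ\operatorname{prox}_u^\epsilon$. One needs that $u$ is finite-valued, continuously differentiable and convex, which holds by the standing assumptions of Section~\ref{section.Assumptions and Main Results} together with Remark~\ref{remark.dissipativity of h}. One also needs that the proximal minimiser exists and is unique; this follows from strong convexity of $\phi\mapsto u(\phi)+|\theta-\phi|^2/(2\epsilon)$.
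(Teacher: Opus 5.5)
Your proof is correct and follows essentially the same route as the paper: both use $h_{MY,\epsilon}(\theta)=h(p)=\epsilon^{-1}(\theta-p)$ with $p=\operatorname{prox}_u^\epsilon(\theta)$, then combine monotonicity of $h$ with Cauchy--Schwarz. The only difference is cosmetic. The paper divides by $|\theta-p|$ where you divide by $|h(p)|$, which is the same quantity up to the factor $\epsilon$, and the paper likewise discards the strong-convexity term $m|\theta-p|^2$, so only monotonicity is actually used.
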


\begin{proof}
	Postponed to Appendix \hyperref[proof.hMY leq h]{C.1}.
\end{proof}

We provide below an error estimate between $h$ and $h_{MY,\epsilon}$.

\begin{lemma}\label{lemma.error of MY regularization}
	Let Assumptions~\ref{assumption1.initial condition}-\ref{assumption3.strong convexity} hold, and let $\epsilon>0$. Then, for all $\theta\in\mathbb{R}^d$,
	\[|h(\theta)-h_{MY,\epsilon}(\theta)|\leq 2^{2r-1}L_h^2(1+|\theta|+R_0)^{4r-1}\epsilon,\]
	where $L_h$ is defined in Remark~\ref{remark.local Lip. of h}, and $R_0$ is defined in Remark~\ref{remark.dissipativity of h}.
\end{lemma}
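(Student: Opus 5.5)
Write $p\coloneq\operatorname{prox}_u^\epsilon(\theta)$. The plan is to combine Lemma~\ref{lemma.MY}(3)--(4) with the polynomial Lipschitz bound of Remark~\ref{remark.local Lip. of h}. Since $u$ is $C^2$ and convex, Lemma~\ref{lemma.MY}(4) gives $h_{MY,\epsilon}(\theta)=h(p)$. Lemma~\ref{lemma.MY}(3) gives $\theta-p=\epsilon h_{MY,\epsilon}(\theta)$. Hence
\[
|h(\theta)-h_{MY,\epsilon}(\theta)|=|h(\theta)-h(p)|\leq L_h(1+|\theta|+|p|)^{2r-1}|\theta-p| = L_h(1+|\theta|+|p|)^{2r-1}\,\epsilon\,|h_{MY,\epsilon}(\theta)|.
\]
It therefore remains to bound $|p|$ and $|h_{MY,\epsilon}(\theta)|$ polynomially in $|\theta|$, and to check that the exponents and constants come out as stated.

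\textbf{Bounding $|p|$.} The proximal map is nonexpansive, and $\theta^*$ is a fixed point of it, since $\theta^*$ minimizes both $u$ and $u_{MY,\epsilon}$. Thus $|p-\theta^*|\leq|\theta-\theta^*|$. By Remark~\ref{remark.dissipativity of h}, $|\theta^*|\leq R_0$, so $|p|\leq|\theta|+2R_0$. A slightly sharper route uses the strong convexity/monotonicity of $h$: from $\langle p-\theta^*,h(p)\rangle\ge0$ and $\theta-p=\epsilon h(p)$ one gets $|p-\theta^*|\le|\theta-\theta^*|$ again. Either way, $1+|\theta|+|p|\leq 2(1+|\theta|+R_0)$.

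\textbf{Bounding $|h_{MY,\epsilon}(\theta)|$.} Lemma~\ref{lemma.hMY leq h} gives $|h_{MY,\epsilon}(\theta)|\leq|h(\theta)|=|h(\theta)-h(\theta^*)|$, using $h(\theta^*)=0$. Remark~\ref{remark.local Lip. of h} then yields
\[
|h(\theta)|\leq L_h(1+|\theta|+R_0)^{2r-1}(|\theta|+R_0)\leq L_h(1+|\theta|+R_0)^{2r}.
\]

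\textbf{Combining and the main obstacle.} Putting the two bounds together gives
\[
|h(\theta)-h_{MY,\epsilon}(\theta)|\leq L_h^2\,2^{2r-1}(1+|\theta|+R_0)^{2r-1}(1+|\theta|+R_0)^{2r}\epsilon,
\]
which is exactly $2^{2r-1}L_h^2(1+|\theta|+R_0)^{4r-1}\epsilon$. The factor $2^{2r-1}$ comes from $(2(1+|\theta|+R_0))^{2r-1}$.

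The only delicate point is making sure that $1+|\theta|+|p|\le 2(1+|\theta|+R_0)$ with this precise constant. The crude bound $|p|\le|\theta|+2R_0$ already gives $1+2|\theta|+2R_0\le2(1+|\theta|+R_0)$, so the constant works out. The rest is routine.
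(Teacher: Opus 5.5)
Your proposal is correct and follows essentially the same route as the paper. It writes $h_{MY,\epsilon}(\theta)=h(\operatorname{prox}_u^\epsilon(\theta))$, applies the polynomial Lipschitz bound together with $|\theta-\operatorname{prox}_u^\epsilon(\theta)|=\epsilon|h_{MY,\epsilon}(\theta)|\le\epsilon|h(\theta)-h(\theta^*)|$, and controls $|\operatorname{prox}_u^\epsilon(\theta)|\le|\theta|+2R_0$ via nonexpansiveness of the proximal map and $\operatorname{prox}_u^\epsilon(\theta^*)=\theta^*$, with the factor $2^{2r-1}$ arising exactly as you describe (using $2r-1\ge 0$).
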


\begin{proof}
	Postponed to Appendix \hyperref[proof.error of MY regularization]{C.1}.
\end{proof}

Next, we introduce probability measures associated with $u_{MY,\epsilon}$, which are essential in establishing convergence results for tSGHMC \eqref{tSGHMC}-\eqref{eq.tamedSG}.

\begin{definition}\label{def.pi}
	Let $0<\epsilon<m^{-1}$. We define
	\[\pi_\beta^\epsilon(\theta)\coloneq \frac{\exp(-\beta u_{MY,\epsilon}(\theta))}{\int_{\mathbb{R}^d}\exp(-\beta u_{MY,\epsilon}(\theta))\,\dd\theta},\quad \Pi_\beta^\epsilon(\theta,\nu)\coloneq \frac{\exp(-\beta ( u_{MY,\epsilon}(\theta) + |\nu|^2/2 ))}{\int_{\mathbb{R}^d}\exp(-\beta ( u_{MY,\epsilon}(\theta) + |\nu|^2/2 ))\,\dd\theta\dd\nu}.\]
\end{definition}

Recalling the definitions of $\pi_\beta$ and $\Pi_\beta$ in \eqref{eq.pi_beta} and \eqref{eq.Pi_beta}, respectively, we establish a convergence result for $\Pi_\beta$ and $\Pi_\beta^\epsilon$ in Wasserstein-2 distance, which is established through $W_2(\pi_\beta,\pi_\beta^\epsilon)$. To make sense of this quantity, we first introduce moment estimates for $\pi_\beta$ and $\pi_\beta^\epsilon$.

\begin{lemma}\label{lemma.moment bounds of pi_beta}
	Let Assumptions~\ref{assumption1.initial condition}-\ref{assumption3.strong convexity} hold, let $q\in\mathbb{N}$, and let $Y$ and $\bar Y$ be random variables with $\mathcal{L}(Y)=\pi_\beta$ and $\mathcal{L}(\bar Y)=\pi_\beta^\epsilon$, respectively. Then, we have
	\begin{align*}
		\e[|Y|^{2q}]&\leq C_{\pi_\beta,2q}\coloneq\left(\frac{4u(0)}{m}+\frac{4(d+2(q-1))}{\beta m}\right)^q<\infty,\\
		\e[|\bar Y|^{2q}]&\leq \left(\frac{8u(0)}{m}+\frac{8(d+2(q-1))}{\beta m}\right)^q<\infty,
	\end{align*}
	where $C_{\pi_\beta,2q}=\mathcal{O}((d/\beta)^q)$.
\end{lemma}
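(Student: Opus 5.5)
We bound the moments of $\pi_\beta$ through the overdamped Langevin generator and the dissipativity inequality of Remark~\ref{remark.dissipativity of h}. Set $f(\theta)=|\theta|^{2q}$ and let $\mathcal{A}f=-\la h,\nabla f\ra+\beta^{-1}\Delta f$ be the generator of \eqref{eq:overdamped}, for which $\pi_\beta$ is invariant. Then
\[
\nabla f=2q|\theta|^{2q-2}\theta,\qquad
\Delta f=2q(d+2q-2)|\theta|^{2q-2}.
\]
The integration-by-parts identity $\int \mathcal{A}f\,\dd\pi_\beta=0$ then gives
\[
\e\big[|Y|^{2q-2}\la h(Y),Y\ra\big]=\beta^{-1}(d+2(q-1))\,\e\big[|Y|^{2q-2}\big].
\]
To justify this identity without boundary terms, we use that $e^{-\beta u}$ decays like a Gaussian by strong convexity (Remark~\ref{remark.dissipativity of h}), so all polynomial moments are finite. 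We can then integrate by parts on balls and let the radius tend to infinity, or equivalently integrate $\nabla\cdot(\nabla f\,e^{-\beta u})$ over $\mathbb{R}^d$. This justification is the main technical point, but it is routine given the Gaussian tail.

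Next we insert the dissipativity bound $\la h(\theta),\theta\ra\ge \frac m2|\theta|^2-u(0)$ to obtain
\[
\frac m2\,\e|Y|^{2q}\le \Big(u(0)+\frac{d+2(q-1)}{\beta}\Big)\e|Y|^{2q-2}.
\]
By Hölder's inequality, $\e|Y|^{2q-2}\le(\e|Y|^{2q})^{(q-1)/q}$. Dividing through yields
\[
(\e|Y|^{2q})^{1/q}\le \frac{2u(0)}{m}+\frac{2(d+2(q-1))}{\beta m},
\]
which is even stronger than the stated constant $\frac{4u(0)}{m}+\frac{4(d+2(q-1))}{\beta m}$. If the $\frac m2$ constant appears to be lost anywhere, for instance through Young's inequality in place of Hölder, the factor $4$ leaves room for it. For $q=1$ we use $\e|Y|^0=1$ directly.

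For $\bar Y$ we repeat the argument with $u_{MY,\epsilon}$ in place of $u$. By Remark~\ref{remark.MY}, $u_{MY,\epsilon}$ is $(m/2)$-strongly convex, hence has Gaussian tails, and it satisfies $\la h_{MY,\epsilon}(\theta),\theta\ra\ge\frac m4|\theta|^2-u(0)$. The same computation, with $m$ replaced by $m/2$, gives the stated constant $\frac{8u(0)}{m}+\frac{8(d+2(q-1))}{\beta m}$. Since $u(0)$ and $m$ do not depend on $d$ or $\beta$, expanding the $q$-th power shows $C_{\pi_\beta,2q}=\mathcal{O}((d/\beta)^q)$.
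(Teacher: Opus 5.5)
Your argument is correct, but it takes a genuinely different route from the paper. The paper argues dynamically. It runs the overdamped Langevin diffusion $\dd x_t=-h(x_t)\,\dd t+\sqrt{2\beta^{-1}}\,\dd B_t$ from $\theta_0$ and imports from an external lemma (with $a_h=m/2$, $b_h=u(0)$) the uniform-in-time bound $\e[|x_t|^{2q}]\le e^{-mqt/2}\e[|\theta_0|^{2q}]+C_{\pi_\beta,2q}$. It then transfers this bound to $\pi_\beta$ via $W_2$-convergence of $\mathcal{L}(x_t)$, the bounded truncations $\min\{|\theta|^{2q},R\}$, and monotone convergence. The $\bar Y$ case repeats this with $h_{MY,\epsilon}$ and $a_h=m/4$.

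You instead work statically on the invariant density, combining the integration-by-parts identity for $\mathcal{A}|\theta|^{2q}$, dissipativity, and Lyapunov's inequality. This is self-contained and needs no ergodicity result for the diffusion and no initial law. The paper's route formally needs $\e[|\theta_0|^{2q}]<\infty$, which Assumption~\ref{assumption1.initial condition} only provides for $q\le 6r$, though a deterministic start would fix that. Your route also gives constants smaller by a factor $2$: $2/m$ for $Y$ and $4/m$ for $\bar Y$. So for $\bar Y$ your computation yields half the stated constant, not the stated constant itself as you wrote; that is of course sufficient.

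The only cost of your route is justifying the identity, and this is easier than you suggest. On $B_R$ the boundary term equals $\beta^{-1}\int_{\partial B_R}2qR^{2q-1}e^{-\beta u}\,\dd S\ge 0$. Hence $\int_{B_R}\la h,\nabla f\ra e^{-\beta u}\le\beta^{-1}\int_{B_R}\Delta f\,e^{-\beta u}$ holds for every $R$, using only $u,u_{MY,\epsilon}\in C^1$ (cf.\ Lemma~\ref{lemma.MY}). Monotone convergence as $R\to\infty$, together with induction on $q$ starting from $\e|Y|^0=1$, then gives both finiteness and the recursion
\[
\e|Y|^{2q}\le \tfrac{2}{m}\big(u(0)+\tfrac{d+2(q-1)}{\beta}\big)\,\e|Y|^{2q-2}.
\]
This removes any need for Gaussian tails, for the polynomial growth of $h$, or for the Hölder step.
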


\begin{proof}
	Postponed to Appendix \hyperref[proof.moment bounds of pi_beta]{C.1}.
\end{proof}

\begin{lemma}
	\label{lemma.contraction of pi_beta}
	Let Assumptions~\ref{assumption1.initial condition}-\ref{assumption3.strong convexity} hold, and let $\epsilon>0$. Then, there exists a constant $\bar{c}>0$ such that
	\[W_2(\Pi_\beta,\Pi_\beta^\epsilon)\leq W_2(\pi_\beta ,\pi_\beta^\epsilon)\leq \bar{c}\epsilon,\]
	where $\bar{c}=\mathcal{O}((d/\beta)^{2r-1/2})$ with its explicit expression given in Table~\ref{tab.constants}.
\end{lemma}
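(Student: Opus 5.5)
The proof splits into two parts: the first inequality comes from the product structure of $\Pi_\beta$ and $\Pi_\beta^\epsilon$, and the second from a synchronous coupling of two overdamped Langevin diffusions.

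\textbf{First inequality.} Both $\Pi_\beta$ and $\Pi_\beta^\epsilon$ factor as products: $\Pi_\beta=\pi_\beta\otimes\mathcal{N}(0,\beta^{-1}I_d)$ and $\Pi_\beta^\epsilon=\pi_\beta^\epsilon\otimes\mathcal{N}(0,\beta^{-1}I_d)$, with the same Gaussian velocity marginal. Take an optimal coupling $(Y,\bar Y)$ of $(\pi_\beta,\pi_\beta^\epsilon)$ and an independent $\nu\sim\mathcal{N}(0,\beta^{-1}I_d)$. Then $((Y,\nu),(\bar Y,\nu))$ is a coupling of $(\Pi_\beta,\Pi_\beta^\epsilon)$ whose cost equals $\e|Y-\bar Y|^2$. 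This gives $W_2(\Pi_\beta,\Pi_\beta^\epsilon)\le W_2(\pi_\beta,\pi_\beta^\epsilon)$ directly.

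\textbf{Setting up the coupling.} For $W_2(\pi_\beta,\pi_\beta^\epsilon)$, I would realize each measure as the invariant law of an overdamped Langevin SDE of the form \eqref{eq:overdamped}: one driven by $h$, the other by $h_{MY,\epsilon}$. By Remark~\ref{remark.dissipativity of h}, $h$ is $m$-strongly monotone and locally Lipschitz, so its SDE is well-posed and ergodic with invariant law $\pi_\beta$. By Remark~\ref{remark.MY}, $h_{MY,\epsilon}$ is globally Lipschitz and strongly monotone, so the same holds for $\pi_\beta^\epsilon$. I would start the $h$-process $L_t$ from $Y\sim\pi_\beta$, so that $\mathcal{L}(L_t)=\pi_\beta$ for all $t$. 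I would start the $h_{MY,\epsilon}$-process $\bar L_t$ from the same point and drive it with the same Brownian motion.

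\textbf{Contraction estimate.} The difference $D_t=L_t-\bar L_t$ is differentiable in $t$, and
\[
\frac{\dd}{\dd t}|D_t|^2=-2\langle D_t,h(L_t)-h(\bar L_t)\rangle-2\langle D_t,h(\bar L_t)-h_{MY,\epsilon}(\bar L_t)\rangle.
\]
I would use strong monotonicity of $h$ (not of $h_{MY,\epsilon}$) on the first term, so it is at most $-2m|D_t|^2$. Young's inequality bounds the second term by $m|D_t|^2+m^{-1}|h(\bar L_t)-h_{MY,\epsilon}(\bar L_t)|^2$. Lemma~\ref{lemma.error of MY regularization} then gives
\[
\frac{\dd}{\dd t}\e|D_t|^2\le -m\,\e|D_t|^2+m^{-1}\bigl(2^{2r-1}L_h^2\epsilon\bigr)^2\,\e\bigl[(1+|\bar L_t|+R_0)^{8r-2}\bigr].
\]

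\textbf{Main obstacle: moment control.} The key step is a uniform-in-time bound on $\e[(1+|\bar L_t|+R_0)^{8r-2}]$. I would avoid a separate Itô/Lyapunov estimate for $\bar L_t$ by writing $\bar L_t=L_t-D_t$ with $L_t\sim\pi_\beta$, but that route mixes in $D_t$ and gets messy. A cleaner alternative is to start both processes stationary, using the product of an optimal coupling, and apply Grönwall to $\e|D_t|^2$. Since $\mathcal{L}(\bar L_t)=\pi_\beta^\epsilon$ for all $t$, the second estimate of Lemma~\ref{lemma.moment bounds of pi_beta} with $2q\ge 8r-2$ bounds the moment by a constant $C=\mathcal{O}((d/\beta)^{4r-1})$. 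Grönwall then gives $\e|D_t|^2\le e^{-mt}\e|D_0|^2+m^{-2}(2^{2r-1}L_h^2\epsilon)^2C$.

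\textbf{Conclusion.} Since $(L_t,\bar L_t)$ couples $(\pi_\beta,\pi_\beta^\epsilon)$ for every $t$, $W_2^2(\pi_\beta,\pi_\beta^\epsilon)\le \e|D_t|^2$. Letting $t\to\infty$ yields $W_2(\pi_\beta,\pi_\beta^\epsilon)\le \bar c\,\epsilon$ with $\bar c=m^{-1}2^{2r-1}L_h^2\sqrt{C}=\mathcal{O}((d/\beta)^{2r-1/2})$, which is the claimed order. The condition $\epsilon<m^{-1}$ is needed only so that $\pi_\beta^\epsilon$ is well defined (Definition~\ref{def.pi}).
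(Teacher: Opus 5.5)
Your proof is correct. It uses the same synchronous-coupling and Gr\"onwall mechanism as the paper, but you arrange the decomposition and the initialization the other way round.

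\textbf{The paper's route.} The paper starts the $h$-diffusion $\widetilde x_t$ and the $h_{MY,\epsilon}$-diffusion $\widetilde v_t$ from a common point $\widetilde\theta_0\sim\pi_\beta$. It splits $h(\widetilde x_t)-h_{MY,\epsilon}(\widetilde v_t)=[h-h_{MY,\epsilon}](\widetilde x_t)+[h_{MY,\epsilon}(\widetilde x_t)-h_{MY,\epsilon}(\widetilde v_t)]$. This uses the $(m/2)$-strong monotonicity of $h_{MY,\epsilon}$ and evaluates the regularization error along the stationary process $\widetilde x_t\sim\pi_\beta$, so only moments of $\pi_\beta$ enter. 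The price is that $\widetilde v_t$ is not stationary. The paper therefore has to invoke the $W_2$-convergence $\mathcal L(\widetilde v_t)\to\pi_\beta^\epsilon$ and a triangle inequality before letting $t\to\infty$.

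\textbf{Your route.} You use the $m$-monotonicity of $h$ and put the error on the $h_{MY,\epsilon}$-process. As you observed, this does not fit a common start. Starting both diffusions in stationarity, jointly coupled and independent of the Brownian motion, fixes it:
\begin{itemize}
\item $(L_t,\bar L_t)$ is a coupling of $(\pi_\beta,\pi_\beta^\epsilon)$ at every time;
\item no ergodicity result is needed beyond $\e|D_0|^2<\infty$;
\item the moment bound is the second estimate of Lemma~\ref{lemma.moment bounds of pi_beta} with $q=4r-1\in\mathbb N$.
\end{itemize}
You get the same order $(d/\beta)^{2r-1/2}$, but not the tabulated constant $\bar c=\sqrt{2c_\pi/m}$. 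You gain a factor $4$ in $\bar c^{\,2}$ from the monotonicity constant $m$ instead of $m/2$. You lose a factor $2^{4r-1}$ on the moment term, since the $\pi_\beta^\epsilon$ bound has $8$'s where the $\pi_\beta$ bound has $4$'s.

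\textbf{One inaccurate remark.} The condition $\epsilon<m^{-1}$ is not needed only for $\pi_\beta^\epsilon$ to be well defined. The moment bound for $\pi_\beta^\epsilon$ that you rely on comes from the dissipativity $\la h_{MY,\epsilon}(\theta),\theta\ra\ge\frac m4|\theta|^2-u(0)$ in Remark~\ref{remark.MY}. That rests on the $(m/2)$-strong convexity of $u_{MY,\epsilon}$, and hence on $\epsilon<m^{-1}$. So your route still uses the strong convexity of the Moreau--Yosida envelope; it enters through the moment estimate rather than the contraction step.
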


\begin{proof}
	Postponed to Appendix \hyperref[proof.contraction of pi_beta]{C.1}.
\end{proof}

\subsection{Auxiliary Processes}
\label{subsection.auxiliary}

To provide a convergence upper bound for $W_2(\mathcal{L}(\theta_n^\lambda),\pi_\beta)$, we introduce several auxiliary processes. We consider the underdamped Langevin SDE $(\widetilde{Z}_t,\widetilde{V}_t)_{t\geq 0}$ given by
\begin{align}\label{eq.KLD}
	\begin{split}
		\dd \widetilde{V}_t&=-[\gamma \widetilde{V}_t+h(\widetilde{Z}_t)]\,\dd t+\sqrt{2\gamma\beta^{-1}}\,\dd W_t,\\
		\dd \widetilde{Z}_t&=\widetilde{V}_t\,\dd t,
	\end{split}
\end{align}
with $(\widetilde{Z}_0,\widetilde{V}_0)=(\theta_0,\nu_0)$ being $\mathbb{R}^{2d}$-valued random variable, where $(W_t)_{t\geq 0}$ is the standard $d$-dimensional Brownian motion on $(\Omega,\mathcal{F},\p)$. Denote by $(\mathcal{F}_t)_{t\geq 0}$ the completed natural filtration of $(W_t)_{t\geq 0}$, which is assumed to be independent of $\mathcal{G}_\infty\vee\sigma(\theta_0,\nu_0)$. 

We next consider the underdamped Langevin SDE $(r_t,R_t)_{t\geq 0}$ related to the Moreau-Yosida regularization given by
\begin{align}\label{eq.MY kinetic Langevin}
	\begin{split}
		\dd R_t&=-[\gamma R_t+h_{MY,\epsilon}(r_t)]\,\dd t+\sqrt{2\gamma\beta^{-1}}\,\dd W_t,\\
		\dd r_t&=R_t\,\dd t,
	\end{split}
\end{align}
with $(r_0,R_0)=(\theta_0,\nu_0)$. 

For each $\lambda>0$, we consider the scaled process $(r_t^\lambda,R_t^\lambda)\coloneq (r_{\lambda t},R_{\lambda t})$ with $(r_t,R_t)_{t\geq 0}$ defined in \eqref{eq.MY kinetic Langevin}, which is given by
\begin{align}\label{scaled process}
	\begin{split}
		\dd R_t^\lambda&=-\lambda[\gamma R_t^\lambda+h_{MY,\epsilon}(r_t^\lambda)]\,\dd t+\sqrt{2\lambda\gamma\beta^{-1}}\,\dd W_t^\lambda,\\
		\dd r_t^\lambda&=\lambda R_t^\lambda\,\dd t,
	\end{split}
\end{align}
with $(r_t^\lambda,R_t^\lambda)=(\theta_0,\nu_0)$, where $W_t^\lambda\coloneq \lambda^{-1/2}W_{\lambda t}$. We denote the filtration of $(W_t^\lambda)_{t\geq 0}$ by $(\mathcal{F}_t^\lambda)_{t\geq 0}\coloneq (\mathcal{F}_{\lambda t})_{t\geq 0}$, and we note that $(\mathcal{F}_t^\lambda)_{t\geq 0}$ is independent of $\mathcal{G}_\infty\vee\sigma(\theta_0,\nu_0)$. 

Next, we define the continuous-time interpolation of tSGHMC \eqref{tSGHMC}-\eqref{eq.tamedSG}, denoted by $(Z_t^\lambda,V_t^\lambda)_{t\geq 0}$, as
\begin{align}\label{Continuous-time interpolation}
	\begin{split}
		\dd V_t^\lambda&=-\lambda[\gamma V_{\lfloor t\rfloor}^\lambda+H_\gamma(Z_{\lfloor t\rfloor}^\lambda,X_{\lceil t\rceil})]\,\dd t+\sqrt{2\lambda\gamma\beta^{-1}}\,\dd W_t^\lambda,\\
		\dd Z_t^\lambda&=\lambda V_{\lfloor t\rfloor}^\lambda\,\dd t,
	\end{split}
\end{align}
with $(Z_t^\lambda,V_t^\lambda)=(\theta_0,\nu_0)$. The process \eqref{Continuous-time interpolation} mimics the recursion \eqref{tSGHMC}-\eqref{eq.tamedSG} at grid points in the sense that $\mathcal{L}(\theta_n^\lambda,\nu_n^\lambda)=\mathcal{L}(Z_n^\lambda,V_n^\lambda)$ for all $n\in\mathbb{N}_0$. 

Furthermore, we define the auxiliary process $(\widehat{\zeta}_t^{s,u,v,\lambda},\widehat{V}_t^{s,u,v,\lambda})_{t\geq s}$ as
\begin{align*}
	\begin{split}
		\dd \widehat{V}_t^{s,u,v,\lambda}&=-\lambda[\gamma \widehat{V}_t^{s,u,v,\lambda}+h_{MY,\epsilon}(\widehat{\zeta}_t^{s,u,v,\lambda})]\,\dd t+\sqrt{2\lambda\gamma\beta^{-1}}\,\dd W_t^\lambda,\\
		\dd \widehat{\zeta}_t^{s,u,v,\lambda}&=\lambda \widehat{V}_t^{s,u,v,\lambda}\,\dd t,
	\end{split}
\end{align*}
with $(\widehat{\zeta}_s^{s,u,v,\lambda},\widehat{V}_s^{s,u,v,\lambda})=(u,v)$.

\begin{definition}\label{Changed Initial Processes}
	For any $0<\lambda\leq\lambda_{\max,\gamma}$ and $n\in\mathbb{N}_0$, set $T\coloneq \lfloor 1/\lambda\rfloor$ and define
	\begin{equation}\label{Auxiliary process with certain initial}
		\zeta_t^{\lambda,n}=\widehat{\zeta}_t^{nT,Z_{nT}^\lambda,V_{nT}^\lambda,\lambda},\quad V_t^{\lambda,n}=\widehat{V}_t^{nT,Z_{nT}^\lambda,V_{nT}^\lambda,\lambda}.
	\end{equation}
\end{definition}

We note that, by Definition~\ref{Changed Initial Processes}, the process $(\zeta_t^{\lambda,n},V_t^{\lambda,n})_{t\geq nT}$ is the underdamped Langevin process \eqref{scaled process} starting at time $nT$ with $(Z_{nT}^\lambda,V_{nT}^\lambda)$.

\subsection{Preliminary Estimates}\label{subsection.estimates}

In this section, we provide moment estimates for tSGHMC \eqref{tSGHMC}-\eqref{eq.tamedSG}, as well as the auxiliary process introduced in Section~\ref{subsection.auxiliary}.

Recall $\gamma_{\min}$ and $\lambda_{\max,\gamma}$ defined in \eqref{eq.friction restriction} and \eqref{eq.step-size restriction}, respectively. We provide below moment estimates for tSGHMC.

\begin{lemma}\label{lemma.2 moment of algorithm}
	Let Assumptions~\ref{assumption1.initial condition}-\ref{assumption3.strong convexity} hold. For $\gamma\geq\gamma_{\min}$ and $\lambda\leq\lambda_{\max,\gamma}$, we obtain that
 	\[\sup_{n\in\mathbb{N}_0}\e[|\theta_n^\lambda|^2]\leq \bar{C}_2,\quad\sup_{n\in\mathbb{N}_0}\e[|\nu_n^\lambda|^2]\leq \bar{B}_2,\]
	where $\bar{C}_2=\mathcal{O}(d/\beta)$ and $\bar{B}_2=\mathcal{O}(d/\beta)$ with their explicit expressions given in Table~\ref{tab.constants}.
\end{lemma}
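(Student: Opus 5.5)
We will prove the uniform second-moment bounds with a Lyapunov function adapted to kinetic Langevin dynamics. We use the quadratic form
\[
\mathcal{V}(\theta,\nu)\coloneq |\nu|^2+2\alpha\la\theta,\nu\ra+\kappa|\theta|^2,
\]
with $\alpha$ proportional to $\gamma$ (e.g.\ $\alpha=\gamma/2$) and $\kappa$ of order $\gamma^2$. We choose these constants so that $\mathcal{V}$ is equivalent to $|\theta|^2+|\nu|^2$. The key ingredients from the construction of $H_\gamma$ are the following. First, by Remark~\ref{remark.dissipativitiy of SG}, $\la\theta,H(\theta,x)\ra\geq\la\theta,(A(x)-mI_d)\theta\ra-\widetilde b(x)$. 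Since the taming factor $w(\theta)\coloneq(1+\gamma^{-1}|\theta|^{4r})^{-1/2}\in(0,1]$, this gives
\[
\la\theta,H_\gamma(\theta,x)\ra=(1-w)m|\theta|^2+w\la\theta,H(\theta,x)\ra\geq (1-w)m|\theta|^2+w\la\theta,(A(x)-mI_d)\theta\ra-w\widetilde b(x).
\]
After taking expectation in $x$, using $\e[A(X_0)]\succeq mI_d$, we get $\e[\la\theta,H_\gamma(\theta,X_0)\ra]\geq (1-w)m|\theta|^2-\e[\widetilde b(X_0)]\ge -C$. The paper's Lemma on dissipativity of $H_\gamma$ is presumably of the cleaner form $\ge c\,m|\theta|^2-C$; I would establish that first, perhaps by using $m$-strong convexity of $h$ together with $w$. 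Second, by \eqref{eq.upper bound of |H|} and $|\theta|^{2r}w\le\sqrt\gamma$, we have the linear-in-$\gamma$ growth
\[
|H_\gamma(\theta,x)|\le m|\theta|+K_H(1+|x|)^\rho(1+\sqrt\gamma)+m|\theta|,
\]
so $\e|H_\gamma(\theta,X_0)|^2\le C(|\theta|^2+\gamma)$.

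Next we expand $\e[\mathcal{V}(\theta_{n+1}^\lambda,\nu_{n+1}^\lambda)\mid\theta_n^\lambda,\nu_n^\lambda]$ using the recursion \eqref{tSGHMC}. We use the independence of $X_{n+1}$ and $\xi_{n+1}$ from $(\theta_n^\lambda,\nu_n^\lambda)$, so that cross terms with $\xi_{n+1}$ vanish. The first-order terms in $\lambda$ are
\[
-2\lambda\gamma|\nu|^2-2\lambda\la\nu,H_\gamma\ra+2\alpha\lambda|\nu|^2-2\alpha\lambda\gamma\la\theta,\nu\ra-2\alpha\lambda\la\theta,H_\gamma\ra+2\kappa\lambda\la\theta,\nu\ra.
\]
Choosing $\kappa=\alpha\gamma$ cancels the $\la\theta,\nu\ra$ terms. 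The term $-2\alpha\lambda\la\theta,H_\gamma\ra$ yields $-c\alpha\lambda m|\theta|^2$ plus a constant, by dissipativity. The term $-2\lambda\la\nu,H_\gamma\ra$ is bounded by Young's inequality, $\lambda\gamma|\nu|^2/2+2\lambda\gamma^{-1}|H_\gamma|^2$; its contribution $\lambda\gamma^{-1}(|\theta|^2+\gamma)$ is absorbed by $\alpha\lambda m|\theta|^2$ once $\gamma\ge\gamma_{\min}$. Finally, the noise contributes $2\lambda\gamma\beta^{-1}d$, which produces the $d/\beta$ scaling.

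The $\lambda^2$ terms, such as $\lambda^2\gamma^2|\nu|^2$, $\lambda^2|H_\gamma|^2$ and $\alpha\lambda^2\la\nu,H_\gamma\ra$, carry extra factors of $\gamma$. They are controlled by the restriction $\lambda\le 1/(4\gamma)$ and the bound on $\lambda_{\max,\gamma}$ (whose second part is designed for the higher moments). This yields
\[
\e\mathcal{V}_{n+1}\le(1-c\lambda m/\gamma)\,\e\mathcal{V}_n+C\lambda\gamma(d/\beta+1).
\]
Iterating gives $\sup_n\e\mathcal{V}_n\le\e\mathcal{V}_0+C\gamma^2(d/\beta+1)/m$. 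Dividing by the equivalence constants, which are of order $\gamma^2$ for $|\theta|^2$ and $1$ for $|\nu|^2$, gives the stated bounds.

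The main obstacle is the bookkeeping of the $\gamma$-dependence. We need the contraction rate (of order $\lambda m/\gamma$) to beat the perturbative terms coming from $|H_\gamma|^2\lesssim|\theta|^2+\gamma$, uniformly in $\gamma\ge\gamma_{\min}$. For the resulting bound on $|\theta|^2$ to be $\gamma$-free, we must ensure that the constant source term scales like $\lambda\gamma\cdot\gamma^{-2}\kappa$-compatible. This depends on precisely choosing $\alpha$ and $\kappa$; $\alpha=\gamma/2$ and $\kappa=\gamma^2/2$ would be my first attempt. I would check that the taming constant $\sqrt\gamma$ in the $H_\gamma$ bound only enters through $\lambda\gamma^{-1}\cdot\gamma$, which is harmless.
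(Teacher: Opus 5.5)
\textbf{Gap: the velocity bound is not $\gamma$-free.} Your position argument is essentially the paper's. With $\alpha=\gamma/2$ and $\kappa=\gamma^2/2$, your $\mathcal V=\tfrac12|\nu|^2+\tfrac12|\gamma\theta+\nu|^2$ is twice the paper's Lyapunov function $M_n$, minus its small correction $-\tfrac{\sigma\gamma^2}{4}|\theta_n^\lambda|^2$ with $\sigma=m/(8\gamma^2)$. The recursion $\e\mathcal V_{n+1}\le(1-c\lambda m/\gamma)\e\mathcal V_n+C\lambda\gamma$, together with $\mathcal V\ge\tfrac{\gamma^2}{4}|\theta|^2$, does give a $\gamma$-free bound on $\e[|\theta_n^\lambda|^2]$. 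The velocity step is where the proposal fails. $\sup_n\e\mathcal V_n$ is genuinely of order $\gamma^2$: both $\e\mathcal V_0\ge\tfrac{\gamma^2}{4}\e[|\theta_0|^2]$ and the stationary part $C\gamma^2(1+d/\beta)/m$ scale that way. So ``dividing by the order-one equivalence constant for $|\nu|^2$'' only yields
\[
\e[|\nu_n^\lambda|^2]\lesssim\gamma^2\bigl(\e[|\theta_0|^2]+(1+d/\beta)/m\bigr)+\e[|\nu_0|^2].
\]
The lemma asserts $\bar B_2=\e[|\nu_0|^2]+9m^2\bar C_2+9C_H/2+4d/\beta$, which does not depend on $\gamma$. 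This matters: $\bar B_2$ enters Lemma~\ref{lemma.mse of interpolation} and the constants $C_1,C_2,C_5,C_0$. Since $\gamma$ is eventually taken of order $\lambda^{-1/4}$, an extra factor $\gamma^2$ would destroy the rate in Theorem~\ref{Main Theorem}.

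\textbf{The missing step.} You need a second recursion for the velocity alone. It uses the friction contraction at rate $\lambda\gamma$, which is much faster than the rate $\lambda m/\gamma$ of the joint functional. Square $\nu_{n+1}^\lambda=(1-\lambda\gamma)\nu_n^\lambda-\lambda H_\gamma(\theta_n^\lambda,X_{n+1})+\sqrt{2\lambda\gamma\beta^{-1}}\xi_{n+1}$. Use $(1-\lambda\gamma)^2\le 1-\lambda\gamma$ and $2\lambda|\la\nu,H_\gamma\ra|\le\tfrac{\lambda\gamma}{2}|\nu|^2+\tfrac{2\lambda}{\gamma}|H_\gamma|^2$. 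With $\e_n$ the conditional expectation given $(\theta_n^\lambda,\nu_n^\lambda)$, this gives
\[
\e_n\bigl[|\nu_{n+1}^\lambda|^2\bigr]\le\Bigl(1-\frac{\lambda\gamma}{2}\Bigr)|\nu_n^\lambda|^2+\Bigl(\lambda^2+\frac{2\lambda}{\gamma}\Bigr)\bigl(2m^2|\theta_n^\lambda|^2+C_H\gamma\bigr)+\frac{2\lambda\gamma d}{\beta}.
\]
Now take expectations and insert the already proved bound $\e[|\theta_n^\lambda|^2]\le\bar C_2$. Using $\lambda\le 1/(4\gamma)$ and $\gamma\ge 1$, the source term is at most $\lambda\gamma\bigl(\tfrac92m^2\bar C_2+\tfrac94C_H+2d/\beta\bigr)$. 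Iterating then yields exactly $\bar B_2$.

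\textbf{Two smaller points.} First, your dissipativity computation only gives $\ge -C$ after averaging. Take the expectation before invoking Remark~\ref{remark.dissipativitiy of SG}. Then $\la\theta,\e[H_\gamma(\theta,X_0)]\ra=(1-w)m|\theta|^2+w\la\theta,h(\theta)\ra$ is a convex combination. By Remark~\ref{remark.dissipativity of h} it is therefore at least $\tfrac m2|\theta|^2-u(0)$. Second, in your growth bound replace the term $2m|\theta|$ by $m|\theta|$, using $w\,m|\theta|\le\sqrt\gamma\,m$ (valid since $2r\ge 1$ and $\gamma\ge 1$). This gives $\e[|H_\gamma(\theta,X_0)|^2]\le 2m^2|\theta|^2+C_H\gamma$. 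With your coefficient $8m^2$, absorbing the $|\theta|^2$ terms becomes tight against the paper's $\gamma_{\min}$, which only guarantees that $\gamma^2$ is a moderate multiple of $m$.
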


\begin{proof}
	Postponed to Appendix \hyperref[Proof.second moment bounds]{C.2}.
\end{proof}

\begin{lemma}\label{lemma.2q moment of algorithm}
	Let Assumptions~\ref{assumption1.initial condition}-\ref{assumption3.strong convexity} hold. For all $q\in[2,\infty)\cap\mathbb{N}$, $\gamma\geq\gamma_{\min}$ and $\lambda\leq\lambda_{\max,\gamma}$, we have that
	\[\sup_{n\in\mathbb{N}_0}\e[|\theta_n^\lambda|^{2q}]\leq\bar{C}_{2q},\]
	where $\bar{C}_{2q}=\mathcal{O}((d/\beta)^q)$ with the explicit expression given in Table~\ref{tab.constants}.
\end{lemma}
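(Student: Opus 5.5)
We will prove the higher moment bound by mimicking the second-moment argument of Lemma~\ref{lemma.2 moment of algorithm}, but applied to a suitable power of a Lyapunov-type quadratic form. For kinetic schemes the position alone does not contract in one step, so we work with a twisted norm
\[
\mathcal{V}_n\coloneq |\theta_n^\lambda|^2+\alpha_1\gamma^{-1}\la\theta_n^\lambda,\nu_n^\lambda\ra+\alpha_2\gamma^{-2}|\nu_n^\lambda|^2,
\]
with constants $\alpha_1,\alpha_2$ chosen as in the proof of Lemma~\ref{lemma.2 moment of algorithm}. This choice makes $\mathcal{V}_n$ equivalent to $|\theta_n^\lambda|^2+\gamma^{-2}|\nu_n^\lambda|^2$; the equivalence constants are absolute once $\gamma\geq\gamma_{\min}$. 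The second-moment proof yields a one-step drift
\[
\e[\mathcal{V}_{n+1}\mid\mathcal{H}_n]\le(1-c\lambda m/\gamma)\mathcal{V}_n+\lambda\gamma^{-1}\bar b,
\]
where $\mathcal{H}_n$ is the information up to step $n$. The key input is the dissipativity of the tamed gradient $H_\gamma$ from Lemma~\ref{Lemma.dissipativity of Expectation of SG} together with its linear-in-$\theta$ growth from Lemma~\ref{lemma.2nd bound of taming factor}. The linear growth is what allows every $\lambda^2$-term to be absorbed for $\lambda\le\lambda_{\max,\gamma}$.

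Next we write $\mathcal{V}_{n+1}=\mathcal{V}_n+\Delta_n$. Here $\Delta_n$ consists of a predictable drift part $\lambda D_n$ and a noise part. The noise part involves $\sqrt{\lambda\gamma\beta^{-1}}\xi_{n+1}$ and the centred stochastic-gradient fluctuation $H_\gamma(\theta_n^\lambda,X_{n+1})-\e[H_\gamma(\theta_n^\lambda,X_{n+1})\mid\theta_n^\lambda]$. We then expand
\[
\mathcal{V}_{n+1}^q=\mathcal{V}_n^q+q\mathcal{V}_n^{q-1}\Delta_n+\sum_{k=2}^q\binom{q}{k}\mathcal{V}_n^{q-k}\Delta_n^k .
\]
Conditioning on $\mathcal{H}_n$, the linear term contributes
\[
q\mathcal{V}_n^{q-1}\bigl(-c\lambda m\gamma^{-1}\mathcal{V}_n+\lambda\gamma^{-1}\bar b\bigr).
\]
For the higher-order terms, every power of $\Delta_n$ carries at least a factor $\lambda$, and the conditional moments satisfy $\e[|\Delta_n|^k\mid\mathcal{H}_n]\lesssim\lambda^{k/2}\mathcal{V}_n^{k/2}(\dots)$. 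The Gaussian and data fluctuations have bounded conditional moments because of the taming: $|H_\gamma(\theta,x)|\lesssim(1+|x|)^\rho\gamma^{1/2}(1+|\theta|)$. The data moments are finite by Assumption~\ref{assumption1.initial condition}, which supplies moments of order $12\rho r\ge 2q\rho$ for $q\le 6r$. The $k=2$ term is of order $\lambda\cdot q(q-1)2^{q-2}\gamma^{-1}\mathcal{V}_n^{q-1}(\mathcal{V}_n+1)$, using $(a+b)^{q-2}\le2^{q-3}(a^{q-2}+b^{q-2})$. It is absorbed into $-\tfrac{c}{2}\lambda m\gamma^{-1}\mathcal{V}_n^q$ precisely because of the restriction $\lambda\le 3m/(16q(q-1)2^{q-2}\gamma)$ in \eqref{eq.step-size restriction}; this explains the form of $\lambda_{q,\max,\gamma}$. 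The terms with $k\ge3$ carry extra factors $\lambda^{1/2}$ and are handled in the same way.

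Young's inequality then bounds the lower-order terms $\mathcal{V}_n^{q-1}$, $\mathcal{V}_n^{q-2}$, \dots\ by $\tfrac{c}{4}m\mathcal{V}_n^q+C(q)(d/\beta)^q$. The constant has this form because $\bar b=\mathcal{O}(d/\beta)$, coming from the Gaussian variance $d\beta^{-1}$. This gives
\[
\e[\mathcal{V}_{n+1}^q]\le(1-\tfrac{c}{4}\lambda m\gamma^{-1})\e[\mathcal{V}_n^q]+\lambda\gamma^{-1}C(q)(d/\beta)^q .
\]
Iterating, we obtain $\sup_n\e[\mathcal{V}_n^q]\le\e[\mathcal{V}_0^q]+4C(q)(d/\beta)^q/m$, which is finite by Assumption~\ref{assumption1.initial condition} since $2q\le12r$. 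The equivalence of $\mathcal{V}_n$ with $|\theta_n^\lambda|^2+\gamma^{-2}|\nu_n^\lambda|^2$ then gives $\e|\theta_n^\lambda|^{2q}\le\bar C_{2q}$.

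The main obstacle is controlling the cross terms $\e[\mathcal{V}_n^{q-k}\Delta_n^k\mid\mathcal{H}_n]$ uniformly in $\gamma$. The stochastic gradient enters the velocity with a factor $\lambda$ but is multiplied by $\gamma^{-1}$ or $\gamma^{-2}$ in $\mathcal{V}$. The tamed bound on $|H_\gamma|^2$ grows like $\gamma(1+|\theta|^2)$, and it must be checked that the resulting powers of $\gamma$ are dominated by $\lambda\gamma\le1/4$. If this bookkeeping fails, the fallback is to prove the recursion for $q=2$ first, using Lemma~\ref{lemma.2 moment of algorithm}, and then to induct on $q$, bounding the lower moments in the expansion by the already established $\bar C_{2(q-1)}$.
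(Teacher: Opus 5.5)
Your outline follows the paper's proof: your $\mathcal{V}_n$ is a rescaling of the paper's $M_n$, and the binomial expansion, absorption via $\lambda\le 3m/(16q(q-1)2^{q-2}\gamma)$, Young's inequality and iteration are the same steps. However, the absorption of the $k\ge 2$ terms fails as you set it up.

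As written, your $k=2$ bound $\lambda q(q-1)2^{q-2}\gamma^{-1}\mathcal{V}_n^{q-1}(\mathcal{V}_n+1)$ has a top-order part of order $\lambda\gamma^{-1}\mathcal{V}_n^q$. That is the same order as the contraction $-\tfrac{c}{2}\lambda m\gamma^{-1}\mathcal{V}_n^q$, so absorbing it would need $q(q-1)2^{q-2}\lesssim m$, and no restriction of the form $\lambda\le C/\gamma$ can provide that. For the restriction you cite to do the job, the $\mathcal{V}_n^q$-coefficient produced by $k\ge2$ must be $O(\lambda^2)$.

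The more serious problem is that, with $\Delta_n=\mathcal{V}_{n+1}-\mathcal{V}_n$ the full increment, its predictable part is not small. Take $\alpha_1=\alpha_2=2/(1-\sigma)$, which is the rescaling of $M_n$. Friction and transport then contribute $-\frac{2\lambda}{(1-\sigma)\gamma}|\nu_n|^2$ to the drift, which is about $-2\lambda\gamma\mathcal{V}_n$ when the velocity dominates (e.g.\ $\theta_n=-\gamma^{-1}\nu_n$). Its square gives $\binom{q}{2}\cdot 4\lambda^2\gamma^2\mathcal{V}_n^q$ at $k=2$. Absorbing this into the weak contraction $c\lambda m\gamma^{-1}\mathcal{V}_n^q$ would require $\lambda\lesssim m/(q^2\gamma^3)$, which $\lambda\le\lambda_{\max,\gamma}$ does not give. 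In effect, your drift inequality discards the strong velocity dissipation in the linear term, while its square survives in the quadratic term.

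This is exactly the bookkeeping you flagged, and it does fail. The induction fallback does not repair it, because the offending term is of top order $\mathcal{V}_n^q$ and is not controlled by lower moments.

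The missing idea is to remove this large negative drift pathwise before raising to the $q$-th power.

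\begin{itemize}
\item The paper first proves $M_{n+1}\le(1-\lambda\sigma\gamma)M_n+K_n$ pathwise. It does so by dropping the $|\nu_n|^2$-term, whose coefficient is nonpositive for $\gamma\ge\gamma_{\min}$ and $\lambda\le 1/(4\gamma)$.
\item Since $M_{n+1}\ge 0$, this gives $M_{n+1}^q\le(sM_n+K_n)^q$ with $s=1-\lambda m/(8\gamma)$, and the binomial expansion is applied to $(sM_n+K_n)^q$.
\item The averaged dissipativity of $H_\gamma$ (Lemma~\ref{Lemma.dissipativity of Expectation of SG}) enters only through $\e_n[K_n]\le\lambda\gamma\bar{C}_K$.
\item For the higher powers only the pathwise bound $\la\theta,H_\gamma(\theta,x)\ra\ge -m|\theta|^2-\widetilde{b}(x)$ is used. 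Hence the non-noise part of $K_n$ is of size $\lambda\gamma C_m|\theta_n|^2\le\frac{32C_m}{3}\lambda\gamma^{-1}M_n$, plus lower-order terms.
\item The top-order part of $\e_n[|K_n|^2]$ is then $\frac{32^2C_m^2\lambda^2}{3\gamma^2}M_n^2\le\frac{\lambda^2}{3}M_n^2$. This uses $\gamma\ge 32C_m$, a condition in $\gamma_{\min}$ that your sketch never invokes.
\item The Gaussian and data noise only feed $M_n^{q-1}$ and lower powers.
\item Finally, $q(q-1)2^{q-2}\lambda^2/3\le\lambda m/(16\gamma)$ is precisely the step-size restriction.
\end{itemize}

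With this reduction in place, the rest of your plan goes through: Young's inequality on the powers $M_n^{q-1}$, $M_n^{q-2}$, $M_n^{q/2}$, iteration, and $|\theta_n^\lambda|^2\le\frac{32}{3\gamma^2}M_n$.
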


\begin{proof}
	Postponed to Appendix \hyperref[Proof.Even Order Moment bounds]{C.2}.
\end{proof}

Next we present estimates for the one-step errors of $(Z_t^\lambda,V_t^\lambda)_{t\geq 0}$ defined in \eqref{Continuous-time interpolation}.

\begin{lemma}\label{lemma.mse of interpolation}
	Let Assumptions~\ref{assumption1.initial condition}-\ref{assumption3.strong convexity} hold. For all $t\geq 0$, $\gamma\geq\gamma_{\min}$ and $\lambda\leq\lambda_{\max,\gamma}$, we obtain that
	\[\e[|Z_t^\lambda-Z_{\bt}^\lambda|^2]\leq \lambda \bar{B}_2,\quad\e[|V_t^\lambda-V_{\bt}^\lambda|^2]\leq\lambda\gamma C_{1,v},\]
	where $C_{1,v}=\mathcal{O}(d/\beta)$ with the explicit expression given in Table~\ref{tab.constants}.
\end{lemma}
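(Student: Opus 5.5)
The plan is to integrate the interpolation \eqref{Continuous-time interpolation} over $[\lfloor t\rfloor,t]$, where the drift is frozen, and then bound each resulting term using the uniform moment bounds of Lemmas~\ref{lemma.2 moment of algorithm} and \ref{lemma.2q moment of algorithm}.

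\emph{Position increment.} Since $\dd Z_s^\lambda=\lambda V_{\bt}^\lambda\,\dd s$ on $[\bt,t]$, we have
\[Z_t^\lambda-Z_{\bt}^\lambda=\lambda(t-\bt)V_{\bt}^\lambda.\]
Hence $\e[|Z_t^\lambda-Z_{\bt}^\lambda|^2]\le\lambda^2\e[|\nu_{\bt}^\lambda|^2]\le\lambda^2\bar B_2$, using $\mathcal{L}(Z_n^\lambda,V_n^\lambda)=\mathcal{L}(\theta_n^\lambda,\nu_n^\lambda)$ and Lemma~\ref{lemma.2 moment of algorithm}. As $\lambda\le\lambda_{\max,\gamma}\le 1/(4\gamma)\le 1$ (because $\gamma\ge1$), this is at most $\lambda\bar B_2$.

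\emph{Velocity increment.} Similarly,
\[V_t^\lambda-V_{\bt}^\lambda=-\lambda(t-\bt)\big[\gamma V_{\bt}^\lambda+H_\gamma(Z_{\bt}^\lambda,X_{\ut})\big]+\sqrt{2\lambda\gamma\beta^{-1}}\,(W_t^\lambda-W_{\bt}^\lambda).\]
Applying $|a+b+c|^2\le3(|a|^2+|b|^2+|c|^2)$ gives three terms.
\begin{itemize}
\item The friction term contributes $3\lambda^2\gamma^2\bar B_2$.
\item The Brownian term contributes $3\cdot 2\lambda\gamma\beta^{-1}d$, since $\e|W_t^\lambda-W_{\bt}^\lambda|^2\le d$.
\item The tamed gradient term is the only delicate one.
\end{itemize}

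For the gradient term, the key point is a bound on $H_\gamma$ that is linear in $\theta$ with a $\sqrt\gamma$ factor. By \eqref{eq.upper bound of |H|},
\[|H_\gamma(\theta,x)|\le m|\theta|+\frac{K_H(1+|x|)^\rho(1+|\theta|^{2r})+m|\theta|}{\sqrt{1+\gamma^{-1}|\theta|^{4r}}}.\]
Using $1+|\theta|^{2r}\le \sqrt2\sqrt{1+|\theta|^{4r}}$ and $\sqrt{1+|\theta|^{4r}}\le\sqrt\gamma\sqrt{1+\gamma^{-1}|\theta|^{4r}}$ (as $\gamma\ge1$), this yields
\[|H_\gamma(\theta,x)|\le 2m|\theta|+\sqrt{2\gamma}\,K_H(1+|x|)^\rho.\]
This is the content of Lemma~\ref{lemma.2nd bound of taming factor}, which I would invoke if available. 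Since $X_{\ut}$ is independent of $Z_{\bt}^\lambda$ and $\e|X_0|^{2\rho}<\infty$ by Assumption~\ref{assumption1.initial condition}, we get
\[\e|H_\gamma(Z_{\bt}^\lambda,X_{\ut})|^2\le 8m^2\bar C_2+4\gamma K_H^2\e[(1+|X_0|)^{2\rho}].\]
Multiplied by $3\lambda^2$, this is at most $\lambda\gamma\cdot 3\lambda\big(8m^2\bar C_2+4K_H^2\e(1+|X_0|)^{2\rho}\big)$, and $\lambda\le1$ absorbs the extra factor.

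\emph{Collecting terms.} Using $\lambda\gamma\le1/4$, the friction term satisfies $3\lambda^2\gamma^2\bar B_2\le\lambda\gamma\cdot\tfrac34\bar B_2$. Summing all three contributions gives
\[C_{1,v}=\tfrac34\bar B_2+6d\beta^{-1}+24m^2\bar C_2+12K_H^2\e(1+|X_0|)^{2\rho}.\]
This constant is $\mathcal{O}(d/\beta)$ in the paper's convention, treating $\beta$-free constants as lower order.

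The main obstacle is the tamed-gradient estimate: one must make sure the superlinear growth of $H$ is fully neutralized by the taming factor at cost only $\sqrt\gamma$, so that only second moments, not $4r$-th moments, of $\theta$ are needed. The remaining steps are routine.
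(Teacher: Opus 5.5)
Your proposal is correct and follows essentially the paper's proof. Both integrate the frozen-drift dynamics over $[\lfloor t\rfloor,t]$, split the velocity increment into friction, tamed-gradient and Brownian parts via $|a+b+c|^2\le 3(|a|^2+|b|^2+|c|^2)$, and close with the grid-point moment bounds of Lemma~\ref{lemma.2 moment of algorithm} together with $\lambda\gamma\le 1/4$, $\gamma\ge 1$.

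The only difference is cosmetic. Your self-derived bound $|H_\gamma(\theta,x)|\le 2m|\theta|+\sqrt{2\gamma}\,K_H(1+|x|)^\rho$ produces a different but equally valid $\mathcal{O}(d/\beta)$ constant. Invoking Lemma~\ref{lemma.2nd bound of taming factor} directly, as the paper does, gives exactly the tabulated constant $C_{1,v}=3\bar B_2/4+3m^2\bar C_2/2+3C_H/4+6d/\beta$ of Table~\ref{tab.constants}.
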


\begin{proof}
	Postponed to Appendix \hyperref[Proof.MSE of interpolation]{C.2}.
\end{proof}

The following lemmas state that the underdamped Langevin SDEs \eqref{eq.MY kinetic Langevin} and \eqref{Auxiliary process with certain initial} have uniform in time second moments.

\begin{lemma}\label{lemma.2nd bound MY kinetic}
	Let Assumptions~\ref{assumption1.initial condition}-\ref{assumption3.strong convexity} hold, and let $\gamma\geq\gamma_{\min}$. Then, we obtain that
	\[\sup_{t\geq 0}\e[|r_t|^2]\leq C_r,\quad\sup_{t\geq 0}\e[|R_t|^2]\leq C_R,\]
	where $C_r=\mathcal{O}(d/\beta)$ and $C_R=\mathcal{O}(d/\beta)$ with their explicit expressions given in Table~\ref{tab.constants}.
\end{lemma}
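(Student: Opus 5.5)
We will use a Lyapunov function of the type used for underdamped Langevin dynamics under strong convexity, adapted to $h_{MY,\epsilon}$. We work with the unscaled process $(r_t,R_t)$ of \eqref{eq.MY kinetic Langevin}; the scaled process has the same marginals at rescaled times, so the suprema coincide. Define
\[
\mathcal{V}(\theta,\nu)\coloneq u_{MY,\epsilon}(\theta)+\frac{1}{2}|\nu|^2+\frac{\gamma}{4}\la\theta,\nu\ra+\frac{\gamma^2}{8}|\theta|^2 ,
\]
or more generally a quadratic form $|\nu+\tfrac{\gamma}{2}\theta|^2$-type correction plus $u_{MY,\epsilon}$, in the spirit of Eberle--Guillin--Zimmer. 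The first step is to check that $\mathcal{V}$ is equivalent to $|\theta|^2+|\nu|^2$. By \eqref{eq.uMY le u}, $u_{MY,\epsilon}\ge 0$, and by Young's inequality $\frac{\gamma}{4}|\la\theta,\nu\ra|\le \frac14|\nu|^2+\frac{\gamma^2}{16}|\theta|^2$. Hence
\[
\mathcal{V}\ge \frac14|\nu|^2+\frac{\gamma^2}{16}|\theta|^2 .
\]
For the upper bound at time $0$ we use $u_{MY,\epsilon}(\theta)\le u(0)+|h(0)||\theta|+\frac K2|\theta|^2$, which follows from the $K$-Lipschitz gradient (Remark~\ref{remark.MY}) together with Lemma~\ref{lemma.hMY leq h} and \eqref{eq.uMY le u}. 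Combined with Assumption~\ref{assumption1.initial condition}, this gives $\e[\mathcal{V}(\theta_0,\nu_0)]<\infty$.

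The second step is to apply Itô's formula and derive a drift inequality $\mathcal{L}\mathcal{V}\le -c\gamma^{-1}\mathcal{V}+C$, or some rate proportional to $m/\gamma$. Computing the generator
\[
\mathcal{L}=\la\nu,\nabla_\theta\ra-\la\gamma\nu+h_{MY,\epsilon}(\theta),\nabla_\nu\ra+\gamma\beta^{-1}\Delta_\nu ,
\]
the cross terms $\la\nu,h_{MY,\epsilon}\ra$ cancel, and we obtain
\[
\mathcal{L}\mathcal{V}= -\frac{3\gamma}{4}|\nu|^2+\frac{\gamma}{4}|\nu|^2-\frac{\gamma}{4}\la\theta,h_{MY,\epsilon}(\theta)\ra-\frac{\gamma^2}{4}\la\theta,\nu\ra+\frac{\gamma^2}{4}\la\theta,\nu\ra+\gamma\beta^{-1}d .
\]
The coefficient $\frac{\gamma^2}{8}$ was chosen precisely so that the $\la\theta,\nu\ra$ terms cancel; in the computation we will tune the coefficients to guarantee this. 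Using the dissipativity $\la h_{MY,\epsilon}(\theta),\theta\ra\ge \frac m4|\theta|^2-u(0)$ from Remark~\ref{remark.MY}, we get
\[
\mathcal{L}\mathcal{V}\le -\frac{\gamma}{2}|\nu|^2-\frac{\gamma m}{16}|\theta|^2+\frac{\gamma}{4}u(0)+\gamma\beta^{-1}d .
\]
To close the inequality we must bound $\mathcal{V}$ from above by $|\nu|^2$, $|\theta|^2$, and constants, and here the term $u_{MY,\epsilon}$ is the obstacle. We propose to use convexity: $u_{MY,\epsilon}(\theta)\le u_{MY,\epsilon}(0)+\la h_{MY,\epsilon}(\theta),\theta\ra\le u(0)+\la h_{MY,\epsilon}(\theta),\theta\ra$. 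We therefore keep the term $-\frac{\gamma}{4}\la\theta,h_{MY,\epsilon}\ra$ partially unexpanded: half of it absorbs $u_{MY,\epsilon}$, and the other half, via dissipativity, gives the $|\theta|^2$ decay. The upper bound $\mathcal{V}\le u_{MY,\epsilon}+\frac34|\nu|^2+\frac{3\gamma^2}{16}|\theta|^2$ then shows that the $\theta$-part of $\mathcal{V}$ is of size $\gamma^2|\theta|^2$, while the decay is only $\gamma m|\theta|^2$. This yields a rate of order $m/\gamma$, which is consistent with the factor $\exp(-\lambda m n/(2\gamma))$ in Theorem~\ref{Main Theorem}. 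The conditions $\gamma\ge\gamma_{\min}$, in particular $\gamma^2\ge K+m/2$ and $\gamma\ge 14m$, are what make the coefficient comparisons work.

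Finally, Grönwall's inequality applied to $\e[\mathcal{V}(r_t,R_t)]$ gives
\[
\e[\mathcal{V}(r_t,R_t)]\le \e[\mathcal{V}(\theta_0,\nu_0)]+C\gamma/c' ,
\]
uniformly in $t$. Before applying Grönwall, a localization argument with stopping times is needed to remove the martingale term, using the fact that $h_{MY,\epsilon}$ is Lipschitz so the SDE has a global solution with finite moments. Combining this with the lower bound from the first step yields $\sup_t\e|R_t|^2\le 4\sup_t\e\mathcal{V}$ and $\sup_t\e|r_t|^2\le 16\gamma^{-2}\sup_t\e\mathcal{V}$. These constants are $\mathcal{O}(d/\beta)$ plus initial-moment terms. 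The step we expect to be the main obstacle is obtaining constants independent of $\gamma$, and of $\epsilon$ through $K$: the $\gamma$-factors in the drift bound and the normalization of $\mathcal{V}$ must be balanced carefully, using $\gamma\ge1$ and $\gamma^2\ge K$ to control the initial term $\frac K2|\theta_0|^2\le\frac{\gamma^2}{2}|\theta_0|^2$.
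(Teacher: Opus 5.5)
Your position estimate works, and it takes a different route from the paper's. The paper uses the Eberle--Guillin--Zimmer function $\bar{\mathcal V}$ with $\sigma=m/(8\gamma^2)$. It first proves $\la\theta,h_{MY,\epsilon}(\theta)\ra\ge 2\sigma\big(u_{MY,\epsilon}(\theta)+\gamma^2|\theta|^2/4\big)-2A_c/\beta$, which needs the quadratic upper bound on $u_{MY,\epsilon}$ and $\gamma^2\ge 2(K+2|h(0)|)/3$, and then invokes \cite[Lemma~5.2]{liang2024non}. You instead compute $\mathcal L\mathcal V$ directly and absorb $u_{MY,\epsilon}$ by convexity, $u_{MY,\epsilon}(\theta)\le u(0)+\la h_{MY,\epsilon}(\theta),\theta\ra$, so $K$ never enters the drift comparison. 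With rate $c=\min\{\gamma/8,m/(6\gamma)\}$ and $u_{MY,\epsilon}(\theta_0)\le u(\theta_0)$, the bound $16\gamma^{-2}\sup_t\e[\mathcal V(r_t,R_t)]$ is indeed uniform in $\gamma$ and $\epsilon$. There is a minor slip: the $|\nu|^2$ coefficient in $\mathcal L\mathcal V$ is $-\frac{3\gamma}{4}$, and the extra $+\frac{\gamma}{4}|\nu|^2$ you wrote is spurious, though harmless.

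The gap is the velocity bound. The inequality $\sup_t\e[|R_t|^2]\le 4\sup_t\e[\mathcal V(r_t,R_t)]$ cannot give a $\gamma$-independent constant, because $\mathcal V$ contains $\frac{\gamma^2}{8}|\theta|^2$:
\begin{itemize}
\item the Grönwall term is of order $\gamma^2(u(0)+d/\beta)/m$;
\item $4\e[\mathcal V(\theta_0,\nu_0)]$ contains $\frac{3\gamma^2}{4}\e[|\theta_0|^2]$;
\item this is intrinsic, not loose constants, since $\mathcal V\ge\frac{\gamma^2}{16}|\theta|^2$ forces $\e_{\Pi_\beta^\epsilon}[\mathcal V]\ge\frac{\gamma^2}{16}\e_{\pi_\beta^\epsilon}[|\theta|^2]$.
\end{itemize}
You would therefore get $C_R=\mathcal O(\gamma^2 d/\beta)$, whereas the lemma (Table~\ref{tab.constants}) asserts a $\gamma$-free $C_R$. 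The paper also reuses this argument for $V_t^{\lambda,n}$ in Lemma~\ref{lemma.2nd bound of auxiliary process}, and the resulting $C_V^\#$ feeds into $C_1$ and $C_5$, where $\gamma$-uniformity matters.

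The missing step is a separate estimate on $|R_t|^2$ alone, which is what the paper does. Itô's formula (localized) together with $-2\la R,h_{MY,\epsilon}(r)\ra\le\gamma|R|^2+\gamma^{-1}|h_{MY,\epsilon}(r)|^2$ gives
\[
\frac{\dd}{\dd t}\e[|R_t|^2]\le-\gamma\e[|R_t|^2]+\gamma^{-1}\e[|h_{MY,\epsilon}(r_t)|^2]+\frac{2\gamma d}{\beta}.
\]
Now combine three facts:
\begin{itemize}
\item $|h_{MY,\epsilon}(r)|\le K|r|+|h(0)|$, by Lemma~\ref{lemma.hMY leq h} and Remark~\ref{remark.MY};
\item your uniform bound $\e[|r_t|^2]\le C_r$;
\item $\gamma\ge\max\{1,K\}$.
\end{itemize}
These yield $\e[|R_t|^2]\le\e[|\nu_0|^2]+2|h(0)|^2+2C_r+2d/\beta$. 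This is where the condition $\gamma\ge K$ is actually used.
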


\begin{proof}
	Postponed to Appendix \hyperref[proof.2nd bound MY kinetic]{C.2}.
\end{proof}

\begin{lemma}\label{lemma.2nd bound of auxiliary process}
	Let Assumptions~\ref{assumption1.initial condition}-\ref{assumption3.strong convexity} hold, and let $\gamma\geq\gamma_{\min}$ and $\lambda\leq\lambda_{\max,\gamma}$. Then, we obtain that
	\[\sup_{n\in\mathbb{N}_0}\sup_{t\geq nT}\e[|\zeta_t^{\lambda,n}|^2]\leq C_\zeta^\#,\quad \sup_{n\in\mathbb{N}_0}\sup_{t\geq nT}\e[|V_t^{\lambda,n}|^2]\leq C_V^\#,\]
	where $C_\zeta^\#=\mathcal{O}(d/\beta)$ and $C_V^\#=\mathcal{O}(d/\beta)$ with their explicit expressions given in Table~\ref{tab.constants}.
\end{lemma}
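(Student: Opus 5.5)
The plan is a Lyapunov-function argument for the underdamped Langevin dynamics with potential $u_{MY,\epsilon}$. We run it on each auxiliary process $(\zeta_t^{\lambda,n},V_t^{\lambda,n})_{t\geq nT}$ separately. The resulting bound has the form: a contracted multiple of the value at the initial time $nT$, plus a constant. The initial value involves only $\e[|Z_{nT}^\lambda|^2]$ and $\e[|V_{nT}^\lambda|^2]$, and these are uniformly bounded by $\bar C_2$ and $\bar B_2$ through Lemma~\ref{lemma.2 moment of algorithm}. Since $\mathcal{L}(Z_{nT}^\lambda,V_{nT}^\lambda)=\mathcal{L}(\theta_{nT}^\lambda,\nu_{nT}^\lambda)$, this is legitimate. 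Taking the supremum over $n$ and over $t\geq nT$ then gives the claim.

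Concretely, since the process is a time change by $\lambda$ of \eqref{eq.MY kinetic Langevin} with random initial data $(Z_{nT}^\lambda,V_{nT}^\lambda)$, it suffices to prove a uniform-in-time bound for \eqref{eq.MY kinetic Langevin} started from an arbitrary initial law with finite second moments. The bound should depend only on those second moments. This is exactly the structure underlying Lemma~\ref{lemma.2nd bound MY kinetic}, and the key point is that the initial data is independent of the driving Brownian increments after $nT$, because $(\mathcal{F}_t^\lambda)$ is independent of $\mathcal{G}_\infty\vee\sigma(\theta_0,\nu_0)$. We take the standard Lyapunov function
\[\mathcal{V}(\theta,\nu)=u_{MY,\epsilon}(\theta)+\tfrac14\gamma^2\big(|\theta+\gamma^{-1}\nu|^2+|\gamma^{-1}\nu|^2-\lambda_0|\theta|^2\big),\]
with a small $\lambda_0\in(0,1/4]$, following Eberle--Guillin--Zimmer / Chau et al. We then apply Itô's formula. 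The dissipativity $\la h_{MY,\epsilon}(\theta),\theta\ra\geq \frac m4|\theta|^2-u(0)$ from Remark~\ref{remark.MY} and the Lipschitz bound $K$ on $h_{MY,\epsilon}$ give a drift inequality $\mathcal{L}\mathcal{V}\leq -c\gamma\mathcal{V}+C(d/\beta+u(0))\gamma$, with $c$ of order $m/\gamma^2$ times constants. Gronwall then yields $\e[\mathcal V(\zeta_t,V_t)]\leq e^{-c\lambda\gamma (t-nT)}\e[\mathcal V(Z_{nT}^\lambda,V_{nT}^\lambda)]+C'$. The lower bound $\mathcal V\gtrsim \gamma^2|\theta|^2+|\nu|^2$, which follows from $u_{MY,\epsilon}\geq0$ and $\lambda_0\le 1/4$, converts this into the desired second-moment bounds.

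For the initial term, use $u_{MY,\epsilon}(\theta)\leq u_{MY,\epsilon}(0)+|h_{MY,\epsilon}(0)||\theta|+\frac K2|\theta|^2$ together with $|h_{MY,\epsilon}(0)|\le|h(0)|$ from Lemma~\ref{lemma.hMY leq h}. This bounds $\e[\mathcal V(Z_{nT}^\lambda,V_{nT}^\lambda)]$ by $u(0)+|h(0)|^2+(K+\gamma^2)\bar C_2+\bar B_2$, which is $\mathcal{O}(d/\beta)$.

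The main obstacle is getting the drift constant right so that all $\gamma$-dependence cancels and the constants are independent of $\epsilon$ through $K$. Here the restrictions in $\gamma_{\min}$ are needed, namely $\gamma^2\geq K+m/2$, $\gamma^2\geq \frac23(K+2|h(0)|)$, $\gamma\geq 14m$ and $\gamma\ge\sqrt m$. They ensure that the cross terms $\la h_{MY,\epsilon}(\theta),\nu\ra$ and the term $K|\theta|^2$ are absorbed by the $\gamma^2$-weighted quadratic parts. I would simply reuse the computation in the proof of Lemma~\ref{lemma.2nd bound MY kinetic} verbatim, replacing the deterministic initial moments by $\bar C_2,\bar B_2$, and define $C_\zeta^\#,C_V^\#$ accordingly.
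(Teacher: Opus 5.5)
Your position estimate is sound, and it takes a slightly different route from the paper. The paper restarts the Lyapunov argument of Lemma~\ref{lemma.2nd bound MY kinetic} at time $nT$. It then controls $\e[\bar{\mathcal V}(Z_{nT}^\lambda,V_{nT}^\lambda)]$ by citing \cite[Lemma~5.4]{liang2024non}, which yields a constant in terms of $\e[u(\theta_0)],\e[|\theta_0|^2],\e[|\nu_0|^2]$. You instead bound the initial Lyapunov value directly, via $u_{MY,\epsilon}(\theta)\le u(0)+|h(0)|\,|\theta|+\frac K2|\theta|^2$ and the grid-point moments $\bar C_2,\bar B_2$ of Lemma~\ref{lemma.2 moment of algorithm}. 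After dividing by $\frac{(1-2\lambda_0)\gamma^2}{8}$ and using $\gamma^2\ge K$ and $\gamma\ge 1$, this gives a $\gamma$-independent $C_\zeta^\#=\mathcal O(d/\beta)$. It also has the merit of relying only on estimates already proved for tSGHMC itself.

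The gap is in the velocity. You read off $\e[|V_t^{\lambda,n}|^2]$ from $\mathcal V\gtrsim|\nu|^2$, but the coefficient of $|\nu|^2$ in $\mathcal V$ is of order one, while your bound on $\e[\mathcal V]$ is of order $\gamma^2$. The initial term contains $\frac{\gamma^2}{2}\bar C_2$. The stationary term is $(d+A_c)/(\beta\lambda_0)\asymp\gamma^2(d+A_c)/(m\beta)$, because the dissipativity requirement $\frac m4\ge 2\lambda_0(\frac K2+|h(0)|+\frac{\gamma^2}{4})$ forces $\lambda_0\asymp m/\gamma^2$. So this route only gives $\sup_n\sup_{t\ge nT}\e[|V_t^{\lambda,n}|^2]\lesssim\gamma^2\bigl(\bar C_2+(d+A_c)/(m\beta)\bigr)$. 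That is not the $\gamma$-independent constant $C_V^\#=\bar B_2+2|h(0)|^2+2C_\zeta^\#+2d/\beta$ of Table~\ref{tab.constants}.

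This is not cosmetic. $C_V^\#$ enters $C_1$ and $C_5$ in Proposition~\ref{proposition: W_2 - interpolation and auxiliary} multiplied by explicit powers of $\gamma$ (e.g.\ $8\lambda^2\gamma^2 C_V^\#$ in $\mathsf{Z}_2$). A $\gamma^2$ loss would therefore change the $\gamma$-exponents in the final bounds.

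The missing step is a separate estimate that uses the friction directly, as in the second half of the proof of Lemma~\ref{lemma.2nd bound MY kinetic}:
\begin{itemize}
\item apply It\^o's formula, with localisation, to $e^{\lambda\gamma(t-nT)}|V_t^{\lambda,n}|^2$;
\item absorb the cross term via $-2\lambda\langle V,h_{MY,\epsilon}(\zeta)\rangle\le\lambda\gamma|V|^2+\lambda\gamma^{-1}|h_{MY,\epsilon}(\zeta)|^2$;
\item use $|h_{MY,\epsilon}(\zeta)|^2\le 2|h(0)|^2+2K^2|\zeta|^2$ together with your position bound;
\item conclude with $\e[|V_{nT}^\lambda|^2]\le\bar B_2$ and $\gamma\ge\max\{1,K\}$.
\end{itemize}
If your plan to ``reuse Lemma~\ref{lemma.2nd bound MY kinetic} verbatim'' was meant to include that second half, state it explicitly. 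The velocity mechanism you actually describe does not give the stated bound.
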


\begin{proof}
	Postponed to Appendix \hyperref[proof.2nd bound of auxiliary process]{C.2}.
\end{proof}

\subsection{Proof of the Main Theorems}
\label{Proof of the Main Theorems}

We now outline the main steps and intermediate results used to establish Theorem~\ref{Main Theorem}. To derive a non-asymptotic bound in the Wasserstein-2 distance between $\mathcal{L}(\theta_n^\lambda)$ and the target distribution $\pi_\beta$, we begin by considering the following splitting using $(Z_t^\lambda,V_t^\lambda)_{t\geq 0}$ defined in \eqref{Continuous-time interpolation}: For any $n\in\mathbb{N}_0$ and $t\in(nT,(n+1)T]$, we have
\begin{equation}\label{Interpolation splitting}
	W_2(\mathcal{L}(Z_t^\lambda),\pi_\beta)\leq W_2(\mathcal{L}(Z_t^\lambda),\pi_\beta^\epsilon)+W_2(\pi_\beta^\epsilon,\pi_\beta).
\end{equation}
A convergence upper bound for of the second term on the right-hand side of \eqref{Interpolation splitting} has been established in Lemma~\ref{lemma.contraction of pi_beta} using Moreau-Yosida regularization described in Section~\ref{subsection.MY}. To upper bound the first term on the right-hand side of \eqref{Interpolation splitting}, we further split it as follows: For all $n\in\mathbb{N}_0$ and $t\in(nT,(n+1)T]$, we have
\begin{equation}\label{eq.auxiliary splitting}
	W_2(\mathcal{L}(Z_t^\lambda),\pi_\beta^\epsilon)\leq W_2(\mathcal{L}(Z_t^\lambda),\mathcal{L}(\zeta_t^{\lambda,n}))+W_2(\mathcal{L}(\zeta_t^{\lambda,n}),\mathcal{L}(r_t^\lambda))+W_2(\mathcal{L}(r_t^\lambda),\pi_\beta^\epsilon),
\end{equation}
where $(\zeta_t^{\lambda,n})_{t\geq 0}$ and $(r_t^\lambda)_{t\geq 0}$ are processes introduced in \eqref{Auxiliary process with certain initial} and \eqref{scaled process}, respectively.

At this stage, our task reduces to obtain an upper bound for each of the terms on the right-hand side of \eqref{eq.auxiliary splitting}. To this end, we provide below an upper bound for $W_2(\mathcal{L}(Z_t^\lambda),\mathcal{L}(\zeta_t^{\lambda,n}))$, which can be viewed as a convergence result for tSGHMC to the associated underdamped Langevin SDE.

\begin{proposition}\label{proposition: W_2 - interpolation and auxiliary}
	Let Assumptions~\ref{assumption1.initial condition}-\ref{assumption3.strong convexity} hold. Then, we have, for all $n\in\mathbb{N}_0$, $t\in(nT,(n+1)T]$, $\gamma\geq\gamma_{\min}$ and $\lambda\leq\lambda_{\max,\gamma}$, that
	\begin{equation}
		W_2(\mathcal{L}(Z_t^\lambda,V_t^\lambda),\mathcal{L}(\zeta_t^{\lambda,n},V_t^{\lambda,n}))\leq C_0\left(\lambda\gamma+\sqrt{\frac{S_{\lambda,\gamma}}{\gamma}}\right),
	\end{equation}
	where $C_0=\mathcal{O}((d/\beta)^{1/2})$, and $S_{\lambda,\gamma}=C_1\lambda^2\gamma^4+C_2\lambda\gamma+C_3\gamma^{-3}+C_4\gamma^{-1}\epsilon^2$ with $C_1=\mathcal{O}((d/\beta)^{2r})$, $C_2=\mathcal{O}((d/\beta)^{2r})$, $C_3=\mathcal{O}((d/\beta)^{6r})$, and $C_4=\mathcal{O}((d/\beta)^{4r-1})$. The explicit expressions are given in Table~\ref{tab.constants}.
\end{proposition}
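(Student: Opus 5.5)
The argument is a synchronous coupling on each block $(nT,(n+1)T]$. Both $(Z_t^\lambda,V_t^\lambda)$ and $(\zeta_t^{\lambda,n},V_t^{\lambda,n})$ are driven by the same Brownian motion $W^\lambda$ and agree at time $nT$. Hence $W_2^2$ is bounded by $\e[|Z_t^\lambda-\zeta_t^{\lambda,n}|^2+|V_t^\lambda-V_t^{\lambda,n}|^2]$, and the noise cancels in the differences.

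Rather than estimate $(Z-\zeta,V-V)$ directly, we would pass to the twisted coordinates adapted to the kinetic dynamics. These are $a_t\coloneq Z_t^\lambda-\zeta_t^{\lambda,n}$ and $b_t\coloneq a_t+\gamma^{-1}(V_t^\lambda-V_t^{\lambda,n})$, with the modified norm $|a|^2+|b|^2$, which is equivalent to the Euclidean one since $\gamma\ge1$. In these coordinates the drift of $b$ is
\[
-\lambda\gamma^{-1}\bigl[h_{MY,\epsilon}(\zeta)-h_{MY,\epsilon}(Z)\bigr]
\]
plus error terms. Using that $h_{MY,\epsilon}$ is $K$-Lipschitz and $(m/2)$-strongly monotone (Remark~\ref{remark.MY}), together with $\gamma\ge\gamma_{\min}$ (in particular $\gamma^2\gtrsim K$), a standard computation gives a contraction
\[
\frac{\dd}{\dd t}\e\bigl[|a_t|^2+|b_t|^2\bigr]\le-\frac{c\lambda m}{\gamma}\e\bigl[|a_t|^2+|b_t|^2\bigr]+\text{(error terms)}.
\]
We would apply Young's inequality to each error term with weight $\lambda m/\gamma$. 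Integrating over an interval of length at most $T\approx1/\lambda$ then yields a bound of the form $\gamma\,\lambda^{-1}\cdot\lambda\,(\cdots)$, i.e.\ of order $S_{\lambda,\gamma}/\gamma$. This explains the shape $\sqrt{S_{\lambda,\gamma}/\gamma}$ in the statement.

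The error terms come from replacing $h_{MY,\epsilon}(Z_t)$, $\gamma V_t$ by $H_\gamma(Z_{\bt},X_{\ut})$, $\gamma V_{\bt}$. We would split them as follows.
\begin{enumerate}
\item Time discretisation, $Z_t-Z_{\bt}$ and $V_t-V_{\bt}$. These are handled by Lemma~\ref{lemma.mse of interpolation} and local Lipschitzness of $h$ combined with the higher moments of Lemma~\ref{lemma.2q moment of algorithm}. This gives $\lambda$-type contributions; the $\gamma V$ part, via $\lambda\gamma C_{1,v}\cdot\gamma^2$-type factors, produces $C_1\lambda^2\gamma^4$ and $C_2\lambda\gamma$.
\item Taming error, $H_\gamma-H$. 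Its mean square is controlled by Lemma~\ref{lemma.mse - H_lambda and H}, which should give powers $\gamma^{-2}$ against $(1+|\theta|)^{6r}$-type moments, hence $C_3\gamma^{-3}$ after the $\gamma^{-1}$ weight.
\item Regularisation error, $h-h_{MY,\epsilon}$. By Lemma~\ref{lemma.error of MY regularization} this is $\le C(1+|\theta|)^{4r-1}\epsilon$, giving $C_4\gamma^{-1}\epsilon^2$.
\item Stochastic-gradient noise, $H(Z_{\bt},X_{\ut})-h(Z_{\bt})$. This is a martingale-difference term: conditionally on $\mathcal{G}_{\bt}$ it has mean zero, and $X_{\ut}$ is independent of the past. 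So its cross term with $(a_{\bt},b_{\bt})$ vanishes, and only the piece involving $(a_t-a_{\bt},b_t-b_{\bt})$ survives. That piece contributes $\lambda^2$-small quantities, which we absorb into $C_1,C_2$.
\end{enumerate}
The remaining $C_0\lambda\gamma$ term arises from the change of coordinates itself and the one-step lag $V_{\bt}$ in $\dd Z$: the direct estimate $|V_t-V_{\bt}|\lambda$ contributes at the $\lambda\gamma$ scale without the square root.

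The main obstacle is twofold. First, we must arrange the twisted coordinates so that the $\gamma V$ lag does not cost a factor $\gamma^{3/2}$, which was the source of the $1/5$ rate in prior work. Second, we must exploit the martingale structure of the noise so that no $\sqrt{\lambda\gamma}$ term appears. We would first try the coupling with $b$ defined using $V_{\bt}$ rather than $V_t$, making the Itô-free ODE for $(a,b)$ exact piecewise. We would then bound the $\e[\langle b_t,\cdot\rangle]$ cross terms by conditioning on $\mathcal{F}^\lambda_{\bt}\vee\mathcal{G}_{\bt}$. Finally, moment constants are collected from Lemmas~\ref{lemma.2q moment of algorithm} and \ref{lemma.2nd bound of auxiliary process} to get the stated $d/\beta$ orders.
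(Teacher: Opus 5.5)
There is a genuine gap, and it sits in the $\gamma$-bookkeeping that the statement is about.

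\textbf{The twisted norm loses $\gamma^2$.} It is not uniformly equivalent to the Euclidean norm on $(Z_t^\lambda-\zeta_t^{\lambda,n},V_t^\lambda-V_t^{\lambda,n})$. Since $V_t^\lambda-V_t^{\lambda,n}=\gamma(b_t-a_t)$, you only have $|a_t|^2+|V_t^\lambda-V_t^{\lambda,n}|^2\le 3\gamma^2(|a_t|^2+|b_t|^2)$, and the factor $\gamma^2$ is attained at $a_t=0$. So even granting your claim $\e[|a_t|^2+|b_t|^2]\lesssim S_{\lambda,\gamma}/\gamma$, you would get only $W_2\lesssim\sqrt{\gamma S_{\lambda,\gamma}}$. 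That is a factor $\gamma$ worse than stated.

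\textbf{The claim itself does not follow.} The forcing in the $b$-equation is $\lambda\gamma^{-1}E_t$, with $E_t=H_\gamma(Z_{\bt}^\lambda,X_{\ut})-h_{MY,\epsilon}(Z_t^\lambda)$. Young's inequality with weight $\lambda m/\gamma$, integrated over $t-nT\le 1/\lambda$, gives only $\e[|a_t|^2+|b_t|^2]\lesssim \gamma^{-1}\sup_s\e[|E_s|^2]$. For the taming part, where $\e[|H_\gamma-H|^2]\sim\gamma^{-2}$, this is $\gamma^{-3}$, i.e.\ of order $S_{\lambda,\gamma}$ rather than $S_{\lambda,\gamma}/\gamma$. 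After untwisting, your scheme certifies a velocity error of order $\gamma^{-1}$, whereas the statement needs $C_3\gamma^{-4}$. After the $\gamma/m$ summation in Proposition~\ref{theorem:W_2 - scaled and auxiliary}, this bound would not even vanish as $\gamma\to\infty$. No single slow weight repairs this.

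\textbf{$|a|^2+|b|^2$ is not a Lyapunov function.} Along the noiseless coupled dynamics its derivative is $-2\lambda\gamma|a|^2+2\lambda\la a,(\gamma I_d-\gamma^{-1}S_t)b\ra$, where $S_t$ is the averaged Hessian from the proof of Theorem~\ref{theorem:contraction result of dalalyan}. This is positive for $a=\delta b$ with small $\delta>0$. The contracting form is $|b|^2+|a-b|^2$, i.e.\ the $(y,z)$ coordinates of that theorem.

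\textbf{The missing idea.} This coupling estimate uses no hypocoercive contraction at all. The velocity error is controlled by the friction dissipation $-2\lambda\gamma\e[|V_t^\lambda-V_t^{\lambda,n}|^2]$, at rate $\lambda\gamma$ rather than $\lambda m/\gamma$, with every Young weight of size $\lambda\gamma/9$. The coupling term $-2\lambda\e[\la V_t^\lambda-V_t^{\lambda,n},h_{MY,\epsilon}(Z_t^\lambda)-h_{MY,\epsilon}(\zeta_t^{\lambda,n})\ra]$ is absorbed using only the Lipschitz constant $K$ and $\gamma\ge 6\sqrt6K$. This leaves $\frac{\lambda\gamma}{24}\e[|Z_t^\lambda-\zeta_t^{\lambda,n}|^2]$ and yields
\[
\sup\e[|V^\lambda-V^{\lambda,n}|^2]\le S_{\lambda,\gamma}/\gamma+\tfrac{1}{24}\sup\e[|Z^\lambda-\zeta^{\lambda,n}|^2].
\]
The $1/\gamma$ in $S_{\lambda,\gamma}/\gamma$ is exactly the inverse friction.

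The position error then comes from integrating $\dd(Z_t^\lambda-\zeta_t^{\lambda,n})=\lambda(V_{\bt}^\lambda-V_t^{\lambda,n})\,\dd t$ over a block with $\lambda(t-nT)\le 1$. The one-step drift of $V^{\lambda,n}$ produces the $C_5\lambda^2\gamma^2$ term. Closing the two supremum inequalities gives the statement.

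Your list of error sources and your martingale treatment of $H-h$ are consistent with the paper's $\mathsf{V}_{2,1}$--$\mathsf{V}_{2,4}$ decomposition. With the slow contraction as the damping mechanism, however, they cannot produce the stated powers of $\gamma$.
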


\begin{proof}
	Postponed to Appendix \hyperref[Proof.W_2 - interpolation and auxiliary]{C.3}.
\end{proof}

To obtain upper estimates for the remaining two terms on the right-hand side of \eqref{Interpolation splitting}, we note that they both relate to the contraction property of the underdamped Langevin SDE \eqref{eq.MY kinetic Langevin}. Hence, we provide below a contraction result of SDE \eqref{eq.MY kinetic Langevin} adapted from \cite[Theorem 1]{dalalyan2020sampling}.

\begin{theorem}\label{theorem:contraction result of dalalyan}
	Let Assumptions~\ref{assumption1.initial condition}-\ref{assumption3.strong convexity} hold. Let $(r_t,R_t)$ and $(r_t',R_t')$ be two solutions of the underdamped Langevin SDE \eqref{eq.MY kinetic Langevin} with initial conditions $(r_0,R_0)$ and $(r_0',R_0')$, respectively. Then, we obtain, for all $t\geq 0$ and $\gamma\geq\gamma_{\min}$, that 
	\[W_2(\mathcal{L}(r_t),\mathcal{L}(r_t'))\leq 4\exp\left(-\frac{m}{2\gamma}t\right)W_2(\mathcal{L}(r_0,R_0),\mathcal{L}(r_0',R_0')).\]
\end{theorem}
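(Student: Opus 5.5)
This contraction is the one of \cite[Theorem~1]{dalalyan2020sampling}, applied to the regularized potential $u_{MY,\epsilon}$. By Remark~\ref{remark.MY}, for $0<\epsilon<m^{-1}$ the potential $u_{MY,\epsilon}$ is twice continuously differentiable with $\frac{m}{2}I_d\preceq \operatorname{Hess}(u_{MY,\epsilon})\preceq KI_d$. These are exactly the hypotheses of the Dalalyan--Riou-Durand result, with strong convexity constant $m/2$ and smoothness constant $K$. The proof is a synchronous coupling: drive $(r_t,R_t)$ and $(r_t',R_t')$ by the same Brownian motion $W$ and start them from an optimal $W_2$-coupling of $\mathcal{L}(r_0,R_0)$ and $\mathcal{L}(r_0',R_0')$. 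The difference $(\delta_t,\Delta_t)\coloneq(r_t-r_t',R_t-R_t')$ then solves the noise-free linear-type system
\[
\dot\delta_t=\Delta_t,\qquad \dot\Delta_t=-\gamma\Delta_t-H_t\delta_t,
\]
where $H_t\coloneq\int_0^1\operatorname{Hess}(u_{MY,\epsilon})(r_t'+s\delta_t)\,\dd s$ is symmetric with $\frac{m}{2}I_d\preceq H_t\preceq KI_d$.

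The contraction comes from a twisted norm. Set $x_t\coloneq\delta_t$ and $y_t\coloneq\delta_t+\frac{2}{\gamma}\Delta_t$, and compute
\[
\frac{\dd}{\dd t}\bigl(|x_t|^2+|y_t|^2\bigr)=-\bigl\la (x_t,y_t),M_t(x_t,y_t)\bigr\ra,
\]
where $M_t$ is a symmetric $2d\times 2d$ block matrix whose entries are polynomials in $\gamma$ and $H_t$. I would then show $M_t\succeq \frac{m}{\gamma}I_{2d}$. The eigenvalues of $H_t$ lie in $[m/2,K]$, and the condition $\gamma\geq\gamma_{\min}$ gives $\gamma^2\geq K+m/2$ from \eqref{eq.friction restriction}. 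Since $H_t$ is symmetric, the check diagonalizes into scalar $2\times2$ matrices indexed by an eigenvalue $\mu\in[m/2,K]$ of $H_t$, and positivity of each reduces to elementary inequalities in $\gamma$ and $\mu$. This step is the main obstacle. Because $H_t$ depends on the path, I cannot diagonalize along the flow, so the bound must hold pointwise for every admissible $\mu$. I expect to use the friction condition $\gamma^2\geq K+m/2$ exactly as in \cite{dalalyan2020sampling}, whose conditions match the first entries of $\gamma_{\min}$; that is why I would reuse their computation rather than redo it.

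Grönwall then gives
\[
|x_t|^2+|y_t|^2\leq e^{-mt/\gamma}\bigl(|x_0|^2+|y_0|^2\bigr).
\]
Since $\gamma\geq1$ by \eqref{eq.friction restriction}, the initial term satisfies $|x_0|^2+|y_0|^2\leq(1+2\cdot2)|\delta_0|^2+2\cdot\frac{4}{\gamma^2}|\Delta_0|^2\leq 8\bigl(|\delta_0|^2+|\Delta_0|^2\bigr)$, which is crude but enough. On the other side, $|r_t-r_t'|^2=|x_t|^2\leq|x_t|^2+|y_t|^2$. Taking expectations and square roots gives
\[
W_2(\mathcal{L}(r_t),\mathcal{L}(r_t'))\leq\sqrt{8}\,e^{-\frac{m}{2\gamma}t}\,W_2\bigl(\mathcal{L}(r_0,R_0),\mathcal{L}(r_0',R_0')\bigr),
\]
and $\sqrt8\leq4$. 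Strong existence and uniqueness for \eqref{eq.MY kinetic Langevin}, needed for the coupling to be well defined, follows from the global Lipschitz continuity of $h_{MY,\epsilon}$.
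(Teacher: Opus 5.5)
Your proof is correct and is the paper's argument up to a rescaled orthogonal change of coordinates: with the paper's $\tilde y_t=\Delta_t+\gamma\delta_t$ and $\tilde z_t=-\Delta_t$ one has $(x_t,y_t)=\gamma^{-1}(\tilde y_t+\tilde z_t,\,\tilde y_t-\tilde z_t)$, so $|x_t|^2+|y_t|^2=\frac{2}{\gamma^2}\bigl(|\tilde y_t|^2+|\tilde z_t|^2\bigr)$ is the same Lyapunov form, and in the paper's coordinates the cross terms cancel by symmetry of $H_t$, giving $\frac{\dd}{\dd t}\bigl(|\tilde y_t|^2+|\tilde z_t|^2\bigr)=-\frac{2}{\gamma}\la\tilde y_t,H_t\tilde y_t\ra+\frac{2}{\gamma}\la\tilde z_t,H_t\tilde z_t\ra-2\gamma|\tilde z_t|^2$, so the Loewner bounds $\frac{m}{2}I_d\preceq H_t\preceq KI_d$ finish directly without any $2\times 2$ eigenvalue analysis. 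The step you deferred also goes through in your coordinates: for an eigenvalue $\mu\in[m/2,K]$ of $H_t$ the scalar block is $\begin{pmatrix}\gamma & 2\mu/\gamma-\gamma\\ 2\mu/\gamma-\gamma & \gamma\end{pmatrix}$, whose smallest eigenvalue $\min\{2\mu/\gamma,\,2(\gamma^2-\mu)/\gamma\}$ is at least $m/\gamma$ exactly when $\gamma^2\geq\mu+m/2$, which holds for every such $\mu$ because $\gamma^2\geq K+m/2$.
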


\begin{proof}
	Postponed to Appendix \hyperref[proof.contraction result of dalalyan]{C.3}.
\end{proof}

By using Theorem \ref{theorem:contraction result of dalalyan}, we are able to derive upper bounds for the last two terms on the right-hand side of \eqref{Interpolation splitting}. These results are provided below.

\begin{proposition}\label{theorem:W_2 - scaled and auxiliary}
	Let Assumptions~\ref{assumption1.initial condition}-\ref{assumption3.strong convexity} hold. Then, we have, for all $n\in\mathbb{N}_0$, $t\in(nT,(n+1)T]$, $\gamma\geq\gamma_{\min}$ and $\lambda\leq\lambda_{\max,\gamma}$, that
	\[W_2(\mathcal{L}(\zeta_t^{\lambda,n}),\mathcal{L}(r_t^\lambda))\leq\frac{16C_0e}{m}\left(\lambda\gamma^2+\sqrt{\gamma S_{\lambda,\gamma}}\right),\]
	where $C_0$ is given in Proposition~\ref{proposition: W_2 - interpolation and auxiliary}. 
\end{proposition}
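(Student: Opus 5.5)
The plan is a telescoping argument over the blocks of length $T=\lfloor 1/\lambda\rfloor$, in the spirit of the analysis for SGLD/SGHMC with auxiliary processes. For $t\in(nT,(n+1)T]$, the process $\zeta_t^{\lambda,n}$ is the scaled Moreau--Yosida Langevin dynamics \eqref{scaled process} started at time $nT$ from $(Z_{nT}^\lambda,V_{nT}^\lambda)$. The process $r_t^\lambda$ is the same dynamics started at time $0$ from $(\theta_0,\nu_0)$. By uniqueness in law, $r_t^\lambda$ coincides in law with $\zeta^{\lambda,0}$ on the first block. More generally, $\mathcal{L}(r_t^\lambda)=\mathcal{L}(\zeta_t^{\lambda,0})$ for all $t$ if we let the $0$-th auxiliary process run forever. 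We therefore write the telescoping bound
\[
W_2(\mathcal{L}(\zeta_t^{\lambda,n}),\mathcal{L}(r_t^\lambda))\leq \sum_{k=1}^{n} W_2(\mathcal{L}(\zeta_t^{\lambda,k}),\mathcal{L}(\zeta_t^{\lambda,k-1})).
\]
Here $\zeta^{\lambda,k}$ and $\zeta^{\lambda,k-1}$ are both solutions of \eqref{scaled process} on $[kT,\infty)$. At time $kT$ they start from $(Z_{kT}^\lambda,V_{kT}^\lambda)$ and $(\zeta_{kT}^{\lambda,k-1},V_{kT}^{\lambda,k-1})$, respectively.

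Next we apply Theorem~\ref{theorem:contraction result of dalalyan} to each summand, after undoing the time change (time $t$ for the scaled process corresponds to time $\lambda t$ for \eqref{eq.MY kinetic Langevin}). This gives
\[
W_2(\mathcal{L}(\zeta_t^{\lambda,k}),\mathcal{L}(\zeta_t^{\lambda,k-1}))\leq 4\exp\Big(-\frac{m\lambda}{2\gamma}(t-kT)\Big)W_2\big(\mathcal{L}(Z_{kT}^\lambda,V_{kT}^\lambda),\mathcal{L}(\zeta_{kT}^{\lambda,k-1},V_{kT}^{\lambda,k-1})\big).
\]
The Markov property lets us restart at time $kT$ with the given initial laws. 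Proposition~\ref{proposition: W_2 - interpolation and auxiliary}, applied with index $k-1$ at time $kT\in((k-1)T,kT]$, bounds the last Wasserstein distance by $C_0(\lambda\gamma+\sqrt{S_{\lambda,\gamma}/\gamma})$, uniformly in $k$.

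It remains to sum the geometric series. Since $t>nT$, we have $t-kT\geq (n-k)T$, and $\lambda T\geq \lambda(1/\lambda-1)=1-\lambda\geq 1/2$ because $\lambda\le 1/(4\gamma)$. Hence
\[
\sum_{k=1}^n \exp\Big(-\frac{m\lambda T}{2\gamma}(n-k)\Big)\leq \frac{1}{1-e^{-m/(4\gamma)}}.
\]
Using $m/(4\gamma)\le 1/56$ (from $\gamma\ge 14m$) and $1-e^{-x}\geq x e^{-x}\ge x/e$ for $x\le 1$, the sum is at most $4e\gamma/m$. Multiplying by the factor $4$ from the contraction and by $C_0(\lambda\gamma+\sqrt{S_{\lambda,\gamma}/\gamma})$ yields $\frac{16C_0e}{m}(\lambda\gamma^2+\sqrt{\gamma S_{\lambda,\gamma}})$, as claimed.

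The main obstacle is the justification of the restart-and-contract step. One must ensure that Theorem~\ref{theorem:contraction result of dalalyan} applies to processes started at time $kT$ with random initial data, which is measurable with respect to $\mathcal{F}_{kT}^\lambda\vee\mathcal{G}_\infty\vee\sigma(\theta_0,\nu_0)$ and independent of the future Brownian increments. This follows from the strong Markov/flow property of \eqref{scaled process}, since its drift $h_{MY,\epsilon}$ is globally Lipschitz by Remark~\ref{remark.MY}. One must also identify $\mathcal{L}(r_t^\lambda)$ with the extended $\zeta^{\lambda,0}$. The remaining constant bookkeeping, namely $\lambda T\ge 1/2$ and the geometric sum, is routine.
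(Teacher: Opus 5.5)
Your proposal is correct and matches the paper's proof step for step. Both arguments telescope over the restarted processes $\zeta^{\lambda,k}$, with $\zeta^{\lambda,0}=r^\lambda$, contract each summand via Theorem~\ref{theorem:contraction result of dalalyan} after the time change, bound the restart discrepancy at $t=kT$ by Proposition~\ref{proposition: W_2 - interpolation and auxiliary}, and sum the geometric series using $\lambda T\ge 1/2$ and $\gamma\ge 14m$. Your explicit Markov/flow justification for restarting at the deterministic time $kT$, where the initial data are independent of future Brownian increments, is a point the paper leaves implicit.
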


\begin{proof}
	Postponed to Appendix \hyperref[Proof.W_2 - scaled and auxiliary]{C.3}.
\end{proof}

\begin{corollary}\label{coro.r_t pi}
	Let Assumptions~\ref{assumption1.initial condition}-\ref{assumption3.strong convexity} hold. Let $(r_t^\lambda,R_t^\lambda)$ be a solution of the underdamped Langevin SDE \eqref{scaled process} with initial condition $(\theta_0,\nu_0)$. Then, we obtain, for all $t\geq 0$ and $\gamma\geq\gamma_{\min}$, that 
	\[W_2(\mathcal{L}(r_t^\lambda),\pi_\beta^\epsilon)\leq 4\exp\left(-\frac{\lambda m}{2\gamma}t\right)W_2(\mathcal{L}(\theta_0,\nu_0),\Pi_\beta^\epsilon).\]
\end{corollary}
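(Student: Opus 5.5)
The plan is to apply Theorem~\ref{theorem:contraction result of dalalyan} with the second solution started at stationarity, after undoing the time change. Let $(r_t',R_t')$ solve \eqref{eq.MY kinetic Langevin}, driven by the same Brownian motion, with $\mathcal{L}(r_0',R_0')=\Pi_\beta^\epsilon$ and with the initial pair coupled to $(\theta_0,\nu_0)$ by an optimal $W_2$ coupling. We will use the following standard fact: since $h_{MY,\epsilon}$ is globally Lipschitz (Remark~\ref{remark.MY}), \eqref{eq.MY kinetic Langevin} has a unique strong solution, and $\Pi_\beta^\epsilon$ is invariant for it. Invariance follows because the generator annihilates the density $\exp(-\beta(u_{MY,\epsilon}(\theta)+|\nu|^2/2))$; equivalently, the Hamiltonian part preserves this density and the Ornstein--Uhlenbeck part in $\nu$ preserves the Gaussian factor with variance $\beta^{-1}$. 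Consequently $\mathcal{L}(r_s',R_s')=\Pi_\beta^\epsilon$ for all $s\geq 0$, and in particular $\mathcal{L}(r_s')=\pi_\beta^\epsilon$, which is the $\theta$-marginal of $\Pi_\beta^\epsilon$ by Definition~\ref{def.pi}.

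Next we handle the time change. By definition $r_t^\lambda=r_{\lambda t}$, where $(r_s,R_s)$ solves \eqref{eq.MY kinetic Langevin} with initial condition $(\theta_0,\nu_0)$. Hence $\mathcal{L}(r_t^\lambda)=\mathcal{L}(r_{\lambda t})$. Applying Theorem~\ref{theorem:contraction result of dalalyan} at time $s=\lambda t\geq 0$ with $\gamma\geq\gamma_{\min}$ gives
\[W_2(\mathcal{L}(r_t^\lambda),\pi_\beta^\epsilon)=W_2(\mathcal{L}(r_{\lambda t}),\mathcal{L}(r_{\lambda t}'))\leq 4\exp\left(-\frac{m}{2\gamma}\lambda t\right)W_2(\mathcal{L}(\theta_0,\nu_0),\Pi_\beta^\epsilon).\]
This is exactly the claimed bound.

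The only point that needs care is the implicit requirement that Theorem~\ref{theorem:contraction result of dalalyan} holds for arbitrary initial laws with finite second moments, including $\Pi_\beta^\epsilon$. Finiteness of the second moments is given by Lemma~\ref{lemma.moment bounds of pi_beta} for $\pi_\beta^\epsilon$, by the Gaussian velocity marginal, and by Assumption~\ref{assumption1.initial condition} for $(\theta_0,\nu_0)$. The requirement $\epsilon<m^{-1}$, needed for strong convexity, is implicit in Definition~\ref{def.pi}. The main obstacle is essentially notational: we must verify that the invariance of $\Pi_\beta^\epsilon$ is legitimately available. I expect this to be routine via the Fokker--Planck check, using the $C^2$ regularity of $u_{MY,\epsilon}$ from Lemma~\ref{lemma.MY}(5).
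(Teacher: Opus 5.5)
Your proposal is correct and follows the paper's proof: you apply Theorem~\ref{theorem:contraction result of dalalyan} at time $\lambda t$ to the unscaled process $(r_s,R_s)$ and to a copy started from $\Pi_\beta^\epsilon$, and then use invariance to identify $\mathcal{L}(r'_{\lambda t})=\pi_\beta^\epsilon$. Your remarks on the invariance of $\Pi_\beta^\epsilon$, on finiteness of second moments, and on the requirement $\epsilon<m^{-1}$ only spell out details that the paper asserts without proof.
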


\begin{proof}
	Postponed to Appendix \hyperref[proof.r_t pi]{C.3}.
\end{proof}

We are now ready to present the proof of Theorem~\ref{Main Theorem}.

\phantomsection
\begin{proof}[\textbf{Proof of Theorem~\ref{Main Theorem}}]\label{Proof.Main theorem}
	By the triangle inequality for the Wasserstein-2 distance with Propositions~\ref{proposition: W_2 - interpolation and auxiliary} and \ref{theorem:W_2 - scaled and auxiliary}, Corollary~\ref{coro.r_t pi}, and Lemma~\ref{lemma.contraction of pi_beta}, we have, for all $n\in\mathbb{N}_0$, $t\in(nT,(n+1)T]$, $\gamma\geq\gamma_{\min}$ and $\lambda\leq\lambda_{\max,\gamma}$, that
	\begin{align*}
		W_2(\mathcal{L}(Z_t^\lambda),\pi_\beta)&\leq W_2(\mathcal{L}(Z_t^\lambda),\mathcal{L}(\zeta_t^{\lambda,n}))+W_2(\mathcal{L}(\zeta_t^{\lambda,n}),\mathcal{L}(r_t^\lambda))+ W_2(\mathcal{L}(r_t^\lambda),\pi_\beta^\epsilon)+W_2(\pi_\beta^\epsilon,\pi_\beta)\\
		&\leq C_0\left(\lambda\gamma+\sqrt{\frac{S_{\lambda,\gamma}}{\gamma}}\right)+\frac{16C_0e}{m}\left(\lambda\gamma^2+\sqrt{\gamma S_{\lambda,\gamma}}\right)\\
		&\quad+4\exp{\left(-\frac{\lambda m}{2\gamma} t\right)}W_2(\mathcal{L}(\theta_0,\nu_0),\Pi_\beta^\epsilon)+W_2(\pi_\beta^\epsilon,\pi_\beta)\\
		&\leq 2\max\left\{C_0,\frac{16C_0e}{m}\right\}\left(\lambda\gamma^2+\sqrt{\gamma S_{\lambda\gamma}}\right)\\
		&\quad+4\exp{\left(-\frac{\lambda m}{2\gamma} t\right)}W_2(\mathcal{L}(\theta_0,\nu_0),\Pi_\beta)+5W_2(\pi_\beta^\epsilon,\pi_\beta)\\
		&\leq 2\max\left\{C_0,\frac{16C_0e}{m}\right\}\left(\lambda\gamma^2+\sqrt{\gamma S_{\lambda\gamma}}\right)+4\exp{\left(-\frac{\lambda m}{2\gamma} t\right)}W_2(\mathcal{L}(\theta_0,\nu_0),\Pi_\beta)+5\bar{c}\epsilon\\
		&\leq \dot{C}\left(\sqrt{\lambda}\gamma+\lambda\gamma^{5/2}+\gamma^{-1}+\exp\left(-\frac{\lambda m}{2\gamma} t\right)W_2(\mathcal{L}(\theta_0,\nu_0),\Pi_\beta)+\epsilon\right),
	\end{align*}
	where $\dot{C}=\mathcal{O}((d/\beta)^{3r+1/2})$, with its explicit expression given in Table~\ref{tab.constants}. Since $\lambda T\geq 1/2$, it follows, for any $n\in\mathbb{N}_0$, that
	\begin{align*}
		W_2(\mathcal{L}(Z_{(n+1)T}^\lambda),\pi_\beta)&\leq\dot{C}\left(\sqrt{\lambda}\gamma+\lambda\gamma^{5/2}+\gamma^{-1}+\exp\left(-\frac{\lambda m}{2\gamma} (n+1)T\right)W_2(\mathcal{L}(\theta_0,\nu_0),\Pi_\beta)+\epsilon\right).
	\end{align*}
	Then, we replace $(n+1)T$ on both sides of the above inequality with $n+1$ to obtain
	\[W_2(\mathcal{L}(Z_{n+1}^\lambda),\pi_\beta)\leq\dot{C}\left(\sqrt{\lambda}\gamma+\lambda\gamma^{5/2}+\gamma^{-1}+\exp\left(-\frac{\lambda m}{2\gamma} (n+1)\right)W_2(\mathcal{L}(\theta_0,\nu_0),\Pi_\beta)+\epsilon\right),\]
	which yields, by using $\mathcal{L}(V_n^\lambda,Z_n^\lambda)=\mathcal{L}(\nu_n^\lambda,\theta_n^\lambda)$, that
	\[W_2(\mathcal{L}(\theta_n^\lambda),\pi_\beta)\leq\dot{C}\left(\sqrt{\lambda}\gamma+\lambda\gamma^{5/2}+\gamma^{-1}+\exp\left(-\frac{\lambda m}{2\gamma}n\right)W_2(\mathcal{L}(\theta_0,\nu_0),\Pi_\beta)+\epsilon\right).\]
	This completes the proof.
\end{proof}

To derive a non-asymptotic error bound for the expected excess risk, we consider the following decomposition:
\begin{equation}
	\e[u(\theta_n^\lambda)]-u(\theta^*)=\left(\e[u(\theta_n^\lambda)]-\e[u(\theta_\infty)]\right)+\left(\e[u(\theta_\infty)]-u(\theta^*)\right),
\end{equation}
where $\theta_\infty$ is distributed according to $\pi_\beta$. The first term on the right-hand side characterizes the sampling error of tSGHMC \eqref{tSGHMC}-\eqref{eq.tamedSG}, and can be controlled through the corresponding Wasserstein-2 distance. The second term reflects the concentration properties of $\pi_\beta$, which becomes small when the inverse temperature parameter $\beta>0$ is sufficiently large.

\begin{lemma}\label{Lemma.sampling error}
	Let Assumptions~\ref{assumption1.initial condition}-\ref{assumption3.strong convexity} hold. Then, there exists $C'>0$ such that, for all $\gamma\geq\gamma_{\min}$, $\lambda\leq\lambda_{\max,\gamma}$, and $n\in\mathbb{N}_0$,
	\[\e[u(\theta_n^\lambda)]-\e[u(\theta_\infty)]\leq C'\left(\sqrt{\lambda}\gamma+\lambda\gamma^{5/2}+\gamma^{-1}+\exp\left(-\frac{\lambda m}{4\gamma}n\right)W_2(\mathcal{L}(\theta_0,\nu_0),\Pi_\beta)+\epsilon\right),\]
	where $C'=\mathcal{O}((d/\beta)^{4r+1/2})$ is given explicitly in Table~\ref{tab.constants}.
\end{lemma}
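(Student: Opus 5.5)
We bound the difference of expectations of $u$ under two laws by their Wasserstein-2 distance, using the polynomial Lipschitz property of $h=\nabla u$ from Remark~\ref{remark.local Lip. of h} together with uniform moment bounds, and then apply Theorem~\ref{Main Theorem}.

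\textbf{Step 1 (pointwise estimate).} For any $\theta,\theta'\in\mathbb{R}^d$, the mean value theorem gives
\[
u(\theta)-u(\theta')=\int_0^1\la h(t\theta+(1-t)\theta'),\theta-\theta'\ra\,\dd t .
\]
By Remark~\ref{remark.local Lip. of h} and $h(\theta^*)=0$ with $|\theta^*|\leq R_0$, we have $|h(\phi)|\leq L_h(1+|\phi|+R_0)^{2r}$. Hence
\[
|u(\theta)-u(\theta')|\leq c\,(1+|\theta|+|\theta'|)^{2r}|\theta-\theta'|,
\]
where $c=L_h(1+R_0)^{2r}$ up to a power-of-two factor.

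\textbf{Step 2 (coupling and Cauchy--Schwarz).} Let $\zeta$ be an optimal coupling of $\mathcal{L}(\theta_n^\lambda)$ and $\pi_\beta$, and let $(\theta,\theta')\sim\zeta$. Cauchy--Schwarz gives
\[
\e[u(\theta_n^\lambda)]-\e[u(\theta_\infty)]\leq c\,\big(\e[(1+|\theta|+|\theta'|)^{4r}]\big)^{1/2}\,W_2(\mathcal{L}(\theta_n^\lambda),\pi_\beta).
\]
We bound the moment factor by $3^{4r-1}(1+\e|\theta_n^\lambda|^{4r}+\e|\theta_\infty|^{4r})$. Since $2r\in\mathbb{N}$, the term $\e|\theta_n^\lambda|^{4r}$ is controlled by Lemma~\ref{lemma.2q moment of algorithm} with $q=2r$ (or by Lemma~\ref{lemma.2 moment of algorithm} when $r=1/2$), giving $\bar C_{4r}=\mathcal{O}((d/\beta)^{2r})$. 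The term $\e|\theta_\infty|^{4r}$ is controlled by Lemma~\ref{lemma.moment bounds of pi_beta}, giving $C_{\pi_\beta,4r}$. The square root of the moment factor is therefore $\mathcal{O}((d/\beta)^{r})$, uniformly in $n$, $\lambda$ and $\gamma$.

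\textbf{Step 3 (apply the main theorem).} Insert the bound of Theorem~\ref{Main Theorem} for $W_2(\mathcal{L}(\theta_n^\lambda),\pi_\beta)$. Since $\exp(-\lambda m n/(2\gamma))\leq\exp(-\lambda m n/(4\gamma))$, the stated exponent $\lambda m/(4\gamma)$ follows, possibly with room to spare. Setting $C'=c\,3^{2r}(1+\bar C_{4r}+C_{\pi_\beta,4r})^{1/2}\dot C$ gives $C'=\mathcal{O}((d/\beta)^{r})\cdot\mathcal{O}((d/\beta)^{3r+1/2})=\mathcal{O}((d/\beta)^{4r+1/2})$, as claimed.

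\textbf{Main obstacle.} The only delicate point is ensuring that the moment order $4r$ needed in Step 2 is available uniformly in $n$. Assumption~\ref{assumption1.initial condition} provides $12r$-th moments of the initial data, and Lemma~\ref{lemma.2q moment of algorithm} covers every integer $q\geq2$; the case $4r=2$ falls under Lemma~\ref{lemma.2 moment of algorithm}. So this step goes through, and the remaining work is tracking constants.
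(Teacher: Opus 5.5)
Your proposal is correct and follows essentially the same route as the paper: write $u(\theta)-u(\theta')$ as an integral of $h$ along the segment, use the polynomial growth of $h$, take an optimal coupling, apply Cauchy--Schwarz with the uniform $4r$-th moment bounds from Lemmas~\ref{lemma.2q moment of algorithm} and \ref{lemma.moment bounds of pi_beta}, and finish with Theorem~\ref{Main Theorem}. The differences are minor. You anchor the growth bound of $h$ at $\theta^*$ rather than at $0$, which gives a different but equally valid constant of the same order. You also handle the case $r=1/2$ explicitly via Lemma~\ref{lemma.2 moment of algorithm}, which the paper passes over.
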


\begin{proof}
	Postponed to Appendix \hyperref[Proof.sampling error]{C.3}.
\end{proof}

\begin{lemma}\label{Lemma.concentration property}
	Let Assumptions~\ref{assumption1.initial condition}-\ref{assumption3.strong convexity} hold. Then, there exists $C''>0$ such that, for all $\gamma\geq\gamma_{\min}$ and $\lambda\leq\lambda_{\max,\gamma}$,
	\[\e[u(\theta_\infty)]-u(\theta^*)\leq C''\e[|\theta_\infty-\theta^*|^2],\]
	where $C''=\mathcal{O}((d/\beta)^{r+1/2})$ is given explicitly in Table~\ref{tab.constants}.
\end{lemma}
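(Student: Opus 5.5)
We prove Lemma~\ref{Lemma.concentration property} by a second-order Taylor expansion of $u$ around $\theta^*$ with integral remainder, using $h(\theta^*)=0$, and then controlling the Hessian through the local Lipschitz property of $h$ (Remark~\ref{remark.local Lip. of h}) together with the moment bounds of $\pi_\beta$ (Lemma~\ref{lemma.moment bounds of pi_beta}).

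\textbf{Step 1: first-order form of the excess.} Since $u$ is $C^2$ and $h(\theta^*)=0$, for every $\theta\in\mathbb{R}^d$ we have
\[
u(\theta)-u(\theta^*)=\int_0^1\la h(\theta^*+s(\theta-\theta^*))-h(\theta^*),\theta-\theta^*\ra\,\dd s .
\]

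\textbf{Step 2: pointwise bound.} By Remark~\ref{remark.local Lip. of h},
\[
|h(\theta^*+s(\theta-\theta^*))-h(\theta^*)|\leq L_h(1+|\theta^*+s(\theta-\theta^*)|+|\theta^*|)^{2r-1}s|\theta-\theta^*| .
\]
Using $|\theta^*|\leq R_0$ (Remark~\ref{remark.dissipativity of h}) and $|\theta^*+s(\theta-\theta^*)|\leq |\theta|+R_0$, this yields
\[
u(\theta)-u(\theta^*)\leq \frac{L_h}{2}(1+2R_0+|\theta|)^{2r-1}|\theta-\theta^*|^2 .
\]

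\textbf{Step 3: take expectations under $\pi_\beta$.} The difficulty is that we want a bound of the form $C''\e[|\theta_\infty-\theta^*|^2]$, not $\e[(1+|\theta_\infty|)^{2r-1}|\theta_\infty-\theta^*|^2]$. A naive application of Cauchy--Schwarz produces $\sqrt{\e[|\theta_\infty-\theta^*|^4]}$ rather than the second moment. One could try Gaussian-type hypercontractivity, since $\pi_\beta$ is $m\beta$-strongly log-concave, to compare the fourth and second moments of $\theta_\infty-\theta^*$. A cleaner route is the following.

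\begin{enumerate}
\item Apply H\"older's inequality to obtain
\[
\e[u(\theta_\infty)]-u(\theta^*)\leq \frac{L_h}{2}\,\e[(1+2R_0+|\theta_\infty|)^{4r-2}]^{1/2}\,\e[|\theta_\infty-\theta^*|^4]^{1/2}.
\]
\item Bound $\e[|\theta_\infty-\theta^*|^4]\leq c\,(\e[|\theta_\infty-\theta^*|^2])^2$ with $c$ depending only on $d/(\beta m)$-type quantities. This follows from the reverse H\"older / Khinchin-type inequality for log-concave measures (Borell's lemma), applied to the log-concave variable $|\theta_\infty-\theta^*|$.
\item If such a comparison is not desired, note instead that $\e[|\theta_\infty-\theta^*|^2]\geq \mathrm{Var}$-type lower bound $d/(\beta K_{\mathrm{loc}})$. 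This fails without a global upper Hessian bound, however, so the log-concave moment comparison is the step I rely on.
\end{enumerate}

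The polynomial factor $\e[(1+2R_0+|\theta_\infty|)^{4r-2}]^{1/2}$ is bounded by Lemma~\ref{lemma.moment bounds of pi_beta} with $q=2r-1$ (recall $2r\in\mathbb{N}$). This gives an order $(d/\beta)^{r-1/2}$, and the extra half power from the moment comparison constant yields $C''=\mathcal{O}((d/\beta)^{r+1/2})$, consistent with the statement.

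\textbf{Main obstacle.} The key difficulty is the passage from the weighted fourth moment to the plain second moment in item~2 of Step~3. I would first try the log-concave reverse H\"older inequality, since $\pi_\beta$ is strongly log-concave with parameter $m\beta$ by Remark~\ref{remark.dissipativity of h}. Failing that, the fallback is to absorb everything into a constant: bound $\e[|\theta_\infty-\theta^*|^4]^{1/2}$ by moments and $\e[|\theta_\infty-\theta^*|^2]$ from below via strong log-concavity. The remaining steps are routine.
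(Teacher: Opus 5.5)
Your argument is correct, but it does not follow the paper's route.

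\textbf{The paper's route.} The paper splits $\mathbb{R}^d$ into $\bar B(0,R_0+1)$ and its complement.
\begin{enumerate}
\item On the ball it uses convexity, $u(\theta)-u(\theta^*)\le\langle h(\theta)-h(\theta^*),\theta-\theta^*\rangle$, together with the Lipschitz constant of $h$ on that ball.
\item On the complement it uses Cauchy--Schwarz against $\mathbb{E}[u(\theta_\infty)^2]^{1/2}$, which is controlled through $C_{\pi_\beta,4r+2}$. This is where the order $(d/\beta)^{r+1/2}$ comes from. It then applies Chebyshev, $\mathbb{P}(|\theta_\infty-\theta^*|>1)\le\mathbb{E}[|\theta_\infty-\theta^*|^2]$.
\end{enumerate}
This needs no log-concave geometry. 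However, the tail term it produces is $b\sqrt{x}$ with $x=\mathbb{E}[|\theta_\infty-\theta^*|^2]$. Writing $ax+b\sqrt{x}\le(a+b)x$ requires $x\ge 1$, which is the opposite of the regime of interest: Lemma~\ref{lemma.second part of opt.} gives $x\le 2d/(m\beta)$.

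\textbf{Your route.} Your global weighted Taylor bound plus H\"older reduces everything to $\mathbb{E}[|X|^4]\le c\,(\mathbb{E}[|X|^2])^2$ for $X=\theta_\infty-\theta^*$. That reverse-H\"older step is exactly what gives a bound genuinely linear in $x$. So your route proves the inequality as stated, with a constant different from Table~\ref{tab.constants}, at the price of importing a result on log-concave measures.

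\textbf{Stating the key step precisely.}
\begin{itemize}
\item Do not apply it to the scalar $|\theta_\infty-\theta^*|$ as a ``log-concave variable''. The Euclidean norm of a log-concave vector is not in general a log-concave real random variable.
\item The clean tool is Borell's lemma for seminorms. Since $u$ is convex, $\mathcal{L}(X)$ is log-concave on $\mathbb{R}^d$; translation preserves this and no centering is needed. Hence $(\mathbb{E}|X|^q)^{1/q}\le C\frac{q}{p}(\mathbb{E}|X|^p)^{1/p}$ for $q>p\ge 1$ with $C$ absolute.
\item So your $c$ is a universal constant, not one depending on $d/(\beta m)$.
\end{itemize}

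\textbf{Order of the constant.} Because $c$ is absolute, there is no ``extra half power''. You get
$C''=\frac{L_h}{2}\sqrt{c}\,\mathbb{E}[(1+2R_0+|\theta_\infty|)^{4r-2}]^{1/2}=\mathcal{O}((d/\beta)^{r-1/2})$
by Lemma~\ref{lemma.moment bounds of pi_beta} with $q=2r-1$. This is better than, and hence consistent with, the claimed order.

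\textbf{Item 3.} You can drop it. It also does not actually fail: Cram\'er--Rao with Fisher information $\beta\,\mathbb{E}[\operatorname{Hess}(u)(\theta_\infty)]$ gives a lower bound on $\mathbb{E}[|X|^2]$ without any global Hessian bound. That Fisher information is finite here by the polynomial Lipschitz bound on $h$ and the moment bounds for $\pi_\beta$. Borell's lemma is still the cleaner option.
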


\begin{proof}
	Postponed to Appendix \hyperref[Proof.concentration property]{C.3}.
\end{proof}

\begin{lemma}[\cite{johnston2024kinetic}, Lemma 10.3]\label{lemma.second part of opt.}
	Let Assumptions~\ref{assumption1.initial condition}-\ref{assumption3.strong convexity} hold. Then
	\[\e[|\theta_\infty-\theta^*|^2]\leq\frac{2d}{m\beta}.\]
\end{lemma}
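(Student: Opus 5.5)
We need to prove $\e[|\theta_\infty-\theta^*|^2]\le 2d/(m\beta)$ for $\theta_\infty\sim\pi_\beta$. The plan is a stationarity/integration-by-parts argument combined with the strong monotonicity of $h$ from Remark~\ref{remark.dissipativity of h}.

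\textbf{Step 1 (Stein identity).} First establish, for $\theta_\infty\sim\pi_\beta$,
\[\e[\la h(\theta_\infty),\theta_\infty-\theta^*\ra]=\frac{d}{\beta}.\]
This follows from integrating by parts against the density $\propto e^{-\beta u}$:
\[\int \la\nabla u(\theta),\theta-\theta^*\ra e^{-\beta u(\theta)}\,\dd\theta=-\beta^{-1}\int\la\nabla e^{-\beta u(\theta)},\theta-\theta^*\ra\,\dd\theta=\beta^{-1}\int \operatorname{div}(\theta-\theta^*)\,e^{-\beta u(\theta)}\,\dd\theta.\]
Since $\operatorname{div}(\theta-\theta^*)=d$, the right-hand side equals $\frac{d}{\beta}\int e^{-\beta u}$.

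\textbf{Step 2 (strong monotonicity).} By Remark~\ref{remark.dissipativity of h} and $h(\theta^*)=0$,
\[\la h(\theta),\theta-\theta^*\ra\ge m|\theta-\theta^*|^2.\]
Taking expectations under $\pi_\beta$ and using Step 1 gives $m\,\e|\theta_\infty-\theta^*|^2\le d/\beta$. This is actually sharper than the claimed $2d/(m\beta)$ by a factor of $2$, so the stated bound follows.

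\textbf{Step 3 (justification).} The main obstacle is discarding the boundary terms in the integration by parts. On the ball $\bar B(\theta^*,R)$, the divergence theorem produces a surface term of size at most $R\,e^{-\beta\min_{|\theta-\theta^*|=R}u}\cdot|\partial B_R|$. Strong convexity gives $u(\theta)\ge u(\theta^*)+\frac m2|\theta-\theta^*|^2$, so this surface term decays like $R^{d}e^{-\beta mR^2/2}\to0$. The integrands are integrable for the same reason: $|h(\theta)|$ grows at most polynomially by Remark~\ref{remark.local Lip. of h}, and $\pi_\beta$ has finite moments by Lemma~\ref{lemma.moment bounds of pi_beta}. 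With these facts, dominated convergence justifies passing $R\to\infty$.

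An alternative route, if one prefers to avoid integration by parts, is to use the overdamped Langevin SDE \eqref{eq:overdamped} started at stationarity. Apply Itô's formula to $|\bar L_t-\theta^*|^2$; stationarity forces the drift term to have zero expectation, which yields the same identity as in Step 1.
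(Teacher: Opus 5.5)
Your argument is correct, and it actually proves the sharper bound $\e[|\theta_\infty-\theta^*|^2]\le d/(m\beta)$. The paper gives no proof of its own for this statement; it imports the estimate from \cite[Lemma~10.3]{johnston2024kinetic}. So your proposal is a self-contained verification rather than a reconstruction of an argument in the paper.

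Your route has two ingredients. The first is the Stein identity $\e[\la h(\theta_\infty),\theta_\infty-\theta^*\ra]=d/\beta$, obtained by integrating by parts against $e^{-\beta u}$. The second is the $m$-strong monotonicity of $h$ from Remark~\ref{remark.dissipativity of h} together with $h(\theta^*)=0$. The justification only needs $e^{-\beta u(\theta)}\le e^{-\beta m|\theta-\theta^*|^2/2}$, which follows from strong convexity and $u\ge 0$, plus polynomial growth of $|h|$. These kill the sphere term $R\,|\partial B_R|\,e^{-\beta mR^2/2}$ and make every integrand integrable. Your appeal to Lemma~\ref{lemma.moment bounds of pi_beta} is therefore superfluous, though not circular.

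The diffusion-based alternative you mention is closer to how the paper handles stationary moments in the proof of Lemma~\ref{lemma.moment bounds of pi_beta}. There, It\^o's formula and Gr\"onwall give $\e[|\bar L_t-\theta^*|^2]\le e^{-2mt}\e[|\theta_0-\theta^*|^2]+\frac{d}{m\beta}(1-e^{-2mt})$. One then needs convergence of $\mathcal{L}(\bar L_t)$ to $\pi_\beta$ and a truncation argument to pass the second moment to the limit. Your direct computation avoids the dynamics entirely.

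Both routes give the constant $d/(m\beta)$, so the factor $2$ in the statement is not needed. The last term in Theorem~\ref{Theorem.Optimization} could accordingly be improved to $C''d/(m\beta)$.
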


Finally, we present the proof of Theorem~\ref{Theorem.Optimization}.

\phantomsection
\begin{proof}[\textbf{Proof of Theorem~\ref{Theorem.Optimization}}]\label{Proof.Optimization Problem}
	Combining Lemmas~\ref{Lemma.sampling error}, \ref{Lemma.concentration property} and \ref{lemma.second part of opt.} yields the result.
\end{proof}

\newpage
\appendix

\section{Proofs of Results in Sections~\ref{section.Assumptions and Main Results}}
\label{Proofs of section.Applications}

\phantomsection
\begin{proof}[\textbf{Proof of Remark~\ref{remark.dissipativitiy of SG}}]
	\label{Proof.dissipative of SG}
	By Assumption~\ref{assumption3.strong convexity}, for all $\theta\in\mathbb{R}^d$ and $x\in\mathbb{R}^k$, it holds that
	\[\la \theta,H(\theta,x)-H(0,x)\ra\geq \la \theta,A(x)\theta\ra.\]
	Consequently, we obtain that
	\begin{align*}
		\la\theta,H(\theta,x)\ra&\geq \la\theta,A(x)\theta\ra+\la\theta,H(0,x)\ra\\
		&\geq \la\theta,A(x)\theta\ra-|\theta|\,|H(0,x)|\\
		&\geq \la\theta,A(x)\theta\ra-m|\theta|^2-K_H^2(1+|x|)^{2\rho}/(4m)\\
		&\geq \la\theta,\widetilde{A}(x)\theta\ra-\widetilde{b}(x)
	\end{align*}
	with $\widetilde{A}(x)=A(x)-m I_d$ and $\widetilde{b}(x)=K_H^2(1+|x|)^{2\rho}/(4m)$, where the third inequality follows from \eqref{eq.upper bound of |H|} together with the elementary inequality $ab<m a^2+b^2/(4m)$ for all $a,b>0$.
\end{proof}

\section{Proofs of Results in Section~\ref{section.Applications}}

\phantomsection
\begin{proof}[\textbf{Proof of Proposition~\ref{prop.Sampling}}]
	\label{proof.Sampling}
	By \eqref{eq.sampling-obj}, $u$ is nonnegative and twice continuously differentiable. Moreover, its gradient is given by
	\[h(\theta)=-\frac{1}{\mathsf{N}}\sum_{i=1}^\mathsf{N}\frac{y_ix_i}{1+\exp(y_i\la x_i,\theta\ra)}+\eta\theta.\]
	Since $I_l\sim \mathrm{Uniform}(\{1,\ldots,\mathsf{N}\})$ for $l=1,\ldots,\mathsf{K}$, we have $\p(I_l=i)=1/\mathsf{N}$ for all $i=1,\ldots,\mathsf{N}$. Consequently, for all $\theta\in\mathbb{R}^d$,
	\begin{align*}
		\e[H(\theta,U_{\mathbf{z}})]&=\eta\theta-\frac{1}{\mathsf{K}}\sum_{l=1}^\mathsf{K}\e\left[\frac{y_{I_l}x_{I_l}}{1+\exp(y_{I_l}\la x_{I_l},\theta\ra)}\right]\\
		&=\eta\theta-\frac{1}{\mathsf{K}}\sum_{l=1}^\mathsf{K}\left(\frac{1}{\mathsf{N}}\sum_{i=1}^\mathsf{N}\frac{y_ix_i}{1+\exp(y_i\la x_i,\theta\ra)}\right)=h(\theta).
	\end{align*}
	
	To show Assumption~\ref{assumption2.local Lip+growth} holds, we have, for all $\theta,\theta'\in\mathbb{R}^d$ and $\bar z,\bar z'\in(\mathbb{R}^d\times\{-1,1\})^\mathsf{K}$, that
	\[|F(\theta,\bar z)-F(\theta',\bar z')|=\eta|\theta-\theta'|,\quad|F(\theta,\bar z)|=\eta|\theta|,\]
	which implies that Assumption~\ref{assumption2.local Lip+growth}(1) is satisfied with $\rho=1$, $r=1/2$, and $L_F=K_F=\eta$. Furthermore, let $s(t)\coloneq 1/(1+e^t)$ for $t\in\mathbb{R}$, we have that
	\[G(\theta,\bar z)=-\frac{1}{\mathsf{K}}\sum_{i=1}^\mathsf{K} \bar y_i\bar x_i s(\bar y_i\la \bar x_i,\theta\ra).\]
	Moreover, $s'(t)=-e^t/(1+e^t)^2$, so that $|s'(t)|\leq 1/4$. Let $M_x\coloneq\max_{1\leq i\leq \mathsf{N}}|x_i|$. Applying Cauchy-Schwarz inequality, together with $|s'|\leq 1/4$, $|y_i|=1$, and $|x_i|\leq M_x$ for all $i=1,\dots,\mathsf{N}$, yields
	\begin{align*}
		\e[|G(\theta,U_{\mathbf{z}})-G(\theta',U_{\mathbf{z}})|]&\leq\frac{1}{\mathsf{K}}\sum_{l=1}^\mathsf{K}\e\left[|y_{I_l}x_{I_l}|\,|s(y_{I_l}\la x_{I_l},\theta\ra)-s(y_{I_l}\la x_{I_l},\theta'\ra)|\right]\\
		&\leq \frac{1}{4\mathsf{K}}\sum_{l=1}^\mathsf{K}\e\left[|x_{I_l}|\,|y_{I_l}\la x_{I_l},\theta-\theta'\ra|\right]\\
		&\leq \frac{1}{4\mathsf{K}}\sum_{l=1}^\mathsf{K}\e\left[|x_{I_l}|^2\right]|\theta-\theta'|\\
		&= \frac{1}{4\mathsf{N}}\sum_{i=1}^\mathsf{N}|x_i|^2|\theta-\theta'|\\
		&\leq \frac{1}{4}M_x^2|\theta-\theta'|.
	\end{align*}
	Furthermore, by using $|s|\leq 1$, $|\bar x_i|\leq |\bar z|$ and $|\bar y_i|=1$ for all $i=1,\dots,\mathsf{K}$, we obtain that
	\[|G(\theta,\bar z)|\leq \frac{1}{\mathsf{K}}\sum_{i=1}^\mathsf{K} |\bar y_i \bar x_i|\,|s(\bar y_i\la \bar x_i,\theta\ra)|\leq \frac{1}{\mathsf{K}}\sum_{i=1}^\mathsf{K}|\bar x_i|\leq |\bar z|.\]
	Thus, Assumption~\ref{assumption2.local Lip+growth}(2) is satisfied with $\vartheta=1$, $\rho=1$, $L_G=M_x^2/4$ and $K_G=1$. 
	
	It remains to verify Assumption~\ref{assumption3.strong convexity}. For all $\theta\in\mathbb{R}^d$ and $\bar z\in(\mathbb{R}^d\times\{-1,1\})^\mathsf{K}$, the Jacobian matrix of $H$ with respect to $\theta$ is given by
	\begin{equation}\label{eq.Jacobian of H}
		\nabla_\theta H(\theta,\bar z)=\eta I_d-\frac{1}{\mathsf{K}}\sum_{i=1}^\mathsf{K} (\bar y_i\bar x_i) s'(\bar y_i\la \bar x_i,\theta\ra)(\bar y_i\bar x_i)^\top.
	\end{equation}
	Since $-s'(t)=e^t/(1+e^t)^2\geq 0$, $|\bar y_i|=1$, and $\bar x_i\bar x_i^\top$ is positive semidefinite for all $i=1,\dots,\mathsf{K}$, \eqref{eq.Jacobian of H} can be rewritten as
	\[\nabla_\theta H(\theta,\bar z)=\eta I_d+\frac{1}{\mathsf{K}}\sum_{i=1}^\mathsf{K} \frac{\exp(\bar y_i\la \bar x_i,\theta\ra)}{(1+\exp(\bar y_i\la \bar x_i,\theta\ra))^2}\bar x_i\bar x_i^\top\succeq\eta I_d,\]
	which implies, for all $\theta,\theta'\in\mathbb{R}^d$ and $\bar{z}\in(\mathbb{R}^d\times\{-1,1\})^\mathsf{K}$, that
	\begin{align*}
		\la\theta-\theta',H(\theta,\bar z)-H(\theta',\bar z)\ra&=\int_0^1\la\theta-\theta',\nabla_\theta H(\theta'+t(\theta-\theta'),\bar z)(\theta-\theta')\ra\,\dd t\\
		&\geq \int_0^1\eta|\theta-\theta'|^2\,\dd t=\eta|\theta-\theta'|^2.
	\end{align*}
	Thus, Assumption~\ref{assumption3.strong convexity} is satisfied by taking $A(\bar z)=\eta I_d$, with $m=\eta$.
\end{proof}

\phantomsection

\begin{proof}[\textbf{Proof of Proposition~\ref{prop.Artificial}}]
	\label{proof.Artificial}
	Recall $H$ defined in \eqref{eq.Artificial-SG}, $(X_n)_{n\in\mathbb{N}_0}$ consists of i.i.d.\ standard Gaussian vectors, $\widetilde{A}$ is positive definite and $\widetilde{B}$ is positive semidefinite. Therefore, we obtain, for any $\theta\in\mathbb{R}^d$, that
	\[u(\theta)=\frac{1}{2}\la \theta,\widetilde{A}\theta\ra+\frac{\eta}{4}\la\theta,\widetilde{B}\theta\ra^2\geq 0,\]
	and
	\begin{align*}
		\e[H(\theta,X_0)]&=\e[\widetilde{A}\theta+X_0+\eta\la\theta,\widetilde{B}\theta\ra\widetilde{B}\theta]=\widetilde{A}\theta+\eta\la\theta,\widetilde{B}\theta\ra\widetilde{B}\theta=h(\theta).
	\end{align*}
	Moreover, $u$ is twice continuously differentiable, and $\e[|X_0|^{12\rho r}]<\infty$. 
	
	We next verify Assumption~\ref{assumption2.local Lip+growth}. From the definition of $\widetilde{A}$ and $0<\bar{\varepsilon}<1$, we have, for all $\theta,\theta'\in\mathbb{R}^d$ and $x,x'\in\mathbb{R}^d$, that
	\[|F(\theta,x)-F(\theta',x')|=|\widetilde{A}(\theta-\theta')+(x-x')|\leq |\theta-\theta'|+|x-x'|,\]
	and
	\[|F(\theta,x)|=|\widetilde{A}\theta+x|\leq |\theta|+|x|\leq(1+|x|)(1+|\theta|).\]
	Thus, Assumption~\ref{assumption2.local Lip+growth}(1) is satisfied with $\rho=1$, $r\geq 1$ and $L_F=K_F=1$. Moreover, recall that $\widetilde{B}=vv^\top$ for some fixed $v\in\mathbb{R}^d$, we have, for any $\theta\in\mathbb{R}^d$, that
	\[\la\theta,\widetilde{B}\theta\ra=\la\theta, vv^\top\theta\ra=\la v,\theta\ra^2\quad\text{and}\quad \widetilde{B}\theta=vv^\top\theta=\la v,\theta\ra v,\]
	which implies that
	\begin{equation}\label{eq.bar G}
		G(\theta,x)=\eta\la v,\theta\ra^3 v.
	\end{equation}
	By using Cauchy-Schwarz inequality and \eqref{eq.bar G}, we obtain, for all $\theta,\theta'\in\mathbb{R}^d$, that
	\begin{align*}
		\e[|G(\theta,X_0)-G(\theta',X_0)|]&\leq\eta|\la v,\theta\ra^3-\la v,\theta'\ra^3|\,|v|\\
		&\leq\eta|\la v,\theta-\theta'\ra|\,\left(|\la v,\theta\ra|^2+|\la v,\theta\ra|\,|\la v,\theta'\ra|+|\la v,\theta'\ra|^2\right)|v|\\
		&\leq \eta|v|^2|\theta-\theta'|(|v|^2|\theta|^2+|v|^2|\theta|\,|\theta'|+|v|^2|\theta'|^2)\\
		&\leq \eta|v|^4(1+|\theta|+|\theta'|)^2|\theta-\theta'|,
	\end{align*}
	and
	\[|G(\theta,x)|=|\eta\la v,\theta\ra^3 v|\leq \eta|v|^4|\theta|^3\leq \eta|v|^4(1+|\theta|)^3.\]
	Hence, Assumption~\ref{assumption2.local Lip+growth}(2) holds with $L_G=K_G=\eta|v|^4$, $\rho=1$ and $\vartheta=3$. Since $2r\in[\vartheta,\infty)\cap\mathbb{N}$, we take $r=3/2$. 
	
	It remains to verify Assumption~\ref{assumption3.strong convexity}. For any $\theta,\theta'\in\mathbb{R}^d$ and $x\in\mathbb{R}^d$,
	\begin{equation}\label{eq.Artificial-SG-difference}
		H(\theta,x)-H(\theta',x)=\widetilde{A}(\theta-\theta')+\eta[\la v,\theta\ra^3-\la v,\theta'\ra^3]\,v.
	\end{equation}
	The definition of $\widetilde{A}$ and $0<\bar{\varepsilon}<1$ imply
	\begin{equation}\label{eq.Artificial-convex-1}
		\la\theta-\theta',\widetilde{A}(\theta-\theta')\ra\geq\bar{\varepsilon}|\theta-\theta'|^2.
	\end{equation}
	In addition, 
	\begin{align}
		\left\la\theta-\theta',\eta(\la v,\theta\ra^3-\la v,\theta'\ra^3)v\right\ra&=\eta(\la v,\theta\ra^3-\la v,\theta'\ra^3)\la\theta-\theta',v\ra\notag\\
		&=\eta(\la v,\theta\ra^3-\la v,\theta'\ra^3)(\la v,\theta\ra-\la v,\theta'\ra)\notag\\
		&=\eta(\la v,\theta\ra-\la v,\theta'\ra)^2[\la v,\theta\ra^2+\la v,\theta\ra\la v,\theta'\ra+\la v,\theta'\ra^2]\notag\\
		&\geq 0,\label{eq.Artificial-convex-2}
	\end{align}
	where the inequality follows from $a^2+ab+b^2=(a+b/2)^2+3b^2/4\geq 0$ for any $a,b\in\mathbb{R}$. Substituting \eqref{eq.Artificial-convex-1} and \eqref{eq.Artificial-convex-2} into \eqref{eq.Artificial-SG-difference} yields
	\[\la\theta-\theta',H(\theta,x)-H(\theta',x)\ra\geq \bar{\varepsilon}|\theta-\theta'|^2.\]
	Therefore, Assumption~\ref{assumption3.strong convexity} holds with $A(x)=\bar{\varepsilon} I_d$ and $m=\bar{\varepsilon}$.
\end{proof}

\phantomsection

\begin{proof}[\textbf{Proof of Proposition~\ref{prop.Newsvendor}}]
	\label{proof.Newsvendor}
	We recall that $\bar{A}$ is positive definite, $\kappa>0$, $h_i,s_i>0$ for all $i=1,\ldots,d$, and $X\coloneq(X^1,\dots,X^d)\sim\mathcal{N}(\mu,\Sigma_X)$. Then, we have, for any $\theta\in\mathbb{R}^d$, that
	\begin{align}\label{eq.Newsvendor-u}
		\begin{split}
			u(\theta)&=\frac{1}{2}\la \theta,\bar A\theta\ra+\sum_{i=1}^d\left(h_i\e[\max\{\theta_i-X^i,0\}]+s_i\e[\max\{X^i-\theta_i,0\}]\right)+\frac{\kappa}{4}|\theta|^4\\
			&=\frac{1}{2}\la \theta,\bar A\theta\ra+\sum_{i=1}^d\left(h_i\int_{-\infty}^{\theta_i}(\theta_i-t)f_i(t)\,\dd t+s_i\int_{\theta_i}^\infty(t-\theta_i)f_i(t)\,\dd t\right)+\frac{\kappa}{4}|\theta|^4\geq 0,
		\end{split}
	\end{align}
	where we denote by $f_i$ the marginal density of $X_i$ for $i=1,\dots,d$. By differentiating \eqref{eq.Newsvendor-u} and using $X_0\sim\mathcal{N}(\mu,\Sigma_X)$, we obtain
	\begin{align*}
		h(\theta)&=\bar A\theta+\sum_{i=1}^d\left(h_i\int_{-\infty}^{\theta_i}f_i(t)\,\dd t-s_i\int_{\theta_i}^\infty f_i(t)\,\dd t\right)e_i+\kappa|\theta|^2\theta\\
		&=\bar A\theta+\sum_{i=1}^d\left(h_i\p(X^i<\theta_i)-s_i\p(X^i>\theta_i)\right)e_i+\kappa|\theta|^2\theta.
	\end{align*} 
	Furthermore, recall $H$ in \eqref{eq.Newsvendor-SG}, it holds that, for all $\theta\in\mathbb{R}^d$, 
	\begin{align*}
		\e[H(\theta,X_0)]&=\bar{A}\theta+\sum_{i=1}^d\left(h_i\e[\II_{\{\theta_i>X_0^i\}}]-s_i\e[\II_{\{X_0^i>\theta_i\}}]\right)e_i+\kappa|\theta|^2\theta\\
		&=\bar{A}\theta+\sum_{i=1}^d\left(h_i\p(X_0^i<\theta_i)-s_i\p(X_0^i>\theta_i)\right)e_i+\kappa|\theta|^2\theta=h(\theta).
	\end{align*}
	Moreover, $u$ is twice continuously differentiable, and $\e[|X_0|^{12\rho r}]<\infty$.
	
	We next verify Assumption~\ref{assumption2.local Lip+growth}. Let $\bar c_d$ be a uniform upper bound for $f_i$, $i=1,\ldots,d$. For any $\theta,\theta'\in\mathbb{R}^d$, and $x,x'\in\mathbb{R}^d$, we have 
	\begin{align*}
		|F(\theta,x)-F(\theta',x')|&\leq\|\bar A\|_{\mathrm{op}}|\theta-\theta'|+\kappa||\theta|^2\theta-|\theta'|^2\theta'|\\
		&\leq \|\bar A\|_{\mathrm{op}}|\theta-\theta'|+\kappa\left(|\theta|^2|\theta-\theta'|+||\theta|^2-|\theta'|^2|\,|\theta'|\right)\\
		&\leq \|\bar A\|_{\mathrm{op}}|\theta-\theta'|+\kappa\left(|\theta|^2|\theta-\theta'|+|\theta-\theta'|(|\theta|+|\theta'|)|\theta'|\right)\\
		&\leq (\|\bar A\|_{\mathrm{op}}+\kappa)(1+|\theta|+|\theta'|)^2|\theta-\theta'|,
	\end{align*}
	where $\|\bar A\|_{\mathrm{op}}\coloneq \sup_{|\theta|=1}|\bar A\theta|$ is the operator norm of $\bar A$. Moreover, using $(a+b)^3\leq 4(a^3+b^3)$ for all $a,b\geq 0$, we obtain that
	\begin{align*}
		|F(\theta,x)|&\leq \|\bar A\|_{\mathrm{op}}|\theta|+\kappa|\theta|^3\leq 4(\|\bar A\|_{\mathrm{op}}+\kappa)(1+|\theta|^3).
	\end{align*}
	Thus, Assumption~\ref{assumption2.local Lip+growth}(1) holds with $\rho=1$, $r=3/2$ and $L_F=\|\bar A\|_{\mathrm{op}}+\kappa$ and $K_F=4(\|\bar A\|_{\mathrm{op}}+\kappa)$. Moreover, for any $\theta,\theta'\in\mathbb{R}^d$ and $x\in\mathbb{R}^d$, it holds that
	\begin{align*}
		|G(\theta,x)-G(\theta',x)|&\leq\sum_{i=1}^d h_i|\II_{\{\theta_i>x_i\}}-\II_{\{\theta_i'>x_i\}}|+\sum_{i=1}^d s_i|\II_{\{x_i>\theta_i\}}-\II_{\{x_i>\theta_i'\}}|\\
		&\leq \sum_{i=1}^d (h_i+s_i)\II_{\{\min\{\theta_i,\theta_i'\}\leq x_i\leq\max\{\theta_i,\theta_i'\}\}}.
	\end{align*}
	Taking expectations, using Cauchy-Schwarz inequality, and $\sup_{1\leq i\leq d}|f_i|\leq \bar{c}_d$ for some $\bar{c}_d>0$, we obtain
	\begin{align*}
		\e[|G(\theta,X_0)-G(\theta',X_0)|]&\leq\sum_{i=1}^d (h_i+s_i)\int_{\min\{\theta_i,\theta_i'\}}^{\max\{\theta_i,\theta_i'\}}f_i(t)\,\dd t\\
		&\leq \bar{c}_d\sum_{i=1}^d (h_i+s_i)|\theta_i-\theta_i'|\\
		&\leq \bar{c_d}(|h|+|s|)\,|\theta-\theta'|,
	\end{align*}
	where $h\coloneq (h_1,\dots,h_d)$ and $s\coloneq(s_1,\dots,s_d)$. Furthermore, we have that
	\begin{align*}
		|G(\theta,x)|&\leq \left|\sum_{i=1}^d \left(h_i\II_{\{\theta_i>x_i\}}-s_i\II_{\{x_i>\theta_i\}}\right)e_i\right|\\
		&\leq \left|\sum_{i=1}^d h_i\II_{\{\theta_i>x_i\}}e_i\right|+\left|\sum_{i=1}^ds_i\II_{\{x_i>\theta_i\}}e_i\right|\\
		&=\left(\sum_{i=1}^d h_i^2\II_{\{\theta_i>x_i\}}^2\right)^{1/2}+\left(\sum_{i=1}^d s_i^2\II_{\{x_i>\theta_i\}}^2\right)^{1/2}\\
		&\leq |h|+|s|,
	\end{align*}
	which implies that Assumption~\ref{assumption2.local Lip+growth}(2) is satisfied with $\rho=\vartheta=1$, $L_G=\bar{c}_d(|h|+|s|)$ and $K_G=|h|+|s|$. 
	
	Finally, we verify Assumption~\ref{assumption3.strong convexity}. The mapping $\theta\mapsto|\theta|^4$ is convex and differentiable, which implies that $\la\theta-\theta',|\theta|^2\theta-|\theta'|^2\theta'\ra\geq 0$. In addition, for each $i=1,\dots,d$,
	\[(\theta_i-\theta_i')(\II_{\{\theta_i>x_i\}}-\II_{\{\theta_i'>x_i\}})\geq 0,\]
	and
	\[-(\theta_i-\theta_i')(\II_{\{x_i>\theta_i\}}-\II_{\{x_i>\theta_i'\}})\geq 0.\]
	Consequently, we have
	\begin{align*}
		\la	\theta-\theta',H(\theta,x)-H(\theta',x)\ra&=\la\theta-\theta',\bar{A}(\theta-\theta')\ra+\kappa\la\theta-\theta',|\theta|^2\theta-|\theta'|^2\theta'\ra\\
		&\quad+\sum_{i=1}^d h_i (\theta_i-\theta_i')(\II_{\{\theta_i>x_i\}}-\II_{\{\theta_i'>x_i\}})\\
		&\quad-\sum_{i=1}^d s_i(\theta_i-\theta_i')(\II_{\{x_i>\theta_i\}}-\II_{\{x_i>\theta_i'\}})\\
		&\geq \la\theta-\theta',\bar{A}(\theta-\theta')\ra.
	\end{align*}
	Thus, Assumption~\ref{assumption3.strong convexity} holds with $A(x)=\bar{A}$.
\end{proof}

\section{Proofs of Results in Section~\ref{section.Main Results}}
\label{Proofs of Moment Bounds}

\subsection{Proofs of Lemmas in Section~\ref{subsection.MY}}

\phantomsection
\begin{proof}[\textbf{Proof of Lemma~\ref{lemma.hMY leq h}}]
	\label{proof.hMY leq h}
	By Lemma \ref{lemma.MY}, we have, for all $\theta\in\mathbb{R}^d$, that 
	\begin{equation}\label{eq.hprox}
		h(\operatorname{prox}_u^\epsilon(\theta))=h_{MY,\epsilon}(\theta)=\frac{1}{\epsilon}\left(\theta-\operatorname{prox}_u^\epsilon(\theta)\right).
	\end{equation}
	If $\theta=\operatorname{prox}_u^\epsilon(\theta)$ for some $\theta\in\mathbb{R}^d$, then the desired result holds. Otherwise, assuming $\theta\neq\operatorname{prox}_u^\epsilon(\theta)$, we apply Remark~\ref{remark.dissipativity of h}, \eqref{eq.hprox} and Cauchy-Schwarz inequality to obtain
	\begin{align*}
		|h(\theta)|\,|\theta-\operatorname{prox}_u^\epsilon(\theta)|&\geq\la h(\theta),\theta-\operatorname{prox}_u^\epsilon(\theta)\ra\\
		&=\la h(\theta)-h(\operatorname{prox}_u^\epsilon(\theta)),\theta-\operatorname{prox}_u^\epsilon(\theta)\ra+\la h(\operatorname{prox}_u^\epsilon(\theta)),\theta-\operatorname{prox}_u^\epsilon(\theta)\ra\\
		&\geq m|\theta-\operatorname{prox}_u^\epsilon(\theta)|^2+\frac{1}{\epsilon}|\theta-\operatorname{prox}_u^\epsilon(\theta)|^2\\
		&\geq \frac{1}{\epsilon}|\theta-\operatorname{prox}_u^\epsilon(\theta)|^2.
	\end{align*}
	Combining this lower bound with \eqref{eq.hprox} yields
	\[|h_{MY,\epsilon}(\theta)|= \frac{1}{\epsilon}|\theta-\operatorname{prox}_u^\epsilon(\theta)|\leq |h(\theta)|,\]
	which completes the proof.
\end{proof}

\phantomsection
\begin{proof}[\textbf{Proof of Lemma~\ref{lemma.error of MY regularization}}]
	\label{proof.error of MY regularization}
	By Remark~\ref{remark.local Lip. of h}, Lemma~\ref{lemma.hMY leq h}, \eqref{eq.hprox}, and $h(\theta^*)=0$, we have, for all $\theta\in\mathbb{R}^d$, that
	\begin{align}
		|h(\theta)-h_{MY,\epsilon}(\theta)|&= |h(\theta)-h(\operatorname{prox}_u^\epsilon(\theta))|\notag\\
		&\leq L_h(1+|\theta|+|\operatorname{prox}_u^\epsilon(\theta)|)^{2r-1}|\theta-\operatorname{prox}_u^\epsilon(\theta)|\notag\\
		&\leq L_h(1+|\theta|+|\operatorname{prox}_u^\epsilon(\theta)|)^{2r-1}|h_{MY,\epsilon}(\theta)|\epsilon\notag\\
		&\leq L_h(1+|\theta|+|\operatorname{prox}_u^\epsilon(\theta)|)^{2r-1}|h(\theta)|\epsilon\notag\\
		&=
		L_h(1+|\theta|+
		|\operatorname{prox}_u^\epsilon(\theta)|)^{2r-1}
		|h(\theta)-h(\theta^\ast)|\,\epsilon \notag\\
		&\leq L_h^2(1+|\theta|+|\operatorname{prox}_u^\epsilon(\theta)|)^{2r-1}(1+|\theta|+|\theta^*|)^{2r-1}|\theta-\theta^*|\epsilon\notag\\
		&\leq L_h^2(1+|\theta|+|\operatorname{prox}_u^\epsilon(\theta)|)^{2r-1}(1+|\theta|+|\theta^*|)^{2r}\epsilon.\label{eq.h-MY}
	\end{align}
	According to \cite[Theorem 6.42]{beck2017first}, the proximal operator $\operatorname{prox}_u^\epsilon$ is $1$-Lipschitz continuous. Additionally, Lemma~\ref{lemma.MY} and definition of $\operatorname{prox}_u^\epsilon$ imply that $\operatorname{prox}_u^\epsilon(\theta^*)=\theta^*$. Combining these results with Remark~\ref{remark.dissipativity of h}, we obtain
	\begin{align}
		|\operatorname{prox}_u^\epsilon(\theta)|&\leq|\operatorname{prox}_u^\epsilon(\theta^*)|+|\operatorname{prox}_u^\epsilon(\theta)-\operatorname{prox}_u^\epsilon(\theta^*)|\notag\\
		&\leq|\theta^*|+|\theta-\theta^*|\leq |\theta|+2|\theta^*|\leq |\theta|+2R_0.\label{eq.bound-prox}
	\end{align}
	Therefore, by substituting \eqref{eq.bound-prox} back into \eqref{eq.h-MY}, we obtain
	\begin{align*}
		|h(\theta)-h_{MY,\epsilon}(\theta)|&\leq L_h^2(1+2|\theta|+2R_0)^{2r-1}(1+|\theta|+R_0)^{2r}\epsilon\\
		&\leq L_h^2\big(2(1+|\theta|+R_0)\big)^{2r-1}(1+|\theta|+R_0)^{2r}\epsilon\\
		&\leq 2^{2r-1}L_h^2(1+|\theta|+R_0)^{4r-1}\epsilon,
	\end{align*}
	which completes the proof.
\end{proof}

\phantomsection
\begin{proof}[\textbf{Proof of Lemma \ref{lemma.moment bounds of pi_beta}}]
	\label{proof.moment bounds of pi_beta}
	Let $(x_t)_{t\geq 0}$ be the overdamped Langevin SDE defined by 
	\[\dd x_t=-h(x_t)\,\dd t+\sqrt{2\beta^{-1}}\,\dd B_t,\]
	where $x_0=\theta_0$ and $(B_t)_{t\geq 0}$ is a $d$-dimensional Brownian motion. By Assumption~\ref{assumption1.initial condition}, we note that $\theta_0$ has $2q$-th finite moment. Moreover, by Remark~\ref{remark.dissipativity of h}, $u$ satisfies a dissipativity condition. Hence, we can apply \cite[Lemma A.1]{lim2024non} with $Z_t^\lambda\curvearrowleft x_t$, $\lambda=1$, $p\curvearrowleft q$, $a_h\curvearrowleft m/2$, $b_h\curvearrowleft u(0)$ to obtain, for all $t\geq 0$, that
	\begin{equation}\label{eq.2q-bound overdamped}
		\e[|x_t|^{2q}]\leq \exp\left(-\frac{mq}{2}t\right)\e[|\theta_0|^{2q}]+\left(\frac{4u(0)}{m}+\frac{4(d+2(q-1))}{\beta m}\right)^q.
	\end{equation}
	Since $\mathcal{L}(x_t)$ converges to $\pi_\beta$ in Wasserstein-2 distance \cite[Proposition 1]{durmus2019high}, it also converges weakly to $\pi_\beta=\mathcal{L}(Y)$ \cite[Theorem 6.9]{villani2009optimal}. For any $R>0$, define $f_R(\theta)\coloneq \min\{|\theta|^{2q},R\}$, $\theta\in\mathbb{R}^d$. We have, for any $R>0$, that $f_R$ is nonnegative, bounded and continuous. Thus, by the definition of weak convergence,
	\[\e[f_R(Y)]=\lim_{t\to\infty}\e[f_R(x_t)].\]
	Moreover, since $f_R(\theta)\leq |\theta|^{2q}$ for all $\theta\in\mathbb{R}^d$, we have, for any $R>0$, that
	\begin{equation}\label{eq.f_R}
		\e[f_R(Y)]=\lim_{t\to\infty}\e[f_R(x_t)]\leq\liminf_{t\to\infty}\e[|x_t|^{2q}].
	\end{equation}
	Since $f_R(\theta)\uparrow |\theta|^{2q}$ as $R\to\infty$, by monotone convergence theorem, \eqref{eq.2q-bound overdamped} and \eqref{eq.f_R}, we obtain
	\[\e[|Y|^{2q}]=\lim_{R\to\infty}\e[f_R(Y)]\leq\lim_{R\to\infty}\liminf_{t\to\infty}\e[|x_t|^{2q}]\leq\left(\frac{4u(0)}{m}+\frac{4(d+2(q-1))}{\beta m}\right)^q \eqcolon C_{\pi_\beta,2q},\]
	where $C_{\pi_\beta,2q}=\mathcal{O}((d/\beta)^q)$. Similarly, let $(v_t)_{t\geq 0}$ be the overdamped Langevin SDE defined by
	\[\dd v_t=-h_{MY,\epsilon}(v_t)\,\dd t+\sqrt{2\beta^{-1}}\,\dd B_t,\]
	where $v_0=\theta_0$. By Assumption~\ref{assumption1.initial condition} and Remark~\ref{remark.MY}, we apply \cite[Lemma A.1]{lim2024non} with $Z_t^\lambda\curvearrowleft v_t$, $h\curvearrowleft h_{MY,\epsilon}$, $\lambda=1$, $p\curvearrowleft q$, $a_h\curvearrowleft m/4$, and $b_h\curvearrowleft u(0)$ to obtain, for all $t\geq 0$, that
	\[\e[|v_t|^{2q}]\leq\exp\left(-\frac{mq}{4}t\right)\e[|\theta_0|^{2q}]+\left(\frac{8u(0)}{m}+\frac{8(d+2(q-1))}{\beta m}\right)^q.\]
	By \cite[Proposition 1]{durmus2019high}, $\mathcal{L}(v_t)$ converges to $\pi_\beta^\epsilon$ in Wasserstein-2 distance. Thus, using the same argument gives
	\[\e[|\bar Y|^{2q}]\leq \left(\frac{8u(0)}{m}+\frac{8(d+2(q-1))}{\beta m}\right)^q.\]
	This completes the proof.
\end{proof}

\phantomsection
\begin{proof}[\textbf{Proof of Lemma~\ref{lemma.contraction of pi_beta}}]
	\label{proof.contraction of pi_beta}
	Let $(\widetilde{x}_t)_{t\geq 0}$ be the overdamped Langevin SDE defined by
	\begin{equation}\label{eq.overdamped x}
		\dd \widetilde{x}_t=-h(\widetilde{x}_t)\,\dd t+\sqrt{2\beta^{-1}}\,\dd B_t
	\end{equation}
	and let $(\widetilde{v}_t)_{t\geq 0}$ be defined by
	\[\dd \widetilde{v}_t=-h_{MY,\epsilon}(\widetilde{v}_t)\,\dd t+\sqrt{2\beta^{-1}}\,\dd B_t,\]
	where both processes are driven by the same Brownian motion $(B_t)_{t\geq 0}$ and have the same initial condition $\widetilde{x}_0=\widetilde{v}_0=\widetilde{\theta}_0$ satisfying $\mathcal{L}(\widetilde{\theta}_0)=\pi_\beta$. By Itô's formula, we obtain, for all $t\geq 0$, that
	\[\e[|\widetilde{x}_t-\widetilde{v}_t|^2]=-2\int_0^t\e[\la \widetilde{x}_s-\widetilde{v}_s,h(\widetilde{x}_s)-h_{MY,\epsilon}(\widetilde{v}_s)\ra]\,\dd s.\]
	Differentiating with respect to $t$, and then using Young's inequality together with Remark~\ref{remark.MY}, yields
	\begin{align*}\label{eq.time-derivative x-y}
		\begin{split}
			\frac{\dd}{\dd t}\e[|\widetilde{x}_t-\widetilde{v}_t|^2]&=-2\e[\la \widetilde{x}_t-\widetilde{v}_t,h(\widetilde{x}_t)-h_{MY,\epsilon}(\widetilde{v}_t)\ra]\\
			&=-2\e[\la \widetilde{x}_t-\widetilde{v}_t,h(\widetilde{x}_t)-h_{MY,\epsilon}(\widetilde{x}_t)\ra]-2\e[\la \widetilde{x}_t-\widetilde{v}_t,h_{MY,\epsilon}(\widetilde{x}_t)-h_{MY,\epsilon}(\widetilde{v}_t)\ra]\\
			&\leq \frac{m}{2}\e[|\widetilde{x}_t-\widetilde{v}_t|^2]+\frac{2}{m}\e[|h(\widetilde{x}_t)-h_{MY,\epsilon}(\widetilde{x}_t)|^2]-m\e[|\widetilde{x}_t-\widetilde{v}_t|^2]\\
			&\leq -\frac{m}{2}\e[|\widetilde{x}_t-\widetilde{v}_t|^2]+\frac{2}{m}\e[|h(\widetilde{x}_t)-h_{MY,\epsilon}(\widetilde{x}_t)|^2].
		\end{split}
	\end{align*}
	Then, by using Lemma~\ref{lemma.error of MY regularization} and \eqref{eq.2q-bound overdamped}, we obtain that
	\begin{align*}
		\frac{2}{m}\e[|h(\widetilde{x}_t)-h_{MY,\epsilon}(\widetilde{x}_t)|^2]&\leq \frac{2}{m} 2^{4r-2}L_h^4\e\left[(1+|\widetilde{x}_t|+R_0)^{8r-2}\right]\epsilon^2\\
		&\leq \frac{2^{4r-1}}{m}L_h^4 2^{8r-3}\left((1+R_0)^{8r-2}+\e\left[|\widetilde{x}_t|^{8r-2}\right]\right)\epsilon^2\\
		&\leq\frac{2^{12r-4}}{m} L_h^4\left((1+R_0)^{8r-2}+C_{\pi_\beta,8r-2}\right)\epsilon^2\leq c_\pi\epsilon^2,
	\end{align*}
	where $c_\pi\coloneq 2^{12r-4}L_h^4\left((1+R_0)^{8r-2}+C_{\pi_\beta,8r-2}\right)/m=\mathcal{O}((d/\beta)^{4r-1})$, and where $C_{\pi_\beta,8r-2}$ is given in Lemma~\ref{lemma.moment bounds of pi_beta}. Consequently, it holds that
	\begin{align*}
		\frac{\dd}{\dd t}\e[|\widetilde{x}_t-\widetilde{v}_t|^2]&\leq  -\frac{m}{2}\e[|\widetilde{x}_t-\widetilde{v}_t|^2]+c_\pi\epsilon^2,
	\end{align*}
	which implies
	\begin{align*}
		\e[|\widetilde{x}_t-\widetilde{v}_t|^2]&\leq c_\pi\epsilon^2\frac{2}{m}(1-e^{-mt/2})\leq \frac{2c_\pi}{m}\epsilon^2.
	\end{align*}
	By using the definitions of $(\widetilde{x}_t)_{t\geq 0}$ and $(\widetilde{v}_t)_{t\geq 0}$, we have, for any $t\geq 0$, that
	\[W_2(\pi_\beta,\mathcal{L}(\widetilde{v}_t))=W_2(\mathcal{L}(\widetilde{x}_t),\mathcal{L}(\widetilde{v}_t))\leq\left(\e[|\widetilde{x}_t-\widetilde{v}_t|^2]\right)^{1/2}\leq\sqrt{\frac{2c_\pi}{m}}\epsilon.\]
	Therefore, we obtain that
	\[W_2(\pi_\beta,\pi_\beta^\epsilon)\leq W_2(\pi_\beta,\mathcal{L}(\widetilde{v}_t))+W_2(\mathcal{L}(\widetilde{v}_t),\pi_\beta^\epsilon)\leq \bar{c}\epsilon+W_2(\mathcal{L}(\widetilde{v}_t),\pi_\beta^\epsilon),\]
	where $\bar{c}\coloneq \sqrt{2c_\pi/m}=\mathcal{O}((d/\beta)^{2r-1/2})$. By \cite[Proposition 1]{durmus2019high}, $\widetilde{v}_t$ converges to its invariant measure $\pi_\beta^\epsilon$ in Wasserstein-2 distance. Hence, taking the limit as $t\to\infty$ and noting the inequality $W_2(\Pi_\beta,\Pi_\beta^\epsilon)\leq W_2(\pi_\beta,\pi_\beta^\epsilon)$ yields the desired result.
\end{proof}

\subsection{Proofs of Results in Section~\ref{subsection.estimates}}

\begin{lemma}\label{Lemma.dissipativity of Expectation of SG}
	Let Assumptions~\ref{assumption1.initial condition}-\ref{assumption3.strong convexity} hold. Then, for all $\theta\in\mathbb{R}^d$ and $x\in\mathbb{R}^m$, $H_\gamma$ defined in \eqref{eq.tamedSG} satisfies
	\[\la\theta,H_\gamma(\theta,x)\ra\geq -m|\theta|^2-\widetilde{b}(x),\]
	where $\widetilde{b}(x)=K_H^2(1+|x|)^{2\rho}/(4m)$, see also Remark~\ref{remark.dissipativitiy of SG}. This implies
	\[\la \e[H_\gamma(\theta,X_0)],\theta\ra\geq\frac{m}{2}|\theta|^2-u(0).\]
\end{lemma}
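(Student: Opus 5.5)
The plan is to write the tamed coefficient as a convex combination of $m\theta$ and $H(\theta,x)$, and then apply the dissipativity bounds already available for $H$ and $h$. Set
\[
\tau(\theta)\coloneq\frac{1}{\sqrt{1+\gamma^{-1}|\theta|^{4r}}}\in(0,1].
\]
Then \eqref{eq.tamedSG} reads
\[
H_\gamma(\theta,x)=(1-\tau(\theta))\,m\theta+\tau(\theta)\,H(\theta,x).
\]
Both claimed inequalities follow from taking the inner product with $\theta$ and using $\tau\in(0,1]$.

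\emph{Pathwise bound.} Taking the inner product with $\theta$ gives
\[
\la\theta,H_\gamma(\theta,x)\ra=(1-\tau)m|\theta|^2+\tau\la\theta,H(\theta,x)\ra .
\]
By Remark~\ref{remark.dissipativitiy of SG},
\[
\la\theta,H(\theta,x)\ra\geq\la\theta,A(x)\theta\ra-m|\theta|^2-\widetilde b(x),
\]
and $\la\theta,A(x)\theta\ra\geq0$ by \eqref{eq.positive semi definited A}. Dropping the nonnegative terms $(1-\tau)m|\theta|^2$ and $\tau\la\theta,A(x)\theta\ra$ leaves $-\tau m|\theta|^2-\tau\widetilde b(x)$. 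Since $\tau\leq1$ and $\widetilde b\geq0$, this is at least $-m|\theta|^2-\widetilde b(x)$.

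\emph{Averaged bound.} Since $\tau$ depends only on $\theta$, and $\e[|H(\theta,X_0)|]<\infty$ by \eqref{eq.upper bound of |H|} together with the moment condition in Assumption~\ref{assumption1.initial condition}, linearity gives
\[
\e[H_\gamma(\theta,X_0)]=(1-\tau)m\theta+\tau h(\theta).
\]
Hence
\[
\la\e[H_\gamma(\theta,X_0)],\theta\ra=(1-\tau)m|\theta|^2+\tau\la h(\theta),\theta\ra .
\]
Remark~\ref{remark.dissipativity of h} gives $\la h(\theta),\theta\ra\geq\frac m2|\theta|^2-u(0)$. Therefore the right-hand side is at least
\[
\Big((1-\tau)m+\tau\tfrac m2\Big)|\theta|^2-\tau u(0)\;\geq\;\tfrac m2|\theta|^2-u(0).
\]
The last step uses $(1-\tau)m\geq(1-\tau)\frac m2$, $\tau\leq 1$, and $u(0)\geq0$, which holds since $u$ is nonnegative.

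There is no real obstacle here. The only points needing care are that every discarded term is indeed nonnegative, namely the positive semidefiniteness of $A(x)$ and the signs of $\widetilde b$ and $u(0)$, and that the expectation of $H_\gamma$ is well defined.
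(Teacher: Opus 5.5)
Your proposal is correct and follows essentially the paper's argument: the pathwise bound is obtained exactly as in the paper, from Remark~\ref{remark.dissipativitiy of SG}, $\langle\theta,A(x)\theta\rangle\geq 0$ and a taming factor in $(0,1]$. The only difference is in the averaged bound, where you write $\mathbb{E}[H_\gamma(\theta,X_0)]$ as the convex combination $(1-\tau)m\theta+\tau h(\theta)$, whereas the paper splits into cases according to the sign of $\langle h(\theta)-m\theta,\theta\rangle$; this is a slightly cleaner way to reach the same inequality.
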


\begin{proof}
	By the definition of $H_\gamma$ in \eqref{eq.tamedSG}, together with Remark~\ref{remark.dissipativitiy of SG} and  \eqref{eq.positive semi definited A}, we have that
	\begin{align*}
		\la\theta,H_\gamma(\theta,x)\ra&=m|\theta|^2+\frac{\la\theta,H(\theta,x)\ra-m|\theta|^2}{\sqrt{1+\gamma^{-1}|\theta|^{4r}}}\\
		&\geq\left(1-\frac{1}{\sqrt{1+\gamma^{-1}|\theta|^{4r}}}\right)m|\theta|^2+\frac{\la\theta,A(x)\theta\ra}{\sqrt{1+\gamma^{-1}|\theta|^{4r}}}-\frac{m|\theta|^2+\widetilde{b}(x)}{\sqrt{1+\gamma^{-1}|\theta|^{4r}}}\\
		&\geq -m|\theta|^2-\widetilde{b}(x).
	\end{align*}
	Moreover, By using the definition of $H_\gamma$ in \eqref{eq.tamedSG} again, we have, for all $\theta\in\mathbb{R}^d$, that
	\[\la \e[H_\gamma(\theta,X_0)],\theta\ra=\left\la\frac{\e[H(\theta,X_0)]-m\theta}{\sqrt{1+\gamma^{-1}|\theta|^{4r}}},\theta\right\ra+m|\theta|^2.\]
	First, we consider the case where $\la\e[H(\theta,X_0)]-m\theta,\theta\ra\geq 0$. Then, it follows that
	\[\left\la\frac{\e[H(\theta,X_0)]-m\theta}{\sqrt{1+\gamma^{-1}|\theta|^{4r}}},\theta\right\ra\geq 0,\]
	which implies that
	\[\la\e[H_\gamma(\theta,X_0)],\theta\ra\geq m|\theta|^2\geq\frac{m}{2}|\theta|^2-u(0).\]
	Next, we consider $\la\e[H(\theta,X_0)]-m\theta,\theta\ra< 0$, which implies that
	\[\left\la\frac{\e[H(\theta,X_0)]-m\theta}{\sqrt{1+\gamma^{-1}|\theta|^{4r}}},\theta\right\ra\geq \la\e[H(\theta,X_0)]-m\theta,\theta\ra.\]
	Thus we obtain
	\[\la\e[H_\gamma(\theta,X_0)],\theta\ra\geq\la\e[H(\theta,X_0)],\theta\ra=\la h(\theta),\theta\ra\geq\frac{m}{2}|\theta|^2-u(0),\]
	where the last inequality follows from Remark~\ref{remark.dissipativity of h}.
\end{proof}

\begin{lemma}\label{lemma.2nd bound of taming factor}
	Let Assumptions~\ref{assumption1.initial condition}-\ref{assumption3.strong convexity} hold. Then, for all $\theta\in\mathbb{R}^d$, $x\in\mathbb{R}^k$, and $\gamma\geq\gamma_{\min}$, we have
	\[|H_\gamma(\theta,x)|^2\leq 2m^2|\theta|^2+8\gamma\left(m^2+K_H^2(1+|x|)^{2\rho}\right),\]
	which implies
	\[\e[|H_\gamma(\theta,X_0)|^2]\leq 2m^2|\theta|^2+\gamma C_H,\]
	where $C_H\coloneq 8(m^2+K_H^2\e[(1+|X_0|)^{2\rho}])$ with $K_H$ given in Remark~\ref{remark.local Lip. of h}.
\end{lemma}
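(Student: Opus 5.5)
The plan is to bound $H_\gamma$ directly from its definition \eqref{eq.tamedSG}, splitting it into the linear part $m\theta$ and the tamed part. Write $D\coloneq\sqrt{1+\gamma^{-1}|\theta|^{4r}}\geq 1$. By $(a+b)^2\leq 2a^2+2b^2$,
\[
|H_\gamma(\theta,x)|^2\leq 2m^2|\theta|^2+2\,\frac{|H(\theta,x)-m\theta|^2}{D^2},
\]
so it suffices to show that $|H(\theta,x)-m\theta|^2/D^2\leq 4\gamma\bigl(m^2+K_H^2(1+|x|)^{2\rho}\bigr)$.

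For the second term, apply $(a+b)^2\leq 2a^2+2b^2$ once more, together with the growth bound \eqref{eq.upper bound of |H|}:
\[
|H(\theta,x)-m\theta|^2\leq 2K_H^2(1+|x|)^{2\rho}(1+|\theta|^{2r})^2+2m^2|\theta|^2 .
\]
Since $(1+|\theta|^{2r})^2\leq 2(1+|\theta|^{4r})$, the first piece is at most $4K_H^2(1+|x|)^{2\rho}(1+|\theta|^{4r})$. Dividing by $D^2=1+\gamma^{-1}|\theta|^{4r}$ and using $\gamma\geq\gamma_{\min}\geq 1$, we get
\[
\frac{1+|\theta|^{4r}}{1+\gamma^{-1}|\theta|^{4r}}\leq\gamma .
\]
Indeed, $1+|\theta|^{4r}\leq\gamma+|\theta|^{4r}=\gamma(1+\gamma^{-1}|\theta|^{4r})$. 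Hence this piece contributes at most $4\gamma K_H^2(1+|x|)^{2\rho}$.

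For the piece $2m^2|\theta|^2/D^2$, the step I expect to need the most care is comparing $|\theta|^2$ with $1+|\theta|^{4r}$. This uses the standing assumption $2r\in[\vartheta,\infty)\cap\mathbb{N}$ with $\vartheta\geq1$, so $2r\geq 1$ and $4r\geq 2$. Consequently $|\theta|^2\leq 1+|\theta|^{4r}$: if $|\theta|\leq1$ the left side is at most $1$, and otherwise $|\theta|^2\leq|\theta|^{4r}$. The same ratio bound as above then gives $2m^2|\theta|^2/D^2\leq 2m^2\gamma\leq 4\gamma m^2$.

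Combining the two pieces yields $|H(\theta,x)-m\theta|^2/D^2\leq 4\gamma\bigl(m^2+K_H^2(1+|x|)^{2\rho}\bigr)$, and multiplying by $2$ gives the pointwise bound. Finally, replacing $x$ by $X_0$ and taking expectations gives
\[
\e[|H_\gamma(\theta,X_0)|^2]\leq 2m^2|\theta|^2+\gamma C_H ,
\]
since $\theta$ is deterministic. The constant $C_H$ is finite because $\e[(1+|X_0|)^{2\rho}]<\infty$, which follows from Assumption~\ref{assumption1.initial condition} ($12\rho r\geq 2\rho$).
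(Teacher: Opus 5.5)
Your proof is correct and follows essentially the paper's route. Both arguments split off $m\theta$, apply the growth bound $|H(\theta,x)|\leq K_H(1+|x|)^\rho(1+|\theta|^{2r})$, and use $\gamma\geq 1$ and $2r\geq 1$ to make the tamed part of order $\sqrt{\gamma}$. The only difference is cosmetic. You work with squared norms and the ratio bound $(1+|\theta|^{4r})/(1+\gamma^{-1}|\theta|^{4r})\leq\gamma$. The paper instead bounds $|H_\gamma|$ first, using $\sqrt{1+\gamma^{-1}|\theta|^{4r}}\geq\gamma^{-1/2}(1+|\theta|^{2r})/\sqrt{2}$, and squares at the end, reaching the same constant $8\gamma$.
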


\begin{proof}
	Recall $H_\gamma$ defined in \eqref{eq.tamedSG} and $\gamma\geq\gamma_{\min}\geq 1$. For all $\theta\in\mathbb{R}^d$ and $x\in\mathbb{R}^k$, by using Remark~\ref{remark.local Lip. of h}, we obtain that
	\begin{align*}
		|H_\gamma(\theta,x)|&\leq m|\theta|+\frac{|H(\theta,x)|+m|\theta|}{\sqrt{1+\gamma^{-1}|\theta|^{4r}}}\\
		&\leq m|\theta|+\frac{\sqrt{2}(|H(\theta,x)|+m|\theta|)}{1+\gamma^{-1/2}|\theta|^{2r}}\\
		&\leq m|\theta|+\frac{\sqrt{2}\,m|\theta|}{\gamma^{-1/2}(1+|\theta|^{2r})}+\frac{\sqrt{2}\,|H(\theta,x)|}{\gamma^{-1/2}(1+|\theta|^{2r})}\\
		&\leq m|\theta|+\frac{\sqrt{2}m}{\gamma^{-1/2}}+\frac{\sqrt{2}K_H(1+|x|)^{\rho}}{\gamma^{-1/2}},
	\end{align*}
	where the second inequality follows from the inequality $(\sqrt{a}+\sqrt{b})/\sqrt{2}\leq\sqrt{a+b}$ for all $a,b\geq 0$. Taking squares on both sides yields
	\[|H_\gamma(\theta,x)|^2\leq 2m^2|\theta|^2+8\gamma\left(m^2+K_H^2(1+|x|)^{2\rho}\right),\]
	which establishes the desired result.
\end{proof}

To obtain moment estimates for tSGHMC \eqref{tSGHMC}-\eqref{eq.tamedSG}, we define the Lyapunov function
\begin{equation}\label{M_n}
	M_n\coloneq \frac{\gamma^2}{4}|\theta_n^\lambda+\gamma^{-1}\nu_n^\lambda|^2+\frac{1}{4}|\nu_n^\lambda|^2-\frac{\sigma\gamma^2}{4}|\theta_n^\lambda|^2,
\end{equation}
where $\sigma\coloneq m/(8\gamma^2)$.

\phantomsection
\begin{proof}[\textbf{Proof of Lemma~\ref{lemma.2 moment of algorithm}}]
	\label{Proof.second moment bounds}
	Recall that $\gamma_{\min}$ and $\lambda_{\max,\gamma}$ are defined in \eqref{eq.friction restriction} and \eqref{eq.step-size restriction}, respectively. For any $\gamma\geq\gamma_{\min}$, $\lambda\leq\lambda_{\max,\gamma}$, and $n\in\mathbb{N}_0$, define 
	\begin{equation}\label{eq.Delta n and En}
		\Delta_n\coloneq \theta_n^\lambda + \gamma^{-1}\nu_n^\lambda - \lambda\gamma^{-1} H_\gamma(\theta_n^\lambda,X_{n+1}),\quad E_n \coloneq  \nu_n^\lambda - \lambda[\gamma \nu_n^\lambda+H_\gamma(\theta_n^\lambda,X_{n+1})].
	\end{equation}
	By using \eqref{tSGHMC}-\eqref{eq.tamedSG}, we obtain that
	\begin{equation}\label{eq.expansion of theta gamma nu}
		|\theta_{n+1}^\lambda+\gamma^{-1}\nu_{n+1}^\lambda|^2=|\Delta_n|^2
		+2\sqrt{\frac{2\lambda}{\gamma\beta}}\la \Delta_n,\xi_{n+1}\ra
		+\frac{2\lambda}{\gamma\beta}|\xi_{n+1}|^2.
	\end{equation}
	Furthermore, by using \eqref{eq.Delta n and En}, Young's inequality and Cauchy-Schwarz inequality, we have
	\begin{align}\label{eq.upper bound of Delta_n}
		\begin{split}
			|\Delta_n|^2
			&=|\theta_n^\lambda+\gamma^{-1}\nu_n^\lambda|^2-2\lambda\gamma^{-1}\la \theta_n^\lambda+\gamma^{-1}\nu_n^\lambda, H_\gamma(\theta_n^\lambda,X_{n+1})\ra+\lambda^2\gamma^{-2}|H_\gamma(\theta_n^\lambda,X_{n+1})|^2\\
			&\leq |\theta_n^\lambda+\gamma^{-1}\nu_n^\lambda|^2 -2\lambda\gamma^{-1}{\la \theta_n^\lambda, H_\gamma(\theta_n^\lambda,X_{n+1})\ra}\\
			&\quad+2\lambda\gamma^{-2}|\nu_n^\lambda|\,|H_\gamma(\theta_n^\lambda,X_{n+1})|+\lambda^2\gamma^{-2}|H_\gamma(\theta_n^\lambda,X_{n+1})|^2\\
			&\leq|\theta_n^\lambda+\gamma^{-1}\nu_n^\lambda|^2 +\lambda(2\gamma)^{-1}|\nu_n^\lambda|^2+2\lambda\gamma^{-3}|H_\gamma(\theta_n^\lambda,X_{n+1})|^2\\
			&\quad-2\lambda\gamma^{-1}\la \theta_n^\lambda, H_\gamma(\theta_n^\lambda,X_{n+1})\ra+\lambda^2\gamma^{-2}|H_\gamma(\theta_n^\lambda,X_{n+1})|^2.
		\end{split}
	\end{align}
	Then, by using \eqref{eq.expansion of theta gamma nu} and the last term on the right hand side of \eqref{eq.upper bound of Delta_n}, we obtain that
	\begin{align}\label{eq.M_n-1}
		\begin{split}
			&\frac{\gamma^2}{4}\big(|\theta_{n+1}^\lambda+\gamma^{-1}\nu_{n+1}^\lambda|^2-|\theta_n^\lambda+\gamma^{-1}\nu_n^\lambda|^2\big)\\
			&\leq\frac{\lambda \gamma}{8}|\nu_n^\lambda|^2+\frac{\lambda}{2\gamma}|H_\gamma(\theta_n^\lambda,X_{n+1})|^2\\
			&\quad-\frac{\lambda\gamma}{2}\la \theta_n^\lambda, H_\gamma(\theta_n^\lambda,X_{n+1})\ra+\frac{\lambda^2}{4}|H_\gamma(\theta_n^\lambda,X_{n+1})|^2\\
			&\quad+\gamma^2\sqrt{\frac{\lambda}{2\gamma\beta}}\la \Delta_n,\xi_{n+1}\ra
			+\frac{\lambda\gamma}{2\beta}|\xi_{n+1}|^2.
		\end{split}
	\end{align}
	Similarly, by using \eqref{tSGHMC} and \eqref{eq.Delta n and En}, we have that
	\begin{align*}
		\nu_{n+1}^\lambda&=(1-\lambda\gamma)\nu_n^\lambda-\lambda H_\gamma(\theta_n^\lambda,X_{n+1})+\sqrt{2\lambda\gamma\beta^{-1}}\xi_{n+1}\\
		&=E_n+\sqrt{2\lambda\gamma\beta^{-1}}\xi_{n+1}.
	\end{align*}
	Further calculations yield
	\begin{align}\label{eq.v n+1 2}
		\begin{split}
			|\nu_{n+1}^\lambda|^2&=|E_n|^2+2\lambda\gamma\beta^{-1}|\xi_{n+1}|^2+2\sqrt{2\lambda\gamma\beta^{-1}}\la E_n,\xi_{n+1}\ra\\
			&=(1-\lambda\gamma)^2|\nu_n^\lambda|^2+\lambda^2|H_\gamma(\theta_n^\lambda,X_{n+1})|^2+2\lambda\gamma\beta^{-1}|\xi_{n+1}|^2\\
			&\quad -2\lambda(1-\lambda\gamma)\la\nu_n^\lambda,H_\gamma(\theta_n^\lambda,X_{n+1})\ra+2\sqrt{2\lambda\gamma\beta^{-1}}\la E_n,\xi_{n+1}\ra.
		\end{split}
	\end{align}
	By using \eqref{eq.Delta n and En}, \eqref{eq.v n+1 2}, and Young's inequality, we have
	\begin{align}\label{eq.M_n-2}
		\begin{split}
			\frac{1}{4}(|\nu_{n+1}^\lambda|^2-|\nu_n^\lambda|^2)
			&\leq -\frac{\lambda\gamma}{2}|\nu_n^\lambda|^2+\frac{\lambda^2\gamma^2}{4}|\nu_n^\lambda|^2+\frac{\lambda}{2}|\la\nu_n^\lambda,H_\gamma(\theta_n^\lambda,X_{n+1})\ra|\\
			&\quad+\frac{\lambda^2}{4}|H_\gamma(\theta_n^\lambda,X_{n+1})|^2+\frac{\lambda\gamma}{2\beta}|\xi_{n+1}|^2+\sqrt{\frac{\lambda\gamma}{2\beta}}\la E_n,\xi_{n+1}\ra\\
			&\leq -\frac{\lambda\gamma}{4}|\nu_n^\lambda|^2+\frac{\lambda^2\gamma^2}{4}|\nu_n^\lambda|^2+\frac{\lambda}{4\gamma}|H_\gamma(\theta_n^\lambda,X_{n+1})|^2\\
			&\quad+\frac{\lambda^2}{4}|H_\gamma(\theta_n^\lambda,X_{n+1})|^2+\frac{\lambda\gamma}{2\beta}|\xi_{n+1}|^2+\sqrt{\frac{\lambda\gamma}{2\beta}}\la E_n,\xi_{n+1}\ra.
		\end{split}
	\end{align}
	Furthermore, by using the definition of $M_n$ in \eqref{M_n}, we have that
	\begin{align*}
		M_n&=\frac{\gamma^2}{4}\left(|\theta_n^\lambda|^2+2\gamma^{-1}\la\theta_n^\lambda,\nu_n^\lambda\ra+\gamma^{-2}|\nu_n^\lambda|^2\right)+\frac{1}{4}|\nu_n^\lambda|^2-\frac{\sigma\gamma^2}{4}|\theta_n^\lambda|^2\\
		&\leq \frac{\gamma^2}{4}|\theta_n^\lambda|^2+\frac{\gamma}{2}\la\theta_n^\lambda,\nu_n^\lambda\ra+\frac{1}{2}|\nu_n^\lambda|^2.
	\end{align*}
	This, together with \eqref{tSGHMC}-\eqref{eq.tamedSG}, yields
	\begin{align}
		-\frac{\sigma\gamma^2}{4}(|\theta_{n+1}^\lambda|^2-|\theta_n^\lambda|^2)&=-\frac{\sigma\gamma^2}{4}(\lambda^2|\nu_n^\lambda|^2+2\lambda\la\theta_n^\lambda,\nu_n^\lambda\ra)\notag\\
		&\leq -\frac{\sigma\lambda^2\gamma^2}{4}|\nu_n^\lambda|^2-\lambda\sigma\gamma M_n+\frac{\lambda\sigma\gamma}{2}|\nu_n^\lambda|^2+\frac{\lambda\sigma\gamma^3}{4}|\theta_n^\lambda|^2\notag\\
		&\leq -\lambda\sigma\gamma M_n+\lambda\sigma\gamma^3|\theta_n^\lambda|^2+\frac{\lambda\sigma\gamma}{2}|\nu_n^\lambda|^2.\label{eq.M_n-3}
	\end{align}
	Using \eqref{M_n}, \eqref{eq.M_n-1}, \eqref{eq.M_n-2}, and \eqref{eq.M_n-3} yields
	\begin{align}\label{eq.M_n+1}
		\begin{split}
			M_{n+1}&\leq (1-\lambda\sigma\gamma)M_n+\lambda\left(\frac{\gamma}{8}-\frac{\gamma}{4}+\frac{\lambda\gamma^2}{4}+\frac{\sigma\gamma}{2}\right)|\nu_n^\lambda|^2+\lambda\sigma\gamma^3|\theta_n^\lambda|^2 \\
			&\quad+\lambda\left(\frac{1}{2\gamma}+\frac{\lambda}{2}+\frac{1}{4\gamma}\right){|H_\gamma(\theta_n^\lambda,X_{n+1})|^2}-\frac{\lambda \gamma}{2}{\la\theta_n^\lambda,H_\gamma(\theta_n^\lambda,X_{n+1})\ra}\\
			&\quad+\gamma^2\sqrt{\frac{\lambda}{2\gamma\beta}}\la\Delta_n,\xi_{n+1}\ra+\frac{\lambda\gamma}{\beta}|\xi_{n+1}|^2+\sqrt{\frac{\lambda\gamma}{2\beta}}\la E_n,\xi_{n+1}\ra. 
		\end{split}
	\end{align}
	Taking conditional expectations on both sides of \eqref{eq.M_n+1}, denoted as $\e_n[\cdot]=\e[\cdot\mid\theta_n^\lambda,\nu_n^\lambda]$ for notational simplicity, and using Lemmas~\ref{Lemma.dissipativity of Expectation of SG} and \ref{lemma.2nd bound of taming factor} along with the fact that $\Delta_n$ and $E_n$ are independent of $\xi_{n+1}$, yields
	\begin{align}\label{eq.coeffs of theta nu}
		\begin{split}
			\e_n[M_{n+1}]&\leq (1-\lambda\sigma\gamma)M_n+\lambda\left(-\frac{\gamma}{8}+\frac{\lambda\gamma^2}{4}+\frac{\sigma\gamma}{2}\right)|\nu_n^\lambda|^2+\lambda\sigma\gamma^3|\theta_n^\lambda|^2\\
			&\quad+\lambda\left(\frac{\lambda}{2}+\frac{3}{4\gamma}\right)\left(2m^2|\theta_n^\lambda|^2+C_H\gamma\right)-\frac{\lambda\gamma}{2}\left(\frac{m}{2}|\theta_n^\lambda|^2-u(0)\right)+\frac{\lambda\gamma}{\beta}d\\
			&= (1-\lambda\sigma\gamma)M_n+\lambda\left(-\frac{\gamma}{8}+\frac{\lambda\gamma^2}{4}+\frac{\sigma\gamma}{2}\right)|\nu_n^\lambda|^2\\
			&\quad+\lambda\left[\sigma\gamma^3+2m^2\left(\frac{\lambda}{2}+\frac{3}{4\gamma}\right)-\frac{\gamma m}{4}\right]|\theta_n^\lambda|^2\\
			&\quad +\lambda\left[C_H\gamma\left(\frac{\lambda}{2}+\frac{3}{4\gamma}\right)+\frac{\gamma}{2}u(0)+\frac{\gamma}{\beta}d\right].
		\end{split}
	\end{align}
	Recall that $\sigma=m/(8\gamma^2)$. Since $\gamma\geq \gamma_{\min}\geq \max\{1,14m\}$ and $\lambda\leq \lambda_{\max,\gamma}\leq 1/(4\gamma)\leq 1/4$, the coefficients of $|\nu_n^\lambda|^2$ and $|\theta_n^\lambda|^2$ in \eqref{eq.coeffs of theta nu} are non-positive, i.e.,
	\begin{equation}\label{eq.non-positive coefficients of theta and nu}
		\lambda\left(-\frac{\gamma}{8}+\frac{\lambda\gamma^2}{4}+\frac{\sigma\gamma}{2}\right)\leq 0,\quad \lambda\left[\sigma\gamma^3+2m^2\left(\frac{\lambda}{2}+\frac{3}{4\gamma}\right)-\frac{\gamma m}{4}\right]\leq 0.
	\end{equation}
	Therefore, by using \eqref{eq.non-positive coefficients of theta and nu}, $\gamma^{-1}\leq 1$, and $\lambda\leq 1/(4\gamma)\leq 1/4$, we obtain that
	\begin{align}
		\e_n[M_{n+1}]&\leq\left(1-\frac{\lambda m}{8\gamma}\right)M_n+\lambda\gamma\left[C_H\left(\frac{\lambda}{2}+\frac{3}{4\gamma}\right)+\frac{u(0)}{2}+\frac{d}{\beta}\right]\notag\\
		&\leq\left(1-\frac{\lambda m}{8\gamma}\right)M_n+\lambda\gamma\left[\frac{7C_H}{8}+\frac{u(0)}{2}+\frac{d}{\beta}\right].\label{eq.expectation Lyapunov}
	\end{align}
	Consequently, by induction and by taking expectations, we obtain
	\begin{equation}\label{expectation of M_{n+1}}
		\e[M_{n+1}]\leq \left(1-\frac{\lambda m}{8\gamma}\right)^{n+1} \e[M_0]+\frac{8\gamma^2}{m}\left[\frac{7C_H}{8}+\frac{u(0)}{2}+\frac{d}{\beta}\right].
	\end{equation}
	Moreover, since for $\gamma\geq\gamma_{\min}\geq \sqrt{m}$, we have that 
	\begin{equation}\label{eq.1-2sigma}
		1-2\sigma\geq\frac{3}{4}>0,
	\end{equation}
	and therefore, we have 
	\begin{align}
		M_n&=\frac{\gamma^2}{4}\left(|\theta_n^\lambda+\gamma^{-1}\nu_n^\lambda|^2+|\gamma^{-1}\nu_n^\lambda|^2-\sigma|\theta_n^\lambda+\gamma^{-1}\nu_n^\lambda-\gamma^{-1}\nu_n^\lambda|^2\right)\notag\\
		&\geq \frac{\gamma^2}{4}\left(|\theta_n^\lambda+\gamma^{-1}\nu_n^\lambda|^2+|\gamma^{-1}\nu_n^\lambda|^2-2\sigma(|\theta_n^\lambda+\gamma^{-1}\nu_n^\lambda|^2+|\gamma^{-1}\nu_n^\lambda|^2)\right)\notag\\
		&=\frac{\gamma^2(1-2\sigma)}{4}|\theta_n^\lambda+\gamma^{-1}\nu_n^\lambda|^2+\frac{1-2\sigma}{4}|\nu_n^\lambda|^2\notag\\
		&\geq \max\left\{\frac{(1-2\sigma)\gamma^2}{8}|\theta_n^\lambda|^2,\frac{1-2\sigma}{4}|\nu_n^\lambda|^2\right\},\label{eq.lower bound of M_n}
	\end{align}
	where the last inequality follows from the inequality $|a+b|^2\geq |a|^2/2-|b|^2$ for $a,b>0$. This further implies $M_n\geq 0$ for all $n\in\mathbb{N}_0$. In addition,
	\begin{equation}\label{eq.M_0}
		M_0=\frac{\gamma^2}{4}|\theta_0+\gamma^{-1}\nu_0|^2+\frac{1}{4}|\nu_0|^2-\frac{\sigma\gamma^2}{4}|\theta_0|^2.
	\end{equation}
	Then, for $\gamma\geq\gamma_{\min}\geq 1$, combining \eqref{expectation of M_{n+1}}-\eqref{eq.M_0}, we have that
	\begin{align*}
		\e[|\theta_n^\lambda|^2]&\leq \frac{8}{(1-2\sigma)\gamma^2}\e[M_n]\\
		&\leq \frac{8}{(1-2\sigma)\gamma^2}\e[M_0]+\frac{64}{(1-2\sigma)m}\left[\frac{7C_H}{8}+\frac{u(0)}{2}+\frac{d}{\beta}\right]\\
		&\leq \frac{8}{3}\e[|\theta_0+\gamma^{-1}\nu_0|^2]+\frac{8}{3}\e[|\nu_0|^2]+\frac{256}{3m}\left[\frac{7C_H}{8}+\frac{u(0)}{2}+\frac{d}{\beta}\right]\\
		&\leq 6\e[|\theta_0|^2]+8\e[|\nu_0|^2]+\frac{86}{m}\left[\frac{7C_H}{8}+\frac{u(0)}{2}+\frac{d}{\beta}\right].
	\end{align*}
	Thus, we obtain that
	\begin{equation}\label{eq.sup-position}
		\sup_{n\in\mathbb{N}_0}\e[|\theta_n^\lambda|^2]\leq \bar{C}_2,
	\end{equation}
	where $\bar{C}_2\coloneq 6\e[|\theta_0|^2]+8\e[|\nu_0|^2]+86\big(7C_H/8+u(0)/2+d/\beta\big)/m=\mathcal{O}(d/\beta)$.
	
	Finally, we establish a bound for the second moment of $\nu_n^\lambda$. By using \eqref{eq.v n+1 2}, we obtain that
	\begin{align*}
		|\nu_{n+1}^\lambda|^2&\leq (1-\lambda\gamma)|\nu_n^\lambda|^2+(\lambda^2+2\lambda\gamma^{-1})|H_\gamma(\theta_n^\lambda,X_{n+1})|^2+\frac{\lambda\gamma}{2}|\nu_n^\lambda|^2\\
		&\quad+2\lambda\gamma\beta^{-1}|\xi_{n+1}|^2+2\sqrt{2\lambda\gamma\beta^{-1}}\la E_n,\xi_{n+1}\ra.
	\end{align*}
	By taking conditional expectations, using Lemma~\ref{lemma.2nd bound of taming factor}, \eqref{eq.sup-position}, $\gamma\geq\gamma_{\min}\geq 1$ and $\lambda\leq\lambda_{\max,\gamma}\leq 1/(4\gamma)\leq 1/4$, we obtain that
	\begin{align*}
		\e_n[|\nu_{n+1}^\lambda|^2]&\leq\left(1-\frac{\lambda\gamma}{2}\right)|\nu_n^\lambda|^2+(\lambda^2+2\lambda\gamma^{-1})(2m^2|\theta_n^\lambda|^2+C_H\gamma)+\frac{2\lambda\gamma}{\beta}d\\
		&\leq \left(1-\frac{\lambda\gamma}{2}\right)|\nu_n^\lambda|^2+\lambda\gamma\left[2m^2(\lambda\gamma^{-1}+2\gamma^{-2})\bar{C}_2+(\lambda+2\gamma^{-1})C_H+\frac{2d}{\beta}\right]\\
		&\leq \left(1-\frac{\lambda\gamma}{2}\right)|\nu_n^\lambda|^2+\lambda\gamma\left[\frac{9m^2}{2}\bar{C}_2+\frac{9}{4}C_H+\frac{2d}{\beta}\right].
	\end{align*}
	Iterating the above inequality and taking expectations yield,
	\[\e[|\nu_{n+1}^\lambda|^2]\leq \left(1-\frac{\lambda\gamma}{2}\right)^{n+1}\e[|\nu_0|^2]+9m^2\bar{C}_2+\frac{9}{2}C_H+\frac{4d}{\beta}.\]
	So we conclude that
	\[\sup_{n\in\mathbb{N}_0}\e[|\nu_n^\lambda|^2]\leq\bar{B}_2,\]
	where $\bar{B}_2\coloneq \e[|\nu_0|^2]+9m^2\bar{C}_2+9C_H/2+4d/\beta=\mathcal{O}(d/\beta)$. This completes the proof.
\end{proof}

\phantomsection
\begin{proof}[\textbf{Proof of Lemma~\ref{lemma.2q moment of algorithm}}]
	\label{Proof.Even Order Moment bounds}
	Recall that $\gamma_{\min}$ and $\lambda_{\max,\gamma}$ are defined in \eqref{eq.friction restriction} and \eqref{eq.step-size restriction}, respectively. For any $q\in[2,\infty)\cap\mathbb{N}$, $\gamma\geq\gamma_{\min}$, $\lambda\leq\lambda_{\max,\gamma}$, and $n\in\mathbb{N}_0$, recall the definition of $\Delta_n$ and $E_n$ defined in \eqref{eq.Delta n and En}. Since $\sigma=m/(8\gamma^2)$, $\gamma\geq\gamma_{\min}\geq \max\{1,14m,\sqrt{m}\}$ and $\lambda\leq\lambda_{\max,\gamma}\leq 1/(4\gamma)$, the coefficients of $|\nu_n^\lambda|^2$ in \eqref{eq.M_n+1} are non-positive, i.e.,
	\[\lambda\left(\frac{\gamma}{8}-\frac{\gamma}{4}+\frac{\lambda\gamma^2}{4}+\frac{\sigma\gamma}{2}\right)\leq 0,\]
	and hence
	\begin{align}\label{upper bound of M_{n+1}}
		M_{n+1}\leq (1-\lambda\sigma\gamma)M_n+K_n, 
	\end{align}
	where
	\begin{align}\label{eq.K_n}
		\begin{split}
			K_n&\coloneq \lambda\sigma\gamma^3|\theta_n^\lambda|^2+\lambda\left(\frac{\lambda}{2}+\frac{3}{4\gamma}\right){|H_\gamma(\theta_n^\lambda,X_{n+1})|^2}-\frac{\lambda \gamma}{2}{\la\theta_n^\lambda,H_\gamma(\theta_n^\lambda,X_{n+1})\ra}\\
			&\quad+\gamma^2\sqrt{\frac{\lambda}{2\gamma\beta}}\la\Delta_n,\xi_{n+1}\ra+\frac{\lambda\gamma}{\beta}|\xi_{n+1}|^2+\sqrt{\frac{\lambda\gamma}{2\beta}}\la E_n,\xi_{n+1}\ra.
		\end{split}
	\end{align}
	Let $s\coloneq 1-\lambda\sigma\gamma$. By \eqref{eq.1-2sigma} and \eqref{eq.lower bound of M_n}, we note that $M_n\geq 0$ for all $n\in\mathbb{N}_0$. Recall the notation $\e_n[\cdot]=\e[\cdot\mid\theta_n^\lambda,\nu_n^\lambda]$. Then, by taking conditional expectations on both sides of \eqref{upper bound of M_{n+1}}, using binomial theorem and the fact that $\Delta_n$ and $E_n$ are independent of $\xi_{n+1}$, we obtain that
	\begin{align}\label{eq.even-bounds of M_n}
		\begin{split}
			\e_n[M_{n+1}^q]&\leq(sM_n)^{q}+q(sM_n)^{q-1}\e_n[K_n]+\sum_{k=2}^{q}\binom{q}{k}\e_n\left[(sM_n)^{q-k}K_n^k\right]\\
			&= (sM_n)^q+q(sM_n)^{q-1}\e_n[K_n]+\sum_{k=0}^{q-2}\binom{q}{k+2}\e_n\left[(sM_n)^{q-k-2}K_n^{k+2}\right]\\ 
			&\leq (sM_n)^q+q(sM_n)^{q-1}\e_n[K_n]+\binom{q}{2}\sum_{k=0}^{q-2}\binom{q-2}{k}\e_n\left[|sM_n|^{q-k-2}|K_n|^{k+2}\right] \\
			&\leq (sM_n)^q+q(sM_n)^{q-1}\e_n[K_n] \\
			&\quad+q(q-1)\e_n\left[|K_n|^2\sum_{k=0}^{q-2}\binom{q-2}{k}|sM_n|^{q-k-2}|K_n|^k\right] \\
			&=(sM_n)^q+q(sM_n)^{q-1}\e_n[K_n]+q(q-1)\e_n\left[|K_n|^2 (|sM_n|+|K_n|)^{q-2}\right] \\
			&\leq (sM_n)^q+q(sM_n)^{q-1}\e_n[K_n]\\
			&\quad+q(q-1)2^{q-3}|sM_n|^{q-2}\e_n[|K_n|^2]+q(q-1)2^{q-3}\e_n[|K_n|^{q}].
		\end{split}
	\end{align}
	To establish an upper bound for the second term on the right-hand side of \eqref{eq.even-bounds of M_n}, we use similar argument to \eqref{eq.M_n+1}-\eqref{eq.expectation Lyapunov}, $\gamma\geq\gamma_{\min}\geq 1$ and $\lambda\leq\lambda_{\max,\gamma}\leq 1/(4\gamma)\leq 1/4$, to obtain
	\begin{equation}\label{eq.condition expectation of K_n}
		\e_n[K_n]\leq\lambda\gamma \bar{C}_K,
	\end{equation}
	where $\bar{C}_K\coloneq 7C_H/8+u(0)/2+d/\beta=\mathcal{O}(d/\beta)$. Recall that $\sigma=m/(8\gamma^2)$. By \eqref{eq.K_n}, Lemmas~\ref{Lemma.dissipativity of Expectation of SG} and \ref{lemma.2nd bound of taming factor}, $\gamma\geq 1$ and $\lambda\leq 1/4$, we have
	\begin{align}
		K_n&\leq \frac{\lambda m\gamma}{8}|\theta_n^\lambda|^2+\lambda\left(\frac{\lambda}{2}+\frac{3}{4\gamma}\right)\left(2m^2|\theta_n^\lambda|^2+8\gamma\big[m^2+K_H^2(1+|X_{n+1}|)^{2\rho}\big]\right)\notag\\
		&\quad+\frac{\lambda\gamma}{2}\left(m|\theta_n^\lambda|^2+\widetilde{b}(X_{n+1})\right)+\gamma^2\sqrt{\frac{\lambda}{2\gamma\beta}}\la\Delta_n,\xi_{n+1}\ra+\frac{\lambda\gamma}{\beta}|\xi_{n+1}|^2+\sqrt{\frac{\lambda\gamma}{2\beta}}\la E_n,\xi_{n+1}\ra\notag\\
		&\leq\left(\frac{\lambda m\gamma}{8}+2m^2\lambda\left(\frac{\lambda}{2}+\frac{3}{4\gamma}\right)+\frac{\lambda m\gamma}{2}\right)|\theta_n^\lambda|^2+\gamma^2\sqrt{\frac{\lambda}{2\gamma\beta}}\la\Delta_n,\xi_{n+1}\ra+\sqrt{\frac{\lambda\gamma}{2\beta}}\la E_n,\xi_{n+1}\ra \notag\\
		&\quad+\frac{\lambda\gamma}{\beta}|\xi_{n+1}|^2+8\lambda\gamma\left(\frac{\lambda}{2}+\frac{3}{4\gamma}\right)\left[m^2+K_H^2(1+|X_{n+1}|)^{2\rho}\right]+\frac{\lambda\gamma}{2}\widetilde{b}(X_{n+1})\notag\\
		&\leq \lambda\gamma C_m|\theta_n^\lambda|^2+f_n+g_n,\label{eq.2 upper bound of K_n}
	\end{align}
	where $C_m\coloneq 5m/8+7m^2/4$ and
	\begin{align*}
		f_n &\coloneq \gamma^2\sqrt{\frac{\lambda}{2\gamma\beta}}\la\Delta_n,\xi_{n+1}\ra+\sqrt{\frac{\lambda\gamma}{2\beta}}\la E_n,\xi_{n+1}\ra,\\
		g_n &\coloneq \frac{\lambda\gamma}{\beta}|\xi_{n+1}|^2+\lambda\gamma\left(\frac{1}{8m}+7\right)\left[m^2+K_H^2(1+|X_{n+1}|)^{2\rho}\right].
	\end{align*}
	To establish an upper estimate for the third term on the right-hand side of \eqref{eq.even-bounds of M_n}, we use \eqref{eq.1-2sigma} to obtain that 
	\begin{equation}\label{eq.position, momentum and M_n}
		|\theta_n^\lambda|^2
		\leq \frac{32}{3\gamma^2}M_n,\quad|\nu_n^\lambda|^2\leq \frac{16}{3}M_n.
	\end{equation}
	By using \eqref{eq.Delta n and En}, \eqref{eq.position, momentum and M_n}, Lemma~\ref{lemma.2nd bound of taming factor}, Young's inequality, $\gamma\geq 1$, and $\lambda\leq 1/(4\gamma)$, we obtain
	\begin{align}
		&|f_n|^2\leq \frac{\lambda\gamma^3}{\beta}|\Delta_n|^2|\xi_{n+1}|^2+\frac{\lambda\gamma}{\beta}|E_n|^2|\xi_{n+1}|^2\notag\\
		&\leq \frac{\lambda\gamma^3}{\beta}|\xi_{n+1}|^2\left|\theta_n^\lambda+\gamma^{-1}\nu_n^\lambda-\lambda\gamma^{-1} H_\gamma(\theta_n^\lambda,X_{n+1})\right|^2\notag\\
		&\quad+\frac{\lambda\gamma}{\beta}|\xi_{n+1}|^2\left|(1-\lambda\gamma)\nu_n^\lambda-\lambda H_\gamma(\theta_n^\lambda,X_{n+1})\right|^2\notag\\
		&\leq \frac{3\lambda\gamma^3}{\beta}|\xi_{n+1}|^2\left(|\theta_n^\lambda|^2+\gamma^{-2}|\nu_n^\lambda|^2+\lambda^2\gamma^{-2}\big(2m^2|\theta_n^\lambda|^2+8\gamma[m^2+K_H^2(1+|X_{n+1}|)^{2\rho}]\big)\right)\notag\\
		&\quad+\frac{2\lambda\gamma}{\beta}|\xi_{n+1}|^2\left((1-\lambda\gamma)^2|\nu_n^\lambda|^2+\lambda^2\big(2m^2|\theta_n^\lambda|^2+8\gamma[m^2+K_H^2(1+|X_{n+1}|)^{2\rho}]\big)\right)\notag\\
		&\leq \frac{48\lambda\gamma}{\beta}|\xi_{n+1}|^2 M_n+\frac{64m^2\lambda^3}{\gamma\beta}|\xi_{n+1}|^2M_n+\frac{24\lambda^3\gamma^2}{\beta}|\xi_{n+1}|^2[m^2+K_H^2(1+|X_{n+1}|)^{2\rho}]\notag\\
		&\quad+\frac{32\lambda\gamma}{3\beta}|\xi_{n+1}|^2M_n+\frac{128m^2\lambda^3}{3\gamma\beta}|\xi_{n+1}|^2M_n+\frac{16\lambda^3\gamma^2}{\beta}|\xi_{n+1}|^2 [m^2+K_H^2(1+|X_{n+1}|)^{2\rho}] \notag\\
		&\leq C_f\lambda\gamma\beta^{-1}M_n|\xi_{n+1}|^2+10\lambda\gamma\beta^{-1}|\xi_{n+1}|^2\left(m^2+K_H^2(1+|X_{n+1}|)^{2\rho}\right),\label{eq.upper bound of |f_n|^2}
	\end{align}
	where $C_f\coloneq 59(1+m^2)$. Similarly, we have 
	\begin{align}
		|g_n|^2&\leq \frac{2\lambda^2\gamma^2}{\beta^2}|\xi_{n+1}|^4 + 4\lambda^2\gamma^2\left(\frac{1}{8m}+7\right)^2[m^4+K_H^4(1+|X_{n+1}|)^{4\rho}]\notag\\
		&\leq \frac{2\lambda\gamma}{\beta^2}|\xi_{n+1}|^4 + 4\lambda\gamma\left(\frac{1}{8m}+7\right)^2[m^4+K_H^4(1+|X_{n+1}|)^{4\rho}].\label{eq.upper bound of |g_n|^2}
	\end{align}
	Substituting \eqref{eq.upper bound of |f_n|^2}-\eqref{eq.upper bound of |g_n|^2} back into \eqref{eq.2 upper bound of K_n} and using \eqref{eq.position, momentum and M_n} yields
	\begin{align}
		\e_n[|K_n|^2]&\leq \e_n\left[3\lambda^2\gamma^2 C_m^2|\theta_n^\lambda|^4+3|f_n|^2+3|g_n|^2\right]\notag\\
		&\leq \frac{32^2 C_m^2}{3\gamma^4}\lambda^2\gamma^2|M_n|^2+3C_f\lambda\gamma\beta^{-1} d M_n+c_f\lambda\gamma, \label{eq.2nd bound of K_n}
	\end{align}
	where $c_f\coloneq 6d(d+2)/\beta^2+4 C_H d/\beta+12(1/(8m)+7)^2(m^4+K_H^4\e[(1+|X_0|)^{4\rho}])=\mathcal{O}((d/\beta)^2)$. To establish an upper bound for the last term on the right-hand side of \eqref{eq.even-bounds of M_n}, we first note, for any $q\in[2,\infty)\cap\mathbb{N}$, that \eqref{eq.upper bound of |f_n|^2} implies
	\begin{align}\label{eq.f_n q}
		\begin{split}
			|f_n|^q=(|f_n|^2)^{q/2}&\leq 2^{q/2-1}C_f^{q/2}\lambda^{q/2}\gamma^{q/2}\beta^{-q/2}|M_n|^{q/2}|\xi_{n+1}|^{q}\\
			&\quad+10^{q/2} 2^{q-2}\lambda^{q/2}\gamma^{q/2}\beta^{-q/2}|\xi_{n+1}|^{q}\big(m^{q}+K_H^{q}(1+|X_{n+1}|)^{\rho q}\big).
		\end{split}
	\end{align}
	By taking conditional expectations on both sides of \eqref{eq.f_n q}, by noticing $q/2\geq 1$, and by using the fact that
	\[\e[|\xi_{n+1}|^q]=2^{q/2}\frac{\Gamma\left((d+q)/2\right)}{\Gamma\left(d/2\right)}\leq (d+q)^{q/2},\]
	we obtain
	\begin{align}
		\e_n[|f_n|^{q}]&\leq 2^{q/2-1}C_f^{q/2}\lambda\gamma\beta^{-q/2}(d+q)^{q/2}\,|M_n|^{q/2}\notag\\
		&\quad+10^{q/2} 2^{q-2}\lambda\gamma\beta^{-q/2}(d+q)^{q/2}\big(m^{q}+K_H^{q}\e[(1+|X_0|)^{\rho q}]\big)\notag\\
		&\leq K_f\lambda\gamma(|M_n|^{q/2}+1),\label{eq.estimates of f_n}
	\end{align}
	where $K_f\coloneq 2^{3q-2}((d+q)/\beta)^{q/2}(C_f^{q/2}+m^{q}+K_H^{q}\e[(1+|X_0|)^{\rho q}])=\mathcal{O}((d/\beta)^{q/2})$. Similarly, \eqref{eq.upper bound of |g_n|^2} implies
	\begin{align}\label{eq.g_n q}
		\begin{split}
			|g_n|^q&\leq 2^{q-1}\lambda^{q/2}\gamma^{q/2}\beta^{-q}|\xi_{n+1}|^{2q}\\
			&\quad+2^{2q-2}\lambda^{q/2}\gamma^{q/2}\left(\frac{1}{8m}+7\right)^q\big(m^{2q}+K_H^{2q}(1+|X_{n+1}|)^{2\rho q}\big).
		\end{split}
	\end{align}
	Taking conditional expectations on both sides of \eqref{eq.g_n q} yields
	\begin{align}\label{eq.estimates of g_n}
		\e_n[|g_n|^q]\leq K_g\lambda\gamma,
	\end{align}
	where $K_g\coloneq 2^{q-1}((d+2q)/\beta)^q+2^{2q-2}(1/(8m)+7)^q(m^{2q}+K_H^{2q}\e[(1+|X_0|)^{2\rho q}])=\mathcal{O}((d/\beta)^q)$. Combining \eqref{eq.2 upper bound of K_n}, \eqref{eq.estimates of f_n}, and \eqref{eq.estimates of g_n}, and using \eqref{eq.position, momentum and M_n}, yields
	\begin{align}
		\e_n[|K_n|^q]&\leq 3^{q-1}\lambda^q\gamma^q C_m^q|\theta_n^\lambda|^{2q}+3^{q-1}\e_n[|f_n|^q]+3^{q-1}\e_n[|g_n|^q]\notag\\
		&\leq 3^{q-1}C_m^q\frac{32^q\lambda^q\gamma^q}{3^q\gamma^{2q}}|M_n|^q+3^{q-1}K_f\lambda\gamma(|M_n|^{q/2}+1)+3^{q-1} K_g \lambda\gamma \notag\\
		&\leq\frac{32^q C_m^q}{3\gamma^{2q}}\lambda^q\gamma^q|M_n|^q+3^{q-1}K_f\lambda\gamma|M_n|^{q/2}+C_K\lambda\gamma,\label{eq.even-bounds of K_n}
	\end{align}
	where $C_K\coloneq 3^{q-1}(K_f+K_g)=\mathcal{O}((d/\beta)^q)$. By noticing $0<s=1-\lambda\sigma\gamma=1-\lambda m/(8\gamma)<1$, and by substituting \eqref{eq.condition expectation of K_n}, \eqref{eq.2nd bound of K_n} and \eqref{eq.even-bounds of K_n} into \eqref{eq.even-bounds of M_n}, we obtain
	\begin{align}\label{eq.even-bound of M_n+1}
		\begin{split}
			\e_n[M_{n+1}^q]&\leq (sM_n)^q+q(sM_n)^{q-1}\lambda\gamma \bar{C}_K\\
			&\quad+q(q-1)2^{q-3}|sM_n|^{q-2}\left(\frac{32^2 \lambda^2 C_m^2}{3\gamma^2}|M_n|^2+3C_f\lambda\gamma dM_n/\beta+c_f\lambda\gamma\right)\\
			&\quad+q(q-1)2^{q-3}\left(\frac{32^{q}\lambda^{q}C_m^{q}}{3\gamma^{q}}|M_n|^q+3^{q-1}K_f\lambda\gamma|M_n|^{q/2}+C_K\lambda\gamma\right)\\
			&\leq \left(1-\frac{\lambda m}{8\gamma}+q(q-1)2^{q-3}\frac{32^2\lambda^2C_m^2}{3\gamma^2}+q(q-1)2^{q-3}\frac{32^{q}\lambda^{q}C_m^{q}}{3\gamma^{q}}\right)|M_n|^q\\
			&\quad+\left(q \bar{C}_K+3q(q-1)2^{q-3}C_f \beta^{-1}d\right)\lambda\gamma|M_n|^{q-1}\\
			&\quad+q(q-1)2^{q-3}c_f\lambda\gamma|M_n|^{q-2}+q(q-1)2^{q-3}3^{q-1}K_f\lambda\gamma|M_n|^{q/2}\\
			&\quad+q(q-1)2^{q-3}C_K\lambda\gamma.
		\end{split}
	\end{align}
	By using $\gamma\geq\gamma_{\min}\geq 32 C_m$ and $\lambda\leq\lambda_{\max,\gamma}\leq \min\{1, 3m/(16q(q-1)2^{q-2}\gamma)\}$, we have that
	\begin{align}\label{eq.Mq lambda}
		\begin{split}
			&1-\frac{\lambda m}{8\gamma}+q(q-1)2^{q-3}\frac{32^2\lambda^2C_m^2}{3\gamma^2}+q(q-1)2^{q-3}\frac{32^{q}\lambda^{q}C_m^{q}}{3\gamma^{q}}\\
			&\leq 1-\frac{\lambda m}{8\gamma}+\frac{q(q-1)2^{q-2}\lambda^2}{3}\leq 1-\frac{\lambda m}{16\gamma}
		\end{split}
	\end{align}
	Substituting \eqref{eq.Mq lambda} into \eqref{eq.even-bound of M_n+1} yields
	\begin{align}
		\e_n[M_{n+1}^q]&\leq\left(1-\frac{\lambda m}{16\gamma}\right)|M_n|^q+\left(q\bar{C}_K+3q(q-1)2^{q-3}C_f \beta^{-1}d\right)\lambda\gamma|M_n|^{q-1}\notag\\
		&\quad+q(q-1)2^{q-3}c_f\lambda\gamma|M_n|^{q-2}+q(q-1)2^{q-3}3^{q-1}K_f\lambda\gamma|M_n|^{q/2}\notag\\
		&\quad+q(q-1)2^{q-3}C_K\lambda\gamma\notag\\
		&\leq \left(1-\frac{\lambda m}{64\gamma}\right)|M_n|^q+q(q-1)2^{q-3}C_K\lambda\gamma+\sum_{i=1}^3 T_i(|M_n|),\label{eq.upper bound of Lyapunov-2q}
	\end{align}
	where for any $s\in\mathbb{R}^d$,
	\begin{align*}
		T_1(s)&\coloneq \left(q \bar{C}_K+3q(q-1)2^{q-3}C_f\beta^{-1}d\right)\lambda\gamma|s|^{q-1}-\frac{\lambda m}{64\gamma}|s|^{q},\\
		T_2(s)&\coloneq q(q-1)2^{q-3}c_f\lambda\gamma|s|^{q-2}-\frac{\lambda m}{64\gamma}|s|^q,\\
		T_3(s)&\coloneq q(q-1)2^{q-3}3^{q-1}K_f\lambda\gamma|s|^{q/2}-\frac{\lambda m}{64\gamma}|s|^q.
	\end{align*}
	Define
	\begin{align*}
		\mathsf{M}_{q,1}&\coloneq \frac{64\gamma^2\left(q \bar{C}_K+3q(q-1)2^{q-3}C_f\beta^{-1}d\right)}{m},\\
		\mathsf{M}_{q,2}&\coloneq 8\gamma\left(\frac{q(q-1)2^{q-3}c_f}{m}\right)^{1/2},\\
		\mathsf{M}_{q,3}&\coloneq \gamma^{4/q}\left(\frac{64q(q-1)2^{q-3}3^{q-1}K_f}{m}\right)^{2/q}.
	\end{align*}
	Then, $|s|\geq \mathsf{M}_{q,i}$ implies $T_i(s)\leq 0$, for all $i=1,2,3$. Thus, we have
	\[T_i(s)=T_i(s)\II_{\{s\leq \mathsf{M}_{q,i}\}}+T_i(s)\II_{\{s\geq \mathsf{M}_{q,i}\}}\leq T_i(s)\II_{\{s\leq \mathsf{M}_{q,i}\}}.\]
	By using the above inequality, we obtain, for all $s\in\mathbb{R}$, that
	\begin{align}
		T_1(s)&\leq \left(q \bar{C}_K+3q(q-1)2^{q-3}C_f\beta^{-1}d\right)\lambda\gamma|s|^{q-1}\II_{\{s\leq \mathsf{M}_{q,1}\}}\notag\\
		&\leq \left(q \bar{C}_K+3q(q-1)2^{q-3}C_f\beta^{-1}d\right)\lambda\gamma \mathsf{M}_{q,1}^{q-1}\notag\\
		&=\left(\frac{64}{m}\right)^{q-1}\left(q \bar{C}_K+3q(q-1)2^{q-3}C_f\beta^{-1}d\right)^{q}\lambda\gamma^{2q-1}\notag\\
		&=N_1\lambda\gamma^{2q-1},\label{eq.T1}
	\end{align}
	where $N_1\coloneq(64/m)^{q-1}(q \bar{C}_K+3q(q-1)2^{q-3}C_f\beta^{-1}d)^{q}=\mathcal{O}((d/\beta)^{q})$. Moreover, by $\gamma\geq\gamma_{\min}\geq 1$, it holds that
	\begin{align}
		T_2(s)&\leq q(q-1)2^{q-3}c_f\lambda\gamma|s|^{q-2}\II_{\{s\leq \mathsf{M}_{q,2}\}}\notag\\
		&\leq q(q-1)2^{q-3}c_f\lambda\gamma \mathsf{M}_{q,2}^{q-2}\notag\\
		&= \left(\frac{8}{\sqrt{m}}\right)^{q-2}\left(q(q-1)2^{q-3}c_f\right)^{q/2}\lambda\gamma^{q-1}\notag\\
		&\leq N_2\lambda\gamma^{2q-1},\label{eq.T2}
	\end{align}
	where $N_2\coloneq (8/\sqrt{m})^{q-2}(q(q-1)2^{q-3}c_f)^{q/2}=\mathcal{O}((d/\beta)^q)$. Similarly, by $\gamma\geq\gamma_{\min}\geq 1$ and $q\geq 2$, we have that
	\begin{align}
		T_3(s)&\leq q(q-1)2^{q-3}3^{q-1}K_f\lambda\gamma|s|^{q/2}\II_{\{s\leq \mathsf{M}_{q,3}\}}\notag\\
		&\leq q(q-1)2^{q-3}3^{q-1}K_f\lambda\gamma \mathsf{M}_{q,3}^{q/2}\notag\\
		&=\frac{64}{m}\left(q(q-1)2^{q-3}3^{q-1}K_f\right)^2\lambda\gamma^3\notag\\
		&\leq N_3\lambda\gamma^{2q-1},\label{eq.T3}
	\end{align}
	where $N_3\coloneq 64(q(q-1)2^{q-3}3^{q-1}K_f)^2/m=\mathcal{O}((d/\beta)^q)$.
	Substituting \eqref{eq.T1}-\eqref{eq.T3} into \eqref{eq.upper bound of Lyapunov-2q} yields
	\begin{align}\label{eq.upper bound Mn1}
		\e_n[|M_{n+1}|^q]\leq\left(1-\frac{\lambda m}{64\gamma}\right)|M_n|^q+N_0\lambda\gamma^{2q-1},
	\end{align}
	where $N_0\coloneq N_1+N_2+N_3+q(q-1)2^{q-3}C_K=\mathcal{O}((d/\beta)^q)$. Iterating \eqref{eq.upper bound Mn1} yields
	\[\sup_{n\in\mathbb{N}_0}\e[|M_{n+1}|^q]\leq \e[|M_0|^q]+\frac{64N_0}{m}\gamma^{2q}.\]
	By \eqref{eq.position, momentum and M_n}, \eqref{eq.upper bound Mn1}, and $\gamma\geq 1$, we obtain
	\begin{align*}
		\sup_{n\in\mathbb{N}_0}\e[|\theta_n^\lambda|^{2q}]&\leq \frac{32^q}{3^q\gamma^{2q}}\left(\e[|M_0|^q]+\frac{64N_0}{m}\gamma^{2q}\right)\\
		&\leq \frac{8^q 4^q}{3^q\gamma^{2q}}\e[|M_0|^q]+\frac{8^{q+2}4^q N_0}{3^qm}\\
		&\leq \frac{8^q 2^q}{\gamma^{2q}}\left(2^{q-2}\gamma^{2q}\e[|\theta_0|^{2q}]+2^{q-1}\e[|\nu_0|^{2q}]\right)+\frac{8^{q+2}2^qN_0}{m}\\
		&\leq 2^{5q-2}\e[|\theta_0|^{2q}]+2^{5q-1}\e[|\nu_0|^{2q}]+2^{4q+6}N_0/m\eqcolon\bar{C}_{2q},
	\end{align*}
	where $\bar{C}_{2q}=\mathcal{O}((d/\beta)^q)$.
\end{proof}

\phantomsection
\begin{proof}[\textbf{Proof of Lemma~\ref{lemma.mse of interpolation}}]
	\label{Proof.MSE of interpolation}
	For any $t\geq 0$, by \eqref{Continuous-time interpolation}, we have that
	\[V_t^\lambda=V_{\bt}^\lambda-\lambda\gamma\int_{\bt}^{t}V_{\bs}^\lambda\,\dd s-\lambda\int_{\bt}^t H_\gamma(Z_{\bs}^\lambda,X_{\us})\,\dd s+\sqrt{2\lambda\gamma\beta^{-1}}(W_t^\lambda-W_{\bt}^\lambda).\]
	By using the inequality that $|a+b+c|^2\leq 3(|a|^2+|b|^2+|c|^2)$ for $a,b,c>0$ and Lemma~\ref{lemma.2nd bound of taming factor}, we have that
	\begin{align}
		\e[|V_t^\lambda-V_{\bt}^\lambda|^2]&\leq 3\lambda^2\gamma^2\e\left[\bigg|\int_{\bt}^t V_{\bs}^\lambda\,\dd s\bigg|^2\right]+3\lambda^2\e\left[\bigg|\int_{\bt}^t H_\gamma(Z_{\bs}^\lambda,X_{\us})\,\dd s\bigg|^2\right]\notag\\
		&\quad+\frac{6\lambda\gamma}{\beta}\e[|W_t^\lambda-W_{\bt}^\lambda|^2]\notag\\
		&\leq 3\lambda^2\gamma^2\e[|V_{\bs}^\lambda|^2]+3\lambda^2\left(2m^2\e[|Z_{\bs}^\lambda|^2]+C_H\gamma\right)+\frac{6\lambda\gamma d}{\beta}.\label{eq.lemma.mse of interpolation}
	\end{align}
	Since the law of the continuous-time interpolation process \eqref{Continuous-time interpolation} agrees with tSGHMC \eqref{tSGHMC}-\eqref{eq.tamedSG} at grid points, applying Lemma~\ref{lemma.2 moment of algorithm} yields
	\[\e[|V_{\bs}^\lambda|^2]\leq \bar{B}_2,\quad \e[|Z_{\bs}^\lambda|^2]\leq \bar{C}_2,\]
	where $\bar{B}_2=\mathcal{O}(d/\beta)$ and $\bar{C}_2=\mathcal{O}(d/\beta)$ are given explicitly in Table~\ref{tab.constants}. Hence, we obtain, by noticing $\gamma\geq\gamma_{\min}\geq 1$ and $\lambda\leq\lambda_{\max,\gamma}\leq 1/(4\gamma)\leq 1/4$, that
	\begin{align*}
		\e[|V_t^\lambda-V_{\bt}^\lambda|^2]\leq \lambda\gamma C_{1,v},
	\end{align*}
	where $C_{1,v}\coloneq 3\bar{B}_2/4+3m^2\bar{C}_2/2+3C_H/4+6d/\beta=\mathcal{O}(d/\beta)$. Moreover, by similar argument, we obtain that
	\[\e[|Z_t^\lambda-Z_{\bt}^\lambda|^2]\leq \lambda\bar{B}_2,\]
	which completes the proof.
\end{proof}

\phantomsection
\begin{proof}[\textbf{Proof of Lemma~\ref{lemma.2nd bound MY kinetic}}]
	\label{proof.2nd bound MY kinetic}
	For all $(\theta,\nu)\in\mathbb{R}^{2d}$, consider the following Lyapunov function 
	\begin{equation}\label{eq.bar V}
		\bar{\mathcal{V}}(\theta,\nu)\coloneq\beta u_{MY,\epsilon}(\theta)+\frac{\beta}{4}\gamma^2\left(|\theta+\gamma^{-1}\nu|^2+|\gamma^{-1}\nu|^2-\sigma|\theta|^2\right),
	\end{equation}
	where $\sigma=m/(8\gamma^2)$ as before. Moreover, by Lemma~\ref{lemma.hMY leq h} and Remark~\ref{remark.MY}, we have
	\begin{equation}\label{eq.Lip MY h}
		|h_{MY,\epsilon}(\theta)|\leq K|\theta|+|h_{MY,\epsilon}(0)|\leq K|\theta|+|h(0)|.
	\end{equation}
	By the fundamental theorem of calculus, Cauchy-Schwarz inequality, and \eqref{eq.Lip MY h}, we obtain 
	\begin{align}
		u_{MY,\epsilon}(\theta)-u_{MY,\epsilon}(0)&=\int_0^1 \la\theta,h_{MY,\epsilon}(t\theta)\ra\,\dd t\leq \int_0^1|h_{MY,\epsilon}(t\theta)|\,|\theta|\,\dd t\notag\\
		&\leq \int_0^1\left(Kt|\theta|+|h(0)|\right)|\theta|\,\dd t=\frac{K}{2}|\theta|^2+|h(0)|\,|\theta|.\label{eq.upper bound uMY}
	\end{align}
	Since $\gamma\geq\gamma_{\min}\geq \sqrt{2(K+2|h(0)|)/3}$, we have
	\[\frac{m}{4}\geq 2\sigma\left(\frac{K}{2}+|h(0)|+\frac{\gamma^2}{4}\right).\]
	Therefore, by Remark~\ref{remark.MY}, \eqref{eq.upper bound uMY}, the inequality $|\theta|\leq 1+|\theta|^2$ for all $\theta\in\mathbb{R}^d$, and $\gamma\geq\gamma_{\min}\geq \sqrt{m}$, we obtain that
	\begin{align}
		\la\theta,h_{MY,\epsilon}(\theta)\ra&\geq \frac{m}{4}|\theta|^2-u(0)\notag\\
		&\geq 2\sigma\left(\frac{K}{2}+|h(0)|+\frac{\gamma^2}{4}\right)|\theta|^2-u(0)\notag\\
		&\geq 2\sigma\left(\frac{K}{2}|\theta|^2+|h(0)|\,|\theta|-|h(0)|\,|\theta|+|h(0)|\,|\theta|^2+\frac{\gamma^2}{4}|\theta|^2\right)-u(0)\notag\\
		&\geq 2\sigma\left(u_{MY,\epsilon}(\theta)-u_{MY,\epsilon}(0)-|h(0)|(1+|\theta|^2)+|h(0)|\,|\theta|^2+\frac{\gamma^2}{4}|\theta|^2\right)-u(0)\notag\\
		&\geq 2\sigma\left(u_{MY,\epsilon}(\theta)+\frac{\gamma^2}{4}|\theta|^2\right)-2\sigma \left(u_{MY,\epsilon}(0)+|h(0)|\right)-u(0)\notag\\
		&\geq 2\sigma\left(u_{MY,\epsilon}(\theta)+\frac{\gamma^2}{4}|\theta|^2\right)-\frac{1}{4}\left(u(0)+|h(0)|\right)-u(0)\notag\\
		&\geq 2\sigma\left(u_{MY,\epsilon}(\theta)+\frac{\gamma^2}{4}|\theta|^2\right)-\frac{2A_c}{\beta},\label{eq.u h dissipativity}
	\end{align}
	where $A_c\coloneq (5u(0)+|h(0)|)\beta/8=\mathcal{O}(\beta)$. 
	
	Next, we follow the argument in \cite[Lemma~5.2]{liang2024non} to establish an upper bound for $\e[|r_t|^2]$. By Remark~\ref{remark.MY} and \eqref{eq.u h dissipativity}, we note that $u_{MY,\epsilon}$ is gradient Lipschitz and satisfies a dissipativity condition. Moreover, since $\gamma\geq\gamma_{\min}\geq\sqrt{m}$, $\sigma=m/(8\gamma^2)\leq 1/4$. These properties fulfill the assumptions of \cite[Lemma~5.2]{liang2024non}, see also Remarks~2.2-2.4 therein. Therefore, we can apply \cite[Lemma~5.2]{liang2024non} with $\lambda\curvearrowleft\sigma$, $\mathcal{V}\curvearrowleft\bar{\mathcal{V}}$, $\theta_t\curvearrowleft r_t$, $V_0\curvearrowleft \nu_0$ to obtain
	\[\frac{\beta}{8}(1-2\sigma)\gamma^2\e[|r_t|^2]\leq\e[\bar{\mathcal{V}}(\theta_0,\nu_0)]+\frac{d+A_c}{\sigma}.\]
	Using $\gamma\geq\gamma_{\min}\geq \max\{1,\sqrt{m}\}$, \eqref{eq.M_0}, \eqref{eq.bar V} and Remark~\ref{remark.MY}, we have
	\begin{align}
		\e[|r_t|^2]&\leq \frac{\e[\bar{\mathcal{V}}(\theta_0,\nu_0)]+(d+A_c)/\sigma}{\beta(1-2\sigma)\gamma^2/8}\notag\\
		&\leq \frac{\beta\e[u_{MY,\epsilon}(\theta_0)]+\beta\e[M_0]+(d+A_c)/\sigma}{\beta(1-2\sigma)\gamma^2/8}\notag\\
		&\leq \frac{8\e[u(\theta_0)]}{(1-2\sigma)\gamma^2}+\frac{8\e[M_0]}{(1-2\sigma)\gamma^2}+\frac{8(d+A_c)}{\beta\sigma(1-2\sigma)\gamma^2}.\label{eq.2nd moment of Zt}
	\end{align}
	Moreover, by \eqref{eq.1-2sigma} and $\gamma\geq 1$, we have
	\begin{equation}\label{eq.Zt1}
		\frac{8\e[u(\theta_0)]}{(1-2\sigma)\gamma^2}\leq 11\e[u(\theta_0)].
	\end{equation}
	Besides, by \eqref{eq.M_0}, we obtain that
	\[M_0\leq\frac{\gamma^2}{4}\left(2|\theta_0|^2+2\gamma^{-2}|\nu_0|^2\right)+\frac{1}{4}|\nu_0|^2\leq \frac{\gamma^2}{2}|\theta_0|^2+\frac{3}{4}|\nu_0|^2,\]
	which, by \eqref{eq.1-2sigma} and $\gamma\geq 1$, implies
	\begin{equation}\label{eq.Zt2}
		\frac{8\e[M_0]}{(1-2\sigma)\gamma^2}\leq 6\e[|\theta_0|^2]+8\e[|\nu_0|^2].
	\end{equation}
	Finally, by $\sigma=m/(8\gamma^2)$ and \eqref{eq.1-2sigma}, we have
	\begin{equation}\label{eq.Zt3}
		\frac{8(d+A_c)}{\beta\sigma(1-2\sigma)\gamma^2}=\frac{64(d+A_c)}{\beta m(1-2\sigma)}\leq \frac{86(d+A_c)}{\beta m}.
	\end{equation}
	Substituting \eqref{eq.Zt1}-\eqref{eq.Zt3} into \eqref{eq.2nd moment of Zt} yields
	\begin{equation}\label{eq.2nd Zt}
		\sup_{t\geq 0}\e[|r_t|^2]\leq C_r,
	\end{equation}
	where $C_r\coloneq 11\e[u(\theta_0)]+6\e[|\theta_0|^2]+8\e[|\nu_0|^2]+86(d+A_c)/(\beta m)=\mathcal{O}(d/\beta)$.
	
	To establish an upper bound for $\e[|R_t|^2]$, we first set
	$a\land b\coloneq\min\{a,b\}$ for $a,b\in\mathbb{R}$ and define the stopping time
	\[
	\tau_m\coloneq\inf\{t\geq 0: |R_t|\geq m\}.
	\]
	Applying Itô's formula to the stopped process $e^{\gamma (t\land\tau_m)}|R_{t\land\tau_m}|^2$ yields
	\begin{align}\label{eq.stopped process}
		\begin{split}
			e^{\gamma(t\land\tau_m)}|R_{t\land\tau_m}|^2-|R_0|^2&=\int_0^{t\land\tau_m}e^{\gamma s}\left(-\gamma |R_s|^2-2\la R_s,h_{MY,\epsilon}(r_s)\ra+2\gamma\beta^{-1}d\right)\,\dd s\\
			&\quad+2\sqrt{2\gamma\beta^{-1}}\int_0^{t\land\tau_m}e^{\gamma s}\la R_s,\mathrm dW_s\ra .
		\end{split}
	\end{align}
	By taking expectations and by using Young's inequality together with \eqref{eq.Lip MY h} and \eqref{eq.2nd Zt}, we obtain
	\begin{align}
		&\e[e^{\gamma(t\land\tau_m)}|R_{t\land\tau_m}|^2]-\e[|R_0|^2]\notag\\
		&\leq\e\left[\int_0^{t\land\tau_m}e^{\gamma s}\left(-\gamma|R_s|^2+\gamma|R_s|^2+\gamma^{-1}|h_{MY,\epsilon}(r_s)|^2+\frac{2\gamma d}{\beta}\right)\,\dd s\right]\notag\\
		&\leq \e\left[\int_0^{t\land\tau_m}e^{\gamma s}\left(\frac{2|h(0)|^2+2K^2|r_s|^2}{\gamma}+\frac{2\gamma d}{\beta}\right)\,\dd s\right]\notag\\
		&\leq \int_0^t e^{\gamma s}\left(\frac{2|h(0)|^2+2K^2\e[|r_s|^2]}{\gamma}+\frac{2\gamma d}{\beta}\right)\,\dd s\notag\\
		&\leq \int_0^t e^{\gamma s}\left(\frac{2|h(0)|^2+2K^2 C_r}{\gamma}+\frac{2\gamma d}{\beta}\right)\,\dd s\notag\\
		&=\left(\frac{2|h(0)|^2+2K^2 C_r}{\gamma^2}+\frac{2 d}{\beta}\right)(e^{\gamma t}-1),\label{eq.stop-inequality}
	\end{align}
	where the third inequality is satisfied since $t\land\tau_m\leq t$.
	By \cite[Theorem 8.3]{le2016brownian} and Remark~\ref{remark.MY}, $R_t$ is a strong solution of \eqref{eq.MY kinetic Langevin} with continuous sample paths. Therefore, we have $\tau_m\uparrow\infty$ almost surely. As a result, $t\land\tau_m\uparrow t$ and $e^{\gamma(t\land\tau_m)}R_{t\land \tau_m}\to e^{\gamma t}R_t$ almost surely as $m\to\infty$. By Fatou's lemma, we have
	\begin{align}\label{eq.Fatou}
		\e[e^{\gamma t}|R_t|^2]\leq \liminf_{n\to\infty}\e[e^{\gamma(t\land\tau_m)}|R_{t\land\tau_m}|^2].
	\end{align}
	Combining \eqref{eq.stop-inequality} and \eqref{eq.Fatou}, and letting $m\to\infty$, we obtain
	\begin{align*}
		e^{\gamma t}\e[|R_t|^2]\leq\e[|R_0|^2]+\left(\frac{2|h(0)|^2+2K^2 C_r}{\gamma^2}+\frac{2 d}{\beta}\right)(e^{\gamma t}-1),
	\end{align*}
	which implies that
	\begin{align*}
		\e[|R_t|^2]\leq e^{-\gamma t}\e[|R_0|^2]+\left(\frac{2|h(0)|^2+2K^2 C_r}{\gamma^2}+\frac{2 d}{\beta}\right)(1-e^{-\gamma t}).
	\end{align*}
	Since $\gamma\geq\gamma_{\min}\geq \max\{1,K\}$ and $R_0=\nu_0$, we have that
	\begin{align*}
		\sup_{t\geq 0}\e[|R_t|^2]\leq C_R,
	\end{align*}
	where $C_R\coloneq \e[|\nu_0|^2]+2|h(0)|^2+2C_r+2d/\beta=\mathcal{O}(d/\beta)$.
\end{proof}

\phantomsection
\begin{proof}[\textbf{Proof of Lemma~\ref{lemma.2nd bound of auxiliary process}}]
	\label{proof.2nd bound of auxiliary process}
	Recall the process $(\zeta_t^{\lambda,n},V_t^{\lambda,n})_{t\geq nT}$ defined in \eqref{Auxiliary process with certain initial}. According to Lemma~\ref{lemma.2nd bound MY kinetic}, we have, for all $n\in\mathbb{N}_0$, that
	\begin{align*}
		\sup_{t\geq nT}\e[|\zeta_t^{\lambda,n}|^2]\leq \frac{\e[\bar{\mathcal{V}}(Z_{nT}^\lambda,V_{nT}^\lambda)]+(d+A_c)/\sigma}{\beta(1-2\sigma)\gamma^2/8}.
	\end{align*}
	By Remark~\ref{remark.MY} and \eqref{eq.u h dissipativity}, we note that $u_{MY,\epsilon}$ is gradient Lipschitz and satisfies a dissipativity condition. Moreover, since $\gamma\geq\gamma_{\min}\geq\sqrt{m}$, $\sigma=m/(8\gamma^2)\leq 1/4$. These properties fulfill the assumptions of \cite[Lemma~5.4]{liang2024non}, see also Remarks~2.2-2.4 therein.
	Thus, we apply \cite[Lemma~5.4]{liang2024non} with $\bar\zeta_t^{\eta,n}\curvearrowleft\zeta_t^{\lambda,n}$, $\bar{Z}_t^{\eta,n}\curvearrowleft V_t^{\lambda,n}$  $\bar\theta_{nT}^\eta\curvearrowleft Z_{nT}^\lambda$, $\bar V_{nT}^\eta\curvearrowleft V_{nT}^\lambda$, $\eta\curvearrowleft\lambda$, $\lambda\curvearrowleft\sigma$ and $\mathcal{V}\curvearrowleft\bar{\mathcal{V}}$ to obtain
	\begin{align*}
		\sup_{t\geq nT}\e[|\zeta_t^{\lambda,n}|^2]&\leq \frac{\e[\bar{\mathcal{V}}(\theta_0,\nu_0)]+5(d+A_c)/\beta}{\beta(1-2\sigma)\gamma^2/8}\\
		&\leq 11\e[u(\theta_0)]+6\e[|\theta_0|^2]+8\e[|\nu_0|^2]+\frac{430(d+A_c)}{\beta m},
	\end{align*}
	where the last inequality follows using the same argument from \eqref{eq.2nd moment of Zt} to \eqref{eq.2nd Zt} in the proof of Lemma~\ref{lemma.2nd bound MY kinetic}. Thus we have
	\begin{equation}\label{eq.bound zeta}
		\sup_{n\in\mathbb{N}_0}\sup_{t\geq nT}\e[|\zeta_t^{\lambda,n}|^2]\leq C_\zeta^{\#},
	\end{equation}
	where $C_\zeta^\#\coloneq 11\e[u(\theta_0)]+6\e[|\theta_0|^2]+8\e[|\nu_0|^2]+430(d+A_c)/(\beta m)=\mathcal{O}(d/\beta)$.
	
	To establish an upper bound for $\e[|V_t^{\lambda,n}|^2]$, fix $n\in\mathbb{N}_0$ and $t\geq nT$. For each $m\in\mathbb{N}$, define the stopping time
	\[\rho_m\coloneq\inf\{t\geq nT:|V_t^{\lambda,n}|\geq m\}.\]
	Applying Itô's formula to the stopped process $e^{\lambda\gamma(t\land\rho_m-nT)}|V_{t\land\rho_m}^{\lambda,n}|^2$, we obtain
	\begin{align}\label{eq.ito V}
		\begin{split}
			&e^{\lambda\gamma((t\land\rho_m)-nT)}|V_{t\land\rho_m}^{\lambda,n}|^2-|V_{nT}^{\lambda,n}|^2\\
			&=\int_{nT}^{t\land\rho_m}e^{\lambda\gamma(s-nT)}
			\left(-\lambda\gamma |V_s^{\lambda,n}|^2-2\lambda\la V_s^{\lambda,n},h_{MY,\epsilon}(\zeta_s^{\lambda,n})\ra+\frac{2\lambda\gamma d}{\beta}\right)\,\dd s\\
			&\quad+2\sqrt{2\lambda\gamma\beta^{-1}}\int_{nT}^{t\land\rho_m}e^{\lambda\gamma(s-nT)}\la V_s^{\lambda,n},\dd W_s^\lambda\ra.
		\end{split}
	\end{align}
	Then, it holds that
	\begin{equation}\label{eq.martingale2}
		2\sqrt{2\lambda\gamma\beta^{-1}}\e\left[\int_{nT}^{t\land\rho_m}e^{\lambda\gamma(t-nT)}\la V_s^{\lambda,n},\dd W_s^\lambda\ra\right]=0.
	\end{equation}
	Taking expectation in \eqref{eq.ito V}, using \eqref{eq.Lip MY h}, \eqref{eq.bound zeta} and \eqref{eq.martingale2}, and applying Young's inequality, we obtain
	\begin{align}
		&\e[e^{\lambda\gamma((t\land\rho_m)-nT)}|V_{t\land\rho_m}^{\lambda,n}|^2]-\e[|V_{nT}^{\lambda,n}|^2]\notag\\
		&\leq \e\left[\int_{nT}^{t\land\rho_m}e^{\lambda\gamma(s-nT)}
		\left(-\lambda\gamma |V_s^{\lambda,n}|^2+\lambda\gamma |V_s^{\lambda,n}|^2+\lambda\gamma^{-1}|h_{MY,\epsilon}(\zeta_s^{\lambda,n})|^2+\frac{2\lambda\gamma d}{\beta}\right)\,\dd s\right]\notag\\
		&\leq \e\left[\int_{nT}^{t\land\rho_m}e^{\lambda\gamma(s-nT)}
		\left(\frac{2\lambda(|h(0)|^2+K^2|\zeta_s^{\lambda,n}|^2)}{\gamma}+\frac{2\lambda\gamma d}{\beta}\right)\,\dd s\right]\notag\\
		&\leq \int_{nT}^{t}e^{\lambda\gamma(s-nT)}
		\left(\frac{2\lambda(|h(0)|^2+K^2\mathbb E[|\zeta_s^{\lambda,n}|^2])}{\gamma}+\frac{2\lambda\gamma d}{\beta}\right)\,\dd s\notag\\
		&=\left(\frac{2|h(0)|^2+2K^2C_\zeta^\#}{\gamma^2}+\frac{2d}{\beta}\right)(e^{\lambda\gamma(t-nT)}-1).\label{eq.stop-inequality1}
	\end{align}
	where the third inequality is satisfied since $t\land\rho_m\leq t$. By \cite[Theorem 8.3]{le2016brownian} and Remark~\ref{remark.MY}, $V_t^{\lambda,n}$ is a strong solution of \eqref{Auxiliary process with certain initial} with continuous sample paths. Thus, we have $\rho_m\uparrow\infty$ almost surely. Hence, $t\land\rho_m\uparrow t$ and $e^{\lambda\gamma((t\land\rho_m)-nT)}|V_{t\land\rho_m}^{\lambda,n}|^2 \to e^{\lambda\gamma(t-nT)} |V_t^{\lambda,n}|^2$ almost surely as $m\to\infty$. Therefore, by Fatou's lemma,
	\[\e\left[e^{\lambda\gamma(t-nT)}|V_t^{\lambda,n}|^2\right]\leq\liminf_{m\to\infty}\e\left[e^{\lambda\gamma((t\land\rho_m)-nT)}|V_{t\land\rho_m}^{\lambda,n}|^2\right].\]
	Letting $m\to\infty$ in \eqref{eq.stop-inequality1}, we obtain
	\[e^{\lambda\gamma(t-nT)}\e[|V_t^{\lambda,n}|^2]\le\e[|V_{nT}^{\lambda,n}|^2]+\left(\frac{2|h(0)|^2+2K^2C_\zeta^\#}{\gamma^2}+\frac{2d}{\beta}\right)(e^{\lambda\gamma(t-nT)}-1).\]
	By \eqref{Continuous-time interpolation} and \eqref{Auxiliary process with certain initial}, we have $V_{nT}^{\lambda,n}=V_{nT}^\lambda=\nu_{nT}^\lambda$. Combining this and the above inequality yields
	\[\e[|V_t^{\lambda,n}|^2]\leq e^{-\lambda\gamma(t-nT)}\e[|\nu_{nT}^\lambda|^2]+\left(\frac{2|h(0)|^2+2K^2C_\zeta^\#}{\gamma^2}+\frac{2d}{\beta}\right)(1-e^{-\lambda\gamma(t-nT)}).\]
	Using Lemma~\ref{lemma.2 moment of algorithm} and $\gamma\ge\gamma_{\min}\geq\max\{1,K\}$, we conclude that
	\[\e[|V_t^{\lambda,n}|^2]\leq \bar B_2+2|h(0)|^2+2C_\zeta^\#+\frac{2d}{\beta}.\]
	Thus we have
	\[\sup_{n\in\mathbb{N}_0}\sup_{t\geq nT}\e[|V_t^{\lambda,n}|^2]\leq C_V^\#,\]
	where $C_V^\#\coloneq \bar{B}_2+2|h(0)|^2+2C_\zeta^\#+2d/\beta=\mathcal{O}(d/\beta)$.
\end{proof}

\subsection{Proofs of Main Theorems in Section~\ref{Proof of the Main Theorems}}
\label{Proofs of Convergence Results}

\begin{lemma}\label{lemma.mse - H_lambda and H}
	Let Assumptions~\ref{assumption2.local Lip+growth} and \ref{assumption3.strong convexity} hold. Then, we have, for all $\theta\in\mathbb{R}^d$, $x\in\mathbb{R}^k$, and $\gamma\geq\gamma_{\min}$, that
	\[|H_\gamma(\theta,x)-H(\theta,x)|^2\leq  2\gamma^{-2}\big[4K_H^2(1+|x|)^{2\rho}+m^2\big](1+|\theta|^{12r}),\]
	where $K_H$ is given in Remark~\ref{remark.local Lip. of h}.
\end{lemma}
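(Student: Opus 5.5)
Write $\Gamma(\theta)\coloneq\sqrt{1+\gamma^{-1}|\theta|^{4r}}\geq 1$. By the definition \eqref{eq.tamedSG} of $H_\gamma$, the difference can be written in closed form:
\[
H_\gamma(\theta,x)-H(\theta,x)=\bigl(H(\theta,x)-m\theta\bigr)\left(\frac{1}{\Gamma(\theta)}-1\right)=-\bigl(H(\theta,x)-m\theta\bigr)\frac{\Gamma(\theta)-1}{\Gamma(\theta)}.
\]
The plan is to bound the two factors separately and then square.

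\emph{Step 1 (taming factor).} Rationalize the numerator:
\[
\Gamma(\theta)-1=\frac{\gamma^{-1}|\theta|^{4r}}{\Gamma(\theta)+1}\leq \gamma^{-1}|\theta|^{4r}.
\]
Since $\Gamma(\theta)\geq1$, this gives
\[
0\leq 1-\frac{1}{\Gamma(\theta)}\leq \gamma^{-1}|\theta|^{4r}.
\]
This yields the $\gamma^{-1}$ factor in front, hence $\gamma^{-2}$ after squaring. Only $\gamma>0$ is needed here, although $\gamma\geq\gamma_{\min}\geq1$ is available.

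\emph{Step 2 (size of the untamed term).} By \eqref{eq.upper bound of |H|} from Remark~\ref{remark.local Lip. of h},
\[
|H(\theta,x)-m\theta|\leq K_H(1+|x|)^\rho(1+|\theta|^{2r})+m|\theta|.
\]
Absorb $|\theta|\leq 1+|\theta|^{2r}$; this is valid since $2r\geq\vartheta\geq1$, so for $|\theta|\geq1$ we have $|\theta|\leq|\theta|^{2r}$. Hence
\[
|H(\theta,x)-m\theta|\leq \bigl(K_H(1+|x|)^\rho+m\bigr)(1+|\theta|^{2r}).
\]

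\emph{Step 3 (combine and square).} Multiplying the two bounds,
\[
|H_\gamma-H|^2\leq \gamma^{-2}\bigl(K_H(1+|x|)^\rho+m\bigr)^2(1+|\theta|^{2r})^2|\theta|^{8r}.
\]
Then use $(a+b)^2\leq 2a^2+2b^2$ to get $(K_H(1+|x|)^\rho+m)^2\leq 2[K_H^2(1+|x|)^{2\rho}+m^2]$. Next, $(1+|\theta|^{2r})^2\leq 2(1+|\theta|^{4r})$, and then
\[
(1+|\theta|^{4r})|\theta|^{8r}=|\theta|^{8r}+|\theta|^{12r}\leq 2(1+|\theta|^{12r}),
\]
using $|\theta|^{8r}\leq 1+|\theta|^{12r}$.

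The only real obstacle is bookkeeping of the numerical constants so that they fit under the stated $2\gamma^{-2}[4K_H^2(1+|x|)^{2\rho}+m^2]$. The slack factor $4$ in front of $K_H^2$ is the budget for this. If the naive chain overshoots the constant in front of $m^2$, I would refine Step 1 using the sharper bound $1-1/\Gamma\leq \min\{1,\gamma^{-1}|\theta|^{4r}\}$. I would also keep the $m|\theta|$ term separate rather than absorbing it, bounding $m^2|\theta|^2\gamma^{-2}|\theta|^{8r}\leq m^2\gamma^{-2}(1+|\theta|^{12r})$ directly, since $|\theta|^{10r}$ (with $2\leq 2r$) is at most $1+|\theta|^{12r}$. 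The cross term would be handled via $(a+b)^2\leq 2a^2+2b^2$ with the factor $4$ absorbing the $(1+|\theta|^{2r})^2\leq 2(1+|\theta|^{4r})$ loss, so as to match the claimed constants exactly.
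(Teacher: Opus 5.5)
Your refined version, which keeps $m|\theta|$ separate via $|H-m\theta|^2\leq 2|H|^2+2m^2|\theta|^2$ and then uses $1-1/\Gamma(\theta)\leq\gamma^{-1}|\theta|^{4r}$, $|H|^2\leq 2K_H^2(1+|x|)^{2\rho}(1+|\theta|^{4r})$ and $|\theta|^{8r}+|\theta|^{12r}\leq 2(1+|\theta|^{12r})$, is exactly the paper's proof and gives the stated constants; the refinement is genuinely needed, since the naive chain of Step~3 produces $8m^2$ instead of $2m^2$. One small correction: you may not assume $2\leq 2r$ (only $2r\geq\vartheta\geq 1$ is given), but $|\theta|^{8r+2}\leq 1+|\theta|^{12r}$ still holds because $8r+2\leq 12r$ exactly when $r\geq 1/2$.
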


\begin{proof}
	Recall the expression of $H_\gamma$ defined in \eqref{eq.tamedSG}. For all $\theta\in\mathbb{R}^d$ and $x\in\mathbb{R}^k$, we have
	\begin{align*}
		H_\gamma(\theta,x)-H(\theta,x)&=\frac{1-\sqrt{1+\gamma^{-1}|\theta|^{4r}}}{\sqrt{1+\gamma^{-1}|\theta|^{4r}}}\left(H(\theta,x)-m\theta\right)\\
		&=-\frac{\gamma^{-1}|\theta|^{4r}}{\sqrt{1+\gamma^{-1}|\theta|^{4r}}(1+\sqrt{1+\gamma^{-1}|\theta|^{4r}})}(H(\theta,x)-m\theta).
	\end{align*}
	By Remark~\ref{remark.local Lip. of h}, and the inequalities $|\theta|^{8r}\leq 1+|\theta|^{12r}$ and $|\theta|^{8r+2}\leq 1+|\theta|^{12r}$ for $r\geq 1/2$, we have
	\begin{align*}
		|H_\gamma(\theta,x)-H(\theta,x)|^2&\leq 2\gamma^{-2}|\theta|^{8r}\left(|H(\theta,x)|^2+m^2|\theta|^2\right)\\
		&\leq 2\gamma^{-2}|\theta|^{8r}\left(2K_H^2(1+|x|)^{2\rho}(1+|\theta|^{4r})+m^2|\theta|^2\right)\\
		&\leq 2\gamma^{-2}\left(2K_H^2(1+|x|)^{2\rho}(|\theta|^{8r}+|\theta|^{12r})+m^2|\theta|^{8r+2}\right)\\
		&\leq 2\gamma^{-2}\left(4K_H^2(1+|x|)^{2\rho}(1+|\theta|^{12r})+m^2(1+|\theta|^{12r})\right)\\
		&\leq 2\gamma^{-2}\big[4K_H^2(1+|x|)^{2\rho}+m^2\big](1+|\theta|^{12r}).
	\end{align*}
	This completes the proof.
\end{proof}

\phantomsection
\begin{proof}[\textbf{Proof of Proposition~\ref{proposition: W_2 - interpolation and auxiliary}}]
	\label{Proof.W_2 - interpolation and auxiliary}
	Throughout the proof, we use symbols, e.g., $\mathsf{V},\mathsf{Z}$, to denote the expressions on the right-hand side of the corresponding (in)equalities. By the definition of Wasserstein-2 distance, we have
	\begin{equation}\label{eq.auxiliary-decomposition}
		W_2(\mathcal{L}(Z_t^\lambda,V_t^\lambda),\mathcal{L}(\zeta_t^{\lambda,n},V_t^{\lambda,n}))\leq (\e[|V_t^\lambda-V_t^{\lambda,n}|^2])^{1/2}+(\e[|Z_t^\lambda-\zeta_t^{\lambda,n}|^2])^{1/2}.
	\end{equation}
	We first establish an upper bound for the first term on the right-hand side of \eqref{eq.auxiliary-decomposition}. By Itô's formula together with the definitions in \eqref{Continuous-time interpolation} and \eqref{Auxiliary process with certain initial}, we obtain, for all $t\in(nT,(n+1)T]$, that
	\begin{align*}
		\e[|V_t^\lambda-V_t^{\lambda,n}|^2]&=\e[|V_{nT}^\lambda-V_{nT}^{\lambda,n}|^2]-2\lambda\gamma\int_{nT}^t\e[\la V_s^\lambda-V_s^{\lambda,n},V_{\bs}^\lambda-V_s^{\lambda,n}\ra]\,\dd s\\
		&\quad-2\lambda\int_{nT}^t\e[\la V_s^\lambda-V_s^{\lambda,n},H_\gamma(Z_{\bs}^\lambda,X_{\us})-h_{MY,\epsilon}(\zeta_s^{\lambda,n})\ra]\,\dd s,
	\end{align*}
	which implies that
	\begin{align}
		\frac{\dd}{\dd t}\e[|V_t^\lambda-V_t^{\lambda,n}|^2]&=-2\lambda\gamma\e[\la V_t^\lambda-V_t^{\lambda,n},V_{\bt}^{\lambda}-V_t^{\lambda,n}\ra]\tag{$\mathsf{V}_1$}\label{eq.V_11}\\
		&\quad-2\lambda\e[\la V_t^\lambda-V_t^{\lambda,n},H_\gamma(Z_{\bt}^\lambda,X_{\ut})-h_{MY,\epsilon}(\zeta_t^{\lambda,n})\ra].\tag{$\mathsf{V}_2$}\label{eq.V_21}
	\end{align}
	By using \eqref{Continuous-time interpolation}, we can further decompose \hyperref[eq.V_11]{$\mathsf{V}_1$} as	
	\begin{align}
		\mathsf{V}_1&=2\lambda\gamma\e[\la V_t^\lambda-V_t^{\lambda,n},V_t^\lambda-V_{\bt}^\lambda\ra]-2\lambda\gamma\e[|V_t^\lambda-V_t^{\lambda,n}|^2] \notag\\
		&=-2\lambda^2\gamma\e\left[\left\la V_t^\lambda-V_t^{\lambda,n},\int_{\bt}^t\big(\gamma V_{\bs}^\lambda+H_\gamma(Z_{\bs}^\lambda,X_{\us})\big)\,\dd s\right\ra\right] \tag{$\mathsf{V}_{1,1}$}\\
		&\quad+2\lambda\gamma\e\left[\left\la V_t^\lambda-V_t^{\lambda,n},\sqrt{2\lambda\gamma\beta^{-1}}(W_t^\lambda-W_{\bt}^\lambda)\right\ra\right]\tag{$\mathsf{V}_{1,2}$}\\
		&\quad-2\lambda\gamma\e[|V_t^\lambda-V_t^{\lambda,n}|^2].\notag
	\end{align}
	By Lemmas~\ref{lemma.2 moment of algorithm} and \ref{lemma.2nd bound of taming factor}, together with Young's inequality, we have
	\begin{align}\label{eq.V_1,1}
		\mathsf{V}_{1,1}&\leq \frac{\lambda\gamma}{9}\e[|V_t^\lambda-V_t^{\lambda,n}|^2]+18\lambda^3\gamma \left(\gamma^2\e[|V_{\bt}^\lambda|^2]+\e[|H_\gamma(Z_{\bt}^\lambda,X_{\ut})|^2]\right)\notag\\
		&\leq \frac{\lambda\gamma}{9}\e[|V_t^\lambda-V_t^{\lambda,n}|^2]+18\lambda^3\gamma^3\bar{B}_2+36m^2\lambda^3\gamma \bar{C}_2+18\lambda^3\gamma^2C_H.
	\end{align}
	Furthermore, by using \eqref{Continuous-time interpolation} and \eqref{Auxiliary process with certain initial}, we have 
	\begin{align}
		\mathsf{V}_{1,2}&=2\lambda\gamma\e[\la V_{\bt}^\lambda-V_{\bt}^{\lambda,n},\sqrt{2\lambda\gamma\beta^{-1}}(W_t^\lambda-W_{\bt}^\lambda)\ra]\tag{$\mathsf{V}_{1,2,1}$}\\
		&\quad-2\lambda\gamma\e\left[\left\la\int_{\bt}^t\lambda\gamma(V_{\bs}^\lambda-V_s^{\lambda,n})\,\dd s,\sqrt{2\lambda\gamma\beta^{-1}}(W_t^\lambda-W_{\bt}^\lambda)\right\ra\right]\tag{$\mathsf{V}_{1,2,2}$}\\
		&\quad-2\lambda\gamma\e\left[\left\la \int_{\bt}^t\lambda(H_\gamma(Z_{\bs}^\lambda,X_{\us})-h_{MY,\epsilon}(\zeta_s^{\lambda,n}))\,\dd s, \sqrt{2\lambda\gamma\beta^{-1}}(W_t^\lambda-W_{\bt}^\lambda)\right\ra\right]\tag{$\mathsf{V}_{1,2,3}$}.
	\end{align}
	Define the filtration $\mathcal{H}_t^\lambda\coloneq \mathcal{F}_t^\lambda\vee\mathcal{G}_{\bt}\vee\sigma(\theta_0,\nu_0)$ for all $t\geq 0$. We notice that
	\begin{align}
		\mathsf{V}_{1,2,1}&=2\lambda\gamma\e[\e[\la V_{\bt}^\lambda-V_{\bt}^{\lambda,n},\sqrt{2\lambda\gamma\beta^{-1}}(W_t^\lambda-W_{\bt}^\lambda)\ra\mid\mathcal{H}_{\bt}^\lambda]]\notag\\
		&=2\lambda\gamma\e[\la V_{\bt}^\lambda-V_{\bt}^{\lambda,n},\sqrt{2\lambda\gamma\beta^{-1}}\e[W_t^\lambda-W_{\bt}^\lambda\mid\mathcal{H}_{\bt}^\lambda]\ra]=0.\label{eq.V_1,2,1}
	\end{align}
	Next, by applying Young's inequality, Cauchy-Schwarz inequality, and Lemma~\ref{lemma.mse of interpolation}, we derive
	\begin{align}
		\mathsf{V}_{1,2,2}&\leq\frac{2\lambda^2\gamma^2}{9}\int_{\bt}^t\e[|V_s^\lambda-V_s^{\lambda,n}|^2]\,\dd s+\frac{2\lambda^2\gamma^2}{9}\int_{\bt}^t\e[|V_{\bs}^\lambda-V_s^\lambda|^2]\,\dd s\notag\\
		&\quad+18\lambda^3\gamma^3\beta^{-1}\e[|W_t^\lambda-W_{\bt}^\lambda|^2]\notag\\
		&\leq \frac{2\lambda\gamma}{9}\sup_{t\in(nT,(n+1)T]}\e[|V_t^\lambda-V_t^{\lambda,n}|^2]+\frac{2}{9}\lambda^3\gamma^3 C_{1,v}+18\lambda^3\gamma^3 d/\beta.\label{eq.V_1,2,2}
	\end{align}
	By Lemmas~\ref{lemma.2 moment of algorithm} and \ref{lemma.2nd bound of taming factor},
	together with \eqref{eq.Lip MY h}, the condition
	$\gamma\geq\gamma_{\min}\geq \max\{1,K\}$, and Young's inequality, we obtain
	\begin{align}
		\mathsf{V}_{1,2,3}&\leq 2\lambda^2\gamma^{-1}\left(\int_{\bt}^t\e[|H_\gamma(Z_{\bs}^\lambda,X_{\us})|^2]\,\dd s+\int_{\bt}^t\e[|h_{MY,\epsilon}(\zeta_s^{\lambda,n})|^2]\,\dd s\right)+2\lambda^3\gamma^4 d/\beta\notag\\
		&\leq 2\lambda^2\gamma^{-1}(2m^2\bar{C}_2+C_H\gamma)+4\lambda^2\gamma^{-1}(K^2 C_\zeta^\#+|h(0)|^2)+2\lambda^3\gamma^4 d/\beta\notag\\
		&\leq 4m^2\lambda^2\bar{C}_2+2\lambda^2 C_H+4\lambda^2\gamma C_\zeta^\#+4\lambda^2|h(0)|^2+2\lambda^3\gamma^4 d/\beta.\label{eq.V_1,2,3}
	\end{align}
	Combining \eqref{eq.V_1,2,1}-\eqref{eq.V_1,2,3} and using
	$\gamma\geq\gamma_{\min}\geq \max\{1,K\}$ yields
	\begin{align}\label{eq.V_1,2}
		\begin{split}
			\mathsf{V}_{1,2}&\leq\frac{2\lambda\gamma}{9}\sup_{t\in(nT,(n+1)T]}\e[|V_t^\lambda-V_t^{\lambda,n}|^2]+\lambda^2\gamma\left(4m^2\bar{C}_2+2C_H+4C_\zeta^\#+4|h(0)|^2\right)\\
			&\quad+\lambda^3\gamma^4\left(\frac{2}{9}C_{1,v}+\frac{20d}{\beta}\right).
		\end{split}
	\end{align}
	Hence, substituting \eqref{eq.V_1,1} and \eqref{eq.V_1,2} into \hyperref[eq.V_11]{$\mathsf{V}_1$}, together with $\gamma\geq\gamma_{\min}\geq 1$, yields
	\begin{align}\label{eq.V_1}
		\begin{split}
			\mathsf{V}_1&\leq-2\lambda\gamma\e[|V_t^\lambda-V_t^{\lambda,n}|^2]+ \frac{3\lambda\gamma}{9}\sup_{t\in(nT,(n+1)T]}\e[|V_t^\lambda-V_t^{\lambda,n}|^2]\\
			&\quad+\lambda^2\gamma\left(4m^2\bar{C}_2+2C_H+4C_\zeta^\#+4|h(0)|^2\right)\\
			&\quad+\lambda^3\gamma^4\left(\frac{2}{9}C_{1,v}+\frac{20d}{\beta}+18\bar{B}_2+36m^2\bar{C}_2+18C_H\right).
		\end{split}
	\end{align}
	Now, we establish an upper bound for \hyperref[eq.V_21]{$\mathsf{V}_2$}, which can be decomposed as follows:
	\begin{align}
		\mathsf{V}_2&=-2\lambda\e[\la V_t^\lambda-V_t^{\lambda,n},H_\gamma(Z_{\bt}^\lambda,X_{\ut})-H(Z_{\bt}^\lambda,X_{\ut})\ra]\tag{$\mathsf{V}_{2,1}$}\label{eq.V_2,11}\\
		&\quad-2\lambda\e[\la V_t^\lambda-V_t^{\lambda,n},H(Z_{\bt}^\lambda,X_{\ut})-h(Z_{\bt}^\lambda)\ra]\tag{$\mathsf{V}_{2,2}$}\label{eq.V_2,21}\\
		&\quad-2\lambda\e[\la V_t^\lambda-V_t^{\lambda,n},h(Z_{\bt}^\lambda)-h_{MY,\epsilon}(Z_{\bt}^\lambda)\ra]\tag{$\mathsf{V}_{2,3}$}\label{eq.V_2,31}\\
		&\quad-2\lambda\e[\la V_t^\lambda-V_t^{\lambda,n},h_{MY,\epsilon}(Z_{\bt}^\lambda)-h_{MY,\epsilon}(Z_t^\lambda)\ra]\tag{$\mathsf{V}_{2,4}$}\label{eq.V_2,41}\\
		&\quad-2\lambda\e[\la V_t^\lambda-V_t^{\lambda,n},h_{MY,\epsilon}(Z_t^\lambda)-h_{MY,\epsilon}(\zeta_t^{\lambda,n})\ra]\tag{$\mathsf{V}_{2,5}$}\label{eq.V_2,51}.
	\end{align}
	We proceed with establishing an upper bound for \hyperref[eq.V_2,11]{$\mathsf{V}_{2,1}$}. Applying Lemmas~\ref{lemma.2q moment of algorithm}, \ref{lemma.mse - H_lambda and H}, and Young's inequality, we obtain
	\begin{align}
		\mathsf{V}_{2,1}&\leq \frac{\lambda\gamma}{9}\e[|V_t^\lambda-V_t^{\lambda,n}|^2]+9\lambda\gamma^{-1}\e[|H_\gamma(Z_{\bt}^\lambda,X_{\ut})-H(Z_{\bt}^\lambda,X_{\ut})|^2]\notag\\
		&\leq \frac{\lambda\gamma}{9}\e[|V_t^\lambda-V_t^{\lambda,n}|^2]+18\lambda\gamma^{-3}\left(4K_H^2\e[(1+|X_0|)^{2\rho}]+m^2\right)(1+\e[|Z_{\bt}^\lambda|^{12r}])\notag\\
		&\leq \frac{\lambda\gamma}{9}\e[|V_t^\lambda-V_t^{\lambda,n}|^2]+18\lambda\gamma^{-3}\left(4K_H^2\e[(1+|X_0|)^{2\rho}]+m^2\right)(1+\bar{C}_{12r}).\label{eq.V_2,1}
	\end{align}
	To establish an upper bound for \hyperref[eq.V_2,21]{$\mathsf{V}_{2,2}$}, we use \eqref{Continuous-time interpolation} and \eqref{Auxiliary process with certain initial} to obtain
	\begin{align}
		\mathsf{V}_{2,2}&=-2\lambda\e[\la V_{\bt}^\lambda-V_{\bt}^{\lambda,n},H(Z_{\bt}^\lambda,X_{\ut})-h(Z_{\bt}^\lambda)\ra]\tag{$\mathsf{V}_{2,2,1}$}\\
		&\quad+2\lambda^2\gamma\e\left[\left\la\int_{\bt}^t(V_{\bs}^\lambda-V_s^{\lambda,n})\,\dd s,H(Z_{\bt}^\lambda,X_{\ut})-h(Z_{\bt}^\lambda)\right\ra\right]\tag{$\mathsf{V}_{2,2,2}$}\\
		&\quad+2\lambda^2\e\left[\left\la\int_{\bt}^t(H_\gamma(Z_{\bs}^\lambda,X_{\us})-h_{MY,\epsilon}(\zeta_s^{\lambda,n}))\,\dd s,H(Z_{\bt}^\lambda,X_{\ut})-h(Z_{\bt}^\lambda)\right\ra\right]\tag{$\mathsf{V}_{2,2,3}$}.
	\end{align}
	Since $\mathcal{H}_t^\lambda=\mathcal{F}_t^\lambda\vee\mathcal{G}_{\bt}\vee\sigma(\theta_0,\nu_0)$ and $X_{\ut}$ is independent of $\mathcal{H}_{\bt}$, we have that
	\begin{align}
		\mathsf{V}_{2,2,1}&=-2\lambda\e[\e[\la V_{\bt}^\lambda-V_{\bt}^{\lambda,n},H(Z_{\bt}^\lambda,X_{\ut})-h(Z_{\bt}^\lambda)\ra\mid\mathcal{H}_{\bt}]]\notag\\
		&=-2\lambda\e[\la V_{\bt}^\lambda-V_{\bt}^{\lambda,n},h(Z_{\bt}^\lambda)-h(Z_{\bt}^\lambda)\ra]=0.\label{eq.V_2,2,1}
	\end{align}
	Moreover, by \eqref{Continuous-time interpolation} and \eqref{Auxiliary process with certain initial}, we obtain
	\begin{align}
		\mathsf{V}_{2,2,2}&=2\lambda^2\gamma\e\left[\left\la \int_{\bt}^t (V_{\bs}^\lambda-V_{\bs}^{\lambda,n}) \,\dd s, H(Z_{\bt}^\lambda,X_{\ut})-h(Z_{\bt}^\lambda)\right\ra\right]\tag{$\mathsf{V}_{2,2,2}^a$}\\
		&\quad+2\lambda^3\gamma\e\left[\left\la \int_{\bt}^t\int_{\bs}^s (\gamma V_r^{\lambda,n}+h_{MY,\epsilon}(\zeta_r^{\lambda,n}))\,\dd r\, \dd s, H(Z_{\bt}^\lambda,X_{\ut})-h(Z_{\bt}^\lambda)\right\ra\right]\tag{$\mathsf{V}_{2,2,2}^b$}\\
		&\quad-2\lambda^2\gamma\e\left[\left\la \int_{\bt}^t\int_{\bs}^s \sqrt{2\lambda\gamma\beta^{-1}}\,\dd W_r^\lambda\, \dd s, H(Z_{\bt}^\lambda,X_{\ut})-h(Z_{\bt}^\lambda)\right\ra\right]\tag{$\mathsf{V}_{2,2,2}^c$}.
	\end{align}
	Since $\int_{\bt}^t(V_{\bs}^\lambda-V_{\bs}^{\lambda,n}) \,\dd s=(t-\bt)(V_{\bt}^\lambda-V_{\bt}^{\lambda,n})$, it follows that
	\begin{align}\label{eq.V_2,2,2,1}
		\mathsf{V}_{2,2,2}^a=2\lambda^2\gamma(t-\bt)\e[\la V_{\bt}^\lambda-V_{\bt}^{\lambda,n},H(Z_{\bt}^\lambda,X_{\ut})-h(Z_{\bt}^\lambda)\ra]=0,
	\end{align}
	where the second equality follows from the same argument as in the analysis of \eqref{eq.V_2,2,1}. We note, by Remark~\ref{remark.local Lip. of h} and Lemma~\ref{lemma.2q moment of algorithm}, for any $t\in(nT,(n+1)T]$, that
	\begin{align}
		&\e[|H(Z_{\bt}^\lambda,X_{\ut})|^2]+\e[|h(Z_{\bt}^\lambda)|^2]\notag\\
		&\leq 2K_H^2\e[(1+|X_{\ut}|)^{2\rho}(1+|Z_{\bt}^\lambda|^{4r})] +2L_h^2\e[(1+|Z_{\bt}^\lambda|)^{4r}]+2|h(0)|^2\notag\\
		&\leq 2K_H^2\e[(1+|X_0|)^{2\rho}](1+\bar{C}_{4r}) + 2^{4r}L_h^2(1+\bar{C}_{4r})+2|h(0)|^2\leq c_A,\label{eq.cA}
	\end{align}
	where $c_A\coloneq (2K_H^2\e[(1+|X_0|)^{2\rho}]+2^{4r}L_h^2)(1+\bar{C}_{4r})+2|h(0)|^2=\mathcal{O}((d/\beta)^{2r})$.
	By Young's inequality, Cauchy-Schwarz inequality, Lemma~\ref{lemma.2nd bound of auxiliary process}, \eqref{eq.Lip MY h} and \eqref{eq.cA}, together with $\gamma\geq\gamma_{\min}\geq\max\{1,K\}$, we have
	\begin{align}
		\mathsf{V}_{2,2,2}^b&\leq 2\lambda^3\gamma\int_{\bt}^t\int_{\bs}^s \big(\gamma^2\e[|V_r^{\lambda,n}|^2]+\e[|h_{MY,\epsilon}(\zeta_r^{\lambda,n})|^2]\big)\,\dd r\,\dd s\notag\\
		&\quad+2\lambda^3\gamma\big(\e[|H(Z_{\bt}^\lambda,X_{\ut})|^2]+\e[|h(Z_{\bt}^\lambda)|^2]\big)\notag\\
		&\leq \lambda^3\gamma^3\left(2C_V^\#+4C_\zeta^\#+4|h(0)|^2+2c_A\right).\label{eq.V_2,2,2,2}
	\end{align}
	Furthermore, we note that, an application of Itô's formula yields
	\[(t-\bt)\int_{\bt}^t\,\dd W_s^\lambda=\int_{\bt}^t\int_{\bs}^s\,\dd r\,\dd W_s^\lambda+\int_{\bt}^t\int_{\bs}^s\,\dd W_r^\lambda\,\dd s.\]
	Hence, by applying \eqref{eq.cA}, we obtain that
	\begin{align}
		\mathsf{V}_{2,2,2}^c&=-2\lambda^2\gamma\e\left[\left\la (t-\bt)\sqrt{2\lambda\gamma\beta^{-1}}\int_{\bt}^t\,\dd W_s^\lambda, H(Z_{\bt}^\lambda,X_{\ut})-h(Z_{\bt}^\lambda)\right\ra\right]\notag\\
		&\quad+2\lambda^2\gamma\e\left[\left\la \sqrt{2\lambda\gamma\beta^{-1}}\int_{\bt}^t\int_{\bs}^s\,\dd r\,\dd W_s^\lambda, H(Z_{\bt}^\lambda,X_{\ut})-h(Z_{\bt}^\lambda)\right\ra\right]\notag\\
		&\leq 2\lambda^3\gamma^2\beta^{-1}\e[|W_t^\lambda-W_{\bt}^\lambda|^2]+4\lambda^2\gamma\big(\e[|H(Z_{\bt}^\lambda,X_{\ut})|^2]+\e[|h(Z_{\bt}^\lambda)|^2]\big)\notag\\
		&\quad+2\lambda^3\gamma^2\beta^{-1}\e\left[\left|\int_{\bt}^t(s-\bs)\,\dd W_s^\lambda\right|^2\right]\notag\\
		&\leq 4\lambda^3\gamma^2 d/\beta+4\lambda^2\gamma c_A.
	\end{align}
	Moreover, using Young's inequality, Lemmas~\ref{lemma.2 moment of algorithm}
	and \ref{lemma.2nd bound of taming factor}, \eqref{eq.Lip MY h} and
	\eqref{eq.cA}, together with $\gamma\geq\gamma_{\min}\geq\max\{1,K\}$, we obtain
	\begin{align}
		\mathsf{V}_{2,2,3}&\leq 2\lambda^2\gamma^{-1}\left(\e[|H_\gamma(Z_{\bt}^\lambda,X_{\ut})|^2]+\sup_{t\in(nT,(n+1)T]}\e[|h_{MY,\epsilon}(\zeta_t^{\lambda,n})|^2]\right)\notag\\
		&\quad+2\lambda^2\gamma\left(\e[|H(Z_{\bt}^\lambda,X_{\ut})|^2]+\e[|h(Z_{\bt})|^2]\right)\notag\\
		&\leq 2\lambda^2\gamma^{-1}\big(2m^2\bar{C}_2+C_H\gamma+2K^2C_\zeta^\#+2|h(0)|^2\big)+2\lambda^2\gamma c_A\notag\\
		&\leq \lambda^2\gamma\left(4m^2\bar{C}_2+2C_H+4C_\zeta^\#+4|h(0)|^2+2c_A\right).\label{eq.V_2,2,3}
	\end{align}
	By using \eqref{eq.V_2,2,1}, \eqref{eq.V_2,2,2,1}, \eqref{eq.V_2,2,2,2}-\eqref{eq.V_2,2,3} and the condition $\gamma\geq\gamma_{\min}\geq 1$, we obtain that, 
	\begin{align}\label{eq.V_2,2}
		\begin{split}
			\mathsf{V}_{2,2}&\leq\lambda^2\gamma\left(4m^2\bar{C}_2+2C_H+4C_\zeta^\#+4|h(0)|^2+6c_A\right) \\
			&\quad+\lambda^3\gamma^4\left(2C_V^\#+4C_\zeta^\#+4|h(0)|^2+2c_A+\frac{4d}{\beta}\right).
		\end{split}
	\end{align}
	Next, we consider establishing an upper bound for \hyperref[eq.V_2,31]{$\mathsf{V}_{2,3}$}. By \eqref{Continuous-time interpolation} and \eqref{Auxiliary process with certain initial}, we have
	\begin{align}
		\mathsf{V}_{2,3}&=-2\lambda\e[\la V_{\bt}^\lambda-V_{\bt}^{\lambda,n},h(Z_{\bt}^\lambda)-h_{MY,\epsilon}(Z_{\bt}^\lambda)\ra]\tag{$\mathsf{V}_{2,3,1}$}\\
		&\quad+2\lambda^2\e\left[\left\la\int_{\bt}^t\gamma(V_{\bs}^\lambda-V_s^{\lambda,n})\,\dd s,h(Z_{\bt}^\lambda)-h_{MY,\epsilon}(Z_{\bt}^\lambda)\right\ra\right]\tag{$\mathsf{V}_{2,3,2}$}\\
		&\quad+2\lambda^2\e\left[\left\la\int_{\bt}^t(H_\gamma(Z_{\bs}^\lambda,X_{\us})-h_{MY,\epsilon}(\zeta_s^{\lambda,n}))\,\dd s,h(Z_{\bt}^\lambda)-h_{MY,\epsilon}(Z_{\bt}^\lambda)\right\ra\right]\tag{$\mathsf{V}_{2,3,3}$}.
	\end{align}
	Applying Young's inequality together with Lemmas~\ref{lemma.error of MY regularization} and \ref{lemma.2q moment of algorithm}, we obtain
	\begin{align}
		\mathsf{V}_{2,3,1}&\leq \frac{\lambda\gamma}{9}\sup_{t\in(nT,(n+1)T]}\e[|V_t^\lambda-V_t^{\lambda,n}|^2]+9\lambda\gamma^{-1}\e[|h(Z_{\bt}^\lambda)-h_{MY,\epsilon}(Z_{\bt}^\lambda)|^2]\notag\\
		&\leq \frac{\lambda\gamma}{9}\sup_{t\in(nT,(n+1)T]}\e[|V_t^\lambda-V_t^{\lambda,n}|^2]+9\cdot 2^{4r-2}\lambda\gamma^{-1}L_h^4\e[(1+|Z_{\bt}^\lambda|+R_0)^{8r-2}]\epsilon^2\notag\\
		&\leq \frac{\lambda\gamma}{9}\sup_{t\in(nT,(n+1)T]}\e[|V_t^\lambda-V_t^{\lambda,n}|^2]+ C_4\lambda\gamma^{-1}\epsilon^2,\label{eq.V_2,3,1}
	\end{align}
	where $C_4\coloneq 9\cdot 2^{12r-5}L_h^4\big((1+R_0)^{8r-2}+\bar{C}_{8r-2}\big)=\mathcal{O}((d/\beta)^{4r-1})$. Similarly, using Young's inequality, Remark~\ref{remark.local Lip. of h}, Lemmas~\ref{lemma.hMY leq h}, \ref{lemma.2 moment of algorithm} and \ref{lemma.mse of interpolation}, and the assumption that $\gamma\geq\gamma_{\min}\geq 1$ and $\lambda\leq\lambda_{\max,\gamma}\leq 1/(4\gamma)$, we have that
	\begin{align}
		\mathsf{V}_{2,3,2}&\leq \frac{2\lambda^2\gamma^2}{9}\int_{\bt}^t\e[|V_s^\lambda-V_s^{\lambda,n}|^2]\,\dd s+\frac{2\lambda^2\gamma^2}{9}\int_{\bt}^t\e[|V_{\bs}^\lambda-V_s^\lambda|^2]\,\dd s\notag\\
		&\quad+18\lambda^2\big(\e[|h(Z_{\bt}^\lambda)|^2]+\e[|h_{MY,\epsilon}(Z_{\bt}^\lambda)|^2]\big)\notag\\
		&\leq \frac{2\lambda\gamma}{9}\sup_{t\in(nT,(n+1)T]}\e[|V_t^\lambda-V_t^{\lambda,n}|^2]+\frac{2}{9}\lambda^3\gamma^3 C_{1,v}+36\lambda^2\e[|h(Z_{\bt}^\lambda)|^2]\notag\\
		&\leq \frac{2\lambda\gamma}{9}\sup_{t\in(nT,(n+1)T]}\e[|V_t^\lambda-V_t^{\lambda,n}|^2]+\frac{2}{9}\lambda^3\gamma^3 C_{1,v}+\lambda^2 c_D,
	\end{align}
	where $c_D\coloneq 36 (2^{4r}L_h^2(1+\bar{C}_{4r})+2|h(0)|^2)=\mathcal{O}((d/\beta)^{2r})$. Moreover, by using Young's inequality, Lemmas~\ref{lemma.hMY leq h},
	\ref{lemma.2 moment of algorithm}, \ref{lemma.2q moment of algorithm},
	\ref{lemma.2nd bound of auxiliary process}, and
	\ref{lemma.2nd bound of taming factor}, together with
	Remark~\ref{remark.local Lip. of h}, \eqref{eq.Lip MY h}, and the condition
	$\gamma\geq\gamma_{\min}\geq\max\{1,K\}$, we obtain that
	\begin{align}
		\mathsf{V}_{2,3,3}&\leq 2\lambda^2\gamma^{-1}\left(\e[|H_\gamma(Z_{\bt}^\lambda,X_{\ut})|^2]+\sup_{t\in(nT,(n+1)T]}\e[|h_{MY,\epsilon}(\zeta_t^{\lambda,n})|^2]\right)\notag\\
		&\quad+2\lambda^2\gamma\left(\e[|h(Z_{\bt}^\lambda)|^2]+\e[|h_{MY,\epsilon}(Z_{\bt}^\lambda)|^2]\right)\notag\\
		&\leq \lambda^2\gamma\left(4m^2\bar{C}_2+2C_H+4C_\zeta^\#+4|h(0)|^2\right)+4\lambda^2\gamma\e[|h(Z_{\bt}^\lambda)|^2]\notag\\
		&\leq\lambda^2\gamma\left(4m^2\bar{C}_2+2C_H+4C_\zeta^\#+12|h(0)|^2+2^{4r+2}L_h^2(1+\bar{C}_{4r})\right).\label{eq.V_2,3,3}
	\end{align}
	From \eqref{eq.V_2,3,1}-\eqref{eq.V_2,3,3} and $\gamma\geq\gamma_{\min}\geq 1$, it follows that
	\begin{align}\label{eq.V_2,3}
		\begin{split}
			\mathsf{V}_{2,3}&\leq\frac{\lambda\gamma}{3}\sup_{t\in(nT,(n+1)T]}\e[|V_t^\lambda-V_t^{\lambda,n}|^2]+C_4\lambda\gamma^{-1}\epsilon^2+\frac{2}{9}\lambda^3\gamma^4 C_{1,v}\\
			&\quad+\lambda^2\gamma\left(c_D+4m^2\bar{C}_2+2C_H+4C_\zeta^\#+12|h(0)|^2+2^{4r+2}L_h^2(1+\bar{C}_{4r})\right).
		\end{split}
	\end{align}
	To establish upper bounds for \hyperref[eq.V_2,41]{$\mathsf{V}_{2,4}$} and \hyperref[eq.V_2,51]{$\mathsf{V}_{2,5}$}, by applying Young's inequality, Lemma~\ref{lemma.mse of interpolation}, Remark~\ref{remark.MY}, and $\gamma\geq\gamma_{\min}\geq K$, we obtain
	\begin{align}
		\mathsf{V}_{2,4}&\leq \frac{\lambda\gamma}{9}\e[|V_t^\lambda-V_t^{\lambda,n}|^2]+9\lambda\gamma^{-1}\e[|h_{MY,\epsilon}(Z_{\bt}^\lambda)-h_{MY,\epsilon}(Z_t^\lambda)|^2]\notag\\
		&\leq \frac{\lambda\gamma}{9}\sup_{t\in(nT,(n+1)T]}\e[|V_t^\lambda-V_t^{\lambda,n}|^2]+9\lambda^2\gamma^{-1}K^2\bar{B}_2\notag\\
		&\leq \frac{\lambda\gamma}{9}\sup_{t\in(nT,(n+1)T]}\e[|V_t^\lambda-V_t^{\lambda,n}|^2]+9\lambda^2\gamma\bar{B}_2,\label{eq.V_2,4}
	\end{align}
	and
	\begin{align}
		\mathsf{V}_{2,5}&\leq \frac{\lambda\gamma}{9}\e[|V_t^\lambda-V_t^{\lambda,n}|^2]+9\lambda\gamma^{-1}K^2\e[|Z_t^\lambda-\zeta_t^{\lambda,n}|^2]\notag\\
		&\leq \frac{\lambda\gamma}{9}\sup_{t\in(nT,(n+1)T]}\e[|V_t^\lambda-V_t^{\lambda,n}|^2]+\frac{\lambda\gamma}{24}\e[|Z_t^\lambda-\zeta_t^{\lambda,n}|^2],\label{eq.V_2,5}
	\end{align}
	where the last inequality follows from the restriction that $\gamma\geq\gamma_{\min}\geq 6\sqrt{6}K$. 
	Combining \eqref{eq.V_1}, \eqref{eq.V_2,1}, \eqref{eq.V_2,2}, \eqref{eq.V_2,3}-\eqref{eq.V_2,5}, we obtain
	\begin{align}\label{eq.differential of V}
		\begin{split}
			\frac{\dd}{\dd t}\e[|V_t^\lambda-V_t^{\lambda,n}|^2]&\leq -2\lambda\gamma\e[|V_t^\lambda-V_t^{\lambda,n}|^2]+\lambda\gamma\sup_{t\in(nT,(n+1)T]}\e[|V_t^\lambda-V_t^{\lambda,n}|^2]\\
			&\quad+\lambda S_{\lambda,\gamma}+\frac{\lambda\gamma}{24}\e[|Z_t^\lambda-\zeta_t^{\lambda,n}|^2],
		\end{split}
	\end{align}
	where
	\begin{align*}
		S_{\lambda,\gamma}&\coloneq C_1\lambda^2\gamma^4+C_2\lambda\gamma+C_3\gamma^{-3}+C_4\gamma^{-1}\epsilon^2,\label{eq.S_lambda,gamma}\\
		C_1&\coloneq 4C_{1,v}/9+24d/\beta+18\bar{B}_2+36m^2\bar{C}_2+18C_H+2C_V^\#\\
		&\quad+4C_\zeta^\#+4|h(0)|^2+2c_A=\mathcal{O}((d/\beta)^{2r}),\\
		C_2&\coloneq 12m^2\bar{C}_2+6C_H+12C_\zeta^\#+20|h(0)|^2+6c_A+c_D\\
		&\quad\ +2^{4r+2}L_h^2(1+\bar{C}_{4r})+9\bar{B}_2=\mathcal{O}((d/\beta)^{2r}),\\
		C_3&\coloneq 18\big(4K_H^2\e[(1+|X_0|)^{2\rho}]+m^2\big)(1+\bar{C}_{12r})=\mathcal{O}((d/\beta)^{6r}),\\
		C_4&\coloneq 9\cdot 2^{12r-5}L_h^4\big((1+R_0)^{8r-2}+\bar{C}_{8r-2}\big)=\mathcal{O}((d/\beta)^{4r-1}).
	\end{align*}
	Combining Lemmas~\ref{lemma.2 moment of algorithm}, \ref{lemma.mse of interpolation}, and \ref{lemma.2nd bound of auxiliary process} with Cauchy-Schwarz inequality, under the assumptions that $\gamma\geq\gamma_{\min}\geq 1$ and $\lambda\leq\lambda_{\max,\gamma}\leq 1/(4\gamma)$, we obtain
	\begin{align*}
		\sup_{t\in(nT,(n+1)T]}\e[|Z_t^\lambda-\zeta_t^{\lambda,n}|^2]&\leq 3\sup_{t\in(nT,(n+1)T]}\left(\e[|\zeta_t^{\lambda,n}|^2]+\e[|Z_{\bt}^\lambda|^2]+\e[|Z_t^\lambda-Z_{\bt}^\lambda|^2]\right)\leq C_J\\
		\sup_{t\in(nT,(n+1)T]}\e[|V_t^\lambda-V_t^{\lambda,n}|^2]&\leq 3\sup_{t\in(nT,(n+1)T]}\left(\e[|V_t^{\lambda,n}|^2]+\e[|V_{\bt}^\lambda|^2]+\e[|V_t^\lambda-V_{\bt}^\lambda|^2]\right)\leq C_M,
	\end{align*}
	where $C_J\coloneq 3C_\zeta^\#+3\bar{C}_2+3\bar{B}_2=\mathcal{O}(d/\beta)$ and $C_M\coloneq 3C_V^\#+3\bar{B}_2+3C_{1,v}=\mathcal{O}(d/\beta)$. 
	Consequently, \eqref{eq.differential of V} can be rewritten as
	\begin{align*}
		\frac{\dd}{\dd t}\e[|V_t^\lambda-V_t^{\lambda,n}|^2]&\leq -2\lambda\gamma\e[|V_t^\lambda-V_t^{\lambda,n}|^2]\\
		&\quad+\lambda\gamma\left(\frac{S_{\lambda,\gamma}}{\gamma}+\frac{1}{24}\sup_{t\in(nT,(n+1)T]}\e[|Z_t^\lambda-\zeta_t^{\lambda,n}|^2]\right.\\
		&\quad\quad\quad\quad\ \left.+\sup_{t\in(nT,(n+1)T]}\e[|V_t^\lambda-V_t^{\lambda,n}|^2]\right),
	\end{align*}
	which implies, by applying integrating factor, that
	\begin{align*}
		&\e[|V_t^\lambda-V_t^{\lambda,n}|^2]\\
		&\leq \frac{1}{2}\left(\frac{S_{\lambda,\gamma}}{\gamma}+\sup_{t\in(nT,(n+1)T]}\left(\frac{1}{24}\e[|Z_t^\lambda-\zeta_t^{\lambda,n}|^2]+\e[|V_t^\lambda-V_t^{\lambda,n}|^2]\right)\right)\left(1-e^{-2\lambda\gamma (t-nT)}\right)\\
		&\leq \frac{1}{2}\sup_{t\in(nT,(n+1)T]}\e[|V_t^\lambda-V_t^{\lambda,n}|^2]+\frac{1}{2}\left(\frac{S_{\lambda,\gamma}}{\gamma}+\frac{1}{24}\sup_{t\in(nT,(n+1)T]}\e[|Z_t^\lambda-\zeta_t^{\lambda,n}|^2]\right).
	\end{align*}
	Taking the supremum over $t\in(nT,(n+1)T]$ gives
	\begin{equation}\label{eq.mse of momentum}
		\sup_{t\in(nT,(n+1)T]}\e[|V_t^\lambda-V_t^{\lambda,n}|^2]\leq \frac{S_{\lambda,\gamma}}{\gamma}+\frac{1}{24}\sup_{t\in(nT,(n+1)T]}\e[|Z_t^\lambda-\zeta_t^{\lambda,n}|^2].
	\end{equation}
	
	To establish an upper bound for the second term on the right-hand side of \eqref{eq.auxiliary-decomposition}, we apply Itô's formula to the expressions given in \eqref{Continuous-time interpolation} and \eqref{Auxiliary process with certain initial} to obtain that
	\[\e[|Z_t^\lambda-\zeta_t^{\lambda,n}|^2]=2\lambda\int_{nT}^t\e[\la Z_s^\lambda-\zeta_s^{\lambda,n},V_{\bs}^\lambda-V_s^{\lambda,n}\ra]\,\dd s,\]
	which can be further rewritten as
	\begin{align*}
		\e[|Z_t^\lambda-\zeta_t^{\lambda,n}|^2]&=2\lambda\int_{nT}^t\e[\la Z_s^\lambda-\zeta_s^{\lambda,n},V_{\bs}^\lambda-V_{\bs}^{\lambda,n}\ra]\,\dd s\tag{$\mathsf{Z}_{1}$}\label{eq.Z_11}\\
		&\quad+2\lambda^2\int_{nT}^t\e\left[\left\la Z_s^\lambda-\zeta_s^{\lambda,n},\int_{\bs}^s\left(\gamma V_r^{\lambda,n}+h_{MY,\epsilon}(\zeta_r^{\lambda,n})\right)\,\dd r\right\ra\right]\,\dd s\tag{$\mathsf{Z}_{2}$}\label{eq.Z_21}\\
		&\quad-2\lambda\int_{nT}^t\e\left[\left\la Z_s^\lambda-\zeta_s^{\lambda,n},\sqrt{2\lambda\gamma\beta^{-1}}\int_{\bs}^s\,\dd W_r^\lambda \right\ra\right]\,\dd s.\tag{$\mathsf{Z}_{3}$}
	\end{align*}
	Applying Young's inequality to \hyperref[eq.Z_11]{$\mathsf{Z}_1$} yields 
	\begin{align}\label{eq.Z_1}
		\mathsf{Z}_{1}\leq \frac{\lambda}{4}\int_{nT}^t\e[|Z_s^\lambda-\zeta_s^{\lambda,n}|^2]\,\dd s+4\lambda\int_{nT}^t\e[|V_{\bs}^\lambda-V_{\bs}^{\lambda,n}|^2]\,\dd s.
	\end{align}
	Moreover, applying Young's inequality and Cauchy-Schwarz inequality to \hyperref[eq.Z_21]{$\mathsf{Z}_2$} yields
	\begin{equation}\label{eq.Z_2}
		\mathsf{Z}_{2}\leq \frac{\lambda}{4}\int_{nT}^t\e[|Z_s^\lambda-\zeta_s^{\lambda,n}|^2]\,\dd s+4\lambda^3\int_{nT}^t\e\left[\int_{\bs}^s|\gamma V_r^{\lambda,n}+h_{MY,\epsilon}(\zeta_r^{\lambda,n})|^2\,\dd r\right]\,\dd s.
	\end{equation}
	By Lemma~\ref{lemma.2nd bound of auxiliary process}, \eqref{eq.Lip MY h}, and $\gamma\geq\gamma_{\min}\geq K$, we obtain
	\begin{align}
		&4\lambda^3\int_{nT}^t\e\left[\int_{\bs}^s|\gamma V_r^{\lambda,n}+h_{MY,\epsilon}(\zeta_r^{\lambda,n})|^2\,\dd r\right]\,\dd s\notag\\
		&\leq 8 \lambda^3\gamma^2\int_{nT}^t\sup_{nT\leq r\leq s}\e[|V_r^{\lambda,n}|^2]\,\dd s+16\lambda^3\int_{nT}^t\e\left[\int_{\bs}^s\left(K^2|\zeta_r^{\lambda,n}|^2+|h(0)|^2\right)\,\dd r\right]\,\dd s\notag\\
		&\leq 8\lambda^2\gamma^2 C_V^\#+16\lambda^2 \gamma^2 C_\zeta^\#+16\lambda^2|h(0)|^2.\label{eq.Z_2-1}
	\end{align}
	Thus, by substituting \eqref{eq.Z_2-1} into \eqref{eq.Z_2}, it holds that
	\begin{equation}\label{eq.Z_1,2}
		\mathsf{Z}_{2}\leq \frac{\lambda}{4}\int_{nT}^t\e[|Z_s^\lambda-\zeta_s^{\lambda,n}|^2]\,\dd s+8\lambda^2\gamma^2 C_V^\#+16\lambda^2 \gamma^2 C_\zeta^\#+16\lambda^2|h(0)|^2.
	\end{equation}
	Next, recall that $\mathcal{H}_t^\lambda= \mathcal{F}_t^\lambda\vee\mathcal{G}_{\bt}\vee\sigma(\theta_0,\nu_0)$. We have, for any $s\in(nT,t]$, that
	\begin{align}
		&\e\left[\left\la Z_{\bs}^\lambda-\zeta_{\bs}^{\lambda,n},\sqrt{2\lambda\gamma\beta^{-1}}(W_s^\lambda-W_{\bs}^\lambda) \right\ra\right]\notag\\
		&=\e\left[\e\left[\left\la Z_{\bs}^\lambda-\zeta_{\bs}^{\lambda,n},\sqrt{2\lambda\gamma\beta^{-1}}(W_s^\lambda-W_{\bs}^\lambda)\right\ra\mid\mathcal{H}_{\bs}^\lambda\right] \right]\notag\\
		&=\e\left[\left\la Z_{\bs}^\lambda-\zeta_{\bs}^{\lambda,n},\sqrt{2\lambda\gamma\beta^{-1}}\e[W_s^\lambda-W_{\bs}^\lambda\mid\mathcal{H}_{\bs}^\lambda]\right\ra\right]=0.\label{eq.Z_3-1}
	\end{align}
	Moreover, notice that
	\begin{equation}\label{eq.Z_31}
		-2\lambda\int_{nT}^t\e\left[\left\la\lambda\int_{\bs}^s V_{\br}^\lambda\,\dd r,\sqrt{2\lambda\gamma\beta^{-1}}(W_s^\lambda-W_{\bs}^\lambda)\right\ra\right]\,\dd s=0.
	\end{equation}
	Thus, by Young's inequality, Cauchy-Schwarz inequality, \eqref{eq.Z_3-1}, \eqref{eq.Z_31} and Lemma~\ref{lemma.mse of interpolation}, we have 
	\begin{align}
		\mathsf{Z}_{3}&=-2\lambda\int_{nT}^t\e\left[\left\la Z_{\bs}^\lambda-\zeta_{\bs}^{\lambda,n},\sqrt{2\lambda\gamma\beta^{-1}}(W_s^\lambda-W_{\bs}^\lambda) \right\ra\right]\,\dd s\notag\\
		&\quad-2\lambda\int_{nT}^t\e\left[\left\la \lambda\int_{\bs}^s(V_{\br}^\lambda-V_r^{\lambda,n})\,\dd r,\sqrt{2\lambda\gamma\beta^{-1}}(W_s^\lambda-W_{\bs}^\lambda) \right\ra\right]\,\dd s\notag\\
		&\leq \lambda\int_{nT}^t\left(\e\left[\lambda\int_{\bs}^s|V_{\br}^\lambda-V_r^{\lambda,n}|^2\,\dd r\right]\,\dd s+2\lambda^2\gamma\beta^{-1}\e[|W_s^\lambda-W_{\bs}^\lambda|^2]\right)\,\dd s\notag\\
		&\leq \lambda^2\int_{nT}^t\e\left[\int_{\bs}^s|V_{\br}^\lambda-V_r^{\lambda,n}|^2\,\dd r\right]\,\dd s+2\lambda^2\gamma d/\beta\notag\\
		&\leq 2\lambda^2\int_{nT}^t\e\left[\int_{\bs}^s\big(|V_{\br}^\lambda-V_r^{\lambda}|^2+|V_r^\lambda-V_r^{\lambda,n}|^2\big)\,\dd r\right]\,\dd s+2\lambda^2\gamma d/\beta\notag\\
		&\leq 2\lambda^2\int_{nT}^t\int_{\bs}^s\e[|V_r^\lambda-V_r^{\lambda,n}|^2]\,\dd r\,\dd s+2\lambda^2\gamma C_{1,v}+2\lambda^2\gamma d/\beta.\label{eq.Z_3}
	\end{align}
	Combining \eqref{eq.Z_1}, \eqref{eq.Z_1,2} and \eqref{eq.Z_3}, together with $\gamma\geq\gamma_{\min}\geq 1$ and $\lambda\leq\lambda_{\max,\gamma}\leq 1/(4\gamma)$, yields
	\begin{align*}
		\e[|Z_t^\lambda-\zeta_t^{\lambda,n}|^2]&\leq \frac{\lambda}{2}\int_{nT}^t\e[|Z_s^\lambda-\zeta_s^{\lambda,n}|^2]\,\dd s+4\lambda\int_{nT}^t\e[|V_{\bs}^\lambda-V_{\bs}^{\lambda,n}|^2]\,\dd s\\
		&\quad+2\lambda^2\int_{nT}^t\int_{\bs}^s\e[|V_r^\lambda-V_r^{\lambda,n}|^2]\,\dd r\,\dd s\\
		&\quad+8\lambda^2\gamma^2 C_V^\#+16\lambda^2 \gamma^2 C_\zeta^\#+16\lambda^2|h(0)|^2+2\lambda^2\gamma C_{1,v}+2\lambda^2\gamma d/\beta\\
		&\leq \frac{\lambda}{2}\int_{nT}^t\e[|Z_s^\lambda-\zeta_s^{\lambda,n}|^2]\,\dd s+6\lambda\int_{nT}^t\sup_{nT\leq r\leq s}\e[|V_r^\lambda-V_r^{\lambda,n}|^2]\,\dd s\\
		&\quad+8\lambda^2\gamma^2 C_V^\#+16\lambda^2 \gamma^2 C_\zeta^\#+16\lambda^2|h(0)|^2+2\lambda^2\gamma C_{1,v}+2\lambda^2\gamma d/\beta\\
		&\leq\frac{1}{2}\sup_{t\in(nT,(n+1)T]}\e[|Z_t^\lambda-\zeta_t^{\lambda,n}|^2]+6\sup_{t\in(nT,(n+1)T]}\e[|V_t^\lambda-V_t^{\lambda,n}|^2]+C_5\lambda^2\gamma^2,
	\end{align*}
	where $C_5\coloneq 8C_V^\#+16C_\zeta^\#+16|h(0)|^2+2C_{1,v}+2d/\beta=\mathcal{O}(d/\beta)$, which implies, by using \eqref{eq.mse of momentum}, that
	\begin{align*}
		\e[|Z_t^\lambda-\zeta_t^{\lambda,n}|^2]&\leq \frac{1}{2}\sup_{t\in(nT,(n+1)T]}\e[|Z_t^\lambda-\zeta_t^{\lambda,n}|^2]+C_5\lambda^2\gamma^2\\
		&\quad+6\left(\frac{S_{\lambda,\gamma}}{\gamma}+\frac{1}{24}\sup_{t\in(nT,(n+1)T]}\e[|Z_t^\lambda-\zeta_t^{\lambda,n}|^2]\right)\\
		&\leq \frac{3}{4}\sup_{t\in(nT,(n+1)T]}\e[|Z_t^\lambda-\zeta_t^{\lambda,n}|^2]+\frac{6S_{\lambda,\gamma}}{\gamma}+C_5\lambda^2\gamma^2.
	\end{align*}
	Taking the supremum over $t\in(nT,(n+1)T]$ gives
	\begin{equation}\label{eq.sup Z-xi}
		\sup_{t\in(nT,(n+1)T]}\e[|Z_t^\lambda-\zeta_t^{\lambda,n}|^2]\leq\frac{24S_{\lambda,\gamma}}{\gamma}+4C_5\lambda^2\gamma^2.
	\end{equation}
	Substituting \eqref{eq.sup Z-xi} back into \eqref{eq.mse of momentum} yields
	\begin{equation}\label{eq.mse of velocity}
		\sup_{t\in(nT,(n+1)T]}\e[|V_t^\lambda-V_t^{\lambda,n}|^2]\leq \frac{2S_{\lambda,\gamma}}{\gamma}+\frac{C_5}{6}\lambda^2\gamma^2.
	\end{equation}
	Finally, in view of \eqref{eq.auxiliary-decomposition}, by using \eqref{eq.sup Z-xi} and \eqref{eq.mse of velocity}, we conclude that
	\[W_2^2(\mathcal{L}(Z_t^\lambda,V_t^\lambda),\mathcal{L}(\zeta_t^{\lambda,n},V_t^{\lambda,n}))\leq C_0^2\left(\frac{S_{\lambda,\gamma}}{\gamma}+\lambda^2\gamma^2\right),\]
	where $C_0\coloneq\sqrt{26(C_5+1)}=\mathcal{O}((d/\beta)^{1/2})$. This completes the proof.
\end{proof}

\phantomsection
\begin{proof}[\textbf{Proof of Theorem~\ref{theorem:contraction result of dalalyan}}]
	\label{proof.contraction result of dalalyan}
	The proof is adapted from \cite[Theorem 1]{dalalyan2020sampling}, we provide details here for explicit constants.
	Let $(r_t,R_t)$ and $(r_t',R_t')$ be two solutions of the underdamped Langevin SDE \eqref{eq.MY kinetic Langevin} with initial conditions $(r_0,R_0)$ and $(r_0',R_0')$, and define
	\begin{equation}\label{eq.y,z}
		y_t\coloneq (R_t-R_t')+\gamma(r_t-r_t'),\quad z_t\coloneq -(R_t-R_t')
	\end{equation}
	for all $t\geq 0$. By Lemma~\ref{lemma.MY}, $u_{MY,\epsilon}$ is twice continuously differentiable. Hence, by the fundamental theorem of calculus,
	\begin{equation}\label{eq.taylor-MY}
		h_{MY,\epsilon}(r_t)-h_{MY,\epsilon}(r_t')=S_t(r_t-r_t'),
	\end{equation}
	where 
	\begin{equation}\label{eq.S_t}
		S_t\coloneq\int_0^1 \operatorname{Hess}(u_{MY,\epsilon})(r_t'+s(r_t-r_t'))\,\dd s.
	\end{equation}
	By \eqref{eq.y,z}, \eqref{eq.taylor-MY} and the expression of \eqref{eq.MY kinetic Langevin}, we have
	\begin{align}
		\frac{\dd y_t}{\dd t}&=-\gamma(R_t-R_t')-(h_{MY,\epsilon}(r_t)-h_{MY,\epsilon}(r_t'))+\gamma(R_t-R_t')\notag\\
		&=-(h_{MY,\epsilon}(r_t)-h_{MY,\epsilon}(r_t'))=-S_t(r_t-r_t')=-\frac{S_t(y_t+z_t)}{\gamma},\label{eq.dy}
	\end{align}
	and
	\begin{align}\label{eq.dz}
		\begin{split}
			\frac{\dd z_t}{\dd t}&=\gamma(R_t-R_t')+(h_{MY,\epsilon}(r_t)-h_{MY,\epsilon}(r_t'))=-\gamma z_t+\frac{S_t(y_t+z_t)}{\gamma}.
		\end{split}
	\end{align}
	Combining \eqref{eq.dy} and \eqref{eq.dz}, we have
	\begin{align}
		\frac{\dd}{\dd t}|(y_t,z_t)|^2&=2\left\la y_t,\frac{\dd y_t}{\dd t}\right\ra+2\left\la z_t,\frac{\dd z_t}{\dd t}\right\ra\notag\\
		&=-\frac{2}{\gamma}\left\la y_t, S_t(y_t+z_t)\right\ra-2\gamma \left\la z_t, z_t\right\ra+\frac{2}{\gamma}\left\la z_t, S_t(y_t+z_t)\right\ra\notag\\
		&=-\frac{2}{\gamma}\la y_t,S_t y_t\ra+\frac{2}{\gamma} \la z_t,S_t z_t\ra -2\gamma|z_t|^2.\label{eq.derivative of y,z}
	\end{align}
	Using Remark~\ref{remark.MY}, we have, for all $t\geq 0$, that
	\begin{equation}\label{eq.quadratic form of S_t}
		\frac{m}{2}I_d\preceq S_t\preceq KI_d.
	\end{equation}
	Substituting \eqref{eq.quadratic form of S_t} into the right-hand side of \eqref{eq.derivative of y,z} yields
	\begin{align}
		\begin{split}
			\frac{\dd}{\dd t}|(y_t,z_t)|^2&\leq -\frac{m}{\gamma}|y_t|^2+\frac{2K}{\gamma}|z_t|^2-2\gamma|z_t|^2 \leq \frac{\max\{-m,2(K-\gamma^2)\}}{\gamma}\,|(y_t,z_t)|^2.
		\end{split}
	\end{align}
	By integrating factor and the assumption that $\gamma\geq\gamma_{\min}\geq \sqrt{K+m/2}$, we obtain, for all $t\geq 0$, that
	\[|(y_t,z_t)|^2\leq \exp\left(\frac{\max\{-m,2(K-\gamma^2)\}}{\gamma} t\right)|(y_0,z_0)|^2\leq \exp\left(-\frac{m}{\gamma} t\right)|(y_0,z_0)|^2,\]
	and thus
	\begin{align}\label{eq.contraction y,z}
		|(y_t,z_t)|\leq \exp\left(-\frac{m}{2\gamma} t\right)|(y_0,z_0)|.
	\end{align}
	Next, by the definition of $y_t$ and $z_t$ in \eqref{eq.y,z}, we have
	\[|y_0|^2\leq 2|R_0-R_0'|^2+2\gamma^2|r_0-r_0'|^2,\quad |z_0|^2=|R_0-R_0'|^2,\]
	so that
	\begin{equation}\label{eq.initial y,z}
		|(y_0,z_0)|\leq\sqrt{3+2\gamma^2}\,|(r_0-r_0',R_0-R_0')|.
	\end{equation}
	Moreover, using the identity $r_t-r_t'=(y_t+z_t)/\gamma$, along with \eqref{eq.contraction y,z} and \eqref{eq.initial y,z}, and the condition $\gamma\geq\gamma_{\min}\geq 1$, we have, for all $t\geq 0$, that
	\begin{align*}
		|r_t-r_t'|\leq\frac{\sqrt{2}}{\gamma}|(y_t,z_t)|&\leq \frac{\sqrt{2(3+2\gamma^2)}}{\gamma}\exp\left(-\frac{m}{2\gamma}t\right)|(r_0-r_0',R_0-R_0')|\\
		&\leq 4\exp\left(-\frac{m}{2\gamma}t\right)|(r_0-r_0',R_0-R_0')|.
	\end{align*}
	Taking expectations and then the infimum, we conclude that
	\[W_2(\mathcal{L}(r_t),\mathcal{L}(r_t'))\leq 4\exp\left(-\frac{m}{2\gamma}t\right)W_2(\mathcal{L}(r_0,R_0),\mathcal{L}(r_0',R_0')),\]
	which completes the proof.
\end{proof}

\phantomsection
\begin{proof}[\textbf{Proof of Proposition~\ref{theorem:W_2 - scaled and auxiliary}}]\label{Proof.W_2 - scaled and auxiliary}
	For all $n\in\mathbb{N}_0$, $t\in(nT,(n+1)T]$, and $\gamma\geq\gamma_{\min}$, by Theorem~\ref{theorem:contraction result of dalalyan}, Corollary~\ref{coro.r_t pi}, and Proposition~\ref{proposition: W_2 - interpolation and auxiliary}, we have
	\begin{align*}
		W_2\big(\mathcal{L}(\zeta_t^{\lambda,n}),\mathcal{L}(r_t^\lambda)\big)&\leq \sum_{k=1}^n W_2\big(\mathcal{L}(\zeta_t^{\lambda,k}),\mathcal{L}(\zeta_t^{\lambda,k-1})\big)\\
		&=\sum_{k=1}^n W_2(\mathcal{L}\big(\widehat{\zeta}_t^{kT,Z_{kT}^\lambda,V_{kT}^\lambda,\lambda}),\mathcal{L}(\widehat{\zeta}_t^{kT,\zeta_{kT}^{\lambda,k-1},V_{kT}^{\lambda,k-1},\lambda})\big)\\
		&\leq \sum_{k=1}^n 4 \exp\left(-\frac{m\lambda}{2\gamma}(t-kT)\right)W_2\big(\mathcal{L}(Z_{kT}^\lambda,V_{kT}^\lambda),\mathcal{L}(\zeta_{kT}^{\lambda,k-1},V_{kT}^{\lambda,k-1})\big)\\
		&\leq 4C_0\sum_{k=1}^n\exp\left(-\frac{m}{4\gamma}(n-k)\right)\left(\lambda\gamma+\sqrt{\frac{S_{\lambda,\gamma}}{\gamma}}\right)\\
		&\leq 4C_0\left(\lambda\gamma+\sqrt{\frac{S_{\lambda,\gamma}}{\gamma}}\right)\frac{1}{1-e^{-\frac{m}{4\gamma}}}\\
		&\leq \frac{16C_0\gamma}{m}\exp\left(\frac{m}{4\gamma}\right)\left(\lambda\gamma+\sqrt{\frac{S_{\lambda,\gamma}}{\gamma}}\right)\\
		&\leq \frac{16C_0e}{m}\left(\lambda\gamma^2+\sqrt{\gamma S_{\lambda,\gamma}}\right),
	\end{align*}
	where $C_0$ is given in Proposition~\ref{proposition: W_2 - interpolation and auxiliary}, with explicitly expression given in Table~\ref{tab.constants}, and where the third inequality holds due to $T\geq 1/(2\lambda)$ for $\lambda\leq\lambda_{\max,\gamma}\leq 1$, the fifth inequality holds due to $1-e^{-x}\geq xe^{-x}$ for all $x\geq 0$, and the last inequality holds due to $\gamma\geq\gamma_{\min}\geq 14m$.
\end{proof}

\phantomsection
\begin{proof}[\textbf{Proof of Corollary~\ref{coro.r_t pi}}]
	\label{proof.r_t pi}
	Let $(r_t,R_t)_{t\ge 0}$ be the solution of
	\eqref{eq.MY kinetic Langevin} with initial condition
	$(r_0,R_0)=(\theta_0,\nu_0)$. By definition of the scaled process \eqref{scaled process}, we have $(r_t^\lambda,R_t^\lambda)=(r_{\lambda t},R_{\lambda t})$. Let $(r_t',R_t')_{t\ge 0}$ be another solution of
	\eqref{eq.MY kinetic Langevin} initialized according to the invariant
	measure $\Pi_\beta^\epsilon$. Applying Theorem~\ref{theorem:contraction result of dalalyan} at time $\lambda t$
	yields
	\[
	W_2\bigl(\mathcal L(r_{\lambda t}),\mathcal L(r_{\lambda t}')\bigr)
	\le
	4\exp\left(-\frac{\lambda m}{2\gamma}t\right)
	W_2\bigl(\mathcal L(\theta_0,\nu_0),\Pi_\beta^\epsilon\bigr).
	\]
	Since $\Pi_\beta^\epsilon$ is invariant for
	\eqref{eq.MY kinetic Langevin}, its position marginal is
	$\pi_\beta^\epsilon$, and hence
	$\mathcal L(r_{\lambda t}')=\pi_\beta^\epsilon$. Therefore,
	\[
	W_2\bigl(\mathcal L(r_t^\lambda),\pi_\beta^\epsilon\bigr)
	\le
	4\exp\left(-\frac{\lambda m}{2\gamma}t\right)
	W_2\bigl(\mathcal L(\theta_0,\nu_0),\Pi_\beta^\epsilon\bigr),
	\]
	which proves the claim.
\end{proof}

\phantomsection
\begin{proof}[\textbf{Proof of Lemma~\ref{Lemma.sampling error}}]\label{Proof.sampling error}
	By Remark~\ref{remark.local Lip. of h}, we have, for all $\theta\in\mathbb{R}^d$, that
	\begin{align}\label{eq.upper bound of htheta}
		|h(\theta)|-|h(0)|\leq |h(\theta)-h(0)|\leq L_h(1+|\theta|)^{2r-1}|\theta|\leq L_h(1+|\theta|)^{2r},
	\end{align}
	which implies,
	\begin{align}\label{eq.upper bound of h}
		|h(\theta)|\leq |h(0)|+L_h(1+|\theta|)^{2r}\leq |h(0)|+L_h 2^{2r-1}(1+|\theta|^{2r}).
	\end{align}
	Let $(\Theta_n, \Theta)$ follow an optimal coupling of $\mathcal{L}(\theta_n^\lambda)$ and $\pi_\beta$, that is, $\mathcal{L}(\Theta_n)=\mathcal{L}(\theta_n^\lambda)$, $\mathcal{L}(\Theta)=\pi_\beta$ and
	\begin{equation}\label{eq.coupling}
		\left(\e[|\Theta_n-\Theta|^2]\right)^{1/2}=W_2(\mathcal{L}(\theta_n^\lambda),\pi_\beta).
	\end{equation}
	Then, we have, for all $\theta,\theta'\in\mathbb{R}^d$, that 
	\begin{align*}
		u(\theta)-u(\theta')=\int_0^1\la h(\theta'+t(\theta-\theta')),\theta-\theta'\ra\,\dd t.
	\end{align*}
	Therefore, using Cauchy-Schwarz inequality and \eqref{eq.upper bound of h}, we obtain
	\begin{align*}
		|u(\theta)-u(\theta')|&\leq \left|\int_0^1\la h(\theta'+t(\theta-\theta')),\theta-\theta'\ra\,\dd t\right|\\
		&\leq \int_0^1|h(\theta'+t(\theta-\theta'))|\,|\theta-\theta'|\,\dd t\\
		&\leq \int_0^1\left(|h(0)|+L_h2^{2r-1}(1+|t\theta+(1-t)\theta'|^{2r})\right)|\theta-\theta'|\,\dd t\\
		&\leq \left(|h(0)|+L_h 2^{2r-1}+L_h 2^{4r-2}(|\theta|^{2r}+|\theta'|^{2r})\right)|\theta-\theta'|.
	\end{align*}
	Taking expectations and applying the Cauchy-Schwarz inequality, the
	elementary bound $|a+b+c|^2\le 3(|a|^2+|b|^2+|c|^2)$ for $a,b,c\in\mathbb{R}$, together with \eqref{eq.coupling} and Lemmas
	\ref{lemma.moment bounds of pi_beta} and
	\ref{lemma.2q moment of algorithm}, we obtain
	\begin{align*}
		&\e[u(\theta_n^\lambda)]-\e[u(\theta_\infty)]\\
		&=\e[u(\Theta_n)]-\e[u(\Theta)]\\
		&\leq \e\left[\left(|h(0)|+L_h2^{2r-1}+L_h 2^{4r-2}(|\Theta_n|^{2r}+|\Theta|^{2r})\right)|\Theta_n-\Theta|\right]\\
		&\leq \left(\e\left[\left||h(0)|+L_h2^{2r-1}+L_h 2^{4r-2}(|\Theta_n|^{2r}+|\Theta|^{2r})\right|^2\right]\right)^{1/2}\left(\e[|\Theta_n-\Theta|^2]\right)^{1/2}\\
		&\leq \left(3(|h(0)|+L_h2^{2r-1})^2+3L_h^2 2^{8r-4}\e[|\theta_n^\lambda|^{4r}]+3L_h^2 2^{8r-4}\e[|\theta_\infty|^{4r}]\right)^{1/2}W_2(\mathcal{L}(\theta_n^\lambda),\pi_\beta)\\
		&\leq \left(3(|h(0)|+L_h2^{2r-1})^2+3L_h^2 2^{8r-4}\big(\bar C_{4r}+C_{\pi_\beta ,4r}\big)\right)^{1/2}W_2(\mathcal{L}(\theta_n^\lambda),\pi_\beta).
	\end{align*}
	Finally, applying Theorem~\ref{Main Theorem} yields
	\[\e[u(\theta_n^\lambda)]-\e[u(\theta_\infty)]\leq C'\left(\sqrt{\lambda}\gamma+\lambda\gamma^{5/2}+\gamma^{-1}+\exp\left(-\frac{\lambda m}{2\gamma}n\right)W_2(\mathcal{L}(\theta_0,\nu_0),\Pi_\beta)+\epsilon\right),\]
	where $C'\coloneq\big(3(|h(0)|+L_h2^{2r-1})^2+3L_h^2 2^{8r-4}(\bar C_{4r}+C_{\pi_\beta ,4r})\big)^{1/2} \dot{C}=\mathcal{O}((d/\beta)^{4r+1/2})$ with $\dot{C}$ given in Theorem~\ref{Main Theorem}.
\end{proof}

\phantomsection
\begin{proof}[\textbf{Proof of Lemma~\ref{Lemma.concentration property}}]\label{Proof.concentration property}
	By Remark~\ref{remark.dissipativity of h}, we have $\theta^*\in\bar{B}(0,R_0)$ with $R_0= \sqrt{2u(0)/m}$. We note that, for all $\theta,\theta'\in\bar{B}(0,R_0+1)$,
	\[|h(\theta)-h(\theta')|\leq L_h(1+2R_0)^{2r-1}|\theta-\theta'|.\] 
	Hence, by the convexity of $u$ as shown in Remark~\ref{remark.dissipativity of h} and the fact that $h(\theta^*)=0$, we have
	\begin{align}
		\e[u(\theta_\infty)\II_{\bar{B}(0,R_0+1)}(\theta_\infty)]-u(\theta^*)&\leq \int_{\bar{B}(0,R_0+1)} (u(\theta)-u(\theta^*))\,\pi_\beta(\dd \theta)\notag\\
		&\leq \int_{\bar{B}(0,R_0+1)} \la h(\theta)-h(\theta^*),\theta-\theta^*\ra\,\pi_\beta(\dd \theta)\notag\\
		&\leq L_h(1+2R_0)^{2r-1}\int_{\bar{B}(0,R_0+1)}|\theta-\theta^*|^2\,\pi_\beta(\dd\theta)\notag\\
		&\leq L_h(1+2R_0)^{2r-1}\e[|\theta_\infty-\theta^*|^2\II_{\bar{B}(0,R_0+1)}]\notag\\
		&\leq L_h(1+2R_0)^{2r-1}\e[|\theta_\infty-\theta^*|^2].\label{eq.first part of u(invariant)}
	\end{align}
	Moreover, since $\theta^*\in\bar{B}(0,R_0)$, we have
	\[\p(\theta_\infty\notin\bar{B}(0,R_0+1))\leq \p(|\theta_\infty-\theta^*|>1)\leq \e[|\theta_\infty-\theta^*|^2],\]
	where the last inequality follows from the Chebyshev's inequality. Applying Cauchy-Schwarz inequality yields
	\begin{align}
		\e[u(\theta_\infty)\II_{{\bar{B}(0,R_0+1)}^c}(\theta_\infty)]&\leq \left(\e[u(\theta_\infty)^2]\right)^{1/2}\left(\p(\theta_\infty\in {\bar{B}(0,R_0+1)}^c)\right)^{1/2}\notag\\
		&\leq \left(\e[u(\theta_\infty)^2]\right)^{1/2}\left(\e[|\theta_\infty-\theta^*|^2]\right)^{1/2}.\label{eq.second part of u(invariant)}
	\end{align}
	Next, for any $\theta\in\mathbb{R}^d$, by the fundamental theorem of calculus, Cauchy-Schwarz inequality, and \eqref{eq.upper bound of htheta}, we have that
	\begin{align*}
		|u(\theta)-u(0)|=\left|\int_0^1\la h(t\theta),\theta\ra\,\dd t\right|&\leq \int_0^1|h(t\theta)|\, |\theta|\,\dd t\\
		&\leq |\theta|\int_0^1 \left(|h(0)|+L_h(1+|t\theta|)^{2r}\right)\,\dd t\\
		&\leq (|h(0)|+L_h)(1+|\theta|)^{2r+1}.
	\end{align*}
	Therefore, we obtain that
	\begin{align*}
		|u(\theta)|^2&\leq 2|u(\theta)-u(0)|^2+2|u(0)|^2\\
		&\leq 2(|h(0)|+L_h)^2(1+|\theta|)^{4r+2}+2|u(0)|^2\\
		&\leq 2^{4r+3}(|h(0)|^2+L_h^2)(1+|\theta|^{4r+2})+2|u(0)|^2
	\end{align*}
	Therefore, by Lemma~\ref{lemma.moment bounds of pi_beta}, the second moment of $u(\theta_\infty)$ satisfies
	\begin{equation}\label{eq.moment bounds of u(invariant)}
		\e[|u(\theta_\infty)|^2]\leq 2^{4r+3}(|h(0)|^2+L_h^2)(1+C_{\pi_\beta ,4r+2})+2|u(0)|^2.
	\end{equation}
	Combining \eqref{eq.first part of u(invariant)}, \eqref{eq.second part of u(invariant)} and \eqref{eq.moment bounds of u(invariant)}, we conclude that
	\[\e[u(\theta_\infty)]-u(\theta^*)\leq C''\e[|\theta_\infty-\theta^*|^2],\]
	where $C''\coloneq L_h(1+2R_0)^{2r-1}+(2^{4r+3}(|h(0)|^2+L_h^2)(1+C_{\pi_\beta ,4r+2})+2u(0)^2)^{1/2}=\mathcal{O}((d/\beta)^{r+1/2})$.
\end{proof}

\newpage

\section{Analytic Expression of Constants}

{\small
	\begin{xltabular}{\textwidth}{
			>{\raggedright\arraybackslash}p{1.8cm} 
			>{\centering\arraybackslash}p{0.8cm}
			>{\raggedright\arraybackslash}X
			>{\raggedright\arraybackslash}p{2.5cm}}
		
		\caption{Explicit expressions of constants.} \label{tab.constants} \\
		\toprule
		\multicolumn{2}{c}{{Constant}} & \multicolumn{1}{c}{{Explicit expression}} & \multicolumn{1}{c}{{Dependence}} \\
		\midrule
		\endfirsthead
		
		\multicolumn{4}{c}{{Table \thetable{} (Continued): Explicit expressions of constants.}} \\[0.5em]
		\toprule
		\multicolumn{2}{c}{{Constant}} & \multicolumn{1}{c}{{Explicit expression}} & \multicolumn{1}{c}{{Dependence}} \\
		\midrule
		\endhead
		
		\bottomrule
		\multicolumn{4}{r}{\small\itshape Continued on next page...} \\
		\endfoot
		
		\bottomrule
		\endlastfoot
		
		Remark~\ref{remark.local Lip. of h} & $K_H$ & $2^{2r-1}K_G+K_F$ & - \\
		\cmidrule{2-4}
		& $L_h$ & $L_G+L_F\e[(1+2|X_0|)^{\rho-1}]$ & -\\
		\midrule
		
		Remark~\ref{remark.dissipativity of h} & $R_0$ & $\sqrt{2u(0)/m}$ & - \\
		\midrule
		
		Theorem~\ref{Main Theorem}
		& $\dot{C}$ & $2\max\{C_0,16C_0e/m\}(1+\sqrt{C_1}+\sqrt{C_2}+\sqrt{C_3}+\sqrt{C_4})+4+5\bar{c}$ & $\mathcal{O}((d/\beta)^{3r+1/2})$ \\
		\midrule
		
		Theorem~\ref{Theorem.Optimization}
		& $C'$ & $(3(|h(0)|+L_h2^{2r-1})^2+3L_h^2 2^{8r-4}(\bar C_{4r}+C_{\pi_\beta ,4r}))^{1/2} \dot{C}$ & $\mathcal{O}((d/\beta)^{4r+1/2})$ \\
		\cmidrule{2-4}
		& $C''$ & $L_h(1+2R_0)^{2r-1}+(2^{4r+3}(|h(0)|^2+L_h^2)(1+C_{\pi_\beta ,4r+2})+2u(0)^2)^{1/2}$ & $\mathcal{O}((d/\beta)^{r+1/2})$ \\
		\midrule
		Remark~\ref{remark.precision-expected} & $\bar{C}''$ & $L_h(1+2R_0)^{2r-1}+(2^{4r+3}(|h(0)|^2+L_h^2)(1+(4u(0)/m+4(d+4r)/m)^{2r+1})+2u(0)^2)^{1/2}$ & $\mathcal{O}(d^{r+1/2})$ \\
		\midrule
		Lemma~\ref{lemma.moment bounds of pi_beta} 
		& $C_{\pi_\beta,2q}$ & $(4u(0)/m+4(d+2(q-1))/(\beta m))^q$ & $\mathcal{O}((d/\beta)^{q})$\\
		\midrule
		
		Lemma~\ref{lemma.contraction of pi_beta}
		& $\bar c$ & $\sqrt{2c_\pi/m}$ & $\mathcal{O}((d/\beta)^{2r-1/2})$ \\
		\cmidrule{2-4}
		& $c_\pi$ & $2^{12r-4}L_h^4\left((1+R_0)^{8r-2}+C_{\pi_\beta,8r-2}\right)/m$ & $\mathcal{O}((d/\beta)^{4r-1})$ \\
		\midrule
		
		Lemma~\ref{lemma.2 moment of algorithm} & $\bar{C}_2$ & $6\e[|\theta_0|^2]+8\e[|\nu_0|^2]+86\big(7C_H/8+u(0)/2+d/\beta\big)/m$ & $\mathcal{O}(d/\beta)$ \\
		\cmidrule{2-4}
		& $\bar{B}_2$ & $\e[|\nu_0|^2]+9m^2\bar{C}_2+9C_H/2+4d/\beta$ & $\mathcal{O}(d/\beta)$ \\
		\midrule
		
		Lemma~\ref{lemma.2q moment of algorithm}
		& $\bar{C}_K$ & $7C_H/8+u(0)/2+d/\beta$ & $\mathcal{O}(d/\beta)$ \\
		\cmidrule{2-4}
		($q\geq 2$ and & $C_m$ & $5m/8+7m^2/4$ & - \\
		\cmidrule{2-4}
		$q\in\mathbb{N}$)& $C_f$ & $59(1+m^2)$ & - \\
		\cmidrule{2-4}
		& $c_f$ & $6d(d+2)/\beta^2+4C_Hd/\beta+12(1/(8m)+7)^2(m^4+K_H^4\e[(1+|X_0|)^{4\rho}])$ & $\mathcal{O}((d/\beta)^2)$ \\
		\cmidrule{2-4}
		& $K_f$ & $2^{3q-2}((d+q)/\beta)^{q/2}(C_f^{q/2}+m^q+K_H^q\e[(1+|X_0|)^{\rho q}])$ & $\mathcal{O}((d/\beta)^{q/2})$ \\
		\cmidrule{2-4}
		& $K_g$ & $2^{q-1}((d+2q)/\beta)^q+2^{2q-2}(1/(8m)+7)^q(m^{2q}+K_H^{2q}\e[(1+|X_0|)^{2\rho q}])$ & $\mathcal{O}((d/\beta)^q)$ \\
		\cmidrule{2-4}
		& $C_K$ & $3^{q-1}(K_f+K_g)$ & $\mathcal{O}((d/\beta)^q)$ \\
		\cmidrule{2-4}
		& $N_1$ & $(64/m)^{q-1}(q \bar{C}_K+3q(q-1)2^{q-3}C_f\beta^{-1}d)^{q}$ & $\mathcal{O}((d/\beta)^q)$ \\
		\cmidrule{2-4}
		& $N_2$ & $(8/\sqrt{m})^{q-2}(q(q-1)2^{q-3}c_f)^{q/2}$ & $\mathcal{O}((d/\beta)^q)$ \\
		\cmidrule{2-4}
		& $N_3$ & $64(q(q-1)2^{q-3}3^{q-1}K_f)^2/m$ & $\mathcal{O}((d/\beta)^q)$ \\
		\cmidrule{2-4}
		& $N_0$ & $N_1+N_2+N_3+q(q-1)2^{q-3}C_K$ & $\mathcal{O}((d/\beta)^q)$ \\
		\cmidrule{2-4}
		& $\bar{C}_{2q}$ & $2^{5q-2}\e[|\theta_0|^{2q}]+2^{5q-1}\e[|\nu_0|^{2q}]+2^{4q+6}N_0/m$ & $\mathcal{O}((d/\beta)^q)$ \\
		\midrule
		
		Lemma~\ref{lemma.mse of interpolation} 
		& $C_{1,v}$ & $3\bar{B}_2/4+3m^2\bar{C}_2/2+3C_H/4+6d/\beta$ & $\mathcal{O}(d/\beta)$\\
		\midrule
		
		Lemma~\ref{lemma.2nd bound MY kinetic}
		& $A_c$ & $(5u(0)+|h(0)|)\beta/8$ & $\mathcal{O}(\beta)$ \\
		\cmidrule{2-4}
		& $C_r$ & $11\e[u(\theta_0)]+6\e[|\theta_0|^2]+8\e[|\nu_0|^2]+86(d+A_c)/(\beta m)$ & $\mathcal{O}(d/\beta)$ \\
		\cmidrule{2-4}
		& $C_R$ & $\e[|\nu_0|^2]+2|h(0)|^2+2C_r+2d/\beta$ & $\mathcal{O}(d/\beta)$\\
		\midrule
		
		Lemma~\ref{lemma.2nd bound of auxiliary process}
		& $C_\zeta^\#$ & $11\e[u(\theta_0)]+6\e[|\theta_0|^2]+8\e[|\nu_0|^2]+430(d+A_c)/(\beta m)$ & $\mathcal{O}(d/\beta)$ \\
		\cmidrule{2-4}
		& $C_V^\#$ & $\bar{B}_2+2|h(0)|^2+2C_\zeta^\#+2d/\beta$ & $\mathcal{O}(d/\beta)$\\
		\midrule
		Lemma~\ref{lemma.2nd bound of taming factor} & $C_H$ & $8\left(m^2+K_H^2\e[(1+|X_0|)^{2\rho}]\right)$ & - \\
		\midrule
		
		Proposition~\ref{proposition: W_2 - interpolation and auxiliary}
		& $c_A$ & $(2K_H^2\e[(1+|X_0|)^{2\rho}]+2^{4r}L_h^2)(1+\bar{C}_{4r})+2|h(0)|^2$ & $\mathcal{O}((d/\beta)^{2r})$ \\
		\cmidrule{2-4}
		& $c_D$ & $36(2^{4r}L_h^2(1+\bar{C}_{4r})+2|h(0)|^2)$ & $\mathcal{O}((d/\beta)^{2r})$ \\
		& $C_1$ & $4C_{1,v}/9+24d/\beta+18\bar{B}_2+36m^2\bar{C}_2+18C_H+2C_V^\#+4C_\zeta^\#+4|h(0)|^2+2c_A$ & $\mathcal{O}((d/\beta)^{2r})$ \\
		\cmidrule{2-4}
		& $C_2$ & $12m^2\bar{C}_2+6C_H+12C_\zeta^\#+20|h(0)|^2+6c_A+c_D+2^{4r+2}L_h^2(1+\bar{C}_{4r})+9\bar{B}_2$ & $\mathcal{O}((d/\beta)^{2r})$ \\
		\cmidrule{2-4}
		& $C_3$ & $18(4K_H^2\e[(1+|X_0|)^{2\rho}]+m^2)(1+\bar{C}_{12r})$ & $\mathcal{O}((d/\beta)^{6r})$ \\
		\cmidrule{2-4}
		& $C_4$ & $9\cdot 2^{12r-5}L_h^4((1+R_0)^{8r-2}+\bar{C}_{8r-2})$ & $\mathcal{O}((d/\beta)^{4r-1})$ \\
		\cmidrule{2-4}
		& $C_5$ & $8C_V^\#+16C_\zeta^\#+16|h(0)|^2+2C_{1,v}+2d/\beta$ & $\mathcal{O}(d/\beta)$ \\
		\cmidrule{2-4}
		& $C_0$ & $\sqrt{26(C_5+1)}$ & $\mathcal{O}((d/\beta)^{1/2})$ \\
		\cmidrule{2-4}
		& $C_J$ & $3C_\zeta^\#+3\bar{C}_2+3\bar{B}_2$ & $\mathcal{O}(d/\beta)$ \\
		\cmidrule{2-4}
		& $C_M$ & $3C_V^\#+3\bar{B}_2+3C_{1,v}$ & $\mathcal{O}(d/\beta)$ \\
\end{xltabular}}


\end{document}